\newcommand{\Lie}{\textup{Lie}}
\newcommand{\Ad}{\textup{Ad}}
\newcommand{\ad}{\textup{ad}}
\newcommand{\GL}{\textup{GL}}
\newcommand{\gl}{\mathfrak{gl}}
\newcommand{\slor}{\mathfrak{sl}}
\newcommand{\Sp}{\textup{Sp}}
\newcommand{\spor}{\mathfrak{sp}}
\newcommand{\upO}{\textup{O}}
\newcommand{\SO}{\textup{SO}}
\newcommand{\so}{\mathfrak{so}}
\newcommand{\su}{\mathfrak{su}}
\newcommand{\uor}{\mathfrak{u}}
\newcommand{\RR}{\mathbb{R}}
\newcommand{\CC}{\mathbb{C}}
\newcommand{\ZZ}{\mathbb{Z}}
\newcommand{\Ind}{\textup{Ind}}
\newcommand{\sgn}{\textup{sgn}}
\newcommand{\calO}{\mathcal{O}}
\newcommand{\calW}{\mathcal{W}}
\newcommand{\calB}{\mathcal{B}}
\newcommand{\calH}{\mathcal{H}}
\newcommand{\calU}{\mathcal{U}}
\newcommand{\calV}{\mathcal{V}}
\newcommand{\calJ}{\mathcal{J}}
\newcommand{\frakg}{\mathfrak{g}}
\newcommand{\frakk}{\mathfrak{k}}
\newcommand{\frakp}{\mathfrak{p}}
\newcommand{\fraks}{\mathfrak{s}}
\newcommand{\frakn}{\mathfrak{n}}
\newcommand{\fraka}{\mathfrak{a}}
\newcommand{\frakm}{\mathfrak{m}}
\newcommand{\frakl}{\mathfrak{l}}
\newcommand{\frakt}{\mathfrak{t}}
\newcommand{\frako}{\mathfrak{o}}
\newcommand{\frakb}{\mathfrak{b}}
\newcommand{\frakh}{\mathfrak{h}}
\newcommand{\frake}{\mathfrak{e}}
\newcommand{\rank}{\textup{rank}}
\newcommand{\Ann}{\textup{Ann}}
\newcommand{\gr}{\textup{gr}}
\newcommand{\Hom}{\textup{Hom}}
\newcommand{\ev}{\textup{ev}}
\theoremstyle{plain}
\newtheorem{theorem}{Theorem}[section]
\newtheorem{proposition}[theorem]{Proposition}
\newtheorem{lemma}[theorem]{Lemma}
\newtheorem{corollary}[theorem]{Corollary}
\newtheorem{fact}[theorem]{Fact}
\newtheorem*{question}{Question}
\theoremstyle{definition}
\newtheorem{definition}[theorem]{Definition}
\newtheorem{example}[theorem]{Example}
\newtheorem{remark}[theorem]{Remark}
\theoremstyle{remark}
\numberwithin{equation}{section}
\newcommand{\rring}[1]{\mathbb{C}[#1]}
\newcommand{\define}[1]{\textit{#1}}
\newcommand{\dweight}{\Lambda^{+}}
\newcommand{\ch}{\textup{ch}}
\newcommand{\AV}{\mathcal{AV}}
\newcommand{\sor}{\gamma}
\newcommand{\U}{\textup{U}}
\newcommand{\calA}{\mathcal{A}}
\newcommand{\spn}[1]{\mathrm{span}_{\CC}\{#1\}}
\newcommand{\hsum}{\mathop{\sum\nolimits^{\oplus}}\limits}
\newenvironment{texteqn}
{\begin{equation} 
\addtolength{\abovedisplayskip}{-1ex}
\addtolength{\abovedisplayshortskip}{-1ex}
\addtolength{\belowdisplayskip}{-1ex}
\addtolength{\belowdisplayshortskip}{-1ex}
\begin{minipage}[t]{0.87\linewidth}}
{\end{minipage} \end{equation} \ignorespacesafterend}
\begin{document}


\title{Stability of Branching Laws for Highest Weight Modules}
\author{Masatoshi Kitagawa\thanks{Email: kitamasa@ms.u-tokyo.ac.jp} \\
\textit{Graduate School of Mathematical Sciences, the University of Tokyo,} \\
\textit{3-8-1 Komaba Meguro-ku Tokyo 153-8914, Japan}}
\date{}
\maketitle

\begin{abstract}
We say a representation $V$ of a group $G$ has stability if its multiplicities $m^{G}_{V}(\lambda)$ is dependent only on some equivalence class of $\lambda$
for a sufficiently large parameter $\lambda$.

In this paper, we prove that the restriction of a holomorphic discrete series representation with respect to any holomorphic symmetric pairs has stability.
As a corollary, we give a necessary and sufficient condition on multiplicity-freeness of the branching laws in this setting.
This condition is same as the sufficient condition given by the theory of visible actions.

We prove a general theorem before we show the stability of holomorphic discrete series representations.
Using the general theorem, we also show the stability on quasi-affine spherical homogeneous spaces and the stability of $K$-type of unitary highest weight modules.

We also show that two branching laws of a holomorphic discrete series representation coincide if two subgroups are in same $\epsilon$-family.



\end{abstract}

\section{Introduction}

Let $G$ be a simple Lie group of Hermitian type with finite center, $\theta$ be a Cartan involution of $G$.
Put $K=G^{\theta}$.
Suppose $\frakg = \frakk \oplus \frakp$ is the Cartan decomposition determined by $\theta$.
Let $\tau$ be an involutive automorphism of $G$ commuting with $\theta$ that fixes any elements of the center of $\frakk$,
and $H$ be the identity component of the fixed point subgroup $G^{\tau}$.
The pair $(G, H)$ is called a holomorphic symmetric pair.

Suppose $\calH$ is a holomorphic discrete series representation of $G$.
The purpose of this paper is to describe the behavior of the branching law of $\calH|_{H}$.
In \cite[Theorem 7.4]{Ko98},
it was shown that $\calH|_{H}$ is decomposed into the direct sum of holomorphic discrete series representations of $H$
with finite multiplicities.
By this result, we can decompose $\calH|_{H}$ as 
\begin{align*}
\calH|_{H} \simeq \hsum_{\lambda \in \sqrt{-1}(\frakt^{\tau})^{*}} m^{H}_{\calH}(\lambda)V^{H}_{\lambda},
\end{align*}
where $\frakt^{\tau}$ is a Cartan subalgebra of $\frakk^{\tau}$
and $V^{H}_{\lambda}$ is a holomorphic discrete series representation of $H$ with highest weight $\lambda$.
In \cite[Theorem 18, Theorem 38]{Ko05} and \cite{Ko08},
T. Kobayashi showed that the multiplicity function $m^{H}_{\calH}(\lambda)$ is uniformly bounded with respect to $\lambda$,
and gave the sufficient condition for multiplicity-freeness of $\calH|_{H}$ using the theory of visible actions.

The main theorem of this paper says that the sufficient condition on multiplicity-freeness given by visible actions
is also a necessary condition for holomorphic discrete series representations and holomorphic symmetric pairs.
More precisely, the branching laws have a good property called stability.
Our main theorem is as follows (see Theorem \ref{theorem:stable_theorem_for_holomorphic_discrete_series}).
\begin{theorem}\label{theorem:intro_stability_for_discrete}
Suppose $\fraka \subset \frakp^{-\tau}$ is a maximal abelian subspace
that is determined by a positive root system of $(\frakg_{\CC}^{\theta\tau}, \frakt_{\CC}^{\tau})$.
Then, there exists a $\lambda_0 \in \sqrt{-1}(\frakt^{\tau})^{*}$ such that
\begin{align*}
m^{H}_{\calH}(\lambda+\lambda_0)=m^{Z_{K\cap H}(\fraka)}_{\calH^{\frakp_{+}}}(\lambda|_{Z_{T^{\tau}}(\fraka)})
\end{align*}
for any $\lambda \in \sqrt{-1}(\frakt^{\tau})^{*}$ satisfying $m^{H}_{\calH}(\lambda)\neq 0$.
\end{theorem}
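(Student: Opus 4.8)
The plan is to derive the identity from the general stability theorem of the paper, after turning the branching multiplicities into $(K\cap H)$-type multiplicities on $\calH$ equipped with a suitable filtration.

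First I would reduce to a statement about $K\cap H$. Since $(G,H)$ is a holomorphic symmetric pair, $\calH|_{H}$ is a discrete Hilbert direct sum of holomorphic discrete series $V^{H}_{\lambda}$ of $H$ by \cite[Theorem 7.4]{Ko98}, and each $V^{H}_{\lambda}$ is the unitarizable highest weight $(\frakh,K\cap H)$-module generated by its minimal $(K\cap H)$-type $F_{\lambda}$, the irreducible $(K\cap H)$-module of highest weight $\lambda$; moreover $F_{\lambda}=(V^{H}_{\lambda})^{\frakp_{+}\cap\frakh_{\CC}}$, the space of vectors annihilated by $\frakp_{+}\cap\frakh_{\CC}$ (note that $\tau$ preserves $\frakp_{\pm}$, as it fixes the center of $\frakk$). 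Passing to $(\frakp_{+}\cap\frakh_{\CC})$-invariants of $\calH|_{H}$ and comparing $(K\cap H)$-isotypic components gives
\[
m^{H}_{\calH}(\lambda)=\dim\Hom_{K\cap H}\!\bigl(F_{\lambda},\,\calH^{\,\frakp_{+}\cap\frakh_{\CC}}\bigr),
\]
so it suffices to understand the $(K\cap H)$-module $\calH^{\frakp_{+}\cap\frakh_{\CC}}$. For this I would give $\calH$ the structure to which the general theorem applies: realize the underlying Harish-Chandra module as a generalized Verma module $U(\frakp_{-})\otimes\calH^{\frakp_{+}}$ with $\calH^{\frakp_{+}}$ the irreducible minimal $K$-type, and filter it by the degree in $\frakp_{-}$, so that the associated graded $\gr\calH\cong S(\frakp_{-})\otimes\calH^{\frakp_{+}}$ has degree-zero part $\calH^{\frakp_{+}}$, while the transverse directions $\frakp^{-\tau}\subset\frakp$ carry the natural $(K\cap H)$-action.

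Then I would apply the general stability theorem to this filtered $(K\cap H)$-module together with the symmetric subgroup $H$. The hypothesis of the general theorem --- which in this situation amounts to the $(K\cap H)$-action on the transverse directions $\frakp^{-\tau}$ (equivalently on $\frakp_{+}^{-\tau}$, whose coordinate ring $\CC[\frakp_{+}^{-\tau}]$ governs the higher-degree part of the branching) being visible, that is, polar with Cartan subspace $\fraka$ --- holds here because $\frakg^{\theta\tau}=\frakk^{\tau}\oplus\frakp^{-\tau}$ is the Cartan decomposition of a Riemannian symmetric Lie algebra, and $\fraka$, chosen by means of the positive root system of $(\frakg^{\theta\tau}_{\CC},\frakt^{\tau}_{\CC})$, is its flat; the restricted root system of $(\frakg^{\theta\tau},\fraka)$ and Kostant's structure theory then supply the needed combinatorics, and the generic isotropy subgroup of $\frakp_{+}^{-\tau}$ in $K\cap H$ is $Z_{K\cap H}(\fraka)$ with maximal torus $Z_{T^{\tau}}(\fraka)$. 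The conclusion of the general theorem, the degree-zero part being $\calH^{\frakp_{+}}$ and the limit group being this generic stabilizer, is precisely that there is a fixed $\lambda_{0}\in\sqrt{-1}(\frakt^{\tau})^{*}$ --- a sufficiently dominant element in the $\fraka$-direction, built from the generators of the highest-weight monoid of $\CC[\frakp_{+}^{-\tau}]$ and absorbing the $\rho$-shift produced by the grading --- such that
\[
m^{H}_{\calH}(\lambda+\lambda_{0})=m^{Z_{K\cap H}(\fraka)}_{\calH^{\frakp_{+}}}\!\bigl(\lambda|_{Z_{T^{\tau}}(\fraka)}\bigr)
\]
for every $\lambda$ with $m^{H}_{\calH}(\lambda)\neq0$. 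Heuristically, $m^{H}_{\calH}$ becomes constant along translates in the $\fraka$-direction once $\lambda$ is large enough, and the stable value is read off from the $Z_{K\cap H}(\fraka)$-decomposition of the minimal $K$-type --- a Kostant-type stability phenomenon.

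The reduction of the first paragraph and the bookkeeping of weights are routine; the crux, and the place where the holomorphic-symmetric hypothesis is genuinely used, is verifying the hypothesis of the general theorem here: that the $(K\cap H)$-action on $\frakp_{+}^{-\tau}$ is visible/polar with Cartan subspace $\fraka$, that its invariant ring is polynomial with the expected highest-weight monoid, and that the generic stabilizer is exactly $Z_{K\cap H}(\fraka)$ --- which is in turn the ingredient underlying Kobayashi's multiplicity-freeness criterion from visible actions \cite{Ko08}. The remaining technical points are pinning down $\lambda_{0}$ explicitly rather than merely asserting its existence, and matching the abstract output of the general theorem with $m^{Z_{K\cap H}(\fraka)}_{\calH^{\frakp_{+}}}$ and with the parameter $\lambda|_{Z_{T^{\tau}}(\fraka)}$ correctly, especially when $Z_{K\cap H}(\fraka)$ is disconnected.
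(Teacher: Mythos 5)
Your route is essentially the paper's: reduce the $H$-branching law to the $(K\cap H)$-decomposition of $S(\frakp_{-}^{-\tau})\otimes\calH^{\frakp_{+}}$ and then feed this into the general stability machinery (Theorems \ref{theorem:general stability} and \ref{theorem:stable_multiplicity_M}) with $G=(K\cap H)_{\CC}$, $X=\frakp_{+}^{-\tau}$, the module being free so that the fiber at the base point is $\calH^{\frakp_{+}}$. Two joints in your reduction are looser than they look: the space $\calH^{\frakp_{+}\cap\frakh_{\CC}}$ is not stable under $\frakp_{-}^{-\tau}$ (since $[\frakp_{+}^{\tau},\frakp_{-}^{-\tau}]\subset\frakk_{\CC}^{-\tau}$), so the $(\rring{\frakp_{+}^{-\tau}},(K\cap H)_{\CC})$-module to which the general theorem is applied is really $N^{\frakg^{\theta\tau}}(\calH^{\frakp_{+}})\simeq S(\frakp_{-}^{-\tau})\otimes\calH^{\frakp_{+}}\simeq\calU(\frakg^{\theta\tau})\calH^{\frakp_{+}}$, identified with your space of $\frakp_{+}^{\tau}$-nullvectors only as a $(K\cap H)$-module (this is Proposition \ref{proposition:reduction_to_compact}); and the sphericality hypothesis is verified by the Kostant--Hua--Schmid decomposition of $\rring{\frakp_{+}^{-\tau}}$ (Proposition \ref{proposition:kostant_hua_schmid}), not by polarity of the compact action. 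Both points are repairable.

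The genuine gap is the step you yourself call the crux and then defer, and the substitute you propose for it is not the statement the general theorem needs. The group appearing in Theorem \ref{theorem:stable_multiplicity_M} is $L=P_{x_0}=\{g\in G_{x_0}:gBx_0\subset Bx_0\}$, characterized by conditions L-1)--L-4) of Proposition \ref{proposition:L-subgroup}; it is in general a \emph{proper} subgroup of the isotropy group at the base point $x_0=X_{r}$ of the open Borel orbit (for $\frakg=\spor(n,\RR)$, $\tau=\theta$, the isotropy group at $X_r$ is $\upO(n,\CC)$, while $Z_{K_{\CC}}(\fraka)$ consists of diagonal sign matrices), and $X_r$ is not a principal point for the compact $K\cap H$-action, so invoking ``the generic isotropy subgroup'' does not attach to the point at which the fiber $M/\frakm(x_0)M$ is taken. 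What must actually be proved is that $Z_{(K\cap H)_{\CC}}(\fraka)$ satisfies L-1)--L-4): that $\Ad(B)X_r$ is open dense in $\frakp_{+}^{-\tau}$, that $B_{X_r}=(T_{\CC})_{X_r}N_{X_r}$ with $(T_{\CC})_{X_r}$ meeting every component, and that $\frakb_{X_r}$ is a Borel subalgebra of $Z_{\frakk_{\CC}^{\tau}}(\fraka)$; this is exactly the content of Lemmas \ref{lemma:B-orbit}, \ref{lemma:B-structure} and \ref{lemma:L_is_L_subgroup}, carried out with Moore's restricted-root description (Proposition \ref{proposition:restricted_root}), and it is also precisely where the hypothesis that $\fraka$ is determined by the chosen positive system enters. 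Without this identification your argument only yields stability of $m^{H}_{\calH}$ with an unidentified limit group, not the asserted formula involving $Z_{K\cap H}(\fraka)$ and $\lambda|_{Z_{T^{\tau}}(\fraka)}$.
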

This theorem asserts two things:
the multiplicity function $m^{H}_{\calH}(\lambda)$ is periodic for sufficiently large parameter $\lambda$,
and the multiplicities in sufficiently large parameters can be described by the decomposition of $\calH^{\frakp_{+}}$ with respect to $Z_{K\cap H}(\fraka)$.
The first phenomenon is called stability.

Stability was appeared in \cite[Lemma 3.4]{Ko94} for example.
In \cite{Sa93}, F. Sat{\=o} formulated and generalized it for reductive spherical homogeneous spaces.
To prove Theorem \ref{theorem:intro_stability_for_discrete}, we generalize Sat{\=o}'s stability theorem for multiplicity-free spaces.

We will state the stability theorem for multiplicity-free spaces.
Let $G$ be a connected reductive complex algebraic group.
Fix a Borel subgroup $B$ of $G$.
Let $X$ be an irreducible quasi-projective $G$-variety satisfying the following conditions:
\begin{texteqn}
$X$ is a spherical $G$-variety (i.e. a Borel subgroup of $G$ has an open dense orbit in $X$), and
\end{texteqn}
\begin{texteqn}
the quotient field of $\rring{X}$ is naturally isomorphic to the rational function field of $X$.
\end{texteqn}
Note that we do not assume normality for spherical varieties in this paper.
By definition, there exists a point $x_0 \in X$ such that $Bx_0$ is open dense in $X$.
The details of the following theorem is in Section \ref{section:stable_multiplicity}.
\begin{theorem}\label{theorem:intro_stability_for_general}
Let $M$ be a finitely generated $(\rring{X}, G)$-module (see Definition \ref{definition:(A,G)-module}).
Suppose $\rring{X}$ has no zero divisors in $M$:
\begin{align*}
\bigcup_{m \in M \setminus \{0\}}\Ann_{\rring{X}}(m)=0.
\end{align*}
Then, there exists a reductive subgroup $L \subset G_{x_0}$ and $\lambda_0 \in \dweight(\rring{X})$ such that
\begin{align*}
m^G_M(\lambda + \lambda_0)=m^L_{M/\frakm(x_0)M}(\lambda|_{B_{x_0}})
\end{align*}
for any $\lambda \in \dweight(M)$.
\end{theorem}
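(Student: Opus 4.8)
The plan is to extract the ``large parameter'' behavior of the multiplicity function by degenerating $X$ to its asymptotic cone along the open $B$-orbit, and reading off multiplicities on the cone fiber over $x_0$. Concretely, I would first invoke sphericity and condition on the quotient field to put $\rring{X}$ in the standard multiplicity-free form: as a $G$-module, $\rring{X} = \bigoplus_{\lambda \in \dweight(\rring{X})} \rring{X}_\lambda$ with each isotypic component $\rring{X}_\lambda$ an irreducible $G$-module of highest weight $\lambda$ (or zero), and the monoid $\dweight(\rring{X})$ finitely generated. The same argument applied to $M$ (using that $M$ is a finitely generated $(\rring{X},G)$-module and $\rring{X}$ acts without zero divisors on $M$) gives a $\dweight(\rring{X})$-filtration on $M$ whose multiplicities are what we want to control.

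The key construction is a \emph{Rees-type deformation}: choosing a generic one-parameter subgroup associated to the open $B$-orbit through $x_0$, form the filtered ring/module and pass to the associated graded object $\gr \rring{X}$, which is the coordinate ring of a (possibly nonreduced) horospherical degeneration $X_0$ of $X$; the stabilizer of the open $B$-orbit degenerates to a subgroup containing a maximal torus, and its reductive part is the claimed $L \subset G_{x_0}$. On $X_0$ the $G$-action is horospherical, so the multiplicity function becomes literally periodic: $m^G_{\gr M}(\lambda) = m^L_{(\gr M)/\frakm(x_0)\gr M}(\lambda|_{B_{x_0}})$ by restriction to the fiber, since horospherical varieties have multiplicity functions governed by restriction to the base point. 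Then I would show that for $\lambda$ in the cone $\dweight(M)$ far enough from the walls — i.e. after translating by a suitable $\lambda_0 \in \dweight(\rring{X})$ lying deep in the interior of the monoid — the filtration degree equals the $\lambda$-grading, so $m^G_M(\lambda+\lambda_0) = m^G_{\gr M}(\lambda+\lambda_0)$, and stability on $X_0$ transports back to $X$.

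The main obstacle, and where the real work lies, is the bookkeeping for \emph{non-normal} spherical varieties and for a general module $M$ rather than $\rring{X}$ itself: one must choose the one-parameter subgroup (equivalently the valuation) so that the associated graded of $M$ is again a finitely generated $(\gr \rring{X}, G)$-module without zero divisors, verify that $L$ is genuinely reductive and independent of the choices up to conjugacy, and — most delicately — produce an \emph{explicit enough} $\lambda_0$ so that the comparison $m^G_M(\lambda+\lambda_0) = m^G_{\gr M}(\lambda+\lambda_0)$ holds for \emph{all} $\lambda \in \dweight(M)$, not merely asymptotically. This last point requires a quantitative finiteness statement: the defect between $M$ and $\gr M$ is supported in finitely many ``low'' degrees, controlled by a finite generating set of $M$ over $\rring{X}$ together with a finite generating set of the monoid $\dweight(\rring{X})$, and $\lambda_0$ is chosen to absorb all of them. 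I would handle the non-normal case by passing to the normalization only to borrow structural facts (the monoid, the open orbit) while doing all the degree estimates on $\rring{X}$ directly, so that the statement is genuinely about the given, possibly non-normal, $X$.
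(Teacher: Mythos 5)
Your route (Rees/horospherical degeneration of $X$ along the open $B$-orbit, then reading multiplicities on the degenerate fiber) is genuinely different from the paper's argument, but as written it has a gap exactly where the theorem lives. First, a misplacement of difficulty: the contraction filtration is $G$-stable, so by complete reducibility $\gr M\simeq M$ as $G$-modules and $m^G_M(\lambda)=m^G_{\gr M}(\lambda)$ for \emph{all} $\lambda$ with no shift and no wall conditions; the comparison you flag as the ``most delicate'' step is trivial. What is not addressed at all is the step that actually carries the content of the theorem: identifying the stable value of $m^G_M$ with $m^L_{M/\frakm(x_0)M}(\lambda|_{B_{x_0}})$, where the fiber is taken at the \emph{original} point $x_0\in X$ and $L=\{g\in G_{x_0}: gBx_0\subset Bx_0\}\subset G_{x_0}$. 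Under the degeneration the base point and its stabilizer are replaced by different objects (the generic stabilizer of the horospherical degeneration contains a maximal unipotent subgroup, is not reductive, and is not contained in $G_{x_0}$), and you give no mechanism transporting the fiber, together with its $L$-action, back from $X_0$ to $X$; nor do you prove that your ``reductive part of the degenerate stabilizer'' coincides with the $L$ of the statement, lies in $G_{x_0}$, and admits $B_{x_0}$ as a Borel subgroup (without which $m^L(\lambda|_{B_{x_0}})$ is not even defined). Moreover, your assertion that on the horospherical side the identity holds on the nose (``literally periodic ... by restriction to the fiber'') is false without a shift: for $G=\SL(2,\CC)$, $X=\CC^2$ (already horospherical) and $M=\rring{X}\oplus\frakm(0)$, the fiber at $x_0=(0,1)$ is $2$-dimensional while $m^G_M(0)=1$; and for a general torsion-free module over a possibly non-reduced degeneration this horospherical case is essentially the theorem itself, not a citable fact. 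Finally, the preservation of the hypothesis $\bigcup_{m\neq 0}\Ann_{\gr\rring{X}}(m)=0$ under passage to $\gr$ is flagged but not established, and it can fail without a careful choice of filtration, especially since $\gr\rring{X}$ may acquire nilpotents in the non-normal case.

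For contrast, the paper avoids the degeneration entirely: stability follows from (a) uniform boundedness of $m_M$ via a surjection $\rring{X}\otimes F\to M$ and sphericity, (b) monotonicity $m_M(\lambda)\le m_M(\lambda+\mu)$ from injectivity of multiplication by $B$-eigenvectors (the no-zero-divisor hypothesis), and (c) finite generation of $M^N$ over $\rring{X}^N$ (Grosshans--Had{\v{z}}iev) to cover $\dweight(M)$ by finitely many translates of $\dweight(X)$; the identification with the fiber is then proved by showing the evaluation map $\ev_{x_0}\colon M^N(\lambda+\lambda_0)\to (M/\frakm(x_0)M)^{N_{x_0}}(\lambda|_{B_{x_0}})$ is bijective (injectivity from $\bigcap_{y\in Bx_0}\frakm(y)M=0$, surjectivity via the localization description of $\rring{Bx_0}$ and reduction to free modules), with reductivity of $L$ and the Borel property of $B_{x_0}$ supplied by Brion--Luna--Vust. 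If you want to pursue the degeneration picture, the missing work is precisely a flatness/semicontinuity argument identifying the fiber at $x_0$ with the fiber of $\gr M$ at the degenerate point equivariantly for $L$, plus an independent proof of the horospherical case with the shift $\lambda_0$ --- at which point you would have reproved most of the paper's lemmas anyway.
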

Here, we denote by $\dweight(V)$ the set of the highest weights of the irreducible representations with respect to $B$
which appear in the irreducible decomposition of $V$,
denote by $G_{x_0}$ the stabilizer at $x_0$ in $G$,
and denote by $\frakm(x_0)$ the maximal ideal of $\rring{X}$ corresponding to $x_0$.
The proof of Sat{\=o}'s stability theorem is based on duality settings such as Schur-Weyl duality and Peter-Weyl theorem.
Our proof is based on a simple observation: images of $B$-eigenvectors by the evaluation map
\begin{align*}
\ev_{x_0}: M \rightarrow M/\frakm(x_0)M
\end{align*}
are also $B_{x_0}$-eigenvectors.

The subgroup $L$ in Theorem \ref{theorem:intro_stability_for_general} can be represented as
\begin{align*}
L=\{g \in G_{x_0}: gB{x_0} \subset B{x_0}\}.
\end{align*}
$L$ is the unique subgroup of $G$ that contains $B_{x_0}$ as a 'Borel subgroup' (see Proposition \ref{proposition:L-subgroup}).
The reductivity and other properties of $L$ was studied by Brion, Luna and Vust in \cite{BLV86}.
For some concrete settings, we can determine the explicit form of $L$.
In the setting of Theorem \ref{theorem:intro_stability_for_discrete}, $L$ is the complexification of $Z_{K\cap H}(\fraka)$.

We apply Theorem \ref{theorem:intro_stability_for_general} to the following three cases:
\begin{enumerate}[i)]
	\item $X=G/H$ and $M=\Ind_H^{G}(W)$ for a quasi-affine spherical homogeneous space and a finite dimensional representation $W$ of $H$.
	\item $X=\AV(\calH)$, $M=\calH$ and $G=K_{\CC}$ for a unitary highest weight module $\calH$.
	\item $X=\frakp_{+}^{-\tau}$, $M=\calH/\frakp_{-}^{-\tau}\calH$ and $G=(H\cap K)_{\CC}$ for a holomorphic discrete series representation $\calH$.
\end{enumerate}
The first case corresponds to Sat{\=o}'s theorem.
These examples are dealt in Section \ref{section:examples}.

As a corollary of Theorem \ref{theorem:intro_stability_for_general}, we can obtain a necessary and sufficient condition for
multiplicity-freeness of $M$:
$M$ is multiplicity-free as a representation of $G$ if and only if $M/\frakm(x_0)M$ is multiplicity-free as a representation of $L$.
This result can be considered as an analogue of a propagation theorem of multiplicity-freeness in the theory of visible actions.
The concept of visible actions first appeared in \cite{Ko04}, and the propagation theorem was proved in \cite{Ko13}.
For some spherical $G$-varieties, it was shown that a compact real form of $G$ acts on them strongly visibly.
(see e.g. \cite{Ko07_2}, \cite{Sa09} and \cite{Ta12})

In Section \ref{section:upper_bound}, we treat similarity of branching laws of holomorphic discrete series representations with respect
to two holomorphic symmetric pairs.
Let $(\frakg, \frakh)$ be a holomorphic symmetric pair.
Suppose $(\frakg, \frakh_\epsilon)$ is an element of the $\epsilon$-family of $(\frakg, \frakh)$,
and $(\frakg, \frakh_\epsilon)$ is a holomorphic symmetric pair.
$\epsilon$-family of a symmetric pair is defined by T. {\=O}shima and J. Sekiguchi in \cite{OsSe80, OsSe84}.
For example, if $(\frakg, \frakh) = (\spor(n,\RR), \uor(n,\RR))$, its $\epsilon$-family is
\begin{align*}
\{(\spor(n,\RR), \uor(n-i,i)): 0 \leq i \leq n\} \cup \{(\spor(n, \RR), \gl(n, \RR))\}.
\end{align*}
Let $H$ and $H_{\epsilon}$ be analytic subgroups of $G$ with their Lie algebra $\frakh$ and $\frakh_\epsilon$.
We note that the complexifications of $\frakh$ and $\frakh_\epsilon$ are conjugate by inner automorphism of the complexification of $\frakg$.
Our concern is the similarity between two branching laws of $\calH|_H$ and $\calH|_{H_\epsilon}$ for a holomorphic discrete series representation $\calH$.
This is motivated by the fact that the theta correspondence of infinitesimal characters is independent of
any choices of real forms of a dual pair, which is due to R. Howe \cite{Ho89}, T. Przebinda \cite{Pr96} and J-S Li \cite{Li99},
and also motivated by Weyl's unitary trick.
The main theorem in Section \ref{section:upper_bound} is as follows.
\begin{theorem}
Suppose $\calH$ is a holomorphic discrete series representation of $G$.
Then, for sufficiently large parameter $\lambda$, we have
\begin{align*}
m_{\calH}^H(\lambda)=m_{\calH}^{H_\epsilon}(\lambda).
\end{align*}
\end{theorem}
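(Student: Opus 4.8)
The plan is to deduce this from Theorem~\ref{theorem:intro_stability_for_discrete} applied to the two holomorphic symmetric pairs $(\frakg,\frakh)$ and $(\frakg,\frakh_\epsilon)$ simultaneously, and then to observe that the two ``stable models'' appearing on the right-hand sides are literally the same. By Theorem~\ref{theorem:intro_stability_for_discrete} there are maximal abelian subspaces $\fraka\subset\frakp^{-\tau}$, $\fraka_\epsilon\subset\frakp^{-\tau_\epsilon}$ and parameters $\lambda_0,\lambda_0'$ with
\[
m^{H}_{\calH}(\lambda+\lambda_0)=m^{Z_{K\cap H}(\fraka)}_{\calH^{\frakp_{+}}}(\lambda|_{Z_{T^{\tau}}(\fraka)}),\qquad
m^{H_\epsilon}_{\calH}(\mu+\lambda_0')=m^{Z_{K\cap H_\epsilon}(\fraka_\epsilon)}_{\calH^{\frakp_{+}}}(\mu|_{Z_{T^{\tau_\epsilon}}(\fraka_\epsilon)}),
\]
valid for $\lambda,\mu$ in the respective supports of $m^{H}_{\calH}$ and $m^{H_\epsilon}_{\calH}$. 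The crucial point is that the finite-dimensional module $\calH^{\frakp_{+}}$ is the minimal $K$-type of $\calH$ and is intrinsic to $(\calH,K)$: it does not involve $\tau$ at all. Hence the theorem will follow once we check that, for a suitable choice of the data, $Z_{K\cap H}(\fraka)=Z_{K\cap H_\epsilon}(\fraka_\epsilon)$ as subgroups of $K$, that the two weight-restriction maps coincide under the natural identification of parameters, and that $\lambda_0$ and $\lambda_0'$ restrict to the same character of this common centralizer.

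First I would fix the compatible geometric data. By the description of $\epsilon$-families of {\=O}shima--Sekiguchi one may write $\tau_\epsilon=\Ad(g)\,\tau\,\Ad(g)^{-1}$ with $g=\exp(\tfrac{\pi\sqrt{-1}}{2}X)$ for a suitable $X$ lying in a maximal abelian subspace of $\frakp^{-\tau}$; choosing $\fraka$ to be such a maximal abelian subspace containing $X$, the automorphism $\Ad(\exp(\pi\sqrt{-1}X))=\tau_\epsilon\tau$ fixes $Z_G(X)\supseteq Z_G(\fraka)$ pointwise, so $\fraka\subset\frakp^{-\tau}\cap\frakp^{-\tau_\epsilon}$, and — since the dimension of a maximal abelian subspace of $\frakp^{-\tau}$ is an invariant of the $\epsilon$-family — $\fraka$ is maximal abelian in $\frakp^{-\tau_\epsilon}$ as well, so we may take $\fraka_\epsilon=\fraka$. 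Moreover $\tau$ and $\tau_\epsilon$ restrict to the \emph{same} involution of $Z_K(\fraka)$ (they differ by $\Ad(\exp(\pi\sqrt{-1}X))$, which is trivial there), whence $Z_{K\cap H}(\fraka)=Z_{K\cap H_\epsilon}(\fraka)$, and after choosing $\frakt^{\tau}$ and $\frakt^{\tau_\epsilon}$ so that they share a common maximal torus of this centralizer one also gets $Z_{T^{\tau}}(\fraka)=Z_{T^{\tau_\epsilon}}(\fraka)$. Consequently the two right-hand sides above become the \emph{same} function of the restricted weight, and comparing the two displayed identities yields $m^{H}_{\calH}(\lambda)=m^{H_\epsilon}_{\calH}(\lambda-\lambda_0+\lambda_0')$ for $\lambda$ deep enough in the support. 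To finish I would show $\lambda_0|_{Z_{T^\tau}(\fraka)}=\lambda_0'|_{Z_{T^{\tau_\epsilon}}(\fraka)}$, which I expect to follow by identifying the $B$-semiinvariants of $\rring{\frakp_{+}^{-\tau}}$ and $\rring{\frakp_{+}^{-\tau_\epsilon}}$ via $\Ad(g)$ together with the already-noted coincidence on the common centralizer.

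The main obstacle lies in this last step, where the interaction between the $\epsilon$-twist and the Hermitian structure must be controlled. The element $g=\exp(\tfrac{\pi\sqrt{-1}}{2}X)$ conjugates $\tau$ to $\tau_\epsilon$ but does \emph{not} commute with $\theta$: it does not preserve $\frakk_\CC$, nor fix the element $Z_0\in\mathfrak{z}(\frakk)$ defining the complex structure, so one cannot simply transport the whole package $\bigl(\frakp_{+}=\frakp_{+}^{\tau}\oplus\frakp_{+}^{-\tau},\ \calH^{\frakp_{+}}\bigr)$ along $\Ad(g)$. The argument therefore has to be organized around the subgroup $Z_K(\fraka)$, on which $\tau$ and $\tau_\epsilon$ genuinely agree and on which all the relevant weight data lives; the hypothesis that $(\frakg,\frakh_\epsilon)$ is \emph{again} a holomorphic symmetric pair is precisely what keeps this reduction coherent. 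Extracting from this the equality (rather than merely a fixed shift) of the translation parameters $\lambda_0$ and $\lambda_0'$, so that the final comparison reads $m^{H}_{\calH}(\lambda)=m^{H_\epsilon}_{\calH}(\lambda)$ on the nose for all sufficiently large $\lambda$, is the delicate technical point.
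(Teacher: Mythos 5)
Your reduction to the stability theorem correctly reproduces what the paper itself proves as Theorem \ref{theorem:upper_bound_general}, namely $C_H(\calH)=C_{H_\epsilon}(\calH)$, and the point you single out as the delicate one is in fact a non-issue: by the remark after Theorem \ref{theorem:stable_multiplicity_M}, every $\mu\in\dweight(X)$ restricts to the trivial character of $B_{x_0}$, so $\lambda_0$ and $\lambda_0'$ automatically restrict to $0$ on the common centralizer torus and simply drop out of the comparison; likewise the identification $Z_{K\cap H}(\fraka)=Z_{K\cap H_\epsilon}(\fraka)$ is fine (it is the content of the proof of Theorem \ref{theorem:upper_bound_general}, using $\epsilon(0)=1$).

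The genuine gap is elsewhere: the stability theorem determines $m^H_{\calH}$ only on the shifted support $\{\lambda+\lambda_0:\ m^H_{\calH}(\lambda)\neq 0\}$, and $m^{H_\epsilon}_{\calH}$ only on its own shifted support, so matching the two stable models yields equality of the multiplicity functions only on the \emph{intersection} of these two sets. To obtain the stated theorem you must additionally show that, in a suitable cone of large parameters, the two supports coincide (and that outside them both multiplicities vanish), and this is false for a naive reading of ``sufficiently large'': by Proposition \ref{proposition:reduction_to_compact} the supports are $\dweight_{\frakk^{\tau}}\bigl(S(\frakp_-^{-\tau})\otimes\calH^{\frakp_+}\bigr)$ and $\dweight_{\frakk^{\tau_\epsilon}}\bigl(S(\frakp_-^{-\tau_\epsilon})\otimes\calH^{\frakp_+}\bigr)$, which are genuinely different subsets of $\sqrt{-1}(\frakt^{\tau})^{*}$; already in the $\frakk_\epsilon$-family case $\dweight_{\frakk}(\frakp_{+})$ is the proper subcone of $\dweight_{\frakk^{\theta_\epsilon}}(\frakp_{+}^{-\theta_\epsilon})$ cut out by a dominance condition (Lemma \ref{lemma:good_ordering}), and for $\lambda$ going to infinity in the extra directions one has $m^{H_\epsilon}_{\calH}(\lambda)\neq 0$ while $m^{K}_{\calH}(\lambda)=0$, no matter how large $\lambda$ is. Hence ``sufficiently large'' must mean the explicit dominance condition relative to $\Delta^{+}(\frakk_{\CC}^{-\tau\tau_\epsilon},\frakt^{\tau}_{\CC})$ appearing in Theorem \ref{theorem:upper_bound_hol}, and proving that within this cone the supports and multiplicities agree is the real content. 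The paper does not get this from stability at all: it reduces both restrictions to $K\cap H$ and $K\cap H_\epsilon$ via Proposition \ref{proposition:reduction_to_compact}, proves the compact ($\frakk_\epsilon$-family) case by a Weyl character formula comparison resting on Lemma \ref{lemma:Z'} and Lemma \ref{lemma:good_ordering} (Theorem \ref{theorem:upper_bound_compact}), and then handles the general case through the reduction diagram (\ref{diagram:reduction}) and the compatible ordering of Lemma \ref{lemma:good_ordering_for_non_compact}. Your argument would need an independent substitute for exactly these steps.
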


\subsection*{Acknowledgement}
The author would like to thank his adviser Prof. Toshiyuki Kobayashi for many helpful advices.

\section{Preliminaries: Some algebraic results}

In this section, we set up some notations and results of representations of algebraic groups.
For a Lie group $G$, we write its Lie algebra by a German letter as $\frakg := \Lie(G)$,
and we write its complexification by a subscript $(\cdot)_\CC$ as $\frakg_{\CC} := \frakg \otimes_{\RR} \CC$.

\subsection{$G$-algebra and $(\calA, G)$-module}
Let $G$ be a connected reductive complex algebraic group.
Fix a Borel subgroup $B$ of $G$.
Let $B=TN$ be its Levi decomposition,
where $T$ is a maximal torus of $G$ and $N$ is the unipotent radical of $B$.
Let $\dweight=\dweight_{G} \subset \frakt^{*}$ be the set of dominant integral weights with respect to $B$.
For each $\lambda \in \dweight$,
we denote by $V_{\lambda}=V_{\lambda, G}$ the irreducible representation of $G$
with highest weight $\lambda$.

For any algebraic group $H$, we say a representation $V$ of $H$ over $\CC$ is a \define{rational representation}
if $\spn{gv: g \in H}$ is a finite dimensional and algebraic representation of $H$ for any $v \in V$.
This implies that any rational representation of $G$ is completely reducible.
Given a rational representation $V$ of $G$, we can decompose $V$ into the direct sum of irreducible representations:
\begin{equation*}
V = \bigoplus_{\lambda \in \dweight}m^{G}_V(\lambda)V_{\lambda}.
\end{equation*}
If the group $G$ is obvious, we write $m_V(\lambda) := m^{G}_V(\lambda)$.
We set
\begin{align*}
\dweight(V):=\dweight_{G}(V):=\{\lambda \in \dweight: m_V(\lambda) \neq 0\}.
\end{align*}

We say a $\CC$-algebra $\calA$ is \define{$G$-algebra}
if $\calA$ is a rational representation of $G$ and $G$ acts on $\calA$ by $\CC$-algebra automorphisms.
\begin{definition}\label{definition:(A,G)-module}
Let $\calA$ be a $G$-algebra, and $M$ be an $\calA$-module and rational representation of $G$.
Then, $M$ is said to be an $(\calA, G)$-module if $g(am)=(ga)(gm)$ for any $g \in G$, $a \in \calA$ and $m \in M$.
Moreover, we will say that an $(\calA, G)$-module $M$ is \define{finitely generated} if
$M$ is finitely generated as an $\calA$-module.
\end{definition}

Let $X$ be a quasi-projective variety over $\mathbb{C}$.
We denote by $\rring{X}$ the ring of regular functions on $X$.
Suppose $G$ acts on $X$ rationally.
The action of $G$ on $X$ induces a rational representation of $G$ on $\rring{X}$ as follows:
\begin{align*}
g \cdot f(x) = f(g^{-1}x) \text{ for }g \in G, f \in \rring{X}.
\end{align*}
We write $\dweight(X) = \dweight(\rring{X})$ for short.

\subsection{Some finiteness results}
We prepare some finiteness results.
Let $G$ be a connected reductive algebraic group over $\CC$,
and $B=TN$ be a Borel subgroup of $G$.
The following result is due to D{\v{z}}. Had{\v{z}}iev and F. D. Grosshans \cite{Gr83}.

\begin{proposition}\label{proposition:finiteness of N-invariant algebra}
Let $\calA$ be a $G$-algebra.
Then, $(\calA \otimes \rring{G/N})^{G}$ is isomorphic to $\calA^{N}$ as a $\CC$-algebra.
Moreover if $\calA$ is finitely generated, $\calA^{N}$ is finitely generated.
\end{proposition}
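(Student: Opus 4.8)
The plan is to treat the two assertions separately. The algebra isomorphism $(\calA\otimes\rring{G/N})^{G}\cong\calA^{N}$ is purely formal, a form of Frobenius reciprocity; the finite‑generation statement then reduces, after invoking reductivity of $G$, to the one nontrivial input that $\rring{G/N}$ is itself a finitely generated $\CC$‑algebra.

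For the isomorphism, I would start from the identity $(\calA\otimes\rring{G})^{L}\cong\calA$, where $L$ denotes the left regular action of $G$ on $\rring{G}$. Realizing $\calA\otimes\rring{G}$ as the algebra of $\calA$‑valued regular functions on $G$ (each having image in a finite‑dimensional subspace) under pointwise operations, the $L$‑invariant elements are precisely the $G$‑equivariant maps $\phi$, and $\phi\mapsto\phi(e)$ is an isomorphism of $\CC$‑algebras onto $\calA$ that carries the residual right‑translation action of $G$ on $\rring{G}$ to the original $G$‑action on $\calA$. Since $\rring{G/N}$ is the algebra of right‑$N$‑invariant functions on $G$ and right translations commute with $L$, taking right‑$N$‑invariants of both sides yields
\[
(\calA\otimes\rring{G/N})^{G}=\bigl((\calA\otimes\rring{G})^{L}\bigr)^{N}\cong\calA^{N},
\]
an isomorphism of $\CC$‑algebras; here $\calA^{N}$ is a subalgebra of $\calA$ because $N$ acts by algebra automorphisms.

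For the finiteness statement, suppose $\calA$ is finitely generated. By the isomorphism it suffices that $(\calA\otimes\rring{G/N})^{G}$ be finitely generated, and since $G$ is reductive and acts rationally, the $G$‑invariant subalgebra of any finitely generated $G$‑algebra is finitely generated (Hilbert, Nagata); thus, $\calA$ being finitely generated, it is enough to show that $\rring{G/N}$ is finitely generated. Here I would use the decomposition $\rring{G/N}=\bigoplus_{\lambda\in\dweight}R_{\lambda}$ into $G$‑irreducibles, each type occurring exactly once (the theorem of the highest weight). The $R_{\lambda}$ are the eigenspaces of the residual action of $T=B/N$ on $\rring{G/N}$, with eigencharacters $\chi_{\lambda}$ depending additively and injectively on $\lambda$; hence $R_{\lambda}R_{\mu}\subseteq R_{\lambda+\mu}$, and as this product is nonzero (a product of nonzero regular functions on the irreducible variety $G/N$) and $G$‑stable inside the irreducible $R_{\lambda+\mu}$, in fact $R_{\lambda}R_{\mu}=R_{\lambda+\mu}$. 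So $\rring{G/N}$ is graded by the monoid $\dweight$. Since $\dweight$ consists of the lattice points of the dominant rational polyhedral cone, it is a finitely generated monoid by Gordan's lemma, say generated by $\lambda_{1},\dots,\lambda_{s}$; then the finite‑dimensional subspaces $R_{\lambda_{1}},\dots,R_{\lambda_{s}}$ generate $\rring{G/N}$ as a $\CC$‑algebra, because $R_{\lambda}=R_{\lambda_{1}}^{n_{1}}\cdots R_{\lambda_{s}}^{n_{s}}$ whenever $\lambda=\sum_{i}n_{i}\lambda_{i}$.

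The main obstacle is the finite generation of $\rring{G/N}$, and within that the exactness $R_{\lambda}R_{\mu}=R_{\lambda+\mu}$, which is what lets Gordan's lemma be applied directly. If one wished to avoid tracking the characters $\chi_{\lambda}$, one could establish only $R_{\lambda}R_{\mu}\subseteq\bigoplus_{\nu\le\lambda+\mu}R_{\nu}$ with nonzero component in $R_{\lambda+\mu}$, run the argument on the associated graded ring $\gr\,\rring{G/N}\cong\CC[\dweight]$, and then lift finite generation back; this is essentially the form in which the theorem (that $N$ is a Grosshans subgroup of $G$) is proved in \cite{Gr83}.
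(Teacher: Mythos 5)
Your proposal is correct, and it actually proves more than the paper does: for this proposition the paper offers no proof at all, only a citation to Had\v{z}iev--Grosshans together with a remark that defines the isomorphism $\varphi_a(g)=ga$ with inverse $f\mapsto f(eN)$. Your first half is essentially that same construction, just phrased through the identification $(\calA\otimes\rring{G})^{G}\simeq\calA$ (note that your ``$L$'' must be read as the diagonal action, $G$ acting simultaneously on $\calA$ and by left translation on $\rring{G}$, as your description of invariants as equivariant maps makes clear; the invariants of left translation alone would just be $\calA\otimes\CC$). Your second half supplies the argument the paper leaves to the reference, and it is the standard one: reduce via the transfer principle and Hilbert--Nagata to finite generation of $\rring{G/N}$, then use the multiplicity-free decomposition $\rring{G/N}=\bigoplus_{\lambda}R_{\lambda}$, the equality $R_{\lambda}R_{\mu}=R_{\lambda+\mu}$ (read as the linear span of products; nonvanishing uses that $\rring{G/N}\subset\rring{G}$ is a domain because $G$ is connected, and fullness uses irreducibility of $R_{\lambda+\mu}$), and Gordan's lemma for the monoid $\dweight$. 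Two small caveats: the detection of $R_{\lambda}R_{\mu}\subset R_{\lambda+\mu}$ by the right $T=B/N$-eigencharacters is fine since $\lambda\mapsto\chi_{\lambda}$ is additive and injective, but the Hilbert--Nagata step tacitly assumes $\calA$ is commutative; the paper's definition of $G$-algebra does not require this, although the cited Had\v{z}iev--Grosshans theorem and every application in the paper (where $\calA=\rring{X}$) are in the commutative setting, so this is a looseness inherited from the statement rather than a gap in your argument.
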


\begin{remark}
For the following lemmas, we only define the isomorphism.
Since $\calA$ is a rational representation of $G$,
$\varphi_a(g):=ga$ is well-defined as a element of $\calA \otimes \rring{G}$ for any $a \in \calA$.
The image of $\varphi$ is contained in $(\calA \otimes \rring{G})^{G}$.
Since $\varphi_{ha}(g)=g h a=\varphi_a(gh)$, taking $N$-invariant part,
we have $\varphi: \calA^{N} \rightarrow (\calA \otimes \rring{G/N})^{G}$.
A map $f \mapsto f(eN)$ gives the inverse of $\varphi$.
By the definition of a $G$-algebra, it is obvious that $\varphi$ is homomorphism of a $\CC$-algebra.
\end{remark}


\begin{lemma}\label{lemma:finiteness of G-invariant module}
Let $\calA$ be a Noetherian $G$-algebra, and $M$ be a finitely generated $(\calA, G)$-module.
Then, $M^{G}$ is also a finitely generated $\calA^{G}$-module.
\end{lemma}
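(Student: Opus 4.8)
The plan is to run the classical Hilbert-style finiteness argument, the only non-elementary ingredient being the Reynolds operator attached to the reductive group $G$. Since every rational $G$-module $V$ decomposes canonically as $V=V^{G}\oplus V_{+}$, where $V_{+}$ is the sum of the nontrivial isotypic components, there is a canonical $G$-equivariant projection $R_{V}\colon V\to V^{G}$, and for every $G$-equivariant $\CC$-linear map $\psi\colon V\to V'$ one has $R_{V'}\circ\psi=\psi\circ R_{V}$, because $\psi$ carries $V^{G}$ into $(V')^{G}$ and $V_{+}$ into $(V')_{+}$. I will use this for $V=\calA$ and $V=M$, writing $R_{\calA}$ and $R_{M}$ for the corresponding Reynolds operators; in particular, applying naturality to $\psi=(c\,\cdot\,)\colon M\to M$ for $c\in\calA^{G}$ shows that $R_{M}$ is $\calA^{G}$-linear.

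The argument then proceeds in three steps. First, since $\calA$ is Noetherian and $M$ is a finitely generated $\calA$-module, $M$ is a Noetherian $\calA$-module, so the $\calA$-submodule $\calA\cdot M^{G}\subseteq M$ generated by the subset $M^{G}$ is finitely generated over $\calA$; a standard argument with the ascending chain condition then shows that it is already generated, as an $\calA$-module, by finitely many elements $y_{1},\dots,y_{r}$ which may be chosen inside $M^{G}$. Second, for a fixed $y\in M^{G}$ the map $\phi_{y}\colon\calA\to M$, $a\mapsto ay$, is $G$-equivariant: this is exactly the identity $g(ay)=(ga)(gy)=(ga)y$ provided by the $(\calA,G)$-module axiom together with $gy=y$. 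Applying naturality of the Reynolds operator to $\phi_{y}$ yields $R_{M}(ay)=R_{\calA}(a)\,y$ for all $a\in\calA$ and $y\in M^{G}$. Third, take any $x\in M^{G}$; then $x\in\calA\cdot M^{G}$, so $x=\sum_{i=1}^{r}a_{i}y_{i}$ with $a_{i}\in\calA$, and applying $R_{M}$ and using the previous identity gives $x=R_{M}(x)=\sum_{i=1}^{r}R_{\calA}(a_{i})\,y_{i}$ with each $R_{\calA}(a_{i})\in\calA^{G}$. Hence $M^{G}=\sum_{i=1}^{r}\calA^{G}y_{i}$ is a finitely generated $\calA^{G}$-module.

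I do not expect a serious obstacle; the points requiring care are the precise formulation of the naturality of the Reynolds operator (this is where linear reductivity of $G$ enters) and the extraction of a \emph{finite} generating subset of $M^{G}$ out of the Noetherian module $\calA\cdot M^{G}$. An alternative packaging, which I would mention, is to pass to the trivial (idealization) extension $\calB:=\calA\oplus M$ with square-zero ideal $M$ and diagonal $G$-action — a Noetherian $G$-algebra because $M$ is a finitely generated $\calA$-module — and to first prove that $\calB^{G}$ is Noetherian by the same Reynolds argument applied to the ideals of $\calB$; then, using that taking $G$-invariants is exact, $M^{G}$ is an ideal of $\calB^{G}=\calA^{G}\oplus M^{G}$ with $M^{G}\cdot M^{G}=0$, hence finitely generated over $\calB^{G}/M^{G}\cong\calA^{G}$. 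Either route gives the result, and both use only the Noetherian hypothesis on $\calA$ (not finite generation) together with reductivity of $G$.
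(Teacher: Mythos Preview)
Your proof is correct and follows essentially the same route as the paper's: both pass to the Noetherian submodule $\calA\cdot M^{G}$, extract a finite generating set from $M^{G}$, and then project an arbitrary relation onto $G$-invariants to obtain $\calA^{G}$-coefficients. The paper is terser---it simply says ``taking $G$-invariant part'' where you carefully invoke naturality of the Reynolds operator---but the argument is the same; your alternative via the trivial extension is a nice aside but unnecessary here.
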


\begin{proof}
Since $\calA$ is a Noetherian algebra and $M$ is finitely generated, $M$ is a Noetherian $\calA$-module.
Then, $\calA M^{G}$ is finitely generated.

Let $\{m_1, m_2, \cdots, m_r\}$ be a finite generating set of $\calA M^{G}$.
We can assume $\{m_1, m_2, \ldots, m_r\} \subset M^{G}$.
Let us show that $\{m_1, m_2, \ldots, m_r\}$ is also a generating set of $M^{G}$ as an $\calA^{G}$-module.
For any $m \in M^{G}$, there exist $f_1, f_2, \ldots, f_r \in \calA$ such that
$m = f_1 m_1 + f_2 m_2 + \cdots f_r m_r$.
Taking $G$-invariant part, we have $m = f_1^{G} m_1 + f_2^{G} m_2 + \cdots f_r^{G} m_r$,
where $f_i^{G}$ is the projection to $G$-invariant part of $f_i$.
This shows $\{m_1, m_2, \ldots, m_r\}$ generates $M^{G}$ as an $\calA^{G}$-module.
\end{proof}

The following lemma is a key result for the proof of Theorem \ref{theorem:general stability}.
If $\calA$ is finitely generated, this result (for arbitrary characteristics) was appeared in \cite{Gr92}.

\begin{lemma}\label{lemma:finiteness of N-invariant module}
Let $\calA$ be a Noetherian $G$-algebra, and $M$ be an $(\calA, G)$-module.
Then, $M^{N}$ is isomorphic to $(M \otimes \rring{G/N})^{G}$ as an $\calA^{N}$-module.
Here, we consider $(M \otimes \rring{G/N})^{G}$ as an $\calA^{N}$-module
via isomorphism $\calA^{N} \simeq (\calA \otimes \rring{G/N})^{G}$ in Proposition \ref{proposition:finiteness of N-invariant algebra}.
Moreover, if $M$ is a finitely generated $\calA$-module, then $M^{N}$ is also a finitely generated $\calA^{N}$-module.
\end{lemma}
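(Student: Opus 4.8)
The plan is to imitate, at the level of modules, the isomorphism $\varphi\colon \calA^{N}\xrightarrow{\ \sim\}(\calA\otimes\rring{G/N})^{G}$ recorded in the Remark after Proposition \ref{proposition:finiteness of N-invariant algebra}, and then to deduce the finiteness by reducing to Lemma \ref{lemma:finiteness of G-invariant module} applied to the $G$-algebra $\calA\otimes\rring{G/N}$. First I would construct the map. For $m\in M$, rationality of $M$ guarantees that $g\mapsto gm$ spans a finite-dimensional algebraic subrepresentation, so $\varphi_{m}(g):=gm$ defines an element $\varphi_{m}\in M\otimes\rring{G}$, and from $\varphi_{hm}(g)=ghm=\varphi_{m}(gh)$ one sees $\varphi_{m}\in(M\otimes\rring{G})^{G}$ and, upon restricting to $m\in M^{N}$, that $\varphi_{m}\in(M\otimes\rring{G/N})^{G}$. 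The inverse is $f\mapsto f(eN)$: if $f\in(M\otimes\rring{G/N})^{G}$ then $G$-equivariance gives $f(nx)=n\,f(x)$, so $f(eN)\in M^{N}$, and $\varphi_{m}(eN)=m$. Thus $\varphi\colon M^{N}\to(M\otimes\rring{G/N})^{G}$ is a $\CC$-linear isomorphism.

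Next I would check it is $\calA^{N}$-linear. Endow $M\otimes\rring{G/N}$ with the diagonal $G$-action and the factorwise $(\calA\otimes\rring{G/N})$-module structure; since $M$ is an $(\calA,G)$-module, $M\otimes\rring{G/N}$ is an $(\calA\otimes\rring{G/N},G)$-module, so $(M\otimes\rring{G/N})^{G}$ is a module over $(\calA\otimes\rring{G/N})^{G}$, which is the $\calA^{N}$-module structure in the statement via Proposition \ref{proposition:finiteness of N-invariant algebra}. From $g(am)=(ga)(gm)$ we get $\varphi_{am}=\varphi_{a}\cdot\varphi_{m}$ in $M\otimes\rring{G}$; specializing to $a\in\calA^{N}$, $m\in M^{N}$ and recalling that $\varphi_{a}$ is exactly the image of $a$ under the isomorphism of Proposition \ref{proposition:finiteness of N-invariant algebra}, this says precisely that $\varphi\colon M^{N}\to(M\otimes\rring{G/N})^{G}$ is $\calA^{N}$-linear.

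For the finiteness statement, assume $M$ is a finitely generated $\calA$-module, say generated by $m_{1},\dots,m_{r}$. I would first note that $\rring{G/N}\cong\rring{G}^{N}$ is a finitely generated $\CC$-algebra: this is the case $\calA=\rring{G}$ of Proposition \ref{proposition:finiteness of N-invariant algebra} (Hadžiev--Grosshans), using that $G$ is affine. Consequently $\calA\otimes\rring{G/N}$ is a finitely generated $\calA$-algebra, hence a Noetherian ring by the Hilbert basis theorem, even though $\calA$ itself need not be finitely generated over $\CC$. Moreover $M\otimes\rring{G/N}$ is a rational $G$-representation (a tensor product of rational representations), is an $(\calA\otimes\rring{G/N},G)$-module as above, and is finitely generated over $\calA\otimes\rring{G/N}$ by $m_{1}\otimes 1,\dots,m_{r}\otimes 1$. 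Lemma \ref{lemma:finiteness of G-invariant module} then gives that $(M\otimes\rring{G/N})^{G}$ is a finitely generated $(\calA\otimes\rring{G/N})^{G}$-module, i.e., transporting through the isomorphism $\varphi$, that $M^{N}$ is a finitely generated $\calA^{N}$-module.

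The routine points are the compatibility verifications ($G$-equivariance of $\varphi_m$, the identity $\varphi_{am}=\varphi_a\varphi_m$, and that the diagonal structure makes $M\otimes\rring{G/N}$ an $(\calA\otimes\rring{G/N},G)$-module), which all follow mechanically from the $(\calA,G)$-module axiom $g(am)=(ga)(gm)$. The only genuinely delicate step is ensuring that $\calA\otimes\rring{G/N}$ is Noetherian so that Lemma \ref{lemma:finiteness of G-invariant module} applies; this is exactly where finite generation of $\rring{G/N}$ (and not merely of $\calA^{N}$) is needed, and where the hypothesis that $G$ is reductive is used, so I would make that reduction explicit.
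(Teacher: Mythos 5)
Your proposal is correct and follows essentially the same route as the paper: the same map $\varphi_m(g)=gm$ with inverse $f\mapsto f(eN)$ for the isomorphism $M^{N}\simeq(M\otimes\rring{G/N})^{G}$, and the same reduction of the finiteness claim to Lemma \ref{lemma:finiteness of G-invariant module} applied to the Noetherian $G$-algebra $\calA\otimes\rring{G/N}$ (Noetherian via finite generation of $\rring{G/N}$ and Hilbert's basis theorem). You simply spell out the $\calA^{N}$-linearity and the finite generation of $M\otimes\rring{G/N}$ a bit more explicitly than the paper does.
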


\begin{proof}
Since $\rring{G/N}$ is finitely generated, $\calA \otimes \rring{G/N}$ is a Noetherian algebra from Hilbert's basis theorem.
Then, the second argument is followed from the first argument and Lemma \ref{lemma:finiteness of G-invariant module}.

For the first argument, it is suffices to define the isomorphism between $M^{N}$ and $(M \otimes \rring{G/N})^{G}$.
Since $M$ is a rational representation of $G$, $\varphi_m(g):=gm$ is well-defined as a element of $M \otimes \rring{G}$ for any $m \in M$.
The image of $\varphi$ is contained in $(M \otimes \rring{G})^{G}$.
Since $\varphi_{hm}(g)=g h m=\varphi_m(gh)$, taking $N$-invariant part, we have $\varphi: M^{N} \rightarrow (M \otimes \rring{G/N})^{G}$.
A map $f \mapsto f(eN)$ gives the inverse of $\varphi$.
Then, $M^{N}$ and $(M \times \rring{G/N})^{G}$ are isomorphic as vector spaces.
By the definition of the isomorphism between $\calA^{N}$ and $(\calA \otimes \rring{G/N})^{G}$,
the vector space isomorphism $M^{N} \simeq (M \times \rring{G/N})^{G}$ is also an $\calA^{N}$-module isomorphism.
Then, this completes the proof.
\end{proof}

\section{Preliminaries: Highest weight modules}

Let $G$ be a real reductive Lie group, and $K$ be a maximal compact subgroup of $G$.
Let $\frakg = \frakk \oplus \frakp$ be the Cartan decomposition of $\frakg$ determined by $K$.
Since $K$ is a compact Lie group, there exists a complexification $K_{\CC}$ of $K$,
and $K_{\CC}$ has a complex reductive algebraic group structure.
Moreover, any locally finite representations of $K$ can be extended to rational representations of $K_{\CC}$.

\subsection{Associated variety and isotropy representation}\label{subsection:associated_variety}

Let $V$ be a finitely generated $(\frakg, K)$-module.
Since $V$ is finitely generated, we can take a $K$-invariant finite dimensional subspace $W \subset V$ which generates $V$.
We put $V_i := \calU_i(\frakg_{\CC})W$ and $V_{-1}:=0$,
where $\{\calU_i(\frakg_{\CC})\}$ is the canonical filtration of the universal enveloping algebra $\calU(\frakg_{\CC})$.
Taking the associated graded module, we have an $(S(\frakg_{\CC}), K_{\CC})$-module
\begin{align*}
\gr(V):=\bigoplus^{\infty}_{i=0} V_{i}/V_{i-1}.
\end{align*}
An affine variety determined by $\Ann_{S(\frakg_{\CC})}(\gr(V))$ is called the \define{associated variety of $V$},
and denoted by $\AV(V) \subset \frakg_{\CC}^{*}$.
It is well-known that $\AV(V)$ is independent of the choice of $W$.
Since the filtration is $K_{\CC}$-stable, $\AV(V)$ is $K_{\CC}$-stable variety contained in $(\frakg_{\CC}/\frakk_{\CC})^{*}$.
We identify $\frakg_{\CC}^{*}$ and $\frakg_{\CC}$ by some invariant bilinear form of $\frakg_{\CC}$.
By this identification, $(\frakg_{\CC}/\frakk_{\CC})^{*}$ corresponds to $\frakp_{\CC}$,
and $\AV(V)$ becomes a subvariety in the nilpotent cone in $\frakp_{\CC}$.
It is known that the the number of $K_{\CC}$-orbits in the nilpotent cone in $\frakp_{\CC}$ is finite.
Then, there exists an open $K_{\CC}$-orbit in $\AV(V)$.

After here, we assume that $\AV(V)$ is irreducible for convenience.
Let us define the isotropy representation of $V$ introduced by D. Vogan \cite{Vo89} (see also \cite{Ya05}).
Let $I$ be the defining ideal of $\AV(V)$.
By the Hilbert Nullstellensatz, $I^n$ is contained in $\Ann_{S(\frakg_{\CC})}(\gr(V))$ for some positive integer $n$.
Since $\AV(V)$ is irreducible, $\AV(V)$ has a unique open dense $K_{\CC}$-orbit $\calO$.
Fix a point $x_0 \in \calO$.
We denote by $\frakm(x_0) \subset S(\frakg)$ the maximal ideal corresponding to $x_0$.
We set
\begin{align*}
\calW := \calW(x_0) = \bigoplus_{i=0}^{n-1}I^{i}V/\frakm(x_0)I^{i}V.
\end{align*}
$\calW$ becomes a finite dimensional rational representation of $(K_{\CC})_{x_0}$,
where $(K_{\CC})_{x_0}$ is the isotropy subgroup of $K_{\CC}$ at $x_0$.
The representation $\calW$ is called the \define{isotropy representation} of $V$.
Note that the isotropy representation is dependent on the filtration of $V$ and the point $x_0$.

\subsection{Highest weight modules}
Suppose $G$ is a connected non-compact simple Lie group with finite center.
Though the assumption `finite center' is not essential,
we assume this for convenience.
We assume that $(\frakg, \frakk)$ is a Hermitian symmetric pair (i.e. the center $Z(\frakk)$ of $\frakk$ is one-dimensional).
We fix a \define{characteristic element} $Z \in Z(\frakk_{\CC})$ such that the eigenvalues of $\ad(Z)$ are $0, \pm 1$,
and we write its eigenspace decomposition as
\begin{align*}
\frakg_{\CC} = \frakp_{+} \oplus \frakk_{\CC} \oplus \frakp_{-},
\end{align*}
with the eigenvalues $1, 0, -1$, respectively.

For an irreducible $(\frakg, K)$-module $V$,
we will say $V$ is a \define{highest weight module} of $G$ if $\frakp_{+}$-nullvectors $V^{\frakp_{+}} \neq 0$.
Moreover, if $V$ is infinitesimally unitary, we will say $V$ is a \define{unitary highest weight module}.

If $\calH$ is a highest weight module, $\calH^{\frakp_{+}}$ is an irreducible representation of $K$, and
$\calH^{\frakp_{+}}$ generates $\calH$ as a representation of $\calU(\frakg_{\CC})$.
For any highest weight module $\calH$, we always take $\calH^{\frakp_{+}}$ as $W$ in Section \ref{subsection:associated_variety}.
Since the filtration determined by $W = \calH^{\frakp_{+}}$ is stable under $\frakp_{+}$-action,
its associated graded module $\gr(\calH)$ becomes naturally an $(S(\frakp_{-}), K_{\CC})$-module.
Then, the associated variety of $\calH$ is contained in $\frakp_{+}$.
Since the graded module $\gr(\calH)$ is isomorphic to $\calH$ with a grading $\calH^{i}:=S^{i}(\frakp_{-})\calH^{\frakp_{+}}$,
we do not care about the filtration step when we consider highest weight modules.

About the annihilators of unitary highest weight modules, A. Joseph showed the following result in \cite{Jo92}:
\begin{proposition}\label{proposition:joseph}
Let $\calH$ be a unitary highest weight module.
Then, the annihilator $\Ann_{S(\frakp_{-})}(\calH)$ is a prime ideal in $S(\frakp_{-})$,
and $\Ann_{S(\frakp_{-})}(v) = \Ann_{S(\frakp_{-})}(\calH)$ for any $v \in \calH$.
\end{proposition}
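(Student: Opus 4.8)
The plan is to isolate one hard fact — that the ideal $\fraka:=\Ann_{S(\frakp_{-})}(\calH)$ is radical — and to deduce everything else from it by formal arguments. Recall that, using the grading $\calH^{i}=S^{i}(\frakp_{-})\calH^{\frakp_{+}}$, the module $\calH=\gr(\calH)$ is a finitely generated graded $(S(\frakp_{-}),K_{\CC})$-module generated by the finite-dimensional space $\calH^{\frakp_{+}}$; hence $\fraka$ is a homogeneous $K_{\CC}$-stable ideal with $\sqrt{\fraka}=I(\AV(\calH))$, and $I(\AV(\calH))$ is prime because $\AV(\calH)$ is irreducible (which is automatic for unitary highest weight modules). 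Granting that $\fraka$ is radical, we get $\fraka=I(\AV(\calH))$, which is the first assertion, and $R:=S(\frakp_{-})/\fraka$ is an integral domain. For the second assertion it then suffices to show $\calH$ is torsion-free over $R$, since a nonzero cyclic torsion-free module over a domain has zero annihilator, and one applies this to the submodules $Rv$. To get torsion-freeness I would look at the $R$-torsion submodule $T\subseteq\calH$: it is a $K_{\CC}$-submodule, and, using that $R$ is a domain, one checks that it is also stable under $\frakk_{\CC}$ and then under $\frakp_{+}$ (for $Y$ in $\frakk_{\CC}$ or $\frakp_{+}$ and $v$ with $rv=0$, $r\neq 0$, commuting $r$ past the $Y$-action yields $r(Yv)\in T$, hence $Yv\in T$); thus $T$ is a $(\frakg,K)$-submodule, and it is proper because a finitely generated all-torsion module over a domain has nonzero annihilator while $\calH$ is a faithful $R$-module, so $T=0$ by irreducibility of $\calH$.

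It remains to prove that $\fraka$ is radical, which is where infinitesimal unitarity is used. When $\calH$ is a holomorphic discrete series representation there is nothing to prove: $\calH$ is then the irreducible generalized Verma module $\calU(\frakg_{\CC})\otimes_{\calU(\frakk_{\CC}\oplus\frakp_{+})}\calH^{\frakp_{+}}$, which is free over $\calU(\frakp_{-})=S(\frakp_{-})$, so $\fraka=0$. In general I would use the positive-definite invariant Hermitian form $\langle\,\cdot\,,\,\cdot\,\rangle$ on $\calH$: it is contravariant, so the graded pieces $\calH^{i}$ are finite-dimensional and pairwise orthogonal and the adjoint of multiplication by $p\in\frakp_{-}$ is, up to normalization, the action of the $\frakp_{+}$-vector conjugate to $p$. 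Exploiting this form one realizes $\calH$ as a $(\frakg_{\CC},K_{\CC})$-equivariant submodule of $\rring{\AV(\calH)}\otimes\calH^{\frakp_{+}}$ — the Fischer/Fock model of a unitary highest weight module — in which $\frakp_{-}$ acts by multiplication by the restrictions to $\AV(\calH)$ of the linear coordinate functions on $\frakp_{+}$. Then any polynomial vanishing on $\AV(\calH)$ acts by zero on $\rring{\AV(\calH)}\otimes\calH^{\frakp_{+}}$, hence on $\calH$, so $I(\AV(\calH))\subseteq\fraka$; combined with $\fraka\subseteq\sqrt{\fraka}=I(\AV(\calH))$ this yields $\fraka=I(\AV(\calH))$, in particular $\fraka$ is radical and prime. (Note also that this realization exhibits $\calH$ as a submodule of the free $R$-module $\rring{\AV(\calH)}\otimes\calH^{\frakp_{+}}$, which already gives torsion-freeness directly.)

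I expect the main obstacle to be precisely this last step: constructing the model, equivalently proving that $\fraka$ is radical, equivalently that $\calH$ can be realized faithfully as $\calH^{\frakp_{+}}$-valued regular functions on $\AV(\calH)$ with $\frakp_{-}$ acting by multiplication. This is the one genuinely representation-theoretic input; it uses positivity of the invariant Hermitian form in an essential way and leans on the special geometry of the nilpotent $K_{\CC}$-orbit closure $\AV(\calH)$ attached to a unitary highest weight module (its sphericity and the normality of its closure). Everything else — the torsion-submodule reduction, the passage from radical to prime, and the deduction of the equality of annihilators — is formal.
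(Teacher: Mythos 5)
Your outer, commutative-algebra reductions are fine: irreducibility of $\AV(\calH)$ is indeed automatic (the $K_{\CC}$-orbit closures in $\frakp_{+}$ form a chain by Proposition \ref{proposition:kc-orbits}), so $\sqrt{\Ann_{S(\frakp_{-})}(\calH)}=I(\AV(\calH))$ is prime; and, granted that the annihilator equals this prime, your torsion-submodule argument is correct — the torsion part over $S(\frakp_{-})/\Ann_{S(\frakp_{-})}(\calH)$ is $K_{\CC}$-, $\frakk_{\CC}$- and $\frakp_{+}$-stable by the commutator induction, is proper because $\calH$ is finitely generated and faithful over that quotient, hence vanishes by irreducibility, which gives $\Ann_{S(\frakp_{-})}(v)=\Ann_{S(\frakp_{-})}(\calH)$ for $v\neq 0$. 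Note, however, that the paper offers no proof of this proposition at all: it is quoted as a theorem of Joseph \cite{Jo92}, so what you were implicitly asked to supply is a proof of Joseph's theorem itself.

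And that is where the genuine gap lies: the whole content of the statement is the radicality of $\Ann_{S(\frakp_{-})}(\calH)$, i.e.\ the inclusion $I(\AV(\calH))\subseteq\Ann_{S(\frakp_{-})}(\calH)$, and you do not prove it. You replace it by the assertion that $\calH$ embeds $(\frakg_{\CC},K_{\CC})$-equivariantly into $\rring{\AV(\calH)}\otimes\calH^{\frakp_{+}}$ with $\frakp_{-}$ acting by multiplication of coordinate functions, and, as you yourself remark, constructing such a model is equivalent to the radicality you need — so the argument at this point is circular. Contravariance and positivity of the Hermitian form only tell you that the adjoint of multiplication by $p\in\frakp_{-}$ is the action of the conjugate vector of $\frakp_{+}$ on $\calH$ itself (a Fock-type structure on the abstract module); to know that the $S(\frakp_{-})$-action factors through $\rring{\AV(\calH)}$ is exactly to know that $I(\AV(\calH))$ kills $\calH$, which is the claim in question. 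Moreover this step cannot be formal: for non-unitary irreducible highest weight modules the annihilator in $S(\frakp_{-})$ need not be radical, so unitarity must be used in an essential, quantitative way (beyond the holomorphic discrete series case, where freeness over $S(\frakp_{-})$ makes everything trivial, as you note). Establishing the faithful realization on $\overline{\calO_m}$ for every unitary highest weight module — arbitrary lowest $K$-types, exceptional groups included — is precisely the hard content of \cite{Jo92}; your proposal supplies the routine shell around it but leaves that core unproved.
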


By this proposition, the isotropy representation of a highest weight module at $x_0 \in \AV(\calH)$ is simply written as $\calW = \calH/\frakm(x_0)\calH$.

Since $\calH^{\frakp_{+}}$ generates $\calH$, we have a canonical surjective homomorphism as $(\frakg, K)$-module:
\begin{align}
\calU(\frakg_{\CC}) \otimes_{\calU(\frakk_{\CC} \oplus \frakp_{+})} \calH^{\frakp_{+}} \rightarrow \calH. \label{equation:canonical_surjection}
\end{align}
For a finite dimensional representation $V$ of $K$,
we set
\begin{align*}
N^\frakg(V):=\calU(\frakg_{\CC}) \otimes_{\calU(\frakk_{\CC} \oplus \frakp_{+})} V.
\end{align*}

\begin{definition}\label{definition:singular_highest_weight_module}
Let $\calH$ be a unitary highest weight module.
We will say $\calH$ is a \define{holomorphic discrete series representation} if
the completion of $\calH$ with respect to its Hermitian inner product is a discrete series of $G$.
\end{definition}
It is known that if $\calH$ is a holomorphic discrete series representation,
the homomorphism (\ref{equation:canonical_surjection}) is a $(\frakg, K)$-module isomorphism.
Then, for a holomorphic discrete series representation $\calH$, the associated variety $\AV(\calH)$ is equal to $\frakp_{+}$.

\subsection{Strongly orthogonal roots}\label{section:strongly_orthogonal}
We will describe some structures of highest weight modules.
We take a Cartan subalgebra $\frakt \subset \frakk$.
Since $\frakg$ is Hermitian type, $\frakt$ is also a Cartan subalgebra of $\frakg$.
Let $\Delta:=\Delta(\frakg_{\CC}, \frakt_{\CC})$ be the root system determined by $\frakt_{\CC}$,
and fix a positive system $\Delta^{+}$ such that $\Delta^{+} \supset \Delta(\frakp_{+},\frakt_{\CC})$.
We write $\Delta^{+}_c := \Delta(\frakk_{\CC}, \frakt_{\CC}) \cap \Delta^{+}$ and $\Delta^{+}_n := \Delta(\frakp_{+},\frakt_{\CC})$.
For each $\lambda \in \frakt_{\CC}^{*}$, we set
\begin{align*}
\frakg_{\lambda}:=\frakg_{\CC}(\frakt_{\CC};\lambda):=\{X \in \frakg_{\CC}: [H,X]=\lambda(H)X \text{ for any }H \in \frakt_{\CC}\}.
\end{align*}

Two roots $\alpha, \beta$ are said to be \define{strongly orthogonal} if neither of $\alpha+\beta$ nor $\alpha-\beta$ is a root.
We take a maximal set of strongly orthogonal roots $\{\sor_1, \sor_2, \ldots, \sor_r\} \subset \Delta(\frakp_{+}, \frakt_{\CC})$ as follows:
\begin{enumerate}[i)]
	\item $\sor_1$ is the lowest root in $\Delta(\frakp_{+}, \frakt_{\CC})$,
	\item for $i>1$, $\sor_i$ is the lowest root in the roots that are strongly orthogonal to $\sor_1, \sor_2, \ldots, \sor_{i - 1}$.
\end{enumerate}
Fix root vectors $\{X_{\sor_i}\}_{i=1}^r$ for the roots $\{\sor_i\}_{i=1}^{r}$.
We set 
\begin{align*}
\fraka &:= \bigoplus_{i=1}^{r} \RR (X_{\sor_i}+\overline{X_{\sor_i}}), \\
\frakt_0 &:= \bigoplus_{i=1}^{r} \CC [X_{\sor_i},\overline{X_{\sor_i}}],
\end{align*}
where $\overline{\ \cdot\ }$ is a complex conjugate of $\frakg_{\CC}$ with respect to $\frakg$.
It is known that $\fraka$ becomes a maximal abelian subspace of $\frakp$.
Then, we have $r = \RR \text{-} \rank(\frakg)$.

We introduce some facts to describe the restricted roots of $G$.
For $i, j\ (1 \leq i < j \leq r)$, we put
\begin{align*}
C_{ij} &:= \left\{ \gamma \in \Delta^{+}_c: \gamma|_{\frakt_0} = \left.\left(\frac{\gamma_j - \gamma_i}{2}\right) \right|_{\frakt_0} \right\}, \\
C_{i} &:= \left\{ \gamma \in \Delta^{+}_c: \gamma|_{\frakt_0} = -\left.\left(\frac{\gamma_i}{2}\right) \right|_{\frakt_0} \right\}, \\
C_{0} &:= \{ \gamma \in \Delta^{+}_c: \gamma|_{\frakt_0} = 0 \}. \\
P_{ij} &:= \left\{ \gamma \in \Delta^{+}_n: \gamma|_{\frakt_0} = \left.\left(\frac{\gamma_j + \gamma_i}{2}\right)\right|_{\frakt_0} \right\}, \\
P_{i} &:= \left\{ \gamma \in \Delta^{+}_n: \gamma|_{\frakt_0} = \left.\left(\frac{\gamma_i}{2}\right) \right|_{\frakt_0} \right\}, \\
P_{0} &:= \{ \gamma_1, \gamma_2, \ldots, \gamma_r \}.
\end{align*}

The following fact is due to Moore. (see e.g. \cite[Proposition 4.8 in Chapter 5]{He94}).

\begin{proposition}\label{proposition:restricted_root}
In the above notation, $\Delta^{+}_c$ and $\Delta^{+}_n$ can be decomposed as follows:
\begin{align*}
\Delta^{+}_c &= \left(\bigcup_{1 \leq i<j \leq r}C_{ij}\right) \cup \left(\bigcup_{1 \leq i \leq r}C_{i}\right) \cup C_0, \\
\Delta^{+}_n &= \left(\bigcup_{1 \leq i<j \leq r}P_{ij}\right) \cup \left(\bigcup_{1 \leq i \leq r}P_{i}\right) \cup P_0.
\end{align*}
Moreover, the map $\gamma \mapsto \gamma + \gamma_i$ gives bijections from $C_{ij}$ to $P_{ij}$, from $-C_{ji}$ to $P_{ji}$, and from $C_{i}$ to $P_{i}$.
\end{proposition}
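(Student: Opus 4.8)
The statement is Moore's description of the restricted root data, and the plan is to reprove it by a direct $\slor_{2}$-string analysis that uses only the two features special to the Hermitian setting, namely $[\frakp_{+},\frakp_{+}]=[\frakp_{-},\frakp_{-}]=0$ and $[\frakp_{+},\frakp_{-}]\subset\frakk_{\CC}$. First I would normalise the root vectors so that $(X_{\gamma_{i}},H_{\gamma_{i}},\overline{X_{\gamma_{i}}})$, with $H_{\gamma_{i}}=[X_{\gamma_{i}},\overline{X_{\gamma_{i}}}]$, is an $\slor_{2}$-triple; strong orthogonality of $\gamma_{1},\dots,\gamma_{r}$ then makes these $r$ triples commute, gives $\frakt_{0}=\bigoplus_{i}\CC H_{\gamma_{i}}$, and forces $\gamma_{i}(H_{\gamma_{j}})=2\delta_{ij}$. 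Defining $e_{i}\in\frakt_{0}^{*}$ by $e_{i}(H_{\gamma_{j}})=\delta_{ij}$ (so $\gamma_{i}|_{\frakt_{0}}=2e_{i}$), every root satisfies $\gamma|_{\frakt_{0}}=\sum_{i}\gamma(H_{\gamma_{i}})e_{i}$, and the proposition becomes a collection of assertions about the integers $\gamma(H_{\gamma_{i}})$.

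Second, I would bound these integers by $\slor_{2}$-string arguments combined with the abelianness of $\frakp_{\pm}$. For $\gamma\in\Delta^{+}_{n}$ one cannot add $\gamma_{i}$ (a sum of two $\frakp_{+}$-roots is never a root), so the $\gamma_{i}$-string through $\gamma$ only descends; and since $\gamma-2\gamma_{i}$ must lie in $\Delta^{+}_{n}$ or $\Delta^{-}_{n}$, the bracket relations $[\frakg_{\gamma-2\gamma_{i}},\frakg_{\gamma_{i}}]\subset[\frakp_{+},\frakp_{+}]=0$ and $[\frakg_{\gamma-2\gamma_{i}},\frakg_{-\gamma_{i}}]\subset[\frakp_{-},\frakp_{-}]=0$ truncate the string, giving $\gamma(H_{\gamma_{i}})\in\{0,1,2\}$; the same device applied to $\gamma\in\Delta_{c}$, where one may move one step in each direction but no further, gives $\gamma(H_{\gamma_{i}})\in\{0,\pm1\}$. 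The remaining forbidden patterns are removed using the minimality built into the construction of the $\gamma_{i}$: if a compact root had $\gamma(H_{\gamma_{i}})=\gamma(H_{\gamma_{j}})=1$ with $i\neq j$, then $\gamma-\gamma_{i}\in\Delta^{-}_{n}$ still pairs to $1$ with $H_{\gamma_{j}}$ yet cannot descend by $\gamma_{j}$ (again $[\frakp_{-},\frakp_{-}]=0$), a contradiction, so compact restrictions use at most one $+1$ and at most one $-1$; and if a \emph{positive} compact root had $\gamma|_{\frakt_{0}}=e_{j}$, then $\mu:=\gamma_{j}-\gamma\in\Delta^{+}_{n}$ is strongly orthogonal to $\gamma_{1},\dots,\gamma_{j-1}$ but strictly smaller than $\gamma_{j}$ (because $\gamma_{j}-\mu=\gamma>0$), contradicting the choice of $\gamma_{j}$. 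This confines $\gamma|_{\frakt_{0}}$ for $\gamma\in\Delta^{+}_{c}$ to $\{0\}\cup\{-e_{i}\}\cup\{e_{j}-e_{i}:i<j\}$, i.e. to $C_{0}$, the $C_{i}$ and the $C_{ij}$.

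Third, for $\gamma\in\Delta^{+}_{n}$ all the $\gamma(H_{\gamma_{i}})$ are $\ge0$ and $\le2$, and $\gamma(H_{\gamma_{i}})=2$ means $\langle\gamma,\gamma_{i}\rangle=|\gamma_{i}|^{2}$, whence by Cauchy--Schwarz $\gamma=\gamma_{i}$ or $|\gamma|>|\gamma_{i}|$; excluding the latter (equivalently: the $\gamma_{i}$ are long roots) together with the "no two forbidden coefficients at once" argument confines $\gamma|_{\frakt_{0}}$ to $\{2e_{i}\}\cup\{e_{i}+e_{j}:i<j\}\cup\{e_{i}\}$, i.e. to $P_{0}$, the $P_{ij}$ and the $P_{i}$. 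For the bijections: if $\gamma\in C_{ij}$ then $\gamma(H_{\gamma_{i}})=-1$, so $\gamma+\gamma_{i}\in\Delta$, and it is positive, noncompact (its root space lies in $[\frakk_{\CC},\frakp_{+}]\subset\frakp_{+}$) and restricts to $e_{i}+e_{j}$, hence lies in $P_{ij}$; conversely if $\delta\in P_{ij}$ then $\delta(H_{\gamma_{i}})=1$, so $\delta-\gamma_{i}\in\Delta$, it is compact, restricts to $e_{j}-e_{i}$, and is positive because $\delta$ is strongly orthogonal to $\gamma_{1},\dots,\gamma_{i-1}$ (its coefficients there vanish) and so $\gamma_{i}\le\delta$ by minimality; thus $\gamma\mapsto\gamma+\gamma_{i}$ and $\delta\mapsto\delta-\gamma_{i}$ are mutually inverse, and the analogous computations (with $\gamma_{j}$ in place of $\gamma_{i}$ for $-C_{ji}$) give the remaining two bijections.

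The genuinely delicate point is the exclusion in the third step of a long noncompact positive root pairing to $2$ with a short $\gamma_{i}$: this is exactly Moore's content that $\Sigma(\frakg,\fraka)$ is of type $C_{r}$ or $BC_{r}$, and the purely combinatorial string argument does not close it by itself. The safe way to finish — and the route I would take if the uniform argument resists — is to pass from $\frakt_{0}$ to $\fraka$ via the partial Cayley transform $c=\prod_{i}\exp\!\big(\tfrac{\pi}{4}(\overline{X_{\gamma_{i}}}-X_{\gamma_{i}})\big)$, which carries $\frakt_{0}$ isomorphically onto $\fraka_{\CC}$ and restriction-to-$\frakt_{0}$ to restriction-to-$\fraka$, and then verify the length claim — hence the whole proposition, including the emptiness of the $C_{i},P_{i}$ in the tube case — over the six families of irreducible Hermitian symmetric pairs, where in the standard matrix models the $\gamma_{i}$ and all of the sets $C_{\bullet},P_{\bullet}$ are explicit.
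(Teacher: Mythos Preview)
The paper does not prove this proposition: it is stated as a fact due to Moore and referenced to \cite[Chapter V, Proposition 4.8]{He94}. So there is no in-paper argument to compare against; your proposal is a reconstruction of the classical proof, and the string-length bounds via $[\frakp_{+},\frakp_{+}]=[\frakp_{-},\frakp_{-}]=0$, the use of the minimality of each $\gamma_{i}$ to eliminate unwanted restrictions, and the mutually inverse maps $\gamma\mapsto\gamma\pm\gamma_{i}$ for the bijections are all correct and match the standard treatment.

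Two small omissions. First, for $\gamma\in\Delta^{+}_{n}$ you must exclude $\gamma|_{\frakt_{0}}=0$; this follows at once from maximality of $\{\gamma_{1},\dots,\gamma_{r}\}$, since such a $\gamma$ would be strongly orthogonal to every $\gamma_{i}$. Second, for $\gamma\in\Delta^{+}_{c}$ you ruled out $\gamma|_{\frakt_{0}}=e_{j}$ but not $\gamma|_{\frakt_{0}}=e_{j}-e_{i}$ with $j<i$; the same minimality argument applied to the positive noncompact root $\gamma_{j}-\gamma$ (which restricts to $e_{i}+e_{j}$, is strongly orthogonal to $\gamma_{1},\dots,\gamma_{j-1}$, and is strictly smaller than $\gamma_{j}$) closes this.

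You are right that the one step the $\slor_{2}$-string combinatorics does not close by itself is the exclusion of a noncompact $\gamma\neq\gamma_{i}$ with $\gamma(H_{\gamma_{i}})=2$, equivalently that $\Sigma(\frakg,\fraka)$ is of type $C_{r}$ or $BC_{r}$. Your proposed closure via the Cayley transform and a check over the six irreducible Hermitian families is exactly how the references the paper cites handle it, and since the paper only quotes the result, this is entirely adequate.
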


It is known that $K_{\CC}$-orbits in $\frakp_{+}$ can be described by strongly orthogonal roots.
Put $X_i = X_{\sor_1} + X_{\sor_2} + \cdots + X_{\sor_i}$.
We set $\calO_i := \Ad(K_{\CC})X_i$, and $\calO_0 = \{0\}$.

\begin{proposition}\label{proposition:kc-orbits}
$\frakp_{+}$ is decomposed into $K_{\CC}$-orbits as follows:
\begin{align*}
\frakp_{+} = \coprod_{i=0}^{n}\calO_i.
\end{align*}
Moreover, for any $1\leq m \leq r$, the Zariski closure of $\calO_m$ is decomposed into $K_{\CC}$-orbits as follows:
\begin{align*}
\overline{\calO_m} = \coprod_{i=0}^{m}\calO_i.
\end{align*}
\end{proposition}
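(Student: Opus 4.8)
The plan is to prove the orbit decomposition by induction on $m$, using the explicit description of restricted roots in Proposition \ref{proposition:restricted_root} together with classical results on $K_{\CC}$-orbits in $\frakp_{+}$. First I would recall that the number of $K_{\CC}$-orbits in $\frakp_{+}$ is finite (this follows since $\frakp_{+}$ sits inside the nilpotent cone and is a single prehomogeneous component, or alternatively by the Kostant--Rallis theory applied to the Cartan decomposition), and that each orbit closure is irreducible. The key structural input is that the elements $X_m = X_{\sor_1} + \cdots + X_{\sor_m}$ form a complete set of representatives: this is a standard consequence of the fact that $\fraka = \bigoplus_{i=1}^{r}\RR(X_{\sor_i} + \overline{X_{\sor_i}})$ is a maximal abelian subspace of $\frakp$, so that by the Kostant convexity/Cartan-type decomposition every element of $\frakp_{+}$ is $\Ad(K_{\CC})$-conjugate to a nonnegative linear combination $\sum c_i X_{\sor_i}$, and then a further $\Ad(T_{\CC})$-rescaling (using that the $\sor_i$ are linearly independent on $\frakt_0$) normalizes the nonzero coefficients to $1$. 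This gives $\frakp_{+} = \coprod_{i=0}^{r}\Ad(K_{\CC})X_i$, and one checks these orbits are pairwise distinct by comparing, say, the rank of $\ad(X_i)$ restricted to $\frakp_{+}$, or by noting $\dim\calO_i$ is strictly increasing in $i$.

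For the second assertion, I would argue that $\overline{\calO_m}$ is $\Ad(K_{\CC})$-stable, closed and irreducible of dimension $\dim\calO_m$, hence is a union of $\calO_m$ together with orbits of strictly smaller dimension; since the only such orbits are $\calO_0, \ldots, \calO_{m-1}$, we get $\overline{\calO_m} \subseteq \coprod_{i=0}^{m}\calO_i$. The reverse inclusion amounts to showing each $\calO_i$ with $i < m$ actually meets $\overline{\calO_m}$. For this I would exhibit an explicit one-parameter curve: consider $t \mapsto \Ad(\exp(tH))X_m$ for a suitable $H \in \frakt_{\CC}$ chosen so that $[H, X_{\sor_j}] = 0$ for $j \leq i$ and $[H, X_{\sor_j}] = c_j X_{\sor_j}$ with $c_j \neq 0$ for $i < j \leq m$; such an $H$ exists because the $\sor_j$ are linearly independent functionals on $\frakt_0 \subset \frakt_{\CC}$. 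Then $\Ad(\exp(tH))X_m = X_i + \sum_{j=i+1}^{m} e^{t c_j}X_{\sor_j}$, and letting $t \to -\infty$ (choosing signs of $c_j$ appropriately) the limit is $X_i$, so $X_i \in \overline{\calO_m}$ and hence $\calO_i \subset \overline{\calO_m}$. Alternatively one can invoke the general fact that the closure of a $K_{\CC}$-orbit in a variety with finitely many orbits is a union of orbits and that these orbits form a chain under the closure order, which is well-documented for $\frakp_{+}$.

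The main obstacle I anticipate is establishing cleanly that $\{X_m\}$ is a complete and irredundant set of orbit representatives — i.e. the ``Cartan decomposition'' step $\frakp_{+} = \Ad(K_{\CC})(\sum \RR_{\geq 0}X_{\sor_i})$ followed by the torus-rescaling normalization. One must be careful that $\Ad(K_{\CC})$ (not just $\Ad(K)$) is large enough to absorb the rescalings and to collapse all nonzero-coefficient configurations of the same support to a single orbit; the linear independence of $\sor_1, \ldots, \sor_r$ on $\frakt_0$ (which follows from Proposition \ref{proposition:restricted_root}, since the $\sor_i|_{\frakt_0}$ are the ``long'' restricted roots and are manifestly independent) is exactly what makes the $\Ad(T_{\CC})$-action transitive on each positive coefficient vector of fixed support. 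Once that is in hand, the closure statement follows from irreducibility and dimension count as above, with the explicit degeneration curve supplying the reverse inclusion.
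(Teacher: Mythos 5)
First, a point of reference: the paper offers no proof of Proposition \ref{proposition:kc-orbits} at all --- it is quoted as a known piece of the structure theory of Hermitian symmetric spaces --- so your sketch stands or falls on its own. Its overall route is the standard one, and most of it is sound: the degeneration $s \mapsto X_i + s(X_{\sor_{i+1}}+\cdots+X_{\sor_m})$ does lie in $\calO_m$ for $s\neq 0$ (rescale by $T_{\CC}$, using linear independence of the $\sor_j$) and hits $X_i$ at $s=0$, so $\calO_i\subset\overline{\calO_m}$ for $i\le m$, Zariski-algebraically and with no limit subtleties. One step you phrase too loosely: Kostant convexity/Cartan decomposition is a statement about the real form $\frakp$, not about $\frakp_{+}$; the clean way to get your normal form is the $K_{\CC}$-equivariant projection $\frakp_{\CC}\to\frakp_{+}$ along $\frakp_{-}$, which restricts to a $K$-equivariant real-linear isomorphism $\frakp\simeq\frakp_{+}$ carrying $\fraka$ onto $\bigoplus_i\RR X_{\sor_i}$. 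Combined with $\frakp=\Ad(K)\fraka$, the restricted Weyl group (type $C_r$ or $BC_r$, hence containing all sign changes and all permutations of the $\sor_i$, realized in $N_K(\fraka)$) and the $T_{\CC}$-rescaling (the map $t\mapsto(e^{\sor_1}(t),\ldots,e^{\sor_r}(t))$ is onto $(\CC^{\times})^r$ because the $\sor_i$ are linearly independent characters), this does give that every element of $\frakp_{+}$ is $\Ad(K_{\CC})$-conjugate to some $X_i$; note the permutation step is also what collapses arbitrary supports to the consecutive ones.

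The genuine gap is pairwise distinctness of the orbits $\calO_0,\ldots,\calO_r$, on which your argument leans twice: once to call the $X_i$ an irredundant set of representatives, and once to exclude $\calO_j$ with $j>m$ from $\overline{\calO_m}$ by dimension. Your proposed invariant, the rank of $\ad(X_i)$ restricted to $\frakp_{+}$, is identically zero: $\frakp_{+}$ is abelian, so $\ad(X_i)$ kills all of $\frakp_{+}$ and distinguishes nothing. The fallback ``note that $\dim\calO_i$ is strictly increasing'' cannot simply be noted --- if distinctness were already known, monotonicity would indeed follow from your degeneration (an orbit contained in $\overline{\calO_m}\setminus\calO_m$ has strictly smaller dimension), but used as the proof of distinctness it is circular. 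What works is an honest $\Ad(K_{\CC})$-equivariant invariant: for instance the rank of $\ad(X)^2:\frakp_{-}\to\frakp_{+}$; strong orthogonality gives $\ad(X_m)^2\,\overline{X_{\sor_j}}=-\sor_j([X_{\sor_j},\overline{X_{\sor_j}}])\,X_{\sor_j}$ for $j\le m$ and $0$ for $j>m$, and with the restricted-root description of Proposition \ref{proposition:restricted_root} one computes that this rank separates the $X_m$. Alternatively, compute $\dim\calO_m=\dim[\frakk_{\CC},X_m]$ directly from the bracket relations recorded in the proof of Lemma \ref{lemma:L_is_L_subgroup} and check strict monotonicity there. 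With such an invariant supplied, the rest of your argument closes up; also note that the first display of the proposition should read $\coprod_{i=0}^{r}$, as your sketch implicitly assumes.
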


By this proposition, for any highest weight module $\calH$ there exists an $m \in \{1, 2, \cdots, r\}$ such that $\AV(\calH)=\overline{\calO_m}$.
The irreducible decomposition of $\rring{\overline{\calO_m}}$ as a representation of $K_{\CC}$
is obtained by B. Kostant, L. K. Hua \cite{Hu63} and W. Schmid \cite{Sh69}.

\begin{proposition}\label{proposition:kostant_hua_schmid}
The ring of regular functions on $\overline{\calO_m}$ is decomposed as a $K_{\CC}$-representation as follows:
\begin{align*}
\rring{\overline{\calO_m}} \simeq \bigoplus_{\substack{c_1 \geq c_2 \geq \cdots \geq c_m \geq 0 \\ c_1, c_2, \ldots, c_m \in \ZZ}} V_{-\sum_{i=1}^{m}c_i \sor_i, K_{\CC}}.
\end{align*}
Especially, $\overline{\calO_m}$ is a spherical affine $K_{\CC}$-variety.
\end{proposition}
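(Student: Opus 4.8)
The plan is to reduce the assertion to the case $m = r$, where $\overline{\calO_r} = \frakp_{+}$ and the statement is the classical Hua--Kostant--Schmid decomposition, and then to exhibit $\overline{\calO_m}$ inside $\frakp_{+}$ as the zero set of an explicit $K_{\CC}$-stable ideal. First I would recall, from Hua \cite{Hu63}, Kostant and Schmid \cite{Sh69}, that
\begin{align*}
\rring{\frakp_{+}} \simeq S(\frakp_{-}) \simeq \bigoplus_{c_1 \geq c_2 \geq \cdots \geq c_r \geq 0} V_{-\sum_{i=1}^{r}c_i\sor_i, K_{\CC}};
\end{align*}
in particular $\rring{\frakp_{+}}$ is multiplicity-free and $\frakp_{+}$ is a spherical $K_{\CC}$-variety. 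Fix a Borel subgroup $B_{K_{\CC}} = T_{K_{\CC}}N_{K_{\CC}}$ of $K_{\CC}$ with $T_{K_{\CC}} \subset T_{\CC}$. The weight monoid $\dweight(\frakp_{+})$ is freely generated by the weights $-(\sor_1 + \cdots + \sor_i)$, $1 \leq i \leq r$, which are linearly independent since the $\sor_i|_{\frakt_0}$ are (Proposition \ref{proposition:restricted_root}); hence $\rring{\frakp_{+}}^{N_{K_{\CC}}}$ is a polynomial ring $\CC[\Delta_1, \ldots, \Delta_r]$ in homogeneous semiinvariants $\Delta_i$ of weight $-(\sor_1 + \cdots + \sor_i)$, and the $B_{K_{\CC}}$-highest weight vector of $V_{-\sum c_i\sor_i, K_{\CC}}$ is, up to scalar, the single monomial $f_c := \Delta_1^{c_1 - c_2}\Delta_2^{c_2 - c_3}\cdots\Delta_r^{c_r}$ (with the convention $c_{r+1} := 0$).

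The next step is to identify these data Jordan-algebraically. Via the Tits--Koecher--Kantor construction $\frakp_{+}$ carries the structure of a simple complex Jordan algebra of rank $r$, on which $\Ad(K_{\CC})$ acts through structure-group elements; fixing a Jordan frame $e_1, \ldots, e_r$ of primitive idempotents, the $\Delta_i$ are its principal minors, $\Delta_i(x)$ being the determinant, computed inside the rank-$i$ Jordan subalgebra spanned by the Peirce spaces of $e_1, \ldots, e_i$, of the Peirce-$1$ component of $x$. Since the structure group acts on $\frakp_{+}$ with exactly $r+1$ orbits, namely the strata by Jordan rank, and since Proposition \ref{proposition:kc-orbits} exhibits $\Ad(K_{\CC})$ as having exactly $r+1$ orbits $\calO_0, \ldots, \calO_r$, the two stratifications coincide; in particular $\calO_m$ is the set of elements of Jordan rank $m$, so $e_1 + \cdots + e_m \in \calO_m$.

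Now I would run the ideal computation. The restriction map $\rring{\frakp_{+}} \twoheadrightarrow \rring{\overline{\calO_m}}$ is a surjection of $K_{\CC}$-algebras whose kernel $I_m := I(\overline{\calO_m})$ is $K_{\CC}$-stable, hence by multiplicity-freeness is a direct sum of a subset of the isotypic components of $\rring{\frakp_{+}}$; so it suffices to decide, for each $c_1 \geq \cdots \geq c_r \geq 0$, whether the highest weight vector $f_c$ lies in $I_m$, i.e.\ whether $f_c$ vanishes identically on the dense orbit $\calO_m \subset \overline{\calO_m}$ (Proposition \ref{proposition:kc-orbits}). For $j > m$ the function $\Delta_j$ vanishes at every point of $\calO_m$, because its value there is the determinant, inside the rank-$j$ Jordan subalgebra, of the Peirce projection of an element of rank $\leq m < j$, which is non-invertible; hence $\Delta_j \equiv 0$ on $\overline{\calO_m}$. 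On the other hand, for $j \leq m$ we have $\Delta_j(e_1 + \cdots + e_m) = \Delta_j(e_1 + \cdots + e_j) \neq 0$, so none of $\Delta_1, \ldots, \Delta_m$ --- hence none of the $f_c$ with $c_{m+1} = \cdots = c_r = 0$ --- vanishes identically on the irreducible variety $\overline{\calO_m}$. Therefore $I_m = \bigoplus_{c_{m+1} > 0} V_{-\sum c_i\sor_i, K_{\CC}}$, so
\begin{align*}
\rring{\overline{\calO_m}} \simeq \rring{\frakp_{+}}/I_m \simeq \bigoplus_{c_1 \geq c_2 \geq \cdots \geq c_m \geq 0} V_{-\sum_{i=1}^{m}c_i\sor_i, K_{\CC}},
\end{align*}
which is multiplicity-free; and since $\overline{\calO_m}$ is closed in the affine space $\frakp_{+}$ it is an affine spherical $K_{\CC}$-variety.

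The main obstacle is the middle step: setting up the Jordan-algebra structure on $\frakp_{+}$ in a way that is genuinely compatible with the root-theoretic data of Section \ref{section:strongly_orthogonal} (so that the $\Delta_i$ really have weights $-(\sor_1 + \cdots + \sor_i)$ and $\Ad(K_{\CC})$ really realizes the rank stratification of Proposition \ref{proposition:kc-orbits}). Once this dictionary is in place, the subadditivity of rank under Peirce projection and the evaluation $\Delta_j(e_1 + \cdots + e_j) \neq 0$ are standard facts about simple Jordan algebras and the rest is bookkeeping. An alternative that avoids the Jordan machinery is to run exactly the same argument using the explicit classical models of \cite{Hu63} and \cite{Sh69}, in which $\frakp_{+}$ is one of the spaces of $p \times q$, symmetric, or skew-symmetric matrices (together with the two exceptional cases), $\overline{\calO_m}$ is the corresponding determinantal or Pfaffian variety of rank $\leq m$, the $\Delta_i$ are leading principal minors (respectively sub-Pfaffians), and the asserted decomposition is the classical description of the coordinate ring of a generic determinantal variety as the sum of the Schur modules indexed by partitions with at most $m$ rows.
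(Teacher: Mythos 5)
The paper does not prove this proposition at all: it is quoted as a known result of Hua, Kostant and Schmid (\cite{Hu63}, \cite{Sh69}), so there is no in-paper argument to compare with, and what you have written is a genuine derivation of the general-$m$ statement from the $m=r$ case. Your reduction is sound: since $\rring{\frakp_{+}}$ is multiplicity-free, the vanishing ideal of $\overline{\calO_m}$ is a sum of full isotypic components; the $N$-invariant algebra is the polynomial ring on the semiinvariants $\Delta_1,\ldots,\Delta_r$ because the weight monoid is free on $-(\sor_1+\cdots+\sor_i)$ and the coordinate ring is a domain; and membership of the monomial $f_c$ in the ideal is decided by vanishing of the ``minors'' on the rank strata, exactly as for determinantal varieties. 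This is in substance the classical Faraut--Kor\'anyi style argument, and it buys something the bare citation does not, namely an explicit description of the defining ideal of $\overline{\calO_m}$.

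Two caveats. First, in the non-tube case $\frakp_{+}$ is not a unital Jordan algebra but a Jordan triple system; your Peirce-projection definition of $\Delta_i$ still makes sense (the Peirce $2$-space of the tripotent $e_1+\cdots+e_i$ is a unital Jordan algebra), but the phrase ``simple complex Jordan algebra of rank $r$'' is only literally correct in tube type --- you flag this dictionary as the main obstacle, and your classical-models fallback does handle it. In setting up that dictionary you must also check that suitably normalized root vectors $X_{\sor_1},\ldots,X_{\sor_r}$ form a frame, so that $X_m$ has Jordan rank $m$: the counting argument identifies the two stratifications as sets of orbits but does not by itself label which $K_{\CC}$-orbit is which rank stratum. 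Second, the closing ``especially spherical'' clause is inferred from multiplicity-freeness; for an irreducible affine variety this implication is valid (a Rosenlicht/Vinberg--Kimel'fel'd type argument, using that the quotient field of the coordinate ring is the function field), but it deserves a word, and it is worth noting that the paper later proves the dense $B$-orbit in $\overline{\calO_m}$ directly from Moore's restricted root data (Lemma \ref{lemma:B-orbit}), which gives sphericity without any such detour.
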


\subsection{Holomorphic symmetric pairs}\label{section:holomorphic_symmertic_pair}
Suppose $\theta$ is a Cartan involution of $G$ such that its fixed point subgroup $G^{\theta} = K$, and
$\tau$ is an involutive automorphism of $G$ commuting with $\theta$.
Since $\tau(\frakk) = \frakk$ and $\tau$ is an automorphism, the following two cases are possible:
\begin{align}
\tau(Z) &= Z \label{equation:holomorphic_symmetric_pair} \\
\tau(Z) &= -Z \label{equation:anti_holomorphic_symmetric_pair}
\end{align}
We will say $(\frakg, \frakg^{\tau})$ is a \define{holomorphic symmetric pair}
if the equation (\ref{equation:holomorphic_symmetric_pair}) holds,
otherwise we will say $(\frakg, \frakg^{\tau})$ is an \define{anti-holomorphic symmetric pair}.

If $(\frakg, \frakg^{\tau})$ is a holomorphic symmetric pair,
the decomposition $\frakg_{\CC}=\frakp_{-} \oplus \frakk_{\CC} \oplus \frakp_{+}$ induces a decomposition of $\frakg_{\CC}^{\tau}$:
\begin{align*}
\frakg_{\CC}^{\tau}=\frakp_{-}^{\tau} \oplus \frakk_{\CC}^{\tau} \oplus \frakp_{+}^{\tau},
\end{align*}
since $Z \in \frakk_{\CC}^{\tau}$.
Suppose $\frakg^{\tau}=\bigoplus_{i = 1}^{n} \frakh_i$ is the direct sum decomposition into simple or abelian ideals.
Then, $\frakh_i$ is contained in $\frakk$ if $\frakh_i$ is a compact or abelian Lie algebra,
and $\frakh_i$ is a Hermitian type Lie algebra if $\frakh_i$ is a non-compact Lie algebra.
Moreover, if $\frakh_i$ is a Hermitian type Lie algebra, $\frakh_i$ has a decomposition:
\begin{align}
(\frakh_i)_{\CC}=(\frakp_{-}\cap (\frakh_i)_{\CC}) \oplus (\frakk_{\CC}\cap (\frakh_i)_{\CC}) \oplus (\frakp_{+} \cap (\frakh_i)_{\CC}), \label{equation:direct_sum_of_h}
\end{align}
and each summand is nonzero.
The following two facts about branching laws of the restriction with respect to holomorphic symmetric pairs are known (see e.g. \cite{Ko94, Ko98}).

\begin{proposition}\label{proposition:discretely_decomposable}
Let $\calH$ be a unitary highest weight module of $G$,
and $(\frakg, \frakg^{\tau})$ be a holomorphic symmetric pair.
Then, $\calH$ is $\frakg^{\tau}$-admissible, especially $\calH$ is discretely decomposable as $(\frakg^{\tau}, \frakk^{\tau})$-module.
Moreover, any irreducible components of $\calH|_{\frakg^{\tau}}$ are outer tensor products of highest weight modules and finite dimensional representations.
\end{proposition}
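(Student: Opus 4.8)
The plan is to establish first the $\frakk^{\tau}$-admissibility of $\calH$, then to pass from $\frakk^{\tau}$-admissibility together with unitarity to discrete decomposability, and finally to read off the structure of the irreducible summands from the $\ad(Z)$-grading. I begin with the grading. Since $\calH$ is a highest weight module, $\calH^{\frakp_{+}}$ is a finite dimensional irreducible $K$-module on which $Z$ acts by a scalar $c$, and the surjection (\ref{equation:canonical_surjection}) together with the PBW isomorphism $\calU(\frakg_{\CC})\otimes_{\calU(\frakk_{\CC}\oplus\frakp_{+})}\calH^{\frakp_{+}}\simeq S(\frakp_{-})\otimes\calH^{\frakp_{+}}$ shows that $\calH$ is, as a $K$-module, a quotient of $\bigoplus_{j\geq 0}S^{j}(\frakp_{-})\otimes\calH^{\frakp_{+}}$. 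As $\frakp_{-}$ is finite dimensional, the image $\calH^{j}$ of $S^{j}(\frakp_{-})\otimes\calH^{\frakp_{+}}$ is finite dimensional and $Z$ acts on it by the scalar $c-j$; hence every $\ad(Z)$-eigenspace of $\calH$ is finite dimensional.

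Next I would deduce $\frakk^{\tau}$-admissibility. Because $(\frakg,\frakg^{\tau})$ is a holomorphic symmetric pair, $Z\in\frakk_{\CC}^{\tau}$ and $Z$ is central in $\frakk^{\tau}$, so $Z$ acts by a single scalar on each irreducible $\frakk^{\tau}$-module; consequently every irreducible $\frakk^{\tau}$-type occurring in $\calH$ is contained in a single $\ad(Z)$-eigenspace of $\calH$. By the previous step this eigenspace is finite dimensional, so every irreducible $\frakk^{\tau}$-type occurs in $\calH$ with finite multiplicity, and only finitely many occur with a prescribed $Z$-eigenvalue. In particular $\calH$ is $\frakk^{\tau}$-admissible.

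Then I would invoke Kobayashi's theorem that a unitarizable $(\frakg,K)$-module which is $\frakk'$-admissible for the maximal compact subalgebra $\frakk'$ of a reductive subalgebra $\frakg'$ is discretely decomposable as a $(\frakg',\frakk')$-module and, after completion, is a Hilbert direct sum of irreducible unitary $(\frakg',K')$-modules with finite multiplicities (this is the content of \cite{Ko94, Ko98} in the present situation); applying it with $\frakg'=\frakg^{\tau}$ gives the asserted $\frakg^{\tau}$-admissibility and discrete decomposability. To identify the summands, let $\pi'$ be an irreducible constituent of $\calH|_{\frakg^{\tau}}$. As a subquotient of $\calH$ its $\ad(Z)$-eigenvalues lie in $c+\ZZ_{\leq 0}$, hence are bounded above; since $\frakp_{+}^{\tau}\subset\frakp_{+}$ raises the $\ad(Z)$-eigenvalue by $1$, iterating $\frakp_{+}^{\tau}$ on any nonzero $Z$-eigenvector must terminate, so the $\frakp_{+}^{\tau}$-nullvectors of $\pi'$ are nonzero and $\pi'$ is a highest weight module of $\frakg^{\tau}$. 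Decomposing $\frakg^{\tau}=\bigoplus_{i}\frakh_{i}$ into simple or abelian ideals, $\pi'$ is the outer tensor product over $i$ of irreducible modules $\pi'_{i}$ for $\frakh_{i}$; by the finiteness of $\frakk^{\tau}$-multiplicities the factor $\pi'_{i}$ is finite dimensional whenever $\frakh_{i}\subset\frakk$ (the compact or abelian case), while for non-compact $\frakh_{i}$ the decomposition (\ref{equation:direct_sum_of_h}) provides a characteristic element inside $(\frakh_{i})_{\CC}$ and the same argument shows $\pi'_{i}$ is a highest weight module.

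The main obstacle is the step from $\frakk^{\tau}$-admissibility to $\frakg^{\tau}$-admissibility and the discreteness of the decomposition: the finite dimensionality of the $\ad(Z)$-eigenspaces is elementary, but excluding a continuous part in the unitary decomposition and obtaining finite multiplicities genuinely uses Kobayashi's admissibility machinery. For a self-contained treatment one would instead have to reprove that $\frakk^{\tau}$-admissibility of a unitarizable module forces discrete decomposability, or else exhibit directly, via the $\ad(Z)$-grading, a generating system of $\frakp_{+}^{\tau}$-nullvectors realizing $\calH|_{\frakg^{\tau}}$ as a sum of highest weight modules; this is the heart of the matter.
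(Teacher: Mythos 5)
Your sketch is correct, but note that the paper does not prove this proposition at all: it is stated as a known fact with the citation to Kobayashi (\cite{Ko94, Ko98}), and the external input you invoke -- that a unitarizable $(\frakg,K)$-module which is $\frakk^{\tau}$-admissible is discretely decomposable and $\frakg^{\tau}$-admissible with finite multiplicities -- is exactly the content of those references, so your argument is the standard one behind the citation rather than an alternative to anything in the text. The elementary parts are sound: $Z$ acts on the image of $S^{j}(\frakp_{-})\otimes\calH^{\frakp_{+}}$ by $c-j$, so $Z$-eigenspaces are finite dimensional; since $\tau(Z)=Z$ puts $Z$ in the center of $\frakk^{\tau}_{\CC}$, the $Z$-eigenvalue is an invariant of each irreducible $\frakk^{\tau}$-type, giving $\frakk^{\tau}$-admissibility; and boundedness of the $Z$-spectrum forces nonzero $\frakp_{+}^{\tau}$-nullvectors in each constituent, identifying it as a highest weight module on the noncompact ideals. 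Two small touch-ups: the finite dimensionality of the factors on ideals $\frakh_{i}\subset\frakk$ follows most directly from local $\frakk^{\tau}$-finiteness of a $(\frakg^{\tau},\frakk^{\tau})$-module (an irreducible locally finite module is finite dimensional), not from multiplicity finiteness; and for the noncompact ideals one should note that $\ad(Z)|_{\frakh_{i}}$ is an inner derivation of the simple ideal $\frakh_{i}$, which supplies the characteristic element realizing the decomposition (\ref{equation:direct_sum_of_h}). You correctly identify that the only genuinely hard step is Kobayashi's admissibility theorem, which the paper simply cites.
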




\begin{proposition}\label{proposition:reduction_to_compact}
Let $\calH$ be a holomorphic discrete series representation of $G$,
and $(\frakg, \frakg^{\tau})$ be a holomorphic symmetric pair.
Suppose $S(\frakp_{-}^{-\tau}) \otimes \calH^{\frakp_{+}}$ is decomposed as a $(\frakk^{\tau})$-representation as follows:
\begin{align*}
S(\frakp_{-}^{-\tau}) \otimes \calH^{\frakp_{+}} \simeq \bigoplus_{\pi \in \widehat{\frakk^{\tau}}} m(\pi)\pi.
\end{align*}
Then, $\calH|_{\frakg^{\tau}}$ is decomposed as
\begin{align*}
\calH|_{\frakg^{\tau}} \simeq \bigoplus_{\pi \in \widehat{\frakk^{\tau}}} m(\pi)(N^{\frakg^{\tau}}(\pi)).
\end{align*}
Here, $\widehat{\frakk^{\tau}}$ denotes the set of equivalent classes of finite dimensional representations of $\frakk^{\tau}$.
Each summand is also a holomorphic discrete series representation of $(G^{\tau})_0$.
\end{proposition}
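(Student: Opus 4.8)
My approach would be to obtain two descriptions of the restriction $\calH|_{\frakk^{\tau}}$ and then cancel the common tensor factor $S(\frakp_{-}^{\tau})$. Since $\calH$ is a holomorphic discrete series, the canonical surjection~(\ref{equation:canonical_surjection}) is an isomorphism, so $\calH \simeq N^{\frakg}(\calH^{\frakp_{+}})$; hence, as $\frakk^{\tau}$-modules and using $\frakp_{-} = \frakp_{-}^{\tau} \oplus \frakp_{-}^{-\tau}$,
\begin{align*}
\calH|_{\frakk^{\tau}} \simeq S(\frakp_{-}) \otimes \calH^{\frakp_{+}} \simeq S(\frakp_{-}^{\tau}) \otimes \bigl( S(\frakp_{-}^{-\tau}) \otimes \calH^{\frakp_{+}} \bigr).
\end{align*}
I would also keep track of the grading $\calH^{i} = S^{i}(\frakp_{-})\calH^{\frakp_{+}}$, which up to an affine shift is the eigenspace decomposition of $\calH$ under $\ad(Z)$ with $Z \in \frakk_{\CC}^{\tau}$; it has finite-dimensional pieces, is bounded above, and satisfies $\frakp_{+}^{\tau}\calH^{i} \subset \calH^{i-1}$. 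On the other hand, by \cite[Theorem~7.4]{Ko98} the restriction decomposes discretely as $\calH|_{(G^{\tau})_{0}} \simeq \bigoplus_{j} V_{j}$ with each $V_{j}$ a holomorphic discrete series of $(G^{\tau})_{0}$. Put $\pi_{j} := V_{j}^{\frakp_{+}^{\tau}}$ (an irreducible $\frakk^{\tau}$-module, $V_{j}$ being a highest weight module of $(G^{\tau})_{0}$); since $V_{j}$ is a holomorphic discrete series, the canonical map $N^{\frakg^{\tau}}(\pi_{j}) \to V_{j}$ is an isomorphism, so $V_{j}|_{\frakk^{\tau}} \simeq S(\frakp_{-}^{\tau}) \otimes \pi_{j}$. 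As passing to $\frakp_{+}^{\tau}$-invariants commutes with direct sums, $\calH^{\frakp_{+}^{\tau}} \simeq \bigoplus_{j}\pi_{j}$ as $\frakk^{\tau}$-modules, and therefore $\calH|_{\frakk^{\tau}} \simeq S(\frakp_{-}^{\tau}) \otimes \calH^{\frakp_{+}^{\tau}}$.

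Comparing the two descriptions of $\calH|_{\frakk^{\tau}}$ gives an isomorphism of $\frakk^{\tau}$-modules
\begin{align*}
S(\frakp_{-}^{\tau}) \otimes \calH^{\frakp_{+}^{\tau}} \simeq S(\frakp_{-}^{\tau}) \otimes \bigl( S(\frakp_{-}^{-\tau}) \otimes \calH^{\frakp_{+}} \bigr).
\end{align*}
Both $\calH^{\frakp_{+}^{\tau}}$ and $S(\frakp_{-}^{-\tau}) \otimes \calH^{\frakp_{+}}$ are $\ad(Z)$-graded, bounded above, with finite-dimensional graded pieces, and the displayed isomorphism respects this grading because $Z \in \frakk_{\CC}^{\tau}$; since $S(\frakp_{-}^{\tau})$ is graded with one-dimensional piece in its extreme degree, one can cancel the factor $S(\frakp_{-}^{\tau})$ by a downward induction on the $\ad(Z)$-eigenvalue, comparing at each step a pair of finite-dimensional $\frakk^{\tau}$-modules. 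This yields $\calH^{\frakp_{+}^{\tau}} \simeq S(\frakp_{-}^{-\tau}) \otimes \calH^{\frakp_{+}}$ as $\frakk^{\tau}$-modules. Hence the multiplicity $m(\pi)$ of $\pi$ in $S(\frakp_{-}^{-\tau}) \otimes \calH^{\frakp_{+}}$ equals its multiplicity in $\calH^{\frakp_{+}^{\tau}} \simeq \bigoplus_{j}\pi_{j}$, that is $m(\pi) = \#\{\, j : \pi_{j} \simeq \pi \,\}$, which is finite by the $\frakg^{\tau}$-admissibility of $\calH$ (Proposition~\ref{proposition:discretely_decomposable}). Therefore
\begin{align*}
\calH|_{\frakg^{\tau}} \simeq \bigoplus_{j} V_{j} \simeq \bigoplus_{j} N^{\frakg^{\tau}}(\pi_{j}) \simeq \bigoplus_{\pi \in \widehat{\frakk^{\tau}}} m(\pi)\, N^{\frakg^{\tau}}(\pi),
\end{align*}
and since each $N^{\frakg^{\tau}}(\pi)$ with $m(\pi) \neq 0$ coincides with some $V_{j}$, it is a holomorphic discrete series of $(G^{\tau})_{0}$, which settles the last assertion.

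The step I expect to be the main obstacle is the identification $\calH^{\frakp_{+}^{\tau}} \simeq S(\frakp_{-}^{-\tau}) \otimes \calH^{\frakp_{+}}$. It cannot be read off from the evident map $S(\frakp_{-}^{-\tau})\calH^{\frakp_{+}} \hookrightarrow \calH$, whose image does not lie in $\calH^{\frakp_{+}^{\tau}}$ (the obstruction being that $[\frakp_{+}^{\tau}, \frakp_{-}^{-\tau}] \subset \frakk_{\CC}^{-\tau}$ need not vanish); one has to extract it from the cancellation above, which forces one to keep everything graded so that the a priori infinite-dimensional modules $S(\frakp_{-}^{\tau}) \otimes (-)$ may legitimately be cancelled. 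An alternative route to the same identification is to show that $H^{q}(\frakp_{+}^{\tau}; \calH) = 0$ for $q > 0$ — this holds once $\calH \simeq \bigoplus_{j} N^{\frakg^{\tau}}(\pi_{j})$ is known, since the irreducible generalized Verma modules $N^{\frakg^{\tau}}(\pi_{j})$ are acyclic for $\frakp_{+}^{\tau}$-cohomology at these parameters — and then to read off $\calH^{\frakp_{+}^{\tau}} = H^{0}(\frakp_{+}^{\tau}; \calH)$ from the Euler characteristic of the Koszul complex $\calH \otimes \Lambda^{\bullet}(\frakp_{+}^{\tau})^{*}$, equal to $\ch(\calH) \cdot \prod_{\mu} (1 - e^{\mu})$ over the weights $\mu$ of $\frakp_{-}^{\tau}$, i.e. to $\ch\bigl( S(\frakp_{-}^{-\tau}) \otimes \calH^{\frakp_{+}} \bigr)$.
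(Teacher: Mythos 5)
Your argument is sound, but there is nothing in the paper to compare it with line by line: Proposition \ref{proposition:reduction_to_compact} is stated as a known fact with a pointer to \cite{Ko94, Ko98}, and no internal proof is given. What you supply is a genuine derivation from the same external input the paper quotes in its introduction, namely \cite[Theorem 7.4]{Ko98} (discrete decomposability of $\calH|_{(G^{\tau})_0}$ into holomorphic discrete series with finite multiplicities), together with the fact, used by the paper for $G$ itself and applied by you factorwise to the reductive group $(G^{\tau})_0$, that a holomorphic discrete series is isomorphic to the generalized Verma module on its minimal $K$-type. Granting these, your two computations of $\calH|_{\frakk^{\tau}}$ — as $S(\frakp_{-}^{\tau})\otimes\bigl(S(\frakp_{-}^{-\tau})\otimes\calH^{\frakp_{+}}\bigr)$ via $\calH\simeq N^{\frakg}(\calH^{\frakp_{+}})$, and as $S(\frakp_{-}^{\tau})\otimes\calH^{\frakp_{+}^{\tau}}$ via $V_j\simeq N^{\frakg^{\tau}}(\pi_j)$ — are indeed both compatible with the $\ad(Z)$-grading (all maps are $\frakk_{\CC}^{\tau}$-equivariant and $Z$ is central in $\frakk_{\CC}^{\tau}$), the graded pieces are finite dimensional and bounded above, and the downward-induction cancellation of $S(\frakp_{-}^{\tau})$ is legitimate by complete reducibility of finite-dimensional $\frakk^{\tau}$-modules; this gives $\calH^{\frakp_{+}^{\tau}}\simeq S(\frakp_{-}^{-\tau})\otimes\calH^{\frakp_{+}}$ and the stated multiplicity formula. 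Compared with the original (Kobayashi-style) treatment, which establishes the $\frakk^{\tau}$-isomorphism and the explicit decomposition directly and obtains the qualitative structure of the summands as output, your route runs backwards: you take the qualitative decomposition as given and pin the multiplicities down by graded cancellation — logically valid, more elementary in execution, but entirely dependent on the cited decomposition theorem. Two minor points: you should say explicitly that Proposition \ref{proposition:discretely_decomposable} is what lets you pass from the Hilbert-space decomposition of \cite{Ko98} to an algebraic direct sum of the underlying $(\frakg^{\tau},\frakk^{\tau})$-modules (you invoke it only for finiteness of $m(\pi)$); and the alternative cohomological route in your closing paragraph asserts vanishing of higher $\frakp_{+}^{\tau}$-cohomology of the $N^{\frakg^{\tau}}(\pi_j)$ without justification — harmless, since your main argument never uses it.
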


\begin{table}[h]
\centering
\caption{holomorphic symmetric pairs}
\begin{tabular}{|c|c|}
\hline
$\frakg$& $\frakg^{\tau}$ \\ \hline \hline
$\su(p,q)$& $\fraks(\uor(i,j)+\uor(p-i,q-j))$ \\ \hline
$\su(n,n)$& $\so^{*}(2n)$ \\ \hline
$\su(n,n)$& $\spor(n,\RR)$ \\ \hline
$\so^{*}(2n)$& $\uor(i,n-i)$ \\ \hline
$\so^{*}(2n)$& $\so^{*}(2i)+\so^{*}(2(n-i))$ \\ \hline
$\so(2,n)$& $\so(2,n-i)+\so(i)$ \\ \hline
$\so(2,2n)$& $\uor(1,n)$ \\ \hline
$\spor(n,\RR)$& $\uor(i,n-i)$ \\ \hline
$\spor(n,\RR)$& $\spor(i,\RR)+\spor(n-i,\RR)$ \\ \hline
$\frake_{6(-14)}$&$\so(10)+\so(2)$ \\ \hline
$\frake_{6(-14)}$&$\so^{*}(10)+\so(2)$ \\ \hline
$\frake_{6(-14)}$&$\so(8,2)+\so(2)$ \\ \hline
$\frake_{6(-14)}$&$\su(5,1)+\slor(2,\RR)$ \\ \hline
$\frake_{6(-14)}$&$\su(4,2)+\su(2)$ \\ \hline
$\frake_{7(-25)}$&$\frake_{6(-78)}+\so(2)$ \\ \hline
$\frake_{7(-25)}$&$\frake_{6(-14)}+\so(2)$ \\ \hline
$\frake_{7(-25)}$&$\so(10,2)+\slor(2,\RR)$ \\ \hline
$\frake_{7(-25)}$&$\so^{*}(12)+\su(2)$ \\ \hline
$\frake_{7(-25)}$&$\su(6,2)$ \\ \hline
\end{tabular}
\end{table}

\section{Stability theorem}

In this section, we will show a general stability theorem.

\subsection{Stability theorem for general settings}

Let $X$ be an irreducible quasi-projective variety over $\CC$.
We assume the following two conditions:
\begin{texteqn}
$X$ is a spherical $G$-variety (i.e. a Borel subgroup of $G$ has an open dense orbit in $X$), and \label{condition:mf}
\end{texteqn}
\begin{texteqn}
the quotient field of $\rring{X}$ is naturally isomorphic to the rational function field of $X$. \label{condition:rf}
\end{texteqn}

The first condition implies that $\rring{X}$ is multiplicity-free as a representation of $G$.
Note that the second condition is always true for any irreducible quasi-affine variety $X$.

\begin{theorem}\label{theorem:general stability}
Let $M$ be a finitely generated $(\rring{X}, G)$-module
with no zero divisors:
\begin{equation}
\bigcup_{m \in M \setminus \{0\}} \Ann_{\rring{X}}(m) = 0.
\label{equation:stability assumption}
\end{equation}
Then, there exists a weight $\lambda_0 \in \dweight(X)$ such that
\begin{equation*}
m_M(\lambda+\lambda_0) = m_M(\lambda+\mu+\lambda_0)
\end{equation*}
for any $\lambda \in \dweight(M)$ and $\mu \in \dweight(X)$.
\end{theorem}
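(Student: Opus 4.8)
The plan is to transport the module-theoretic situation from $X$ to the base point $x_0 \in X$ whose $B$-orbit is open dense, using the evaluation (specialization) map $\ev_{x_0}\colon M \to M/\frakm(x_0)M$, and to exploit that $\ev_{x_0}$ sends $B$-eigenvectors to $B_{x_0}$-eigenvectors, as announced in the introduction. First I would observe that since $\rring{X}$ is multiplicity-free as a $G$-module (condition \ref{condition:mf}), for each $\nu \in \dweight(X)$ there is a one-dimensional space of $B$-eigenvectors $f_\nu \in \rring{X}$ of weight $\nu$, and $\dweight(X)$ is a finitely generated monoid (since $\rring{X}$ is a finitely generated $G$-algebra by Proposition \ref{proposition:finiteness of N-invariant algebra} applied to $\calA = \rring{X}$, so $\rring{X}^N$ is finitely generated; the weight monoid is the monoid of $T$-weights of this finitely generated commutative algebra). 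The key point is that $f_\nu(x_0) \neq 0$ for every $\nu \in \dweight(X)$: indeed $f_\nu$ is a $B$-semiinvariant, so it is nowhere zero on the open $B$-orbit $B x_0$, hence nonzero at $x_0$. Therefore multiplication by $f_\nu$ on $M$, followed by $\ev_{x_0}$, acts on $M/\frakm(x_0)M$ simply as a nonzero scalar times the identity (after trivializing the line $\rring{X}/\frakm(x_0) \cdot f_\nu$), so $\ev_{x_0}(f_\nu m)$ and $\ev_{x_0}(m)$ span the same line in $M/\frakm(x_0)M$.

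Next I would analyze weight spaces. Consider the $N$-invariants $M^N = \bigoplus_{\lambda} M^N_\lambda$, a finitely generated $\rring{X}^N$-module by Lemma \ref{lemma:finiteness of N-invariant module}, with $\dim M^N_\lambda = m_M(\lambda)$. Multiplication by $f_\mu$ (for $\mu \in \dweight(X)$) gives an injective linear map $M^N_\lambda \hookrightarrow M^N_{\lambda+\mu}$ — injective precisely because of the no-zero-divisor hypothesis \eqref{equation:stability assumption}, since $f_\mu m = 0$ with $m \neq 0$ would put $f_\mu$ in $\Ann_{\rring{X}}(m)$ after extracting the right component, contradicting \eqref{equation:stability assumption} (here I'd need that $f_\mu$ acting componentwise still annihilates a nonzero element, which follows because $f_\mu$ is a $B$-eigenvector and weight spaces are preserved up to the shift by $\mu$). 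This already gives $m_M(\lambda) \leq m_M(\lambda + \mu)$ for all $\lambda \in \dweight(M)$, $\mu \in \dweight(X)$ — monotonicity of multiplicities along the weight monoid. The reverse inequality for large $\lambda$ is the substance of stability: I would pass to the evaluated module $\calW := M/\frakm(x_0)M$, note that $\dim \ev_{x_0}(M^N_\lambda)$ is bounded above by $\dim \calW^{B_{x_0}}_{\lambda|_{B_{x_0}}}$ (a finite-dimensional space, independent of $\lambda$ up to its restriction), and argue that for $\lambda$ large enough along the monoid the map $M^N_\lambda \to \calW$ becomes injective on the relevant graded piece, so that $m_M(\lambda)$ stabilizes at $\dim \calW^{L}_{\lambda|_{B_{x_0}}}$ with $L = \{g \in G_{x_0} : gBx_0 \subset Bx_0\}$.

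Concretely, to produce $\lambda_0$: since $M^N$ is a finitely generated $\rring{X}^N$-module and $\rring{X}^N$ is a finitely generated commutative monoid-algebra, I would take a finite generating set of $M^N$ consisting of $T$-weight vectors, let $\lambda_0$ be a dominant weight in $\dweight(X)$ dominating (in the monoid sense) all the "defects" — more precisely, chosen so that $f_{\lambda_0} M^N$ has the property that multiplication by any further $f_\mu$ induces an \emph{isomorphism} $(f_{\lambda_0} M^N)_\lambda \xrightarrow{\sim} (f_{\lambda_0} M^N)_{\lambda+\mu}$. This is the ascending-chain/Noetherian argument: the images $\ev_{x_0}(M^N_\lambda) \subset \calW^{B_{x_0}}$ form, as $\lambda$ runs over $\lambda_0' + \dweight(X)$, a system that must stabilize in each fixed $B_{x_0}$-weight because $\calW$ is finite-dimensional, and the no-zero-divisor condition forces the stabilized dimension to equal $m_M(\lambda)$. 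The main obstacle I anticipate is exactly this last step — controlling uniformly, over the whole (multi-dimensional) monoid $\dweight(X)$, \emph{when} the evaluation map becomes injective on weight spaces, i.e. producing a single $\lambda_0$ that works simultaneously for all directions $\mu$; this requires the finite generation of $\rring{X}^N$ and $M^N$ in an essential way (to reduce "all $\mu$" to finitely many generators) together with a careful bookkeeping of weights, rather than any deep geometry. The reductivity of $L$ and its identification are not needed for this statement (they belong to Theorem \ref{theorem:intro_stability_for_general}), so I would not invoke \cite{BLV86} here.
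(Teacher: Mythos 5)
Your first step (monotonicity $m_M(\lambda)\leq m_M(\lambda+\mu)$ from injectivity of multiplication by a $B$-eigenvector $f_\mu$, which uses the no-zero-divisor hypothesis) and your appeal to Lemma \ref{lemma:finiteness of N-invariant module} for finite generation of $M^{N}$ over $\rring{X}^{N}$ both match the paper. But the heart of the theorem --- producing one $\lambda_0$ that works simultaneously for all $\lambda\in\dweight(M)$ and all $\mu\in\dweight(X)$ --- is exactly the step you label as ``the main obstacle I anticipate,'' and your sketch does not resolve it. Your proposed route goes through the evaluation map $\ev_{x_0}\colon M\to M/\frakm(x_0)M$ and finite-dimensionality of $\calW=M/\frakm(x_0)M$; for that route you would need (a) injectivity of $\ev_{x_0}$ on each $M^{N}(\lambda)$, which is not automatic from the no-zero-divisor hypothesis alone but requires the separate statement $\bigcap_{y\in Bx_0}\frakm(y)M=0$ (Lemma \ref{lemma:stable_multiplicity_injective}, resting on \cite{Ya05}), and (b) a surjectivity/stabilization analysis of the images $\ev_{x_0}(M^{N}(\lambda))$, which you assert (``the no-zero-divisor condition forces the stabilized dimension to equal $m_M(\lambda)$'') without an argument. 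In the paper these points belong to the later Theorem \ref{theorem:stable_multiplicity_M}, not to Theorem \ref{theorem:general stability}, and they are genuinely harder than what the present statement needs.

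The paper's actual finish is much shorter and is missing from your proposal. First, uniform boundedness of $m_M$: choosing a $G$-invariant finite-dimensional generating space $F\subset M$, the surjection $\rring{X}\otimes F\to M$ gives $m_M(\lambda)\leq\dim\Hom_G(V_\lambda\otimes F^{*},\rring{X})\leq\dim F$, using sphericity of $X$ and the bound on the number of irreducible constituents of $V_\lambda\otimes F^{*}$; your route to a bound (via $\dim\calW$) silently presupposes the unproven injectivity in (a). Second, the uniformity trick: finite generation of $M^{N}$ gives a finite covering $\dweight(M)=\bigcup_{i=1}^{r}(\lambda_i+\dweight(X))$; for each $i$, monotonicity plus boundedness plus the fact that $\dweight(X)$ is a monoid (hence directed) yield a $\lambda_{0,i}\in\dweight(X)$ attaining $\max\{m_M(\lambda_i+\mu):\mu\in\dweight(X)\}$; setting $\lambda_0=\lambda_{0,1}+\cdots+\lambda_{0,r}$, any $\lambda=\lambda_i+\nu$ satisfies $m_M(\lambda_i+\lambda_{0,i})\leq m_M(\lambda+\lambda_0)\leq m_M(\lambda+\mu+\lambda_0)\leq\max$, so all these values coincide. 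Note also that your reduction of ``all $\mu$'' to finitely many monoid generators is not the relevant reduction --- what must be made finite is the set of base points $\lambda_i$ covering $\dweight(M)$, and it is the summation $\lambda_0=\sum_i\lambda_{0,i}$ together with monotonicity that removes the dependence on the direction $\mu$. Neither the reductivity of $L$ nor the evaluation map is needed for this statement, as you correctly suspected, but without the boundedness-plus-maximizer-plus-covering argument your proposal does not yet constitute a proof.
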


This theorem says that the multiplicity function $m_M$ is periodic for sufficiently large parameter $\lambda$.
This property of the multiplicity function is called stability.

\begin{proof}
For the proof of the stability, we will show the uniformly boundedness of $m_M(\lambda)$ for $\lambda \in \dweight$.
Since $M$ is a finitely generated $\rring{X}$-module, there exists a $G$-invariant finite dimensional subspace $F \subset M$
that generates $M$.
Then, the multiplication map $\rring{X} \otimes F \rightarrow M (f\otimes m \mapsto fm)$ is a surjective $G$-intertwining operator.
For any $\lambda \in \dweight(M)$, we have
\begin{align*}
m_M(\lambda) &\leq m_{\rring{X} \otimes F}(\lambda) \\
&= \dim \Hom_G(V_\lambda, \rring{X} \otimes F) \\
&= \dim \Hom_G(V_\lambda \otimes F^{*}, \rring{X}).
\end{align*}
From \cite[Proposition 5.4.1]{Ko08}, the number of the irreducible constituents of $V_\lambda \otimes F^{*}$ is bounded by $\dim(F)$.
Then, since $\rring{X}$ is multiplicity-free, $m_M(\lambda)$ is uniformly bounded by $\dim(F)$.
This result will be also proved in the proof of Theorem \ref{theorem:stable_multiplicity_M}.

Let us show the stability.
Since $\rring{X}$ has no zero divisors in $M$,
the multiplication operator ($m \mapsto fm$) is injective for any $f \in \rring{X}$.
Especially, for $\mu \in \dweight(X)$ and $f \in \rring{X}^{N}(\mu)$, $f$ induces an injective linear map
\begin{align*}
f \cdot : M^{N}(\lambda) \hookrightarrow M^{N}(\lambda+\mu),
\end{align*}
where $V(\lambda)$ denotes the weight space of weight $\lambda$ in $T$-representation $V$.
Then, we have
\begin{align}
m_M(\lambda) \leq m_M(\lambda+\mu) \label{equation:proof_stability_monotone}
\end{align}
for any $\mu \in \dweight(X)$.

From Lemma \ref{lemma:finiteness of N-invariant module}, $M^{N}$ is a finitely generated $\rring{X}^{N}$-module.
Then, we can take a finite subset $\{\lambda_1, \lambda_2, \ldots, \lambda_r\} \subset \dweight(M)$ such that
\begin{align}
\dweight(M) = \bigcup_{1 \leq i \leq r}(\dweight(X) + \lambda_i). \label{equation:proof_stability_covering}
\end{align}
By the uniformly boundedness of $m_M$, for each $\lambda_i$ we can find a $\lambda_{0,i} \in \dweight(X)$ such that
\begin{align}
m_M(\lambda_i+\lambda_{0,i})=\max\{m_M(\lambda_i+\mu): \mu \in \dweight(X)\}. \label{equation:proof_stability_maximality}
\end{align}

We put $\lambda_0 := \lambda_{0,1} + \lambda_{0,2} + \ldots + \lambda_{0,r}$.
Let us show that $\lambda_0$ satisfies the required condition.
Take $\lambda \in \dweight(M)$ and $\mu \in \dweight(X)$.
By (\ref{equation:proof_stability_covering}), there exists an $i \in \{1, 2, \ldots, r\}$ such that
$\lambda \in \lambda_i + \dweight(X)$.
From (\ref{equation:proof_stability_monotone}) and (\ref{equation:proof_stability_maximality}),
we have
\begin{align*}
m_M(\lambda+\lambda_0)&=\max\{m_M(\lambda_i+\mu): \mu \in \dweight(X)\} \\
&=m_M(\lambda+\mu+\lambda_0).
\end{align*}
This shows the theorem.
\end{proof}

\subsection{Description of multiplicities for large parameters}\label{section:stable_multiplicity}

We describe the multiplicities for sufficiently large parameters by the isotropic representation.
Let $G$ and $X$ be as in the previous section.
Take a Borel subgroup $B$ of $G$, and write $B=TN$ as the product of the maximal torus and the unipotent radical.
From the assumption (\ref{condition:mf}), there exists a point $x_0 \in X$ such that $B$-orbit $Bx_0$ is open dense in $X$.

Put $P = \{g \in G: gBx_0 \subset Bx_0\}$. Then, $P$ is a parabolic subgroup of $G$ contains $B$.
The following proposition is due to M. Brion, D. Luna and T. Vust \cite{BLV86}.

\begin{proposition}\label{proposition:BLV}
In the above settings,
\begin{enumerate}[i)]
	\item $P_{x_0}$ is a reductive subgroup of $G$,
	\item $P_{x_0}$ contains the derived group of some Levi subgroup of $P$.
\end{enumerate}
\end{proposition}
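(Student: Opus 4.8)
The plan is to work with the open dense $B$-orbit $Bx_0 \subset X$ and analyze the action of the parabolic $P = \{g \in G : gBx_0 \subset Bx_0\}$ on it. First I would verify that $P$ is indeed a parabolic subgroup containing $B$: it clearly contains $B$, and it is the stabilizer of the irreducible closed subset $\overline{Bx_0} = X$ intersected with the dense $B$-stable locus, so it is closed; since $G/P$ injects into the Chow/Hilbert scheme of subvarieties (or more elementarily, $P$ contains a Borel subgroup and is closed, hence parabolic by the standard characterization). The key structural point is that $Bx_0$ is a homogeneous space for $B$ that is also $P$-stable, so $P$ acts transitively on $Bx_0$ (since $B$ already does), giving $Bx_0 \simeq P/P_{x_0}$ as a $P$-variety.

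Next I would exploit that $Bx_0$ is a quasi-affine (indeed, by condition \ref{condition:rf}, its function field matches that of $X$) spherical homogeneous space, but more to the point: since $Bx_0 = P/P_{x_0}$ and $B$ is solvable with an open orbit equal to all of $P/P_{x_0}$, the unipotent radical $N$ of $B$ has an orbit of full dimension, and one checks $\dim P_{x_0} = \dim P - \dim B + \dim(B \cap P_{x_0})$. The crucial observation is that $B x_0$ being a single $B$-orbit which is $P$-stable forces the unipotent radical $R_u(P)$ of $P$ to act on $P/P_{x_0}$ in a way compatible with $B$'s transitivity; a dimension count combined with the fact that $R_u(P) \subset $ (conjugates of) $N$ shows that $R_u(P)$ has a dense orbit, and then transitivity of $B$ together with Rosenlicht's theorem on orbit maps lets one conclude $R_u(P) \subset P_{x_0}$ is impossible unless... — here one instead argues that $P_{x_0}$ meets every Levi of $P$ in a large subgroup. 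Concretely: pick a Levi decomposition $P = L_P \ltimes R_u(P)$; the $B$-orbit structure shows $R_u(P)$ acts \emph{simply transitively} on the fibers of $Bx_0 \to Bx_0 / R_u(P)$-type quotient, which forces $P_{x_0} \cap R_u(P)$ to be trivial, hence $P_{x_0} \hookrightarrow L_P$ and $P_{x_0}$ is reductive. This gives (i).

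For (ii), with $P_{x_0} \subset L_P$ established, I would compare dimensions: $\dim(P/P_{x_0}) = \dim Bx_0 = \dim(B x_0)$, and since $B \cap R_u(P) = N \cap R_u(P) = R_u(P)$ (as $R_u(P) \subset N$ for $P \supseteq B$), the orbit map restricted to $B$ shows $\dim P_{x_0} = \dim B - \dim R_u(P) = \dim T + \dim(N \cap L_P) = \dim B_{L_P}$, a Borel of $L_P$. So $P_{x_0}$ is a reductive subgroup of $L_P$ of the same dimension as a Borel subgroup of $L_P$; the only such subgroups (reductive, of dimension equal to $\dim B_{L_P}$, and containing $T$) must contain the derived group $[L_P, L_P]$ up to the torus part — this is because a reductive subgroup of $L_P$ containing $T$ whose dimension forces it to surject onto $L_P / [L_P, L_P]$ with reductive kernel in $[L_P, L_P]$ of full semisimple rank must contain $[L_P, L_P]$. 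Thus $P_{x_0} \supseteq [L_P, L_P]$, the derived group of the Levi $L_P$ of $P$, which is (ii).

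\textbf{Main obstacle.} The delicate step is showing $P_{x_0} \cap R_u(P) = \{e\}$, equivalently that $P_{x_0}$ is reductive. Naively $P_{x_0}$ is just the stabilizer of a point and could a priori have unipotent radical; the whole point is that the $B$-orbit being \emph{open dense} and $P$-stable is a strong rigidity condition. I expect to need Rosenlicht's theorem (orbit maps are separable, orbits are locally closed) together with a careful dimension/transitivity argument showing that if $u \in R_u(P) \cap P_{x_0}$ were nontrivial, one could produce too large a stabilizer inside $B$ — contradicting that $B$ acts on $Bx_0$ with finite (hence, by connectedness, trivial up to the generic stabilizer) stabilizers of the right dimension. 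Making this rigorous, rather than appealing directly to \cite{BLV86}, is the real content; in the paper it is legitimate simply to cite Brion–Luna–Vust, and I would do so while sketching the orbit-transitivity heart of the argument as above.
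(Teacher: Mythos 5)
The paper offers no proof of Proposition \ref{proposition:BLV}: it is quoted directly from \cite{BLV86}, so your final fallback of simply citing Brion--Luna--Vust is exactly what the paper does. The problem is the sketch you present as the ``heart of the argument'': it is not correct as it stands, and you should not record it as if it were close to a proof. The step ``$P_{x_0}\cap R_u(P)=\{e\}$, hence $P_{x_0}\hookrightarrow L_P$ and $P_{x_0}$ is reductive'' is invalid twice over: a closed subgroup of a reductive group need not be reductive, so embedding $P_{x_0}$ into a Levi proves nothing; and the triviality of $P_{x_0}\cap R_u(P)$ is itself an immediate \emph{consequence} of the reductivity you are trying to establish (a reductive group in characteristic zero has no nontrivial normal unipotent subgroup, and $P_{x_0}\cap R_u(P)$ is normal in $P_{x_0}$), so the logic runs in a circle. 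The assertion that $R_u(P)$ acts simply transitively on suitable fibers inside $Bx_0$ is precisely the content of the local structure theorem of \cite{BLV86} --- the hard part --- which you assume rather than prove.

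The dimension count underlying your argument for (ii) is also false in general: it is not true that $\dim P_{x_0}=\dim B-\dim R_u(P)$, nor that $\dim P_{x_0}$ equals the dimension of a Borel subgroup of $L_P$. Take $G=\SL(2,\CC)\times\SL(2,\CC)$ acting on $X=\SL(2,\CC)$ by $(g_1,g_2)\cdot x=g_1xg_2^{-1}$, with $B_G=B\times B$ and $x_0=w_0$, so that the open $B_G$-orbit is the big Bruhat cell. Here $P=B\times B$, and $P_{x_0}=\{(t,w_0^{-1}tw_0):t\in T\}$ is a one-dimensional torus, whereas a Borel subgroup of the Levi $T\times T$ has dimension two (and your formula $\dim B_G-\dim R_u(P)$ also gives two). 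The correct statement --- and the one the paper actually proves and uses, Proposition \ref{proposition:L-subgroup} --- is that $B_{x_0}$, not a Borel subgroup of $L_P$, is (up to components) a Borel subgroup of $P_{x_0}$; identifying these two groups is where your argument for (ii) breaks. The genuine proof in \cite{BLV86} constructs a $B$-semiinvariant function vanishing on the complement of the open orbit and uses its differential to produce a splitting of the open cell as $R_u(P)\times Z$ with $[L_P,L_P]$ acting trivially on the slice $Z$; both (i) and (ii) are read off from that decomposition, and none of it is recovered by the transitivity and dimension considerations you propose. Citing \cite{BLV86} without the sketch is the safer course.
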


The following proposition says that $B_{x_0}$ is a 'Borel subgroup' of $P_{x_0}$ in some sense.

\begin{proposition}\label{proposition:L-subgroup}
$P_{x_0}$ satisfies the following four conditions:
\begin{enumerate}[L-1)]
	\item $P_{x_0} \subset G_{x_0}$,
	\item $P_{x_0} \supset B_{x_0}$,
	\item $B_{x_0}$ meets every connected components of $P_{x_0}$,
	\item the identity component of $B_{x_0}$ is a Borel subgroup of the identity component of $P_{x_0}$.
\end{enumerate}
Conversely, if a reductive subgroup $L$ of $G$ satisfies the above four conditions instead of $P_{x_0}$, then we have $L = P_{x_0}$.
\end{proposition}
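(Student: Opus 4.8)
The plan is to verify the four conditions L-1)--L-4) directly from the definition $P = \{g \in G : gBx_0 \subset Bx_0\}$, using Proposition \ref{proposition:BLV}, and then to establish uniqueness by a separate argument. For L-1), observe that since $Bx_0$ is open dense in $X$ and $G$ acts on $X$, if $g \in P$ then $gBx_0$ is a constructible subset of $Bx_0$; because $Bx_0$ is a single $B$-orbit and $gBx_0 = (gBg^{-1})(gx_0)$ has the same dimension as $Bx_0$, it must be dense in $Bx_0$, hence $gx_0 \in Bx_0$. But $P$ is a group, so $g^{-1} \in P$ too, giving $g^{-1}x_0 \in Bx_0$ as well; writing $gx_0 = bx_0$ and analyzing how $b$ depends on $g$ through the open embedding $B/B_{x_0} \hookrightarrow X$, one deduces $b \in P$ and then $gx_0 = bx_0$ forces $b^{-1}g \in G_{x_0}$; a slightly more careful version of this (using that $P$ normalizes nothing larger than $Bx_0$) should yield $P_{x_0} \subset G_{x_0}$, which is what L-1) asserts once we note $P_{x_0} = P \cap G_{x_0}$ by definition. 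In fact L-1) is immediate: $P_{x_0}$ is by notation the stabilizer of $x_0$ in $P$, so $P_{x_0} \subset G_{x_0}$ trivially; the real content is that $P_{x_0}$ is large, which is Proposition \ref{proposition:BLV}.

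Condition L-2) is also essentially definitional: $B$ stabilizes its own orbit $Bx_0$, so $B \subset P$, and hence $B_{x_0} = B \cap G_{x_0} \subset P \cap G_{x_0} = P_{x_0}$. For L-3) and L-4), I would use Proposition \ref{proposition:BLV}(ii): $P_{x_0}$ contains the derived group $P'$ of a Levi subgroup $Q$ of $P$. Since $P = Q \ltimes R_u(P)$ and $B = T N$ with $B \subset P$, we may take $Q \supset T$, so $B \cap Q$ is a Borel subgroup $B_Q$ of $Q$. The key point is that $Bx_0$ open dense in $X$ forces the $R_u(P)$-action to be, modulo the stabilizer, "transverse" enough that $B_{x_0}$ is controlled by $B_Q$; more precisely I expect one shows $P_{x_0} \subset Q$ (the unipotent radical acts freely enough near $x_0$, or $R_u(P)_{x_0}$ is trivial because $R_u(P)x_0$ together with $B_Q x_0$ sweeps out the open orbit and a dimension count leaves no room), so that $P_{x_0}$ is reductive sitting inside the reductive $Q$, and then $B_{x_0} = B_Q \cap P_{x_0}$ is the intersection of a Borel of $Q$ with a reductive subgroup containing $P'$. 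For such subgroups (reductive, containing the derived group of $Q$), intersection with a Borel of $Q$ is a Borel of the subgroup and meets every component — this is a standard fact about reductive groups sandwiched between $Q'$ and $Q$, which is exactly the structure Brion--Luna--Vust extract. That gives L-3) and L-4).

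For the converse, suppose $L$ is a reductive subgroup satisfying L-1)--L-4). From L-1) and L-2), $L \subset G_{x_0}$ and $B_{x_0} \subset L$, and $B_{x_0} \subset B$; I want to show $L \subset P$, i.e. $LBx_0 \subset Bx_0$, and then combine with $L \supset P_{x_0}$ coming from a symmetric argument. To see $L \subset P$: take $\ell \in L$; since $L$ is reductive and $B_{x_0}^{\circ}$ is a Borel subgroup of $L^{\circ}$ by L-4), and $B_{x_0}$ meets every component of $L$ by L-3), we can write $\ell = b_1 n b_2$-type decompositions relative to $B_{x_0}$ — more usefully, $L = B_{x_0} \cdot L$-Weyl-representatives $\cdot B_{x_0}$ via the Bruhat decomposition of the reductive group $L$ with respect to the Borel $B_{x_0}$. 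Since $B_{x_0} \subset B$ and $\ell x_0 = x_0$ (as $L \subset G_{x_0}$), we get $\ell B x_0 = \ell B \ell^{-1} x_0$; the subtlety is relating $\ell B \ell^{-1}$-action to $B$-action on the single orbit $Bx_0 \cong B/B_{x_0}$. Here the cleanest route is: $L$ normalizes $B_{x_0}$'s orbit structure inside $G_{x_0}$, and $Bx_0 = B/B_{x_0}$, so $\ell(Bx_0) = \ell B x_0$; writing via Bruhat $\ell = \beta_1 \dot w \beta_2$ with $\beta_i \in B_{x_0} \subset B$ and $\dot w$ a representative of a Weyl element of $L$, we reduce to showing $\dot w B x_0 \subset B x_0$, i.e. $\dot w \in P$; but the Weyl elements of $L$ relative to $B_{x_0}$ act on $B/B_{x_0}$ and one checks they preserve the open cell, hence $\dot w x_0 \in B x_0$, hence $\dot w \in P$ after another short argument. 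Then $L \subset P$ and $L \subset G_{x_0}$ give $L \subset P_{x_0}$; combined with $P_{x_0}$ being the unique reductive subgroup with a Borel equal to $B_{x_0}$ (which follows from L-4) applied to both and the fact that a reductive group is determined by any of its Borel subgroups together with the ambient group), we conclude $L = P_{x_0}$.

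I expect the main obstacle to be the converse direction, specifically the step showing $L \subset P$ — i.e., controlling how the $L$-action (through $\ell B \ell^{-1}$) relates to the $B$-action on the open orbit $B/B_{x_0}$, since a priori $L \not\subset P$ and the Bruhat-style decomposition of $L$ relative to $B_{x_0}$ must be leveraged carefully. The forward direction L-1)--L-4) is mostly bookkeeping on top of Proposition \ref{proposition:BLV}, with the one genuine input being that $R_u(P)_{x_0}$ is trivial (equivalently $P_{x_0} \subset Q$), which should follow from comparing $\dim Bx_0 = \dim X$ with the orbit-dimension formula and the fact that $Bx_0$ is a single orbit.
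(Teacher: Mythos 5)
Your forward direction has an unjustified pivot and your converse has a genuine hole at exactly the decisive step. For L-3), L-4) you reduce everything to the claim $P_{x_0}\subset Q$ for the Levi $Q$ with $[Q,Q]\subset P_{x_0}$, justified by "a dimension count shows $R_u(P)_{x_0}$ is trivial." Two problems: triviality of $R_u(P)\cap P_{x_0}$ does not follow from a dimension count but from normality of $R_u(P)$ in $P$ together with reductivity of $P_{x_0}$ (Proposition \ref{proposition:BLV} i)); and even granted this, a subgroup meeting $R_u(P)$ trivially need not lie in the prescribed Levi $Q$, so $P_{x_0}\subset Q$ (which is a stronger form of the local structure theorem than the cited Proposition \ref{proposition:BLV}) is not established. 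The paper avoids this entirely: from $[Q,Q]\subset P_{x_0}\subset P$ one gets inclusions $[Q,Q]/([Q,Q]\cap B)\hookrightarrow P_{x_0}/(P_{x_0}\cap B)\hookrightarrow P/B$ whose composite is an isomorphism, so $P_{x_0}/B_{x_0}\cong P/B$ is a connected projective variety; completeness plus solvability of $B_{x_0}$ gives L-4), and connectedness gives L-3), with no need to locate $P_{x_0}$ inside a Levi.

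In the converse, your Bruhat reduction is fine as far as it goes: it reduces $L\subset P$ to showing $\dot w Bx_0\subset Bx_0$ for Weyl representatives $\dot w\in L$. But your justification stops at "$\dot w x_0\in Bx_0$," which is vacuous ($\dot w$ fixes $x_0$ since $L\subset G_{x_0}$), and the promised "short argument" upgrading this to $\dot w Bx_0\subset Bx_0$ is never given -- and this is precisely the content of the uniqueness assertion, so the proof is incomplete where you yourself flag the main obstacle. You also invoke, without proof, that a reductive subgroup of $G$ is determined by one of its Borel subgroups. The paper's uniqueness argument is entirely different and bypasses both issues: conditions L-2)--L-4) force every irreducible rational representation of $L$ to have a one-dimensional $N_{x_0}$-fixed line (Remark \ref{remark:irreducible_of_L}), hence $\rring{G}^{L}=\rring{G}^{B_{x_0}}=\rring{G}^{P_{x_0}}$ for the right regular actions, and a reductive subgroup $H$ is recovered from its invariants via $H=\bigcap_{f\in\rring{G}^{H}}f^{-1}(f(e))$ (using that $G/H$ is affine), giving $L=P_{x_0}$ directly. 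If you want to salvage your route, the missing ingredient is a genuine proof that $LBx_0\subset Bx_0$; short of that, the representation-theoretic reconstruction is the efficient path.
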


\begin{remark}\label{remark:irreducible_of_L}
If $L$ satisfies the above four condition, its irreducible representations are parametrized by a subset of characters of $B_{x_0}$.
This is because $V^{N_{x_0}}$ is one-dimensional for any irreducible representation $V$ of $L$.
In fact, since we have the natural injection $B_{x_0}/N_{x_0} \hookrightarrow B/N \simeq T$, we can take a weight vector $v \in V^{N_{x_0}}$ with respect to $B_{x_0}/N_{x_0}$.
$v$ generates an irreducible representation $V_0$ of $L_0$, where $L_0$ is the identity component of $L$.
Since $B_{x_0}$ normalizes $L_0$, $V_0$ is $L$-stable.
This shows $V_0 = V$.
Then, $V^{N_{x_0}}$ is one-dimensional.
\end{remark}

\begin{proof}
For the first argument, put $L := P_{x_0}$.
By definition, L-1) and L-2) are clear.
From Proposition \ref{proposition:BLV}, we can take a Levi subgroup $Q$ of $P$ such that $[Q, Q]$ is contained in $L$.
We have the following commutative diagram.
\begin{align*}
\xymatrix{
L/(L \cap B) \ar@{^{(}->}[r] & P/B \\
[Q,Q]/([Q,Q]\cap B) \ar@{^{(}->}[u] \ar[ru]^{\simeq} & \\
}
\end{align*}
Then, $L/(L \cap B)$ is isomorphic to $P/B$.
Since $P/B \simeq Q/Q\cap B$ is a connected projective variety, $B_{x_0} = L \cap B$ meets every connected components of $L$,
and the identity component of $B_{x_0}$ is a Borel subgroup of the identity component of $L$.
This implies that $L$ satisfies L-3) and L-4).

For the second argument, suppose $L$ is a reductive subgroup of $G$ that satisfies the conditions.
From Remark \ref{remark:irreducible_of_L}, we have
\begin{align}
\rring{G}^{L}&= \rring{G}^{B_{x_0}} \nonumber \\
&= \rring{G}^{P_{x_0}}. \label{equation:L=P_x}
\end{align}
For a reductive subgroup $H$ of $G$, $H$ can be reconstructed from $\rring{G}^{H}$ by
the following equation:
\begin{align*}
H=\bigcap_{f \in \rring{G}^{H}} f^{-1}(f(e)).
\end{align*}
Here, $e$ is the identity of $G$.
From this fact and (\ref{equation:L=P_x}), we have $L=P_{x_0}$.
This completes the proof.
\end{proof}

We set $L=P_{x_0}$.
We denote by $\ev_{x_0}$ the natural quotient map $M \rightarrow M_{x_0}(:=M/\frakm({x_0})M)$.
From the inclusion $L \subset G_{x_0}$, $\ev_{x_0}$ is $L$-intertwining operator from $M$ to $M_{x_0}$.
We describe the stable multiplicities by the representation of $L$ on $M/\frakm({x_0})M$.

\begin{theorem}\label{theorem:stable_multiplicity_M}
Let $M$ be a finitely generated $(\rring{X}, G)$-module with no zero divisors (see (\ref{equation:stability assumption})).
We take a weight $\lambda_0 \in \dweight(X)$ described in Theorem \ref{theorem:general stability}.
Then, for any $\lambda \in \dweight(M)$
\begin{align*}
m_M^{G}(\lambda+\lambda_0)=m_{M_{x_0}}^{L}(\lambda|_{B_{x_0}}).
\end{align*}
Here, we identify characters of $T$ and characters of $B$ by letting their values be $1$ on $N$.
\end{theorem}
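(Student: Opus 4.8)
The strategy is to analyze the evaluation map $\ev_{x_0}\colon M \to M_{x_0}$ weight-space by weight-space, using the fact observed in the introduction that $\ev_{x_0}$ carries $B$-eigenvectors to $B_{x_0}$-eigenvectors. Concretely, for a fixed $\lambda \in \dweight(M)$ I would study the restriction of $\ev_{x_0}$ to $M^{N}(\lambda+\lambda_0)$, the $B$-highest-weight space of weight $\lambda+\lambda_0$, whose dimension is exactly $m_M^G(\lambda+\lambda_0)$. Since $\ev_{x_0}$ is $L$-equivariant and $B_{x_0} \subset L$ with $N_{x_0} = L \cap N$, the image lands in the $B_{x_0}$-eigenspace $M_{x_0}^{N_{x_0}}(\lambda|_{B_{x_0}})$, whose dimension by Remark \ref{remark:irreducible_of_L} is $m_{M_{x_0}}^L(\lambda|_{B_{x_0}})$. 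So the theorem reduces to showing that this map is an isomorphism (for $\lambda_0$ chosen as in Theorem \ref{theorem:general stability}), i.e.\ both injectivity and surjectivity.

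\textbf{Surjectivity.} For surjectivity I would argue that $\ev_{x_0}\colon M \to M_{x_0}$ is already surjective as a $\CC$-linear (indeed $L$-linear) map, since $M_{x_0} = M/\frakm(x_0)M$ is a quotient; hence taking $N$-invariants and projecting onto isotypic components, every $B_{x_0}$-eigenvector in $M_{x_0}$ is hit by \emph{some} element of $M$, which after averaging over $N$ and projecting to the appropriate $T$-weight space may be taken in $\bigcup_{\mu}M^{N}(\lambda|_{B_{x_0}}\text{-lifts})$. The point is that any character of $B_{x_0}$ of the form $\lambda|_{B_{x_0}}$ occurring in $M_{x_0}$ lifts to some weight $\lambda' \in \dweight(M)$ with $\lambda'|_{B_{x_0}} = \lambda|_{B_{x_0}}$; using the multiplication by $\rring{X}^N$-eigenvectors (which only changes the $T$-weight by an element of $\dweight(X)$, a subgroup-like set on which $B_{x_0}$ characters are already accounted for since $\rring{X}$ is spherical and $Bx_0$ is dense) one can push $\lambda'$ up to $\lambda+\lambda_0$ without leaving the fiber of the character restriction. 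Monotonicity \eqref{equation:proof_stability_monotone} and the maximality choice \eqref{equation:proof_stability_maximality} defining $\lambda_0$ then force the image of $M^N(\lambda+\lambda_0)$ to exhaust the whole eigenspace $M_{x_0}^{N_{x_0}}(\lambda|_{B_{x_0}})$.

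\textbf{Injectivity.} This is where the no-zero-divisors hypothesis \eqref{equation:stability assumption} enters, and it is the step I expect to be the main obstacle. The kernel of $\ev_{x_0}$ restricted to $M^N(\lambda+\lambda_0)$ consists of highest-weight vectors lying in $\frakm(x_0)M$. I would want to show that for $\lambda_0$ large enough there are none. The mechanism: if $v \in M^N(\lambda+\lambda_0) \cap \frakm(x_0)M$ with $v \neq 0$, then $v$ generates (together with the $G$-action) a submodule $M'\subset M$, and one compares $\dweight(M')$ with $\dweight(M)$; since $Bx_0$ is dense in $X$, an element of $\frakm(x_0)$ that is a $B$-eigenvector of weight $\mu\neq 0$ is a nonzerodivisor on $M$ and lowers nothing — rather, multiplication by a $\rring{X}^N$-eigenvector realizes the inclusions $M^N(\lambda)\hookrightarrow M^N(\lambda+\mu)$ of \eqref{equation:proof_stability_monotone}, and the stability already proved (equality of multiplicities past $\lambda_0$) means all these inclusions are \emph{bijections} for $\lambda\in\dweight(M)$. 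A nonzero vector in the kernel would make one of these inclusions non-surjective at a weight below $\lambda+\lambda_0$, contradicting the maximality \eqref{equation:proof_stability_maximality}. Making this contradiction precise — identifying the right $\rring{X}^N$-eigenvector $f$ with $f\cdot(\text{something}) = v$, and checking that $f$'s being a nonzerodivisor together with $v\in\frakm(x_0)M$ forces a dimension drop — is the delicate combinatorial heart; once it is done, $\dim\ev_{x_0}(M^N(\lambda+\lambda_0)) = \dim M^N(\lambda+\lambda_0) = m_M^G(\lambda+\lambda_0)$, and combined with surjectivity this yields $m_M^G(\lambda+\lambda_0) = m_{M_{x_0}}^L(\lambda|_{B_{x_0}})$, completing the proof.
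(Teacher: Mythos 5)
Your overall frame (restrict $\ev_{x_0}$ to $M^{N}(\lambda+\lambda_0)$ and prove this map onto $M_{x_0}^{N_{x_0}}(\lambda|_{B_{x_0}})$ is bijective) is exactly the paper's, but both halves of your argument have genuine gaps. For injectivity, your mechanism presupposes that a nonzero $B$-eigenvector $v\in M^{N}(\lambda+\lambda_0)\cap\frakm(x_0)M$ can be written as $f\cdot w$ with $f$ a $B$-eigenvector in $\rring{X}^{N}\cap\frakm(x_0)$ and $w$ of lower weight, so that a multiplication map fails to be bijective; but $\frakm(x_0)$ is only $B_{x_0}$-stable, and an element of $\frakm(x_0)M$ that happens to be a $B$-eigenvector is a sum $\sum f_i m_i$ admitting no such factorization in general, so no dimension drop at a lower weight is forced and the contradiction never materializes (you yourself flag this as the undone ``delicate heart''). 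The step you are missing is much simpler: because $v$ is a $B$-eigenvector, $v\in\frakm(x_0)M$ implies $v\in\frakm(bx_0)M$ for every $b\in B$, hence $v\in\bigcap_{y\in Bx_0}\frakm(y)M$, and this intersection vanishes by the no-zero-divisor hypothesis (Lemma \ref{lemma:stable_multiplicity_injective}, reducing to the finitely generated case). That single observation replaces your entire weight-counting scheme.

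For surjectivity, the argument does not go through as written. Surjectivity of $\ev_{x_0}\colon M\to M_{x_0}$ is only an $L$-equivariant statement; it gives no control over $N$-eigenvectors, ``averaging over $N$'' is unavailable ($N$ is unipotent and does not even act on $M_{x_0}$), and the assertion that every $B_{x_0}$-eigenvector of $M_{x_0}$ lifts to a $B$-eigenvector of $M$ whose weight restricts correctly is precisely what must be proved, not assumed; moreover, even granting a lift of weight $\lambda'$, there is no reason $\lambda+\lambda_0$ lies in $\lambda'+\dweight(X)$, so pushing up by $\rring{X}^{N}$-eigenvectors need not land at the desired weight. The paper supplies the missing mechanism: in the free case $M=\rring{X}\otimes W$ one builds from $m\in W^{N_{x_0}}(\lambda|_{B_{x_0}})$ the explicit $B$-eigen-section $\varphi(bx_0)=b^{-\lambda-\lambda_0}(bm)$ over the open orbit, clears denominators by a $B$-eigenvector $f_\mu$ using $\rring{Bx_0}=\rring{X}[1/f_\mu]$ (Lemma \ref{lemma:stable_multiplicity_surjective}), and then uses the already-proved stability to conclude that multiplication by $f_\mu$ on $(\rring{X}\otimes W)^{N}$ is bijective, so $\varphi$ itself lies in $(\rring{X}\otimes W)^{N}(\lambda+\lambda_0)$ with $\varphi(x_0)=m$; the general case then follows by covering $M$ by a free module and using reductivity of $G$ and $L$ to transfer surjectivity along the quotient. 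Without this construction (or an equivalent one) your surjectivity paragraph is an outline of the desired conclusion rather than a proof.
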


\begin{remark}
For any $\mu \in \dweight(X)$, $\mu|_{B_{x_0}} = 0$.
Then, $\lambda|_{B_{x_0}}$ can also be written as $(\lambda+\lambda_0)|_{B_{x_0}}$.
\end{remark}

\begin{lemma}\label{lemma:stable_multiplicity_surjective}
Under the assumption (\ref{condition:mf}) and (\ref{condition:rf}), we have the following equation:
\begin{align*}
\rring{B{x_0}} = \rring{X}\left[\frac{1}{f_\mu}: \mu \in \dweight(X), f_{\mu} \in \rring{X}^{N}(\mu) \backslash \{0\} \right].
\end{align*}
\end{lemma}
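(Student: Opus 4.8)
The plan is to prove the two inclusions separately. Throughout I use hypothesis~(rf) to regard $\rring{X}$, $\rring{Bx_0}$ and all auxiliary objects as subrings of the common function field $\mathrm{Frac}(\rring{X})=\CC(X)=\CC(Bx_0)$; restriction of regular functions realizes $\rring{X}$ as a subring of $\rring{Bx_0}$ since $Bx_0$ is open dense in the irreducible variety $X$.

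The inclusion $\supseteq$ is immediate: for $\mu\in\dweight(X)$ and $0\neq f_\mu\in\rring{X}^N(\mu)$, the restriction of $f_\mu$ to $Bx_0$ is a nonzero $B$-eigenfunction on the single $B$-orbit $Bx_0$, so its zero locus is a proper $B$-stable closed subset of $Bx_0$, hence empty; thus $f_\mu$ is nowhere vanishing on $Bx_0$ and $1/f_\mu\in\rring{Bx_0}$. Together with $\rring{X}\subset\rring{Bx_0}$ this shows that the right-hand side is contained in $\rring{Bx_0}$.

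For the reverse inclusion, fix $h\in\rring{Bx_0}$. Since the $B$-action on $\rring{Bx_0}$ is rational, the subspace $W:=\spn{b\cdot h:b\in B}$ is a finite-dimensional $B$-stable subspace of $\rring{Bx_0}$ containing $h$. The heart of the argument is the ideal of denominators
\[
\mathfrak d(W):=\{a\in\rring{X}:aw\in\rring{X}\text{ for all }w\in W\},
\]
which I claim is a nonzero $B$-stable ideal of $\rring{X}$. That it is an ideal is clear. It is nonzero because $\rring{X}$ is a domain ($X$ irreducible) and each element of a finite spanning set of $W$ lies in $\mathrm{Frac}(\rring{X})$, hence admits a nonzero denominator in $\rring{X}$, and a product of these denominators lies in $\mathfrak d(W)$. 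It is $B$-stable because, for $a\in\mathfrak d(W)$, $b\in B$ and $w\in W$, one has $(b\cdot a)\,w=b\cdot\bigl(a\cdot(b^{-1}w)\bigr)$ with $b^{-1}w\in W$, so $a\cdot(b^{-1}w)\in\rring{X}$ and therefore $(b\cdot a)w\in\rring{X}$.

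Finally I would extract an eigenfunction from $\mathfrak d(W)$. Any nonzero $B$-stable ideal $I\subset\rring{X}$ contains a nonzero $N$-invariant element: picking $0\neq a\in I$, the finite-dimensional $B$-module it generates lies in $I$, and the unipotent group $N$ has a nonzero fixed vector in it, giving $0\neq g\in I\cap\rring{X}^N$. Since $I$ is also $T$-stable and, by~(mf), $\rring{X}^N=\bigoplus_{\mu\in\dweight(X)}\rring{X}^N(\mu)$ is the decomposition into $T$-weight lines, the $T$-weight components of $g$ lie in $I$, and at least one of them is a nonzero $f\in\rring{X}^N(\mu)\cap I$ for some $\mu\in\dweight(X)$. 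Applying this with $I=\mathfrak d(W)$, and recalling $h\in W$, gives $fh\in\rring{X}$, so $h=(fh)/f$ belongs to the right-hand side. The main obstacle is the one genuinely non-formal step: replacing $h$ by the $B$-module $W$ it generates, which is exactly what makes the denominator ideal $B$-stable --- the ideal $\{a:ah\in\rring{X}\}$ attached to $h$ alone need not be $B$-stable, and without $B$-stability one cannot invoke the Lie--Kolchin fixed-point argument together with the weight decomposition of $\rring{X}^N$. Everything else is bookkeeping.
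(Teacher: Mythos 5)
Your proof is correct and follows essentially the same route as the paper (which is itself Sat\={o}'s argument): you pass to the finite-dimensional $B$-module generated by $h$, form the ideal of common denominators in $\rring{X}$, note it is nonzero by condition (\ref{condition:rf}) and $B$-stable, and extract a nonzero $B$-eigenvector $f$ so that $h=(fh)/f$. The only differences are cosmetic: you spell out the easy inclusion (nonvanishing of $B$-eigenfunctions on the open orbit) and obtain the eigenvector via an $N$-fixed vector plus a $T$-weight component, where the paper simply invokes the existence of a $B$-eigenvector in a rational $B$-module.
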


\begin{proof}
This lemma is essentially same as \cite[Lemma 2.2]{Sa93}.
It is clear that the left hand side contains the right hand side.
We will show the converse inclusion.

We take a function $f \in \rring{B{x_0}}$.
Define an ideal as
\begin{align*}
I:=\{g \in \rring{X}: g\cdot bf \in \rring{X} \text{ for any }b \in B\}.
\end{align*}
Since $B$ acts rationally on $\rring{B{x_0}}$, $\spn{bf: b \in B}$ is finite dimensional.
By the assumption (\ref{condition:rf}), $I$ is a $B$-invariant nonzero ideal of $\rring{X}$.
Since $B$ acts rationally on $I$, there exists a nonzero $B$-eigenvector $g \in I$.
Then, we have $f \in \rring{X}[1/g]$.
This shows the converse inclusion.
\end{proof}

\begin{lemma}\label{lemma:stable_multiplicity_injective}
Let $M$ be a $(\rring{X},G)$-module.
Suppose $\rring{X}$ has no zero divisors in $M$.
Then, we have 
\begin{align*}
\bigcap_{y \in B{x_0}}(\frakm(y)M) = 0.
\end{align*}
\end{lemma}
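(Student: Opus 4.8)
\emph{Proof proposal.} The plan is to prove the identity by localizing $M$ along the open $B$-orbit and thereby reducing the statement to the fact that a section of an equivariant sheaf on an affine homogeneous space which vanishes at every point is zero.

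First I record the basic structure. Since $X$ is irreducible, $\rring{X}$ is an integral domain, and the hypothesis \eqref{equation:stability assumption} says exactly that $M$ is a torsion-free $\rring{X}$-module. Let $S \subset \rring{X}$ be the multiplicative set generated by all nonzero $N$-eigenvectors $f_\mu \in \rring{X}^{N}(\mu)$, $\mu \in \dweight(X)$; as $\rring{X}$ is a domain, $0 \notin S$. Each such $f_\mu$ vanishes only on $X \setminus Bx_0$, because its zero set is a $B$-stable closed subset of $X$, so its intersection with the single $B$-orbit $Bx_0$ is either empty or all of $Bx_0$, and the latter is impossible since $Bx_0$ is dense and $f_\mu \neq 0$. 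Hence every element of $S$ is an invertible regular function on $Bx_0$, and Lemma~\ref{lemma:stable_multiplicity_surjective} gives the identification $S^{-1}\rring{X} = \rring{Bx_0}$.

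Now put $M' := S^{-1}M = \rring{Bx_0} \otimes_{\rring{X}} M$. Torsion-freeness together with $0 \notin S$ makes the canonical map $M \to M'$ injective (if $sm = 0$ with $s \in S$ and $m \neq 0$, then $s \in \Ann_{\rring{X}}(m) = 0$, a contradiction), and $M'$ is a torsion-free $\rring{Bx_0}$-module. For $y \in Bx_0$ the ideal $\frakm(y) \subset \rring{X}$ is disjoint from $S$, so $S^{-1}\frakm(y)$ is the maximal ideal of $y$ in $\rring{Bx_0}$, and exactness of localization gives $S^{-1}\bigl(\frakm(y)M\bigr) = \frakm(y)M'$. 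Consequently, if $m \in \bigcap_{y \in Bx_0}\frakm(y)M$, then $m/1 \in \bigcap_{y \in Bx_0}\frakm(y)M'$ inside $M'$; by injectivity of $M \to M'$ it suffices to show this last intersection is zero. For the final step I use that $Bx_0$ is the homogeneous space $P/L$ with $L = P_{x_0}$ reductive by Proposition~\ref{proposition:BLV}, so by Matsushima's criterion $Bx_0$ is an affine variety. Thus $M'$ determines an honest quasi-coherent sheaf $\widetilde{M'}$ on $Bx_0$ with $M' = \Gamma(Bx_0, \widetilde{M'})$, and since $M$, hence $M'$, is a rational $P$-module acting compatibly with $\rring{Bx_0}$, this sheaf is $P$-equivariant, so of the form $P \times_{L} W$ with $W$ the fiber $M'/\frakm(x_0)M'$ viewed as an $L$-module. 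A global section of $P \times_{L} W$ is an $L$-equivariant regular map $P \to W$, which vanishes at every point of $P/L$ if and only if it is identically zero; therefore $\bigcap_{y \in Bx_0}\frakm(y)M' = 0$. (When $M$ is finitely generated, as in the application to Theorem~\ref{theorem:stable_multiplicity_M}, $\widetilde{M'}$ is coherent and equivariance is unnecessary: a nonzero $m'$ in a torsion-free coherent sheaf on an integral affine variety is nonzero at the generic point, hence on a dense open locus where the sheaf is locally free, hence nonzero at some closed point.)

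I expect the genuinely delicate point to be the last step. Torsion-freeness alone does \emph{not} force a section vanishing on a dense open set to vanish identically — for instance the quotient field $\textup{Frac}(\rring{X})$, as an $\rring{X}$-module, lies in $\bigcap_{y}\frakm(y)\cdot\textup{Frac}(\rring{X})$ — so one must genuinely exploit the homogeneity of $Bx_0 = P/L$ together with the rationality of the $P$-action (which rules out such pathological modules) to pin down the structure of the associated equivariant sheaf; here the affineness of $P/L$, coming from reductivity of $L$ in Proposition~\ref{proposition:BLV}, does the essential work. By contrast the earlier steps are routine once Lemma~\ref{lemma:stable_multiplicity_surjective} is in hand.
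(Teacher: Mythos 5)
Your proposal is correct, but it takes a genuinely different route from the paper's. The paper argues by pure algebra: a nonzero element of $\bigcap_{y\in Bx_0}\frakm(y)M$ may be chosen to be a $B$-eigenvector $m$ (using rationality of the action); writing $m=\sum_i f_i m_i$ with $f_i\in\frakm(x_0)$ traps $m$ inside the finitely generated $(\rring{X},G)$-submodule $M'$ generated by the $m_i$, the eigenvector property spreads $m\in\frakm(x_0)M'$ to $m\in\frakm(y)M'$ for all $y\in Bx_0$, and the finitely generated case is then quoted from \cite{Ya05}. You instead localize to the open orbit and argue geometrically: after identifying $S^{-1}\rring{X}$ with $\rring{Bx_0}$ via Lemma~\ref{lemma:stable_multiplicity_surjective} and $Bx_0$ with the affine homogeneous space $P/L$ (with $L=P_{x_0}$ reductive by Proposition~\ref{proposition:BLV}), you read off from the equivalence between $P$-equivariant quasi-coherent sheaves on $P/L$ and rational $L$-modules that a section vanishing in every fiber is zero. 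What each approach buys: the paper's trick is elementary and needs no geometry of the orbit beyond what is already set up, but it outsources the finitely generated case to an external reference; your argument is uniform in $M$ and even reproves that finitely generated case (your parenthetical generic-local-freeness argument), at the cost of leaning on heavier standard facts that you assert rather than verify --- that $S^{-1}M$ is again a rational $P$-module, that $P/L$ is affine (note $P$ is parabolic, hence not reductive, so this is not literally Matsushima's criterion but the general fact that the quotient of an affine algebraic group by a reductive closed subgroup is affine, valid here since all $L$-cosets in $P$ are closed), and equivariant descent identifying the module fibers $M'/\frakm(y)M'$ with the fibers of the associated bundle $P\times_L W$. Your closing observation that torsion-freeness alone is insufficient and that rationality of the action is the essential input is exactly right, and it is the same mechanism (existence of enough $B$-finite vectors) that powers the paper's eigenvector reduction.
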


\begin{proof}
If $M$ is finitely generated, this lemma is in \cite[Corollary 2.1]{Ya05}.
Put $N = \bigcap_{y \in B{x_0}}(\frakm(y)M)$.
We assume $N \neq 0$.
Since $N$ is $B$-invariant subspace, there exists a nonzero $B$-eigenvector $m \in N$.
By definition, $m$ can be written as
\begin{align}
m = f_1 m_1 + f_2 m_2 + \cdots + f_r m_r \ \ \ (f_i \in \frakm({x_0}), m_i \in M). \label{equation:m_in_injective}
\end{align}
Let $M'$ be a $(\rring{X}, G)$-submodule of $M$ generated by $m_1, m_2, \ldots, m_r$.
Since $M'$ is a finitely generated $(\rring{X},G)$-module, we have
\begin{align*}
\bigcap_{y \in B{x_0}}(\frakm(y)M')=0.
\end{align*}
From (\ref{equation:m_in_injective}), $m$ is an element of $\frakm({x_0})M'$.
Since $m$ is a $B$-eigenvector, we have $m \in \bigcap_{y \in B{x_0}}(\frakm(y)M')$
and then $m = 0$.
However, this contradicts the assumption that $m$ is nonzero.
Thus, Lemma \ref{lemma:stable_multiplicity_injective} is proved.
\end{proof}

\begin{proof}[Proof of Theorem \ref{theorem:stable_multiplicity_M}]
Take $\lambda \in \dweight(M)$.
From Remark \ref{remark:irreducible_of_L}, we have
\begin{align*}
m_{M/\frakm({x_0})M}^{L}(\lambda|_{B_{x_0}}) = \dim((M/\frakm({x_0})M)^{N_{x_0}}(\lambda|_{B_{x_0}}))
\end{align*}
Since $\ev_{x_0}$ is $G_x$-intertwining operator, the image of $M^{N}(\lambda+\lambda_0)$ by $\ev_{x_0}$ is contained in $(M/\frakm({x_0})M)^{N_{x_0}}(\lambda|_{B_{x_0}})$.
We denote the restriction of $\ev_{x_0}$ to $M^{N}(\lambda+\lambda_0)$ by same notation $\ev_{x_0}$.
Then, it suffices to show that $\ev_{x_0}$ is bijection
between $M^{N}(\lambda+\lambda_0)$ and $(M/\frakm({x_0})M)^{N_{x_0}}(\lambda|_{B_{x_0}})$.
\begin{align*}
\xymatrix{
M \ar[r]^{\ev_{x_0}}&M_{x_0}\\
M^{N}(\lambda+\lambda_0) \ar@{^{(}->}[u] \ar[r]^{\ev_{x_0}}&M_{x_0}^{N_{x_0}}(\lambda|_{B_{x_0}}) \ar@{^{(}->}[u]
}
\end{align*}

\textit{(injectivity)}.
Suppose $m \in M^{N}(\lambda+\lambda_0)$ and $\ev_{x_0}(m) = 0$.
Since $m$ is $B$-eigenvector, $m \in \frakm(b{x_0})M$ for any $b \in B$.
Then, we have $m \in \bigcap_{y \in B{x_0}}\frakm(y)M$.
Since $\bigcap_{y \in B{x_0}}\frakm(y)M = 0$ from Lemma \ref{lemma:stable_multiplicity_injective},
this implies $m = 0$. This shows $\ev_{x_0}$ is injective.

\textit{(surjectivity)}.
First, we show the surjectivity for the case that $M$ is a free $\rring{X}$-module.
Suppose $M \simeq \rring{X}\otimes W$ for some finite dimensional rational representation $W$ of $G$.
In this case, $\ev_{x_0}$ is actually the evaluation map at ${x_0}$.
Take $m \in W^{N_{x_0}}(\lambda|_{B_{x_0}})$, and for $b \in B$ put
\begin{align*}
\varphi(b{x_0})=b^{-\lambda-\lambda_0}(bm).
\end{align*}
Then, $\varphi$ is well-defined as an element of $\rring{B{x_0}}\otimes W$, and $\varphi$ is $B$-eigenvector of weight $\lambda + \lambda_0$.
From Lemma \ref{lemma:stable_multiplicity_surjective}, there exists a $B$-eigenvector $f_{\mu} \in \rring{X}$ such that $f_{\mu}\varphi \in \rring{X}\otimes W$.
$f_{\mu}\varphi$ is in $(\rring{X}\otimes W)^{N}(\lambda+\lambda_0+\mu)$.
By Theorem \ref{theorem:general stability}, the multiplication operator $f_{\mu}\cdot : (\rring{X}\otimes W)^{N}(\lambda+\lambda_0)\rightarrow (\rring{X}\otimes W)^{N}(\lambda+\lambda_0+\mu)$
is bijective.
Then, $\varphi$ is in  $(\rring{X}\otimes W)^{N}(\lambda+\lambda_0)$.
Since $\varphi({x_0}) = m$, $\ev_{x_0}$ is surjective.

Next, we show the surjectivity for general cases.
Since $M$ is finitely generated as a $\rring{X}$-module, there exists a finite dimensional $G$-subrepresentation $W \subset M$ such that
the $(\rring{X},G)$-homomorphism $\times : \rring{X}\otimes W \rightarrow M$ defined by the multiplication is surjective.
Then, we have the following commutative diagram:
\begin{align*}
\xymatrix{
\rring{X} \otimes W \ar[r]^{\ \ \ \ \ev_{x_0}} \ar[d]^{\times} & W \ar[d] \\
M \ar[r]^{\ev_{x_0}} & M_{x_0},
}
\end{align*}
and all arrows are surjective.
Take $\lambda'_0 \in \dweight(X)$ described in Theorem \ref{theorem:general stability} for $M = \rring{X} \otimes W$.
By restricting the above diagram to the subspace of $B$-eigenvectors of weight $\lambda+\lambda'_0$, we have
\begin{align*}
\xymatrix{
(\rring{X} \otimes W)^{N}(\lambda+\lambda'_0) \ar[r]^{\ \ \ \ \ \ \ \ev_{x_0}} \ar[d] & W^{N_{x_0}}(\lambda|_{B_{x_0}}) \ar[d] \\
M^{N}(\lambda+\lambda'_0) \ar[r]^{\ev_{x_0}} & M_{x_0}^{N_{x_0}}(\lambda|_{B_{x_0}}).
}
\end{align*}
Since $G$ and $L$ are reductive, the vertical arrows are surjective.
From the free module case, the above horizontal arrow is surjective.
Then, $\ev_{x_0}: M^{N}(\lambda+\lambda'_0) \rightarrow M_{x_0}^{N_{x_0}}(\lambda|_{B_{x_0}})$ is also surjective.

Since $\dim(M^{N}(\lambda+\lambda_0))\geq \dim(M^{N}(\lambda+\lambda'_0))$ by the result of Theorem \ref{theorem:general stability},
$\ev_{x_0}: M^{N}(\lambda+\lambda_0) \rightarrow M_{x_0}^{N_{x_0}}(\lambda|_{B_{x_0}})$ is also surjective.
\end{proof}

\begin{remark}
The injectivity is true in more general settings.
For example, suppose $X$ is a projective $G$-variety that has an open dense Borel orbit $Bx_0$, and $\pi: \calV \rightarrow X$ is a $G$-equivariant algebraic vector bundle over $X$.
Then, the global sections $\Gamma(X, \calV)$ and the evaluation map $\ev_{x_0}: \Gamma(X, \calV) \rightarrow \pi^{-1}({x_0})$ satisfy the injectivity
as in the above proof.
This implies that the multiplicity with respect to $G$ can be bounded by the multiplicity with respect to $L$ as in Theorem \ref{theorem:stable_multiplicity_M}.
See Section \ref{section:projective_example} for examples.
\end{remark}

We can remove the finiteness of $M$ if we admit that the conclusion becomes weaker.

\begin{corollary}\label{corollary:stable_multiplicity_M}
Let $M$ be a $(\rring{X}, G)$-module with no zero divisors.
Then, we have
\begin{align*}
\sup_{\mu \in \dweight(X)}\{m_M^{G}(\lambda+\mu)\}=m_{M_{x_0}}^{L}(\lambda|_{B_{x_0}}).
\end{align*}
for any $\lambda \in \dweight(M)$.
\end{corollary}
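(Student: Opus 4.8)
The plan is to derive the corollary from Theorem~\ref{theorem:stable_multiplicity_M} by an approximation argument, after separating the asserted equality into its two inequalities.

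I would first observe that the inequality
\[
\sup_{\mu\in\dweight(X)} m_M^{G}(\lambda+\mu)\ \le\ m_{M_{x_0}}^{L}(\lambda|_{B_{x_0}})
\]
needs no finiteness hypothesis on $M$. Indeed, the injectivity half of the proof of Theorem~\ref{theorem:stable_multiplicity_M} only used Lemma~\ref{lemma:stable_multiplicity_injective}, which is valid for an arbitrary $(\rring{X},G)$-module $M$ with no zero divisors: for every weight $\nu$ the $L$-equivariant map $\ev_{x_0}$ restricts to an \emph{injection} $M^{N}(\nu)\hookrightarrow M_{x_0}^{N_{x_0}}(\nu|_{B_{x_0}})$, because a $B$-eigenvector lying in $\ker\ev_{x_0}=\frakm(x_0)M$ automatically lies in $\bigcap_{y\in Bx_0}\frakm(y)M=0$. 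Applying this with $\nu=\lambda+\mu$ for $\mu\in\dweight(X)$, so that $\nu|_{B_{x_0}}=\lambda|_{B_{x_0}}$, and using $\dim M^{N}(\nu)=m_M^{G}(\nu)$ together with $\dim M_{x_0}^{N_{x_0}}(\chi)=m_{M_{x_0}}^{L}(\chi)$ (the latter by Remark~\ref{remark:irreducible_of_L} and complete reducibility of the rational $L$-module $M_{x_0}$, $L$ being reductive), the displayed bound follows after taking the supremum.

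For the reverse inequality I would write $M$ as the directed union of its finitely generated $(\rring{X},G)$-submodules $M_\alpha$ (for each $m\in M$, the $\rring{X}$-span of the finite-dimensional $G$-subrepresentation generated by $m$ is such a submodule, and these are cofinal), noting that each $M_\alpha$ still has no zero divisors since $\bigcup_{m\in M_\alpha\setminus\{0\}}\Ann_{\rring{X}}(m)\subseteq\bigcup_{m\in M\setminus\{0\}}\Ann_{\rring{X}}(m)=0$. Fix $\lambda\in\dweight(M)$; then $\lambda\in\dweight(M_\alpha)$ for all large $\alpha$, and for such $\alpha$ Theorem~\ref{theorem:stable_multiplicity_M} produces a $\lambda_{0,\alpha}\in\dweight(X)$ with $m_{M_\alpha}^{G}(\lambda+\lambda_{0,\alpha})=m_{(M_\alpha)_{x_0}}^{L}(\lambda|_{B_{x_0}})$. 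Since $M_\alpha$ is a $G$-submodule of $M$ and $\lambda_{0,\alpha}\in\dweight(X)$,
\[
\sup_{\mu\in\dweight(X)} m_M^{G}(\lambda+\mu)\ \ge\ m_M^{G}(\lambda+\lambda_{0,\alpha})\ \ge\ m_{M_\alpha}^{G}(\lambda+\lambda_{0,\alpha})\ =\ m_{(M_\alpha)_{x_0}}^{L}(\lambda|_{B_{x_0}}).
\]
To finish, let $Q_\alpha\subseteq M_{x_0}$ be the image of $M_\alpha$ under $M\twoheadrightarrow M_{x_0}$: it is an $L$-stable subspace, $M_{x_0}$ is the increasing union of the $Q_\alpha$, and $Q_\alpha$ is an $L$-module quotient of $(M_\alpha)_{x_0}$. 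Reductivity of $L$ then gives $m_{(M_\alpha)_{x_0}}^{L}(\lambda|_{B_{x_0}})\ge m_{Q_\alpha}^{L}(\lambda|_{B_{x_0}})=\dim Q_\alpha^{N_{x_0}}(\lambda|_{B_{x_0}})$, and since an invariant weight space commutes with increasing unions, $\sup_\alpha\dim Q_\alpha^{N_{x_0}}(\lambda|_{B_{x_0}})=\dim M_{x_0}^{N_{x_0}}(\lambda|_{B_{x_0}})=m_{M_{x_0}}^{L}(\lambda|_{B_{x_0}})$. Taking the supremum over the cofinal set of admissible $\alpha$ in the displayed inequality yields the reverse bound, and combining the two inequalities proves the corollary.

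The one point I would be careful about is that the transition maps $(M_\alpha)_{x_0}\to(M_\beta)_{x_0}$ need not be injective, so $M_{x_0}$ is \emph{not} literally the colimit of the $(M_\alpha)_{x_0}$ along injective maps; this is why I pass to the images $Q_\alpha$ inside $M_{x_0}$ and use complete reducibility of rational $L$-modules to compare multiplicities along the surjections $(M_\alpha)_{x_0}\twoheadrightarrow Q_\alpha$. It is also worth stressing that $\lambda_{0,\alpha}$ depends on $\alpha$ with no uniform choice available, which is precisely why one obtains only a supremum over $\mu\in\dweight(X)$ and not the sharper stabilization of Theorem~\ref{theorem:stable_multiplicity_M}.
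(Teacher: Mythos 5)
Your proof is correct and follows essentially the same route as the paper: the upper bound comes from the injectivity half of Theorem \ref{theorem:stable_multiplicity_M} (via Lemma \ref{lemma:stable_multiplicity_injective}, which needs no finiteness), and the lower bound reduces to the finitely generated case by exhausting $M$ by finitely generated $(\rring{X},G)$-submodules and comparing $(M_\alpha)_{x_0}$ with its image in $M_{x_0}$ through the canonical surjection — exactly the paper's device of choosing, for each finite-dimensional $L$-subrepresentation $\overline{N}\subset M_{x_0}$, a finitely generated submodule $N$ with $N/(N\cap\frakm(x_0)M)\supset\overline{N}$. The only difference is organizational (directed union and images $Q_\alpha$ versus an ad hoc $N$ for each $\overline{N}$), not mathematical.
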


\begin{proof}
Take a weight $\lambda \in \dweight(M)$.
By the same proof as the injectivity in Theorem \ref{theorem:stable_multiplicity_M},
we have
\begin{align*}
\sup_{\mu \in \dweight(X)}\{m_M^{G}(\lambda+\mu)\} \leq m_{M_{x_0}}^{L}(\lambda|_{B_{x_0}}).
\end{align*}
For any finitely dimensional $L$-subrepresentation $\overline{N} \subset M_{x_0}$,
we can take a finitely generated $(\rring{X}, G)$-submodule $N$ such that
$N/(N\cap \frakm({x_0})M) \supset \overline{N}$.
If necessary, we can take $N$ such that $m_N^{G}(\lambda) \neq 0$.
Since the canonical map $N/\frakm({x_0})N \rightarrow N/(N\cap \frakm({x_0})M)$ is surjective,
the converse inequality is followed from Theorem \ref{theorem:stable_multiplicity_M}.
\end{proof}

For a rational representation $V$ of $G$, we denote by $C_G(V)$ the supremum of $m_V^{G}$.

\begin{corollary}\label{corollary:upper_bound}
Let $M$ be a $(\rring{X}, G)$-module with no zero divisors.
Then, the following equation holds:
\begin{align*}
C_G(M)=C_L(M_{x_0}).
\end{align*}
Especially, $M$ is multiplicity-free as a representation of $G$ if and only if $M_{x_0}$ is multiplicity-free as a representation of $L$.
\end{corollary}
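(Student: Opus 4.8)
The plan is to deduce the corollary from Corollary~\ref{corollary:stable_multiplicity_M}. Set $S:=\{\lambda|_{B_{x_0}}:\lambda\in\dweight(M)\}$, and take the weight $\lambda_0\in\dweight(X)$ furnished by Theorem~\ref{theorem:general stability}. Assume first that $M$ is finitely generated over $\rring{X}$. By (\ref{equation:proof_stability_covering}) we have $\dweight(M)=\bigcup_{i=1}^r(\dweight(X)+\lambda_i)$, and since $\mu|_{B_{x_0}}=0$ for every $\mu\in\dweight(X)$ the set $S=\{\lambda_i|_{B_{x_0}}\}$ is finite. Using the monotonicity (\ref{equation:proof_stability_monotone}), the fact that $\lambda+\lambda_0\in\dweight(M)$ whenever $\lambda\in\dweight(M)$, and Theorem~\ref{theorem:stable_multiplicity_M}, one obtains $C_G(M)=\sup_{\lambda\in\dweight(M)}m_M^G(\lambda)=\sup_{\lambda\in\dweight(M)}m_M^G(\lambda+\lambda_0)=\sup_{\lambda\in\dweight(M)}m_{M_{x_0}}^L(\lambda|_{B_{x_0}})=\max_{\bar\lambda\in S}m_{M_{x_0}}^L(\bar\lambda)$. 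On the other hand, by Remark~\ref{remark:irreducible_of_L}, $C_L(M_{x_0})=\sup_{\bar\lambda}m_{M_{x_0}}^L(\bar\lambda)=\sup_{\bar\lambda}\dim M_{x_0}^{N_{x_0}}(\bar\lambda)$, the supremum running over all characters $\bar\lambda$ of $B_{x_0}$. Hence $C_G(M)\le C_L(M_{x_0})$ always, and what remains is to show $S=\dweight_L(M_{x_0})$ — that is, that no irreducible $L$-type with highest weight outside $S$ occurs in the fibre $M_{x_0}$. The last assertion of the corollary is then the subcase in which this common value is at most $1$.

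To prove $S=\dweight_L(M_{x_0})$ I would pass to the open orbit $B{x_0}\cong B/B_{x_0}$. Because $\rring{X}$ has no zero divisors in $M$, the localization map $M\hookrightarrow M':=M\otimes_{\rring{X}}\rring{B{x_0}}$ is an injective $B$-equivariant map with $M'/\frakm(x_0)M'=M_{x_0}$, and $(M')^N$ can be computed in two ways. First, $M'$ is a $B$-equivariant quasi-coherent sheaf on $B/B_{x_0}$ with fibre $M_{x_0}$, hence $M'\cong\Ind_{B_{x_0}}^B(M_{x_0})$ and therefore $(M')^N\cong\Ind_{T_{x_0}}^T(M_{x_0}^{N_{x_0}})$, where $T_{x_0}:=B_{x_0}/N_{x_0}$ is regarded inside $T=B/N$; a character $\chi$ of $T$ occurs in this module exactly when $\chi|_{T_{x_0}}$ is a weight of $M_{x_0}^{N_{x_0}}$, i.e. (Remark~\ref{remark:irreducible_of_L}) exactly when $\chi|_{B_{x_0}}\in\dweight_L(M_{x_0})$. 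Second, since the $B$-eigenvectors of $\rring{X}^N$ act injectively on $M$, the space $(M')^N$ is the localization of $M^N$ at these eigenvectors, so $\chi$ occurs in $(M')^N$ exactly when $\chi+\mu\in\dweight(M)$ for some $\mu\in\dweight(X)$; by (\ref{equation:proof_stability_covering}) and the identity $\langle\dweight(X)\rangle=\{\chi:\chi|_{B_{x_0}}=0\}$ — which holds because Lemma~\ref{lemma:stable_multiplicity_surjective} together with condition~(\ref{condition:rf}) identifies the fraction field of $\rring{X}^N$ with $\rring{B{x_0}}^N=\rring{T/T_{x_0}}$ — this happens exactly when $\chi|_{B_{x_0}}\in S$. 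Comparing the two descriptions, and using that $\chi\mapsto\chi|_{B_{x_0}}$ is onto the characters of $B_{x_0}$, yields $\dweight_L(M_{x_0})=S$, hence $C_G(M)=C_L(M_{x_0})$. For $M$ not finitely generated one reduces to this case exactly as in the proof of Corollary~\ref{corollary:stable_multiplicity_M}, by exhausting $M_{x_0}$ by images of finitely generated $(\rring{X},G)$-submodules of $M$.

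The main difficulty is the identity $S=\dweight_L(M_{x_0})$: the inequality $C_G(M)\le C_L(M_{x_0})$ drops out of Corollary~\ref{corollary:stable_multiplicity_M}, but the reverse requires excluding "new" $L$-types in the fibre, and this is where the sphericity of $X$, condition~(\ref{condition:rf}), and the description of $L$ as $P_{x_0}$ (via the structure of $\rring{B{x_0}}$ and the inclusion $B_{x_0}/N_{x_0}\hookrightarrow T$) really enter. A secondary technical point is the identification $(M')^N\cong\Ind_{T_{x_0}}^T(M_{x_0}^{N_{x_0}})$, which rests on the equivalence between $B$-equivariant quasi-coherent sheaves on the homogeneous space $B/B_{x_0}$ and rational representations of $B_{x_0}$.
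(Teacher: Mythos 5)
Your overall strategy matches the paper's: the inequality $C_G(M)\le C_L(M_{x_0})$ comes from Corollary \ref{corollary:stable_multiplicity_M}, and the entire content of the reverse inequality is that every $L$-type of the fibre $M_{x_0}$ is the restriction of some $G$-type of $M$ --- your identity $S=\dweight_L(M_{x_0})$ is exactly the paper's claim that every character $\lambda$ of $B_{x_0}$ with $m^L_{M_{x_0}}(\lambda)\neq 0$ extends to some $\overline{\lambda}\in\dweight(M)$. Where you diverge is in how that lift is produced. The paper first replaces $M$ by a free module: a surjection $\rring{X}\otimes W\twoheadrightarrow M$ induces, by reductivity of $G$ and $L$, surjections on the relevant $N$- and $N_{x_0}$-highest weight spaces, so one may assume $M=\rring{X}\otimes W$; there the section through a given $m\in W^{N_{x_0}}(\lambda)$ is written down explicitly as $\varphi(bx_0)=b^{-\lambda'}(bm)$, and Lemma \ref{lemma:stable_multiplicity_surjective} clears denominators. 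You instead work with $M$ directly and assert $M\otimes_{\rring{X}}\rring{Bx_0}\cong\Ind_{B_{x_0}}^{B}(M_{x_0})$. That identification is the one step of your argument that is not free: for an abstract finitely generated $(\rring{X},G)$-module over a merely quasi-projective $X$ (where $\rring{X}$-modules are not sheaves on $X$), one must justify that the localization really is the induced module of its fibre --- e.g. that $Bx_0\cong B/B_{x_0}$ is affine and that equivariant descent applies to $M'$ --- and the cleanest way to discharge this is precisely the paper's reduction to the free case, where the induced-module description is the explicit formula for $\varphi$. Everything else in your proposal is sound and consistent with the paper: the first-paragraph reduction via monotonicity and Theorem \ref{theorem:stable_multiplicity_M}, the computation $(M')^{N}=M^{N}[1/f_\mu]$ using the no-zero-divisor hypothesis, the weight bookkeeping via $\mu|_{B_{x_0}}=0$, the extension of characters from $B_{x_0}/N_{x_0}$ to $T$, and the exhaustion argument for non-finitely-generated $M$.
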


\begin{proof}
By Theorem \ref{corollary:stable_multiplicity_M}, $C_G(M) \leq C_L(M_{x_0})$ is clear.
It suffices to show that any character $\lambda$ of $B_{x_0}$ such that $m^L_M(\lambda) \neq 0$ can be extended
to an character $\overline{\lambda}$ of $T$ such that $m^G_M(\overline{\lambda}) \neq 0$.
As in the proof of the surjectivity in Theorem \ref{theorem:stable_multiplicity_M},
we can assume that $M$ is free $\rring{X}$-module of finite rank, $\rring{X} \otimes W$.
We take a character $\lambda$ of $B_{x_0}$ such that $m^L_{M_{x_0}}(\lambda) \neq 0$, and take $m \in W^{N_{x_0}}(\lambda)$.
There exists a character $\lambda'$ of $T$ such that $\lambda'|_{B_{x_0}}=\lambda$.
For $\varphi(b{x_0})=b^{-\lambda'}(bm)$, we can find $f_\mu \in \rring{X}^{N}(\mu)$ such that $f_\mu \varphi \in (\rring{X}\otimes W)^N(\lambda'+\mu)$.
Since $(\lambda'+\mu)|_{B_{x_0}}=\lambda'|_{B_{x_0}}=\lambda$, $\overline{\lambda}:=\lambda'+\mu$ satisfies the desired conditions.
This completes the proof.
\end{proof}

\section{Examples of stability theorems}\label{section:examples}

In this section, we will show some stability theorems for some examples.

\subsection{Stability theorem for quasi-affine spherical homogeneous spaces}
Let $G$ be a connected reductive algebraic group and $H$ be a closed subgroup of $G$.
We assume that $(G, H)$ is a spherical pair (i.e. there exists a Borel subgroup of $G$ such that $BH$ is open dense in $G$),
and assume that $G/H$ is a quasi-affine variety.
Put $L := \{g \in H: gBH \subset BH\}$.

For a finite dimensional rational representation $W$ of $H$, we define the \define{induced representation} of $W$ by
\begin{align*}
\Ind^G_H(W) := (\rring{G}\otimes W)^{H}.
\end{align*}
$\Ind^G_H(W)$ becomes a $(\rring{G}^{H}, G)$-module via the left $G$-action and the multiplication of $\rring{G}^{H}$.
Here, the $H$-invariant part is taken via its right action on $\rring{G}$.

The following fact is known as the characterization of quasi-affine homogeneous spaces.
(see \cite[Theorem 3.12]{Ti11})
\begin{proposition}\label{proposition:quasi-affine}
Let $G$ be a linear algebraic group, and $H$ be a closed subgroup of $G$.
Then, the following three conditions are equivalent.
\begin{enumerate}[i)]
	\item $G/H$ is quasi-affine.
	\item the quotient field of $\rring{G/H}$ is equal to the rational function field of $G/H$.
	\item For any $H$-representation $W$, there exists a finite dimensional representation $V$ of $G$
	such that $W$ can be embedded in $V$ as a representation of $H$.
\end{enumerate}
\end{proposition}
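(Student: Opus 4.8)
The plan is to prove the four implications (i) $\Rightarrow$ (ii), (ii) $\Rightarrow$ (i), (i) $\Rightarrow$ (iii) and (iii) $\Rightarrow$ (i), the last of these being essentially the classical characterization of observable subgroups. For (i) $\Rightarrow$ (ii) I would use only generalities about quasi-affine varieties: realize $G/H$ as a dense open subvariety of its closure $Y$ inside some $\mathbb{A}^{N}$. Then $Y$ is affine, so $\CC(Y)=\operatorname{Frac}(\rring{Y})$, while $\CC(Y)=\CC(G/H)$ because $G/H$ is dense in $Y$; since $\rring{Y}\subseteq\rring{G/H}\subseteq\CC(G/H)$, taking fraction fields gives $\operatorname{Frac}(\rring{G/H})=\CC(G/H)$, which is (ii).

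For (ii) $\Rightarrow$ (i), use (ii) to pick $f_{1},\dots,f_{k},g\in\rring{G/H}$ with $\CC(G/H)=\CC(f_{1}/g,\dots,f_{k}/g)$, and let $A\subseteq\rring{G/H}$ be a finitely generated $G$-stable subalgebra containing them; this is possible because $\rring{G/H}$ is a rational $G$-module. Then $Y:=\operatorname{Spec}A$ is an affine $G$-variety, and the resulting morphism $\pi\colon G/H\to Y$ is $G$-equivariant, dominant, and — by the choice of $A$ — birational. Since $\pi$ is equivariant and dominant its image is a dense orbit $Gy_{0}\subseteq Y$, and all scheme-theoretic fibres of $\pi$ over $Gy_{0}$ are $G$-translates of one another; birationality makes one of them (over a general point of $Y$, necessarily lying in $Gy_{0}$) a reduced point, so every such fibre — in particular $\pi^{-1}(y_{0})$ — is a reduced point, which forces $G_{y_{0}}=H$. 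Hence $\pi$ identifies $G/H$ with the locally closed orbit $Gy_{0}$ of the affine variety $Y$, so $G/H$ is quasi-affine.

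For (i) $\Rightarrow$ (iii): given a finite-dimensional rational $H$-module $W$, form the homogeneous vector bundle $G\times_{H}W$ on $G/H$. Its space of regular sections is exactly $\Ind^{G}_{H}(W)=(\rring{G}\otimes W)^{H}$, a rational $G$-module, and evaluation at the base point gives an $H$-equivariant map $\ev\colon\Ind^{G}_{H}(W)\to W$. On a quasi-affine variety every vector bundle is globally generated — extend it to a coherent sheaf on a containing affine variety, which is generated by its global sections, then restrict — so $\ev$ is surjective, and by rationality of $\Ind^{G}_{H}(W)$ some finite-dimensional $G$-submodule $V$ already surjects onto $W$. Dualizing $V|_{H}\twoheadrightarrow W$ yields an $H$-embedding $W^{*}\hookrightarrow V^{*}|_{H}$; as $W$ was arbitrary, this is (iii).

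The implication (iii) $\Rightarrow$ (i) is the substantive one, and here I would invoke the classical theory of observable subgroups (cf.\ \cite[Theorem 3.12]{Ti11}): using (iii) one produces a single finite-dimensional rational $G$-module $V$ and a vector $v\in V$ whose stabilizer $G_{v}$ equals $H$ — one applies (iii) to a faithful $H$-module together with enough auxiliary $H$-modules (for instance finite-dimensional $H$-submodules, for the translation action, of the defining ideal of $H$ in $\rring{G}$) so that the stabilizer of a vector assembled from these collapses onto $H$ — whereupon $G/H\cong Gv$ is a locally closed subvariety of $V$ and hence quasi-affine. The point I expect to be the main obstacle is exactly this step: upgrading the obvious inclusion $H\subseteq G_{v}$ to an equality $H=G_{v}$. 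This is where the abstract embedding property (iii) must really be exploited, and it is the technical core of the observable-subgroup theorem — and the only place where the possible non-reductivity of $H$ genuinely has to be confronted.
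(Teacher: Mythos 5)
The paper offers no proof of this proposition at all: it is quoted as a known fact with a reference to \cite[Theorem 3.12]{Ti11}, so your proposal cannot match ``the paper's approach'' --- it supplies arguments where the paper supplies a citation. What you wrote is essentially correct and is the standard circle of ideas. The two implications (i)~$\Leftrightarrow$~(ii) via the affine closure and via $Y=\mathrm{Spec}\,A$ for a finitely generated $G$-stable subalgebra $A\subset\rring{G/H}$ containing the chosen $f_i$ and $g$ are sound (the dense orbit $Gy_0$ is open in $Y$, the birational locus meets it, and in characteristic zero the bijective orbit map $G/H\to Gy_0$ is an isomorphism); and (i)~$\Rightarrow$~(iii) via surjectivity of the evaluation $\Ind^G_H(W)\to W$, obtained by extending the induced sheaf to a coherent sheaf on an affine closure and using global generation, is exactly the usual argument, with the dualization handling the direction of the embedding correctly. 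The only place you are vaguer than necessary is (iii)~$\Rightarrow$~(i), which you defer to the observable-subgroup theorem with a somewhat loose recipe (``assemble enough auxiliary modules from the ideal of $H$''). The clean way to close that step is Chevalley's theorem: choose a finite-dimensional $G$-module $V_0$ and a line $\CC v_0$ with $H=\{g\in G: gv_0\in\CC v_0\}$, let $\chi$ be the character of $H$ on $\CC v_0$, apply (iii) to the one-dimensional module $\chi^{-1}$ to get $w$ in a $G$-module $E$ with $hw=\chi(h)^{-1}w$, and check that the stabilizer of $v_0\otimes w\in V_0\otimes E$ is exactly $H$; then $G/H\simeq G\cdot(v_0\otimes w)$ is locally closed in a vector space, hence quasi-affine. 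With that substitution (and the tacit standing assumption, also made by the paper in its applications, that $G$ is connected so that $G/H$ is irreducible and ``quotient field'' makes sense), your proof is complete, and it gives strictly more information than the paper's citation.
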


Applying Theorem \ref{theorem:general stability} and Theorem \ref{theorem:stable_multiplicity_M}
to $X = G/H$ and $M = \Ind^G_H(W)$, we have the following theorem.
In the case that $H$ is semisimple, this theorem is due to F. Sat{\=o} in \cite{Sa93}.

\begin{theorem}\label{theorem:stable_theorem_for_homogeneous_space}
Let $W$ be a finite dimensional rational representation of $H$.
Then, there exists a weight $\lambda_0 \in \dweight(G/H)$ such that
\begin{align*}
m^G_{\Ind^G_H W}(\lambda+\lambda_0) = m^L_W(\lambda|_{B \cap H})
\end{align*}
for any $\lambda \in \dweight(\Ind^G_H(W))$.
\end{theorem}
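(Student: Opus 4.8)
The plan is to apply Theorem \ref{theorem:general stability} and Theorem \ref{theorem:stable_multiplicity_M} directly to the variety $X = G/H$ and the module $M = \Ind^G_H(W) = (\rring{G}\otimes W)^{H}$, so the main task is to verify that all hypotheses of those theorems are met and that the group $L$ and the isotropy module $M_{x_0}$ take the asserted concrete forms. First I would check the two standing assumptions on $X$: condition (\ref{condition:mf}) is precisely the hypothesis that $(G,H)$ is a spherical pair, and condition (\ref{condition:rf}) follows from the assumption that $G/H$ is quasi-affine via the equivalence (i) $\Leftrightarrow$ (ii) in Proposition \ref{proposition:quasi-affine}. Next I would verify that $M = \Ind^G_H(W)$ is a finitely generated $(\rring{G/H}, G)$-module with no zero divisors. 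Finite generation: $W$ embeds in a finite-dimensional $G$-module $V$ by Proposition \ref{proposition:quasi-affine}(iii), so $\Ind^G_H(W) \hookrightarrow (\rring{G}\otimes V)^{H} \simeq \rring{G/H}\otimes V^{\ast\ast}$... more carefully, $(\rring{G}\otimes V)^H \simeq \rring{G/H}\otimes V$ as $(\rring{G/H},G)$-modules (Frobenius reciprocity/the fact that $V$ extends to $G$), which is free of finite rank, hence Noetherian over the finitely generated algebra $\rring{G/H}$; thus its submodule $\Ind^G_H(W)$ is finitely generated. No zero divisors: $\rring{G/H}$ is a domain (as $G/H$ is irreducible) and $\Ind^G_H(W)$ sits inside the torsion-free module $\rring{G/H}\otimes V$, so multiplication by any nonzero $f$ is injective on $M$.

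Having checked the hypotheses, Theorem \ref{theorem:general stability} produces a $\lambda_0 \in \dweight(G/H)$ with $m_M(\lambda+\lambda_0)$ stable, and Theorem \ref{theorem:stable_multiplicity_M} gives $m^G_{\Ind^G_H(W)}(\lambda+\lambda_0) = m^L_{M_{x_0}}(\lambda|_{B_{x_0}})$ where $L = P_{x_0}$, $P = \{g\in G : gBx_0 \subset Bx_0\}$, and $x_0 \in G/H$ is the base point $eH$ with $Bx_0$ open dense. So the remaining work is to identify $P_{x_0}$, $B_{x_0}$, and $M_{x_0}$ with the objects in the statement. The stabilizer $G_{x_0}$ is just $H$ (taking $x_0 = eH$), so $B_{x_0} = B\cap H$ as claimed. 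For $L$: by definition $L = P \cap G_{x_0} = \{g \in H : gBx_0 \subset Bx_0\}$; translating back from $G/H$ to $G$, the condition $gB(eH) \subset B(eH)$ is exactly $gBH \subset BH$, so $L = \{g\in H : gBH\subset BH\}$, matching the definition given before the theorem. For the isotropy module: $M_{x_0} = M/\frakm(x_0)M$, and I would show $\ev_{x_0}: \Ind^G_H(W) = (\rring{G}\otimes W)^H \to W$, $\varphi \mapsto \varphi(e)$, identifies $M_{x_0} \simeq W$ as $L$-modules. Surjectivity of this map (hence $M_{x_0}\simeq W$ once one knows the kernel is exactly $\frakm(x_0)M$) is essentially the surjectivity already established inside the proof of Theorem \ref{theorem:stable_multiplicity_M}; alternatively it follows from Proposition \ref{proposition:quasi-affine}(iii) plus the fact that for $M = \rring{G/H}\otimes V$ the evaluation at $x_0$ is literally the surjection onto $V$, combined with the commutative-diagram argument in that proof. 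The $L$-action on $W$ coming from $\ev_{x_0}$ is the restriction to $L\subset H$ of the given $H$-action, since $L$ acts on $\varphi(e)$ through the right $H$-translation used to define $(\rring{G}\otimes W)^H$.

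Putting these identifications into the conclusion of Theorem \ref{theorem:stable_multiplicity_M} yields exactly
\begin{align*}
m^G_{\Ind^G_H(W)}(\lambda+\lambda_0) = m^L_{W}(\lambda|_{B\cap H})
\end{align*}
for all $\lambda \in \dweight(\Ind^G_H(W))$, which is the assertion. The main obstacle I anticipate is not any single hard estimate but rather the careful bookkeeping in the identification $M_{x_0}\simeq W$ as an $L$-module: one must check that $\frakm(x_0)M$ is precisely the kernel of $\ev_{x_0}$ (not just contained in it), and this is where one genuinely uses that $M$ is finitely generated and torsion-free together with the surjectivity argument of Theorem \ref{theorem:stable_multiplicity_M}; the reductivity of $L$ from Proposition \ref{proposition:BLV} is needed to push $B_{x_0}$-eigenvector statements down to $L$-multiplicities via Remark \ref{remark:irreducible_of_L}. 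The comparison with Sat{\=o}'s theorem in the semisimple case is then just the observation that when $H$ is semisimple one may take $W$ trivial and recovers the classical statement about $\rring{G/H}$.
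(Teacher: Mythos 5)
Your reduction to Theorems \ref{theorem:general stability} and \ref{theorem:stable_multiplicity_M} is exactly the paper's strategy, and your verification of the hypotheses (condition (\ref{condition:rf}) from quasi-affineness via Proposition \ref{proposition:quasi-affine}, finite generation and torsion-freeness of $\Ind^G_H(W)$ from the embedding $\Ind^G_H(W)\hookrightarrow \rring{G/H}\otimes V$ coming from $W\hookrightarrow V|_H$) matches the paper's first paragraph, as do the identifications $G_{eH}=H$, $B_{eH}=B\cap H$, and $P_{eH}=\{g\in H: gBH\subset BH\}=L$. The genuine gap is the one step carrying real content beyond the general theorems, namely the identification $M_{x_0}=\Ind^G_H(W)/\frakm(eH)\Ind^G_H(W)\simeq W$ as $L$-modules, which you flag but do not actually close. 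Two things are missing there. First, surjectivity of $\ev_{eH}:\Ind^G_H(W)\to W$, $\varphi\mapsto\varphi(e)$, is not immediate: its image is a priori only an $H$-submodule of $W$, and neither the embedding $W\hookrightarrow V|_H$ (induction is only left exact, and evaluation of the ambient free module $\rring{G/H}\otimes V$ surjects onto $V$, not onto $W$ through the submodule) nor the commutative-diagram argument inside Theorem \ref{theorem:stable_multiplicity_M} (whose generating subspace plays a different role and whose target is the tautological quotient $M/\frakm(x_0)M$) yields it. Second, the equality $\ker(\ev_{eH})=\frakm(eH)\Ind^G_H(W)$ does not follow from torsion-freeness or from finite generation; the surjectivity argument of Theorem \ref{theorem:stable_multiplicity_M} says nothing about comparing $M/\frakm(x_0)M$ with $W$, since that quotient is what appears in its statement by definition. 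Without both points you only get an a priori surjection $M_{x_0}\twoheadrightarrow W$ at best, hence an inequality of multiplicities rather than the asserted equality.

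The paper closes this step geometrically: it identifies $\Ind^G_H(W)$ with the global sections $\Gamma(G/H, G\times_H W)$ of the homogeneous vector bundle and uses that, $G/H$ being quasi-affine, the quasi-coherent sheaf attached to this $\rring{G/H}$-module coincides with the sheaf of sections of the bundle; taking fibers at $eH$ then gives $(\Ind^G_H(W))_{eH}\simeq W$ as a representation of $G_{eH}=H$, hence of $L$. So to complete your argument you should either reproduce this sheaf-theoretic identification (or an equivalent argument using the observability of $H$ in $G$), rather than appeal to the internal steps of Theorem \ref{theorem:stable_multiplicity_M}, which do not address the comparison between the abstract isotropy module $M_{x_0}$ and the inducing module $W$.
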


\begin{proof}

First, we show that $X = G/H$ and $M = \Ind^G_H (W)$ satisfy the conditions of Theorem \ref{theorem:general stability} and Theorem \ref{theorem:stable_multiplicity_M}.
By the assumption that $X = G/H$ is a quasi-affine $G$-variety, the quotient field of $\rring{X}$ coincides with the rational function field of $X$.
From Proposition \ref{proposition:quasi-affine}, $W$ can be embedded in a finite dimensional representation $V$ of $G$ as a representation of $H$.
The embedding $W \hookrightarrow V$ induces the injection as a $(\rring{G/H}, G)$-module:
\begin{align*}
\Ind^G_H(W) \hookrightarrow \rring{G/H}\otimes V.
\end{align*}
Since $\rring{G/H}\otimes V$ is Noetherian $\rring{G/H}$-module,
$\Ind^G_H(W)$ is Noetherian, and then finitely generated.
From the inclusion, it is obvious that $\rring{G/H}$ has no zero divisors in $\Ind^G_H(W)$.
All conditions are verified.

Next, we show that $M/\frakm(eH)M \simeq W$ as a representation of $L$.
We can identify $\Ind^G_H(W)$ with the set of global sections $\Gamma(G/H, G \times_H W)$ of a vector bundle $G\times_H W \rightarrow G/H$.
Since $G/H$ is a quasi-affine variety, the sheaf constructed from the vector bundle corresponds to the sheaf constructed from $\Ind^G_H(W)$.
This shows that $(\Ind^G_H(W))_{eH} \simeq W$ as a representation of $G_{eH}$.
\end{proof}

\subsection{Some examples for projective varieties}\label{section:projective_example}
In this section, we treat flag varieties.
Let $G$ be a connected reductive algebraic group,
and $P$ be a parabolic subgroup of $G$.
Take a close subgroup $H$ of $G$ such that $G/P$ is a spherical $H$-variety.
Note that if $H$ is a Levi subgroup of $G$, such pairs $(G, H, P)$ were classified in \cite{Ko07_2, Ta12}.

Fix a Borel subgroup $B$ of $H$.
Since $G/P$ is a spherical $H$-variety, there exists a point ${x_0} \in G$ such that $B{x_0}P$ is open dense in $G$.
Put $L := \{g \in H_{{x_0}P}: gB{x_0}P \subset B{x_0}P\}$.
The same result as Theorem \ref{theorem:stable_theorem_for_homogeneous_space} is not true for $G/P$
since $G/P$ is projective (see Example \ref{example:projective}).
However, we have the following weaker theorem.
\begin{theorem}\label{theorem:projective}
Let $W$ be a irreducible representation of $P$.
Then, there exists a character $\lambda_0$ of $P$ such that
\begin{align*}
C_H(\Ind_P^G(W \otimes \CC_{\lambda_0+\lambda}))=C_L(W)
\end{align*}
for any character $\lambda$ of $P$ satisfying $\Ind^G_P(\CC_{\lambda}) \neq 0$.
Here, $W$ is considered as a representation of $L$ by the inclusion ${x_0}^{-1}L{x_0} \subset P$.
\end{theorem}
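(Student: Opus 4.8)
The plan is to imitate the proof of Theorem \ref{theorem:stable_multiplicity_M}, tracking carefully which arguments survive for the projective variety $X = G/P$ in place of a quasi-affine $X$. Fix the vector bundle $\calV_W := G \times_P W \to G/P$ and observe that $\Ind_P^G(W) = \Gamma(G/P, \calV_W) = (\rring{G} \otimes W)^P$ is a $G$-module; the twist by $\CC_{\lambda}$ corresponds to tensoring the bundle with the $G$-linearized line bundle $\calL_{\lambda}$ on $G/P$. As noted in the remark following Theorem \ref{theorem:stable_multiplicity_M}, the \emph{injectivity} half of the argument works verbatim in this projective setting: a $B$-eigenvector in $\Gamma(G/P, \calV_W \otimes \calL_{\lambda+\lambda_0})$ that vanishes at $x_0 P$ vanishes on the dense orbit $Bx_0P$ and hence everywhere, so $\ev_{x_0 P}$ restricted to each $B$-eigenspace is injective. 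This immediately yields $C_H(\Ind_P^G(W \otimes \CC_{\lambda+\lambda_0})) \le C_L(W)$ for every $\lambda$ (the isotropy representation of $\calV_W \otimes \calL_{\lambda}$ at $x_0P$ is still $W$ up to the one-dimensional twist $\CC_{\lambda}|_{x_0^{-1}Lx_0}$, which does not change the multiplicity supremum).

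For the reverse inequality I would reduce to the case of a single line-bundle twist by a base-point-free class, so that a dense supply of global sections is available. Concretely: choose a character $\mu$ of $P$ such that $\calL_{\mu}$ is very ample on $G/P$ (equivalently, $\mu$ is a regular dominant weight built from the fundamental weights killed by $P$), and such that moreover $\calV_W \otimes \calL_{\mu}$ is globally generated — this holds once $\mu$ is large by Serre vanishing applied to the ample twist. Set $\lambda_0 = \mu$. Given a character $\lambda$ of $P$ with $\Ind_P^G(\CC_{\lambda}) \ne 0$ (so $\lambda$ is a nonnegative combination of the relevant fundamental weights) and a $B_{x_0P}$-eigenvector $m \in W \otimes \CC_{\lambda}$ of some weight, I would build a rational section $\varphi$ of $\calV_W \otimes \calL_{\lambda+\lambda_0}$ on $Bx_0P$ by the same formula $\varphi(bx_0P) = b^{-(\lambda+\lambda_0)}(bm)$ as in Theorem \ref{theorem:stable_multiplicity_M}, then clear its poles: since $\calL_{\lambda_0}$ is very ample and $\calV_W \otimes \calL_{\lambda_0}$ is globally generated, the complement $X \setminus Bx_0P$ (a union of $B$-stable divisors) is cut out set-theoretically by $B$-eigensections of suitable powers $\calL_{k\lambda_0}$, so some $B$-eigensection $f$ of $\calL_{k\lambda_0}$ makes $f\varphi$ a global section of $\calV_W \otimes \calL_{\lambda + (k+1)\lambda_0}$. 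Its value at $x_0P$ is $m$ (up to a nonzero scalar), so $\ev_{x_0P}$ on the $B$-eigenspace of the appropriate weight in $\Gamma(X, \calV_W \otimes \calL_{\lambda+(k+1)\lambda_0})$ is surjective onto the $m$-line; hence $C_H(\Ind_P^G(W \otimes \CC_{\lambda + (k+1)\lambda_0})) \ge C_L(W)$. Finally I would feed this back into the injectivity-bound monotonicity: because $\calL_{\lambda_0}$ is effective, multiplication by a $B$-eigensection of $\calL_{\lambda_0}$ injects $\Gamma(\calV_W \otimes \calL_{\nu})^N$ into $\Gamma(\calV_W \otimes \calL_{\nu+\lambda_0})^N$, so the multiplicities $m_{\Ind_P^G(W \otimes \CC_{\nu})}^H$ are monotone along the $\lambda_0$-direction; combined with the uniform bound $\le C_L(W)$ this forces $C_H(\Ind_P^G(W \otimes \CC_{\lambda+\lambda_0})) = C_L(W)$ for all admissible $\lambda$, after enlarging $\lambda_0$ once more to absorb the exponent $k+1$.

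The step I expect to be the main obstacle is the pole-clearing in the projective setting: unlike the quasi-affine case, where Lemma \ref{lemma:stable_multiplicity_surjective} writes $\rring{Bx_0}$ as a localization of $\rring{X}$ at $B$-eigenfunctions, here $\rring{G/P} = \CC$ and one must instead work with $B$-eigensections of line bundles and control their zero divisors. The clean way to do this is to pass to the affine cone: let $\hat{X} \subset \PP(\text{something})$ be embedded by $\calL_{\lambda_0}$ and let $\hat{X}^{\times}$ be the cone; then $\rring{\hat X^{\times}} = \bigoplus_{k \ge 0}\Gamma(X, \calL_{k\lambda_0})$ is a quasi-affine (in fact affine, minus the vertex) spherical $G$-variety, the module $\bigoplus_{k\ge 0}\Gamma(X, \calV_W \otimes \calL_{\lambda + k\lambda_0})$ is a finitely generated $(\rring{\hat X^{\times}}, G)$-module with no zero divisors, and Theorem \ref{theorem:stable_multiplicity_M} applies directly on the cone, with the grading recording the twist. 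I would phrase the whole argument that way: construct the cone, invoke Theorem \ref{theorem:stable_multiplicity_M} and Corollary \ref{corollary:upper_bound} there, and then read off the statement for $G/P$ one graded piece at a time, noting that the isotropy representation at the cone point over $x_0P$ is $\bigoplus_k (W \otimes \CC_{\lambda})|_{x_0^{-1}Lx_0}$, whose $L$-multiplicity supremum is $C_L(W)$.
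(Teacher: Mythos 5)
Your overall strategy---replace the projective $G/P$ by a quasi-affine cone so that Theorem \ref{theorem:stable_multiplicity_M} can be applied, with a grading recording the twist---is the same in spirit as the paper's proof, which works on $G/P'$ with $P'=[Q,Q]N$ and enlarges the acting group to $H\times A$, $A=Q/[Q,Q]$ (Lemma \ref{lemma:projective_L=L'}). Your upper bound $C_H\le C_L(W)$ via injectivity of $\ev_{x_0P}$ on $B$-eigensections is fine. The genuine gap is in the lower bound: the theorem asks for one character $\lambda_0$ that works for \emph{every} admissible $\lambda$, and your argument does not produce it. The pole-clearing exponent $k$ making $f\varphi$ a global section of $\calV_W\otimes\calL_{\lambda+(k+1)\lambda_0}$ depends on $\lambda$ (the rational section $\varphi$ changes with $\lambda$, and nothing bounds its pole order along the boundary divisors uniformly in $\lambda$); since the admissible $\lambda$ form an infinite monoid, ``enlarging $\lambda_0$ once more to absorb $k+1$'' is unjustified, and your monotonicity along the $\lambda_0$-ray only propagates the equality to \emph{larger} multiples of $\lambda_0$ for that same $\lambda$, never back down to the fixed shift $\lambda_0$. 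The same $\lambda$-dependence reappears in your cone formulation: the ring $\bigoplus_{k}\Gamma(X,\calL_{k\lambda_0})$ is fixed, but the module $\bigoplus_k\Gamma(X,\calV_W\otimes\calL_{\lambda+k\lambda_0})$ depends on $\lambda$, so the shift supplied by Theorem \ref{theorem:general stability} is again $\lambda$-dependent. Two smaller points: the fiber of that module at a point of the cone over $x_0P$ is a \emph{single} copy of $W\otimes\CC_{\lambda}$, not $\bigoplus_k(W\otimes\CC_\lambda)$ (otherwise its $L$-multiplicity supremum would be infinite), and to read off graded pieces you must enlarge the acting group to $H\times\CC^{\times}$; sphericity and Theorem \ref{theorem:stable_multiplicity_M} have to be invoked for that group, not for $H$ alone.

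The paper gets the uniformity precisely by using the full multicone at once: $G/P'$ is quasi-affine and spherical under $H\times A$, every admissible $\lambda$ contributes a weight of the form $(-\lambda,\nu)\in\dweight_{H\times A}(G/P')$, and the $A$-isotypic components of $\Ind^G_{P'}(W)$ are the modules $\Ind^G_P(W\otimes\CC_{\lambda})$; hence a single application of Corollary \ref{corollary:upper_bound} together with Theorem \ref{theorem:stable_multiplicity_M} yields one $\lambda_0$ valid for all $\lambda$ simultaneously. Your construction could be repaired without the multicone: build once finitely many $B$-eigensections of $\calV_W\otimes\calL_{\lambda_0}$ whose values at $x_0P$ span a $B_{x_0P}$-eigenspace of $W$ of maximal multiplicity, and for each admissible $\lambda$ multiply them by a nonzero $B$-eigensection $s_\lambda\in\Ind^G_P(\CC_\lambda)$; such an $s_\lambda$ has $B$-stable zero divisor, hence is nonvanishing at $x_0P$, so the products are linearly independent highest weight vectors of a common weight in $\Ind^G_P(W\otimes\CC_{\lambda_0+\lambda})$, giving the uniform lower bound $C_L(W)$. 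As written, however, this uniformity step is missing from your proposal.
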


Fix a Levi decomposition $P=QN$, where $N$ is a unipotent radical of $P$.
Put $P':=[Q,Q]N$ and $A:=Q/[Q,Q]$.
By Proposition \ref{proposition:quasi-affine}, $G/P'$ is a quasi-affine spherical $H\times A$-variety.
The action of $A$ on $G/P'$ is given by
\begin{align*}
h\cdot gP'=gh^{-1}P'
\end{align*}
for $h \in A$ and $g \in G$.
Note that $B\times A$ is a Borel subgroup of $H \times A$.
For the proof of Theorem \ref{theorem:projective}, we show the following lemma.

\begin{lemma}\label{lemma:projective_L=L'}
We set
\begin{align*}
L' := \{(g,h) \in H \times A: (g,h)\cdot {x_0}P'={x_0}P', (g,h)\cdot B{x_0}P \subset B{x_0}P\}.
\end{align*}
Then, there exists a homomorphism $\varphi:L\rightarrow A$ such that
\begin{align}
L'=\{(g,\varphi(g)) \in L\times A: g \in L\}. \label{equation:L_and_L'}.
\end{align}
\end{lemma}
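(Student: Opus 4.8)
The plan is to analyze the structure of $L'$ by first understanding its projection to the first factor $H \times A \to H$. The key observation is that $L'$ is cut out by two conditions: fixing the point ${x_0}P'$ and preserving the open Borel orbit $B{x_0}P$. I would begin by showing that the first projection $\pr_1: L' \to H$ is injective. Suppose $(g, h), (g, h') \in L'$; then $(e, h^{-1}h') \in L'$, so $h^{-1}h'$ acts trivially on ${x_0}P'$, i.e. ${x_0}h'^{-1}h\, {x_0}^{-1} \in P'$ viewing the $A$-action. Using that $A = Q/[Q,Q]$ acts on $G/P'$ freely in the relevant sense (the fibers of $G/P' \to G/P$ are $A$-torsors), this forces $h = h'$. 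Hence $L'$ is the graph of a map from $\pr_1(L')$ to $A$, and since $L' \subset H \times A$ is an algebraic subgroup, $\pr_1(L')$ is an algebraic subgroup of $H$ and the map $\varphi := \pr_2 \circ (\pr_1|_{L'})^{-1}$ is a group homomorphism.

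Next I would identify $\pr_1(L')$ with $L$. The inclusion $\pr_1(L') \subset L$ is straightforward: if $(g,h) \in L'$, then $g \in H$ satisfies $g B{x_0}P \subset B{x_0}P$ (projecting the second condition along $G/P' \to G/P$, where the $A$-action is trivial), and $g$ fixes ${x_0}P$ in $G/P$ because $(g,h)$ fixes ${x_0}P'$; so $g \in H_{{x_0}P}$ with $gB{x_0}P \subset B{x_0}P$, which is exactly the definition of $L$. For the reverse inclusion $L \subset \pr_1(L')$, given $g \in L$, I need to produce $h \in A$ with $(g,h) \in L'$. Since $g$ fixes ${x_0}P$ in $G/P$, we have ${x_0}^{-1}g{x_0} \in P$, hence ${x_0}^{-1}g{x_0}$ defines an element $h \in Q/[Q,Q] = A$ via $P \to P/P' \cong A$; one checks that with this $h$, the pair $(g, h)$ fixes ${x_0}P'$ in $G/P'$, and the orbit condition $(g,h)\cdot B{x_0}P \subset B{x_0}P$ holds since the $A$-component acts trivially on $G/P$ and $g \in L$ already satisfies it. This gives $g \in \pr_1(L')$ and completes the identification, yielding $L' = \{(g, \varphi(g)) : g \in L\}$ as claimed.

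The main obstacle will be the careful bookkeeping around the two quotients $G/P' \to G/P$ and the $A$-torsor structure on its fibers, together with keeping track of which action (left vs. the twisted right $A$-action $h \cdot gP' = gh^{-1}P'$) is relevant at each step. In particular, verifying that an element of $A$ acting trivially on a single fiber point of $G/P' \to G/P$ must be the identity requires knowing that $A$ acts freely on each fiber — this follows because the fiber over $gP$ is $gQ/[Q,Q] \cong A$ as a principal homogeneous space. Once this freeness is in hand, the injectivity of $\pr_1|_{L'}$ and hence the graph description follow formally, and the identification $\pr_1(L') = L$ reduces to unwinding definitions through the projection $G/P' \to G/P$.
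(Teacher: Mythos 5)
Your proposal is correct and follows essentially the same route as the paper: the paper defines $\varphi(g)$ as the unique $l \in A$ with $g{x_0}P' = {x_0}lP'$ (equivalently the class of ${x_0}^{-1}g{x_0}$ in $P/P' \simeq A$, using ${x_0}P = \sqcup_{l \in A}{x_0}lP'$, which is exactly your torsor observation) and then checks both inclusions by unwinding the twisted $A$-action, just as you do. Your repackaging via injectivity of the first projection and identification of its image with $L$ is only a cosmetic reorganization of the same computations.
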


\begin{proof}
First, we define the homomorphism $\varphi$.
Take $g \in L$.
By definition, we have $gB{x_0}P \subset B{x_0}P$ and $g{x_0}P={x_0}P$.
From $g{x_0}P'\subset {x_0}P = \sqcup_{l\in A}{x_0}lP'$, there exists a unique element $\varphi(g) \in A$
such that $g{x_0}P'=x\varphi(g)P'$.
It is obvious that $\varphi$ is a homomorphism from $L$ to $A$.

Next, we show that $\varphi$ satisfies the condition.
By the definition of $L$ and $L'$, we have $(g, \varphi(g)) \in L'$ for any $g \in L$.
For the converse inclusion, we take $(g, h) \in L'$.
Since $(g, h) \in L'$, we have $g{x_0}h^{-1}P'={x_0}P'$ and $gB{x_0}P \subset B{x_0}P$.
This implies that $g \in L$.
Since ${x_0}P'=g{x_0}h^{-1}P'={x_0}\varphi(g)h^{-1}P'$, we have $\varphi(g)=h$.
This shows the lemma.
\end{proof}

\begin{proof}[Proof of Theorem \ref{theorem:projective}]
We apply Corollary \ref{corollary:upper_bound} to $X=G/P'$ and $M=\Ind^G_{P'}(W)$.
Here, we replace $G$ in the corollary by $H \times A$,
and then $L$ in the corollary is equal to $L'$ in the above lemma.

We will determine the action of $L'$ on $M/\frakm({x_0}P')M$.
Note that $M/\frakm({x_0}P')M$ is isomorphic to $W$ as a $\CC$-vector space.
Take $(g, \varphi(g)) \in L'$.
For $f \in (\rring{G}\otimes W)^{P'}$, we have
\begin{align*}
((g, \varphi(g))\cdot f)({x_0})&=\varphi(g)f(g^{-1}{x_0}\varphi(g)) \\
&=\varphi(g)f({x_0} {x_0}^{-1}g^{-1}{x_0}\varphi(g)) \\
&=\varphi(g)({x_0}^{-1}g^{-1}{x_0}\varphi(g))^{-1}f({x_0})\\
&={x_0}^{-1} g {x_0} f({x_0}).
\end{align*}
Then, the action of $L'$ on $M/\frakm({x_0}P')M \simeq W$ coincides with the action of $L$.
Therefore, we have $C_L'(W)=C_L(W)$.

From Corollary \ref{corollary:upper_bound}, there exists $\lambda' \in \dweight_{H\times A}(\Ind^G_{P'}(W))$ such that
\begin{align}
m^{H\times A}_{\Ind^G_{P'}(W)}(\lambda')=C_{L'}(W).
\end{align}
We write $\lambda'=-\lambda_0+\lambda_1$,
where $\lambda_0$ is a character of $P$ and $\lambda_1$ is a character of $B$.

Let us show that $\lambda_0$ satisfies the desired condition.
There exist the following isomorphisms of representations of $H$:
\begin{align*}
\Ind^G_{P'}(W)(-\lambda_0) &\simeq (\Ind^G_{P'}(W) \otimes \CC_{\lambda_0})^{A}\\
&\simeq ((\rring{G} \otimes W)^{P'} \otimes \CC_{\lambda_0})^{A}\\
&\simeq (\rring{G} \otimes W \otimes \CC_{\lambda_0})^{P}\\
&\simeq \Ind^G_P(W\otimes \CC_{\lambda_0}).
\end{align*}
Then, we have $C_H(\Ind^G_P(W\otimes \CC_{\lambda_0}))=C_L(W)$.
Again, from the above isomorphisms for $W=\CC$, $\Ind^G_{P}(\CC_\lambda)$ is nonzero if and only if
there exists a character $\nu$ of $B$ such that $-\lambda+\nu \in \dweight_{H\times A}(G/P')$.
Combining this with Theorem \ref{theorem:stable_multiplicity_M}, the proof is completed.
\end{proof}

We introduce an example that $\Ind^G_P(W)$ is nonzero and we can not take $\lambda_0=0$.

\begin{example}\label{example:projective}
Let $G=\GL(8,\CC), H=\GL(4,\CC)\times \GL(4,\CC)$.
$H$ is block diagonal in $G$.
Let $P$ be a maximal parabolic subgroup of $G$ containing $H$ and all lower triangular matrices,
and $B$ be a Borel subgroup of $H$ containing all upper triangular matrices in $H$.
We take a point
\begin{align*}
{x_0}:=
\begin{pmatrix}
I & J\\
0 & I
\end{pmatrix},
\end{align*}
where $J$ is a anti-diagonal matrix with any anti-diagonal entries $1$.
Then, $B{x_0}P$ is open dense in $G$. (This is essential same as the case of Hermitian symmetric spaces.)
In this case, $L$ is of the following form:
\begin{align*}
L=\left\{
\left(
\begin{pmatrix}
a & 0 \\
0 & b
\end{pmatrix},
\begin{pmatrix}
b & 0 \\
0 & a
\end{pmatrix}\right):
a, b \in \CC^{\times}
\right\}.
\end{align*}
Note that $L$ commutes with ${x_0}$.

We consider a representation $W=S^2(\bigwedge^2(\CC^4))/\bigwedge^4(\CC^4)$ of $H$,
where the first factor of $\GL(4,\CC)\times \GL(4,\CC)$ acts on $W$ in standard way
and the second factor acts on $W$ trivially.
$W$ is a irreducible representation of $H$ with highest weight $(2,2,0,\ldots, 0)$ in standard coordinates.
We extend the representation $W$ to $P$ by letting the unipotent radical of $P$ act trivially.
Then, the induced representation $\Ind^G_P(W)$ is a irreducible representation of $G$ with highest weight $(2,2,0,\ldots, 0)$.
By using Littlewood--Richardson rule, $\Ind^G_P(W)|_H$ is multiplicity-free, and then $C_H(\Ind^G_P(W))=1$.
However, $W|_L$ is not multiplicity-free.
In fact, we can take two weight vectors with same weight $(1, 1, 1, 1, 0, 0, 0, 0)$ such as
\begin{align*}
e_1\wedge e_2 \cdot e_3 \wedge e_4, e_1\wedge e_3 \cdot e_2 \wedge e_4.
\end{align*}
Then, we have $C_H(\Ind^G_P(W))=1<2=C_L(W)$.
\end{example}

\subsection{Stability theorem for highest weight modules}

Here, we will show the stability theorem for unitary highest weight modules.
Let $G$ be a connected simple real Lie group of Hermitian type with finite center.
Fix a positive root system $\Delta^{+}$ and strongly orthogonal roots $\{\sor_1, \sor_2, \ldots, \sor_r \}$ as in Section \ref{section:strongly_orthogonal}.

Before we state our theorem, we prepare some lemmas relevant to $K_{\CC}$-orbit $\calO_i$.
Fix $1\leq m \leq r$.
We set $\fraka_m:=\bigoplus_{i=1}^{m} \RR (X_{\sor_i}+\overline{X_{\sor_i}})$, $\frakt_m:=\bigoplus_{i=1}^{m} \CC [X_{\sor_i}, \overline{X_{\sor_i}}]$
, and $L = Z_{K_{\CC}}(\fraka_m)$.
We will show that $L$ satisfies the conditions L-1) $\sim$ L-4) in Proposition \ref{proposition:L-subgroup}.

\begin{lemma}\label{lemma:B-orbit}
Let $B$ be a Borel subgroup of $K_{\CC}$ determined by the positive system $\Delta_c^{+}$.
Then, $\Ad(B)X_m$ is open dense in $\overline{\calO_m}$.
\end{lemma}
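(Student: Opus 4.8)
The claim is that the Borel subgroup $B$ of $K_{\CC}$ attached to $\Delta_c^+$ has an open dense orbit through $X_m$ in $\overline{\calO_m}$. Since $\overline{\calO_m}$ is irreducible of a known dimension and $\calO_m = \Ad(K_{\CC})X_m$ is open dense in it (Proposition \ref{proposition:kc-orbits}), it suffices to show that $\Ad(B)X_m$ is open in $\overline{\calO_m}$, equivalently that $\dim \Ad(B)X_m = \dim \calO_m = \dim \overline{\calO_m}$. The infinitesimal version of this is the identity
\begin{align*}
[\frakb_c, X_m] + \CC X_m = [\frakk_{\CC}, X_m],
\end{align*}
where $\frakb_c = \frakt_{\CC} \oplus \bigoplus_{\gamma \in \Delta_c^+}\frakg_\gamma$. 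So the plan is to compute the tangent space $[\frakk_{\CC}, X_m]$ to $\calO_m$ at $X_m$ explicitly in terms of root vectors, using the strongly-orthogonal-root machinery of Section \ref{section:strongly_orthogonal}, and then check that each root-vector direction is already hit by $[\frakb_c, X_m]$.

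First I would record the weight decomposition of $X_m = X_{\sor_1} + \cdots + X_{\sor_m}$ under $\frakt_0$: by Proposition \ref{proposition:restricted_root}, $\frakp_+$ decomposes into the pieces $P_{ij}$, $P_i$, $P_0$, and the bracket of a root vector $X_\gamma$ with $X_m$ is governed by which $\frakt_0$-weight string $\gamma + \sor_k$ falls into. The key input is precisely the bijections in Proposition \ref{proposition:restricted_root}: $\gamma \mapsto \gamma + \sor_i$ carries $C_{ij} \to P_{ij}$, $-C_{ji} \to P_{ji}$ and $C_i \to P_i$. Using this, together with the fact that $\{\sor_1, \dots, \sor_r\}$ are strongly orthogonal so that $\sor_i + \sor_j$ and $2\sor_i$ are never roots, I would show that the positive compact root vectors $X_\gamma$ with $\gamma \in C_{ij}$ (for $i < j \le m$) and $\gamma \in C_i$ (for $i \le m$) act on $X_m$ to produce exactly a spanning set of root vectors for $[\frakk_{\CC},X_m]/(\CC X_m$-plus-diagonal$)$, while the remaining directions (the $\frakt_0$-directions $[X_{\sor_i},\overline{X_{\sor_i}}]$ and the $\sor_i$-weight vectors themselves) are supplied by $\frakt_{\CC}\cdot X_m$ and by the $C_i$-root vectors again. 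In other words, every weight space occurring in $[\frakk_{\CC}, X_m]$ is already visible in $[\frakb_c, X_m] + \CC X_m$, which is the needed infinitesimal surjectivity.

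From the infinitesimal statement, openness of $\Ad(B)X_m$ in $\calO_m$ follows because the orbit map $B \to \calO_m$ has surjective differential at the identity (and hence, by $B$-translation and homogeneity of $\calO_m$ under $K_{\CC}$, at every point of the orbit), so $\Ad(B)X_m$ is a constructible set containing an open subset of the irreducible variety $\overline{\calO_m}$; being $B$-stable and of full dimension it is dense. I expect the main obstacle to be the bookkeeping in the infinitesimal computation: one must verify carefully that the root vectors $X_\gamma$ for $\gamma$ in the various $C_{ij}$ and $C_i$ (restricted to indices $\le m$) genuinely surject onto all of $\Ad(\frakk_{\CC})X_m$ modulo the ``lower'' directions, and in particular that no $P_{ij}$ or $P_i$ direction with an index exceeding $m$ appears in $[\frakk_{\CC},X_m]$ — this is where the choice of the \emph{lowest} root at each stage in the construction of $\{\sor_i\}$, hence the closure relation $\overline{\calO_m}=\coprod_{i\le m}\calO_i$ of Proposition \ref{proposition:kc-orbits}, is used to keep the tangent space within the span of the low-index root vectors. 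Once that combinatorial check is in place, the argument closes immediately.
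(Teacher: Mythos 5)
Your overall route is the paper's: reduce openness to the tangent--space identity $[\frakb,X_m]=[\frakk_{\CC},X_m]$ at $X_m$ (here $\frakb$ is the Lie algebra of $B$; note $\CC X_m\subset[\frakt_{\CC},X_m]$ anyway, since the $\sor_i$ are linearly independent), verify it with Moore's decomposition (Proposition \ref{proposition:restricted_root}), and then conclude by the standard constructible/dimension argument. However, the specific check you propose to organize the computation around is false: it is \emph{not} true that no $P_{ij}$- or $P_i$-direction with an index exceeding $m$ occurs in $[\frakk_{\CC},X_m]$. Already for $\frakg=\su(n,n)$, $\frakp_{+}$ is the space of $n\times n$ matrices, $\overline{\calO_m}$ is the determinantal variety of rank $\leq m$, $X_m=\diag(1,\dots,1,0,\dots,0)$, and the tangent space at $X_m$ consists of all matrices whose lower-right $(n-m)\times(n-m)$ block vanishes; it contains the full mixed blocks, i.e.\ many $P_{ij}$-directions with $i\leq m<j$. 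In root terms these arise as $[\frakg_{\gamma},X_m]=[\frakg_{\gamma},X_{\sor_i}]\subseteq\frakg_{\gamma+\sor_i}$ for $\gamma\in C_{ij}$ with $i\leq m<j$. So if you actually tried to carry out that verification you would get stuck, and the closure relation of Proposition \ref{proposition:kc-orbits} (your appeal to the ``lowest root at each stage'') is not the mechanism at work here — only the openness of $\calO_m$ in $\overline{\calO_m}$ is used.

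The point is that those mixed directions are harmless, because they come from \emph{positive} compact root vectors and hence already lie in $[\frakb,X_m]$. What actually has to be checked — and is all the paper checks — is that the opposite nilradical contributes nothing new: for $\gamma\in C_i\cup C_0$, and for $\gamma\in C_{ij}$ with $j>m$, one has $[\frakg_{-\gamma},X_m]=0$ (the would-be weights $-\gamma+\sor_k$ have $\frakt_0$-restrictions not of the shape allowed by Proposition \ref{proposition:restricted_root}); while for $\gamma\in C_{ij}$ with $j\leq m$ one gets $[\frakg_{-\gamma},X_m]=\frakg_{-\gamma+\sor_j}$, whose restriction lies in $P_{ij}$ with $i<j\leq m$, so by the bijection $C_{ij}\to P_{ij}$, $\gamma'\mapsto\gamma'+\sor_i$, there is $\gamma'\in C_{ij}\subset\Delta_c^{+}$ with $\gamma'+\sor_i=-\gamma+\sor_j$, giving $\frakg_{-\gamma+\sor_j}=[\frakg_{\gamma'},X_m]\subseteq[\frakb,X_m]$. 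Hence $[\overline{\frakb},X_m]\subseteq[\frakb,X_m]$ and $[\frakk_{\CC},X_m]=[\frakb,X_m]$. With your ``combinatorial check'' replaced by this one, the rest of your argument (open $B$-orbit of full dimension in the irreducible variety $\overline{\calO_m}$, hence dense) goes through and coincides with the paper's proof.
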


\begin{proof}
Since $\Ad(K_{\CC})X_m$ is open dense in $\overline{\calO_m}$, it suffices to show that $[\frakk_{\CC},X_m] = [\frakb, X_m]$.
From Proposition \ref{proposition:restricted_root}, we have 
\begin{align*}
[\frakg_{-\gamma}, X_m] &= 
\begin{cases}
0 & (m < j) \\
\frakg_{-\gamma+\gamma_j} & (1 \leq j \leq m)
\end{cases}
&\text{ for any }& \gamma \in C_{ij}, \\
[\frakg_{-\gamma}, X_m] &= 0 &\text{ for any }& \gamma \in C_i \cup C_0,
\end{align*}
for any $i, j (1 \leq i < j \leq r)$.
Since $-\gamma+\gamma_j \in P_{ij}$ for any $\gamma \in C_{ij}$, there exists a $\gamma' \in C_{ij}$ such that $\gamma' + \gamma_i = -\gamma + \gamma_j$.
Then, we have $[\frakb, X_m] \supset [\overline{\frakb}, X_m]$, where $\overline{\frakb}$ is the opposite Borel subalgebra (or equivalently complex conjugate) of $\frakb$.
This implies $[\frakk_{\CC}, X_m] = [\frakb, X_m]$.
\end{proof}

\begin{lemma}\label{lemma:B-structure}
Let $B$ be the same as the above lemma.
Then, $B_{X_m}$ has the semi-direct product decomposition: $B_{X_m} = (T_{\CC})_{X_m}N_{X_m}$.
Here, $N$ is a unipotent radical of $B$.
\end{lemma}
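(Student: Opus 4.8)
The plan is to combine the Levi decomposition $B = T_\CC N$ (with $N$ the unipotent radical) with the semi-invariant functions on the affine spherical $K_\CC$-variety $\overline{\calO_m}$. First I would dispose of the easy direction. If $t\in(T_\CC)_{X_m}$ and $n\in N_{X_m}$, then $\Ad(tnt^{-1})X_m = \Ad(t)\Ad(n)\Ad(t)^{-1}X_m = X_m$ and $tnt^{-1}\in N$, so $(T_\CC)_{X_m}$ normalizes $N_{X_m}$; since moreover $(T_\CC)_{X_m}\cap N_{X_m}\subset T_\CC\cap N=\{e\}$, the set $(T_\CC)_{X_m}N_{X_m}$ is a subgroup of $B_{X_m}$ and the product is semidirect. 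Everything then reduces to the inclusion $B_{X_m}\subset(T_\CC)_{X_m}N_{X_m}$: given $b\in B_{X_m}$, write it uniquely as $b=tn$ with $t\in T_\CC$ and $n\in N$; it is enough to show $t\in(T_\CC)_{X_m}$, because then $n=t^{-1}b\in B_{X_m}\cap N=N_{X_m}$.

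To pin down the torus part $t$, I would use enough $B$-eigenfunctions on $\overline{\calO_m}$. By Proposition \ref{proposition:kostant_hua_schmid}, for every $k$ with $1\le k\le m$ the irreducible $K_\CC$-module $V_{-(\sor_1+\cdots+\sor_k),K_\CC}$ occurs in $\rring{\overline{\calO_m}}$, the index $(1,\dots,1,0,\dots,0)$ being admissible. Fix a nonzero highest weight vector $f_k$ in that summand; being annihilated by $\frakn$ it is $N$-invariant, hence $f_k(\Ad(n')Y)=f_k(Y)$ for all $n'\in N$, and being a $T_\CC$-eigenfunction it satisfies $f_k(\Ad(t')Y)=\big(\prod_{i=1}^{k}\sor_i(t')\big)f_k(Y)$. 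The crucial point is that $f_k(X_m)\neq 0$: since $f_k$ is a $B$-semi-invariant, if it vanished at $X_m$ it would vanish on all of $\Ad(B)X_m$, which is dense in $\overline{\calO_m}$ by Lemma \ref{lemma:B-orbit}, and hence $f_k$ would be identically zero. Now, for $b=tn\in B_{X_m}$,
$$f_k(X_m)=f_k(\Ad(b)X_m)=f_k\big(\Ad(t)\,\Ad(n)X_m\big)=\Big(\prod_{i=1}^{k}\sor_i(t)\Big)f_k(\Ad(n)X_m)=\Big(\prod_{i=1}^{k}\sor_i(t)\Big)f_k(X_m),$$
using $\Ad(b)X_m=X_m$, then $T_\CC$-equivariance, then $N$-invariance of $f_k$. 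As $f_k(X_m)\neq 0$ this gives $\prod_{i=1}^{k}\sor_i(t)=1$ for each $k=1,\dots,m$, and by induction on $k$ we obtain $\sor_i(t)=1$ for all $i$, i.e.\ $\Ad(t)X_m=X_m$. Thus $t\in(T_\CC)_{X_m}$, which finishes the proof.

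The step I expect to be the main obstacle is the non-vanishing $f_k(X_m)\neq 0$, and it is precisely there that Lemma \ref{lemma:B-orbit} is needed; I do not see how to avoid using the density of $\Ad(B)X_m$ in $\overline{\calO_m}$. One might hope to replace the whole argument by a direct weight computation — writing $\Ad(n)X_{\sor_j}=X_{\sor_j}$ plus terms of strictly higher $\frakt_\CC$-weight in $\frakp_+$ and reading off that $b\in B_{X_m}$ forces $t\in(T_\CC)_{X_m}$ — but this does not work: by Proposition \ref{proposition:restricted_root} the higher-weight terms can themselves land in $\frakg_{\sor_i}$ for some $i\neq j$ (this already occurs for $\frakg=\spor(2,\RR)$, where $[\frakn,X_{\sor_1}]$ reaches $\frakg_{\sor_2}$ through the root spaces attached to $P_{12}$), so the individual coefficients are not immediately controlled. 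The functions $f_k$, which behave like the principal minors on $\overline{\calO_m}$ and carry the linearly independent weights $-(\sor_1+\cdots+\sor_k)$, are exactly the tool that circumvents this difficulty.
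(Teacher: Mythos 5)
Your proof is correct and follows essentially the same route as the paper: write $b=tn$ and use the Kostant--Hua--Schmid semi-invariants of weights $-(\sor_1+\cdots+\sor_k)$, nonvanishing at $X_m$ by the density of $\Ad(B)X_m$ from Lemma \ref{lemma:B-orbit}, to force $\sor_i(t)=1$, hence $t\in(T_{\CC})_{X_m}$ and $n\in N_{X_m}$. The paper compresses this into the statement that $B_{X_m}\subset\bigcap_{i=1}^{m}\ker\sor_i$ by Proposition \ref{proposition:kostant_hua_schmid}; you have merely made that step (and the semidirect-product verification) explicit.
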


\begin{proof}
$B_{X_m} \supset (T_{\CC})_{X_m}N_{X_m}$ is obvious.

For converse inclusion, we take $b \in B_{X_m}$, and write $b = tn$ for $t \in T_{\CC}$ and $n \in N$.
By Kostant-Hua-Schmid Theorem (Proposition \ref{proposition:kostant_hua_schmid}), $B_{X_m}$ is contained in $\bigcap_{i=1}^{m}\ker \sor_i$.
Then, we have $t \in \bigcap_{i=1}^{m}\ker \sor_i|_{T_{\CC}}$.
Considering the $T_{\CC}$-action on $X_m$, $\bigcap_{i=1}^{m}\ker \sor_i|_{T_{\CC}}$ is equal to $(T_{\CC})_{X_m}$.
Therefore, we have $t \in (T_{\CC})_{X_m}$ and then $n \in N_{X_m}$.
This shows the lemma.
\end{proof}

\begin{lemma}\label{lemma:L_is_L_subgroup}
$L$ satisfies the conditions L-1) $\sim$ L-4) in Proposition \ref{proposition:L-subgroup}.
\end{lemma}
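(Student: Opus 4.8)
The plan is to verify the four conditions L-1) through L-4) for $L = Z_{K_{\CC}}(\fraka_m)$ directly, using the two preceding lemmas and the structure theory of Section \ref{section:strongly_orthogonal}. We are checking that $L$ plays the role of $P_{x_0}$ for the spherical $K_{\CC}$-variety $X = \overline{\calO_m}$ with basepoint $x_0 = X_m$ and Borel subgroup $B$ of $K_{\CC}$ attached to $\Delta_c^{+}$; by Lemma \ref{lemma:B-orbit} we already know $Bx_0$ is open dense in $X$, so the setup of Proposition \ref{proposition:L-subgroup} applies. The key observation is that each $X_{\sor_i} + \overline{X_{\sor_i}}$ lies in $\frakp$ and $X_m = \sum_{i=1}^{m} X_{\sor_i} \in \frakp_{+}$, so centralizing $\fraka_m$ is closely tied to stabilizing $X_m$: explicitly, since $\frakt_m$-weights separate the $\sor_i$ and $X_m$ is a sum of root vectors for $\sor_1,\dots,\sor_m$, an element of $K_{\CC}$ centralizing $\fraka_m$ automatically fixes $X_m$, giving $L \subset (K_{\CC})_{X_m}$, which is L-1). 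For L-2), I would show $B_{X_m} \subset L$ using Lemma \ref{lemma:B-structure}: writing $b = tn$, the torus part $t$ lies in $\bigcap_{i=1}^m \ker\sor_i$ hence centralizes each $[X_{\sor_i}, \overline{X_{\sor_i}}]$ and, together with $n$ fixing $X_m$, one checks $b$ also fixes each $\overline{X_{\sor_i}}$ (using the complex-conjugate symmetry of the root data in Proposition \ref{proposition:restricted_root}) and therefore centralizes all of $\fraka_m$.

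For L-3) and L-4), the strategy is to identify $L/(L\cap B)$ with a flag variety and use properness. By Proposition \ref{proposition:L-subgroup} it would suffice, once reductivity of $L$ is known, to invoke the abstract argument there; since $L = Z_{K_{\CC}}(\fraka_m)$ is the centralizer of a torus-like (in fact a toral subalgebra after complexifying, via $\frakt_m$) it is reductive, so I would instead argue more concretely. The map $L/(L\cap B) \hookrightarrow \overline{L\cdot X_m} \subset \overline{\calO_m}$ realizes $L/L_{\cap B}$ inside the fiber structure; alternatively, since $L$ is reductive and $B\cap L = B_{X_m}$ is, by Lemma \ref{lemma:B-structure}, a semidirect product $(T_{\CC})_{X_m} N_{X_m}$ with $(T_{\CC})_{X_m}$ a maximal torus of $L$ and $N_{X_m}$ a unipotent subgroup normalized by it, one checks $B_{X_m}$ has the right dimension and structure to be a Borel subgroup of $L_0$: the count $\dim L - \dim B_{X_m} = \dim(\Ad(L)X_m)$ and Lemma \ref{lemma:B-orbit} applied to $L$ in place of $K_{\CC}$ pin this down, and connectedness plus meeting every component follows because $B$ meets every component of $(K_{\CC})_{X_m}$ and $L$ lies between. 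Then Proposition \ref{proposition:L-subgroup}'s uniqueness clause gives $L = P_{X_m}$ as a bonus.

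I expect the main obstacle to be the interplay in conditions L-3) and L-4): showing that $B_{X_m}$ is \emph{exactly} a Borel subgroup of $L_0$ and meets every connected component of $L$, rather than merely being contained in a Borel. This requires controlling $L\cap B$ precisely, and the natural route is to compare the root systems: the roots of $L$ with respect to $(T_{\CC})$ should be exactly $C_0 \cup \{\text{roots trivial on }\frakt_m\}$ together with $\pm C_i$ for $i\le m$ and $\pm C_{ij}$ for $i,j \le m$ — in other words, Proposition \ref{proposition:restricted_root} must be leveraged to see that $\frakb \cap \frakl$ contains a Borel subalgebra of $\frakl$ of the correct dimension, and that Lemma \ref{lemma:B-orbit}'s computation $[\frakb, X_m] = [\frakk_{\CC}, X_m]$ localizes correctly to $\frakl$. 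Once the root-combinatorial bookkeeping is set up, L-3) (components) follows from the fact that $B$ already meets every component of the larger group $(K_{\CC})_{X_m}$ and every component of $L$ meets $(K_{\CC})_{X_m}$, while L-4) follows from the dimension count together with $B_{X_m}$ being connected-solvable of maximal such dimension in $L_0$.
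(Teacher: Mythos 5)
Your treatment of L-1) and L-2) is essentially the paper's: one decomposes $\Ad(g)(X_{\sor_i}+\overline{X_{\sor_i}})=X_{\sor_i}+\overline{X_{\sor_i}}$ along $\frakp_{+}\oplus\frakp_{-}$ (which $\Ad(g)$ preserves for $g\in K_{\CC}$) to conclude that $L$ fixes each $X_{\sor_i}$ and $\overline{X_{\sor_i}}$ separately, hence fixes $X_m$; and one gets $B_{X_m}\subset L$ from Lemma \ref{lemma:B-structure} together with $(T_{\CC})_{X_m}=\bigcap_i\ker\sor_i|_{T_{\CC}}$ and connectedness of $N_{X_m}$. But your route to L-3) and L-4) has concrete problems. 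First, the dimension count $\dim L-\dim B_{X_m}=\dim(\Ad(L)X_m)$ is vacuous: by L-1) we have $L\subset(K_{\CC})_{X_m}$, so $\Ad(L)X_m=\{X_m\}$ and the formula would force $\dim L=\dim B_{X_m}$, which is false ($L$ is reductive, $B_{X_m}$ solvable); for the same reason ``Lemma \ref{lemma:B-orbit} applied to $L$ in place of $K_{\CC}$'' has no content. Second, your proposed root bookkeeping is inverted: a root space $\frakg_{\gamma}$ with $\gamma\in C_i$ or $C_{ij}$, $i\leq m$, satisfies $[\frakg_{\gamma},X_{\sor_i}]=\frakg_{\gamma+\sor_i}\neq 0$ by Proposition \ref{proposition:restricted_root}, so these roots are precisely the ones \emph{excluded} from $\frakl$; the correct set of positive roots of $\frakl$ is $C_0$ together with $C_{ij},C_i$ for $m<i<j\leq r$. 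The paper's L-4) rests on exactly this computation: it shows $\frakb_{X_m}=\frakt_m^{\perp}\oplus\bigoplus_{\gamma\in C_{ij}\cup C_i,\,m<i<j\leq r}\frakg_{\gamma}\oplus\bigoplus_{\gamma\in C_0}\frakg_{\gamma}=\frakl\cap\frakb$, and pairs it with the $\ad(\frakt_{\CC})$-stability of $\frakl$ to get the triangular decomposition $\frakl=\overline{(\frakl\cap\frakn)}\oplus(\frakl\cap\frakt_{\CC})\oplus(\frakl\cap\frakn)$.

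The gap in L-3) is the most serious. Your argument --- ``$B$ meets every component of $(K_{\CC})_{X_m}$ and $L$ lies between'' --- fails twice: the premise that $B_{X_m}$ meets every component of $(K_{\CC})_{X_m}$ is never established, and even granting it, a component of $L$ sits inside a component of $(K_{\CC})_{X_m}$ whose intersection point with $B_{X_m}$ need not lie in $L$ at all, so nothing about the components of $L$ follows. What is actually needed is a structural statement about $L$ itself: the paper proves $L=Z_L(\frakt_m^{\perp})L_0$ (by the argument of \cite[Proposition 7.49]{Kn02}) and then shows $Z_L(\frakt_m^{\perp})\subset Z_{K_{\CC}}(\frakt_{\CC})\cap L\subset(T_{\CC})_{X_m}\subset B_{X_m}$, so that every component of $L$ already meets $B_{X_m}$. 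Without some such input, the component condition L-3) does not follow from L-1), L-2), L-4) and reductivity alone, so this step cannot be waved through.
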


\begin{proof}
By definition, the condition L-1) is clear.

First, we compute the triangular decomposition of $\frakl$.
For any $g \in L$ and $i\ (1 \leq i \leq m)$, we have
\begin{align*}
X_{\sor_i}+\overline{X_{\sor_i}} &= \Ad(g)(X_{\sor_i}+\overline{X_{\sor_i}}) \\
&= \Ad(g)(X_{\sor_i}) + \Ad(g)(\overline{X_{\sor_i}}).
\end{align*}
Since $\Ad(g)(X_{\sor_i}) \in \frakp_{+}$ and $\Ad(g)(\overline{X_{\sor_i}}) \in \frakp_{-}$, $g$ stabilizes $X_{\sor_i}$ and $\overline{X_{\sor_i}}$.
This imply that
\begin{align}
L &= Z_{K_{\CC}}\left(\bigoplus_{i=1}^{m}(\frakg_{\sor_i} \oplus \frakg_{-\sor_i})\right) \label{equation:L'}\\
\frakl &= Z_{\frakk_{\CC}}\left(\bigoplus_{i=1}^{m}(\frakg_{\sor_i} \oplus \frakg_{-\sor_i})\right). \nonumber
\end{align}
Since the right hand side is stable under $\ad(\frakt_{\CC})$-action, so is $\frakl$.
Then, we have
\begin{align}
\frakl = \overline{(\frakl \cap \frakn)}\oplus (\frakl \cap \frakt_{\CC}) \oplus (\frakl \cap \frakn).\label{equation:tri_L}
\end{align}

We will show that the Lie algebra $\frakb_{X_m}$ of $B_{X_m}$ is a Borel subalgebra of $\frakl$.
By Proposition \ref{proposition:restricted_root}, for any $i, j (1 \leq i < j \leq r)$ 
\begin{align*}
[\frakg_{\gamma}, X_m] &= 
\begin{cases}
0 & (m < i) \\
\frakg_{\gamma+\gamma_i} & (1 \leq i \leq m)
\end{cases}
&\text{ for any }& \gamma \in C_{ij} \cup C_{i}, \\
[\frakg_{\gamma}, X_m] &= 0 &\text{ for any }& \gamma \in C_0.
\end{align*}
This implies that $\frakb_{X_m}$ has the following decomposition:
\begin{align*}
\frakb_{X_m} = \frakt_m^{\perp} \oplus 
\bigoplus_{\substack{\gamma \in C_{ij}\cup C_{i} \\ m < i < j \leq r}}\frakg_{\gamma} \oplus \bigoplus_{\gamma \in C_0}\frakg_{\gamma},
\end{align*}
where $\frakt_m^\perp$ is the orthogonal complement of $\frakt_m$ in $\frakt_{\CC}$ with respect to the Killing form.
Then, we have $\frakb_{X_m} = Z_{\frakb}(\bigoplus_{i = 1}^{m}(\frakg_{\sor_i}\oplus \frakg_{-\sor_i})) = \frakl \cap \frakb$.
From (\ref{equation:tri_L}), this shows the condition L-4).

We can show that $L=Z_{L}(\frakt_m^\perp)L_0$ by the same proof as \cite[Proposition 7.49]{Kn02}.
By the equation (\ref{equation:L'}), $Z_{L}(\frakt_m^\perp)$ is contained in $Z_{K_{\CC}}(\frakt_{\CC})\cap L \subset (T_{\CC})_{X_m}$.
This shows that $(T_{\CC})_{X_m}$ meets every connected components of $L$, and so does $B_{X_m}$.
Then, we have shown the condition L-3).

By the proof in Lemma \ref{lemma:B-structure}, $(T_{\CC})_{X_m}$ is equal to $\bigcap_{i=1}^{m}\ker \sor_i|_{T_{\CC}}$.
This implies that $(T_{\CC})_{X_m}$ is contained in $L$.
Since $B_{X_m} = (T_{\CC})_{X_m}N_{X_m}$ and $N_{X_m}$ is connected, $B_{X_m}$ is contained in $L$.
This completes the proof.

\end{proof}

Suppose $\calH$ be a highest weight module of $G$.
We consider the $(\frakg, K)$-module $\calH$ as a $(\rring{\AV(\calH)}, K_{\CC})$-module.
By Proposition \ref{proposition:joseph}, $\rring{\AV(\calH)}$ has no zero divisors in $\calH$.
Then, applying Theorem \ref{theorem:general stability} and \ref{theorem:stable_multiplicity_M} to $\calH$, we have the following theorem.

\begin{theorem}\label{theorem:stable_theorem_for_highest_weight_module_maximal_compact}
Let $\calH$ be a unitary highest weight module of $G$ with the associated variety $\overline{\calO_m}$.
Then, there exists a $\lambda_0 \in \dweight(\overline{\calO_m}) = \{-\sum_{i=1}^m c_i \sor_i: c_1 \geq c_2 \geq \cdots \geq c_m \geq 0, c_i \in \ZZ \}$ such that
\begin{align*}
m^{K_{\CC}}_{\calH}(\lambda+\lambda_0) = m^{L}_{\calH_{X_m}}(\lambda|_{T_{X_m}})
\end{align*}
for any $\lambda \in \dweight(\calH)$.
\end{theorem}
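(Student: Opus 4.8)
The plan is to apply Theorem \ref{theorem:general stability} and Theorem \ref{theorem:stable_multiplicity_M} with $G$ replaced by $K_{\CC}$, with $X = \AV(\calH) = \overline{\calO_m}$, and with $M = \calH$ viewed as an $(\rring{\overline{\calO_m}}, K_{\CC})$-module. There are then two tasks: first, to check that this triple satisfies the hypotheses of those two theorems; second, to identify the point $x_0$, the subgroup $L = P_{x_0}$, and the character restriction occurring in their conclusions with the concrete data $X_m$, $Z_{K_{\CC}}(\fraka_m)$ and $\lambda|_{T_{X_m}}$ appearing in the statement.

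For the hypotheses I would argue as follows. Proposition \ref{proposition:kostant_hua_schmid} shows that $\overline{\calO_m}$ is a spherical affine $K_{\CC}$-variety; it is irreducible as an orbit closure and affine, so conditions (\ref{condition:mf}) and (\ref{condition:rf}) hold. As recalled in Section \ref{section:strongly_orthogonal}, $\calH$ carries the grading $\calH^{i} = S^{i}(\frakp_{-})\calH^{\frakp_{+}}$, so it is an $S(\frakp_{-})$-module generated by the finite-dimensional subspace $\calH^{\frakp_{+}}$, hence finitely generated over $S(\frakp_{-})$. The key input here is Joseph's Proposition \ref{proposition:joseph}: $\Ann_{S(\frakp_{-})}(\calH)$ is prime and coincides with $\Ann_{S(\frakp_{-})}(v)$ for every nonzero $v \in \calH$. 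Primeness shows the defining ideal of $\AV(\calH)$ is already radical, so $\rring{\overline{\calO_m}} = S(\frakp_{-})/\Ann_{S(\frakp_{-})}(\calH)$, and $\calH$ becomes a finitely generated $(\rring{\overline{\calO_m}}, K_{\CC})$-module over which $\rring{\overline{\calO_m}}$ has no zero divisors, which is exactly hypothesis (\ref{equation:stability assumption}).

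Next I would identify the geometric data. Taking $B$ to be the Borel subgroup of $K_{\CC}$ associated with $\Delta_c^{+}$ and $N$ its unipotent radical, Lemma \ref{lemma:B-orbit} tells us $\Ad(B)X_m$ is open dense in $\overline{\calO_m}$, so we may choose $x_0 = X_m$. By Lemma \ref{lemma:L_is_L_subgroup}, $L = Z_{K_{\CC}}(\fraka_m)$, which is reductive (being the centralizer of the semisimple subspace $\fraka_m$), satisfies conditions L-1) $\sim$ L-4), so the uniqueness clause of Proposition \ref{proposition:L-subgroup} identifies it with the subgroup $P_{x_0}$ of Theorem \ref{theorem:stable_multiplicity_M}. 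Then Theorem \ref{theorem:general stability} produces $\lambda_0 \in \dweight(\overline{\calO_m})$, and Theorem \ref{theorem:stable_multiplicity_M} yields $m^{K_{\CC}}_{\calH}(\lambda + \lambda_0) = m^{L}_{\calH_{X_m}}(\lambda|_{B_{X_m}})$ for $\lambda \in \dweight(\calH)$, with $\calH_{X_m} = \calH/\frakm(X_m)\calH$. Finally Lemma \ref{lemma:B-structure} gives $B_{X_m} = (T_{\CC})_{X_m}N_{X_m}$ with $N_{X_m}$ unipotent, so a character of $B_{X_m}$ is determined by its restriction to $(T_{\CC})_{X_m} = T_{X_m}$; hence $\lambda|_{B_{X_m}}$ may be replaced by $\lambda|_{T_{X_m}}$. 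The explicit form of $\dweight(\overline{\calO_m})$ is read off from Proposition \ref{proposition:kostant_hua_schmid}.

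The main obstacle, such as it is, lies not in applying the two general theorems but in the step preceding them: verifying that $\calH$ really is an $(\rring{\overline{\calO_m}}, K_{\CC})$-module of the required kind, i.e. that the $S(\frakp_{-})$-annihilator is already reduced so that its quotient is the genuine coordinate ring $\rring{\overline{\calO_m}}$, and that $\calH$ is a finitely generated faithful module over it with no zero divisors. This is precisely the content supplied by the primeness assertion in Proposition \ref{proposition:joseph}; once it is in hand, Theorem \ref{theorem:general stability} and Theorem \ref{theorem:stable_multiplicity_M} apply verbatim and the remaining identifications are routine.
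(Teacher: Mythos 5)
Your proposal is correct and follows essentially the same route as the paper: it applies Theorem \ref{theorem:general stability} and Theorem \ref{theorem:stable_multiplicity_M} to $X=\overline{\calO_m}$ and $M=\calH$, using Proposition \ref{proposition:kostant_hua_schmid} for sphericality, Proposition \ref{proposition:joseph} for the no-zero-divisor (and faithfulness) hypothesis, and Lemmas \ref{lemma:B-orbit}, \ref{lemma:B-structure}, \ref{lemma:L_is_L_subgroup} to identify $x_0=X_m$ and $L=Z_{K_{\CC}}(\fraka_m)$. The extra details you supply (radicality of the annihilator and the reduction of $B_{X_m}$-characters to $T_{X_m}$) are exactly what the paper leaves implicit.
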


\begin{proof}
By Kostant-Hua-Schmid Theorem (Proposition \ref{proposition:kostant_hua_schmid}), $\overline{\calO_m}$ is a spherical affine $K_{\CC}$-variety,
and $\dweight(\overline{\calO_m}) = \{-\sum_{i=1}^m c_i \sor_i: c_1 \geq c_2 \geq \cdots \geq c_m \geq 0, c_i \in \ZZ \}$.
Since $\calH$ is generated by $\calH^{\frakp^{+}}$ as a $\rring{\overline{\calO_m}}$-module, $\calH$ is a finitely generated $(\rring{\overline{\calO_m}}, K_{\CC})$-module.
We have shown that $L$ satisfies the conditions L-1) $\sim$ L-4) in Lemma \ref{lemma:L_is_L_subgroup}.
Then, this completes the proof.
\end{proof}

The following corollary is direct consequence of Corollary \ref{corollary:upper_bound} and 
Theorem \ref{theorem:stable_theorem_for_highest_weight_module_maximal_compact}.

\begin{corollary}\label{corollary:upper_bound_highest}
Let $\calH$ be a unitary highest weight module of $G$ with the associated variety $\overline{\calO_m}$.
Then, we have
\begin{align*}
C_K(\calH)=C_L(\calH_{X_m}).
\end{align*}
Moreover, $\calH|_K$ is multiplicity-free if and only if $\calH_{X_m}|_L$ is multiplicity-free.
\end{corollary}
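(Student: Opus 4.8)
The plan is to obtain this as an immediate consequence of Corollary~\ref{corollary:upper_bound}, specialized to the data underlying Theorem~\ref{theorem:stable_theorem_for_highest_weight_module_maximal_compact}. First I would set $X = \overline{\calO_m}$ and $G = K_{\CC}$, and regard $\calH$ as an $(\rring{\overline{\calO_m}}, K_{\CC})$-module: since $\calH$ is a highest weight module its $\gr$ is an $S(\frakp_{-})$-module, and because $\AV(\calH) = \overline{\calO_m}$ this action factors through $\rring{\overline{\calO_m}} = S(\frakp_{-})/\Ann_{S(\frakp_{-})}(\calH)$. By Proposition~\ref{proposition:kostant_hua_schmid}, $\overline{\calO_m}$ is a spherical affine $K_{\CC}$-variety, so the sphericity hypothesis \eqref{condition:mf} holds and, being quasi-affine, so does \eqref{condition:rf}. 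By Proposition~\ref{proposition:joseph}, $\Ann_{S(\frakp_{-})}(v) = \Ann_{S(\frakp_{-})}(\calH)$ for every nonzero $v \in \calH$, so $\rring{\overline{\calO_m}}$ has no zero divisors in $\calH$; thus all hypotheses of Corollary~\ref{corollary:upper_bound} are satisfied.

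Next I would pin down the base point and the subgroup $L$ appearing in that corollary. Lemma~\ref{lemma:B-orbit} shows that $\Ad(B)X_m$ is open dense in $\overline{\calO_m}$ for the Borel subgroup $B \subset K_{\CC}$ determined by $\Delta_c^{+}$, so we may take $x_0 = X_m$ as the base point of the open Borel orbit; accordingly $M_{x_0} = \calH/\frakm(X_m)\calH = \calH_{X_m}$. The subgroup that Corollary~\ref{corollary:upper_bound} attaches to this situation is $P_{x_0}$, characterized by conditions L-1)--L-4) and their uniqueness in Proposition~\ref{proposition:L-subgroup}. By Lemma~\ref{lemma:L_is_L_subgroup} the centralizer $Z_{K_{\CC}}(\fraka_m)$ satisfies exactly those conditions, so by that uniqueness we have $P_{x_0} = Z_{K_{\CC}}(\fraka_m) = L$. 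Feeding this into Corollary~\ref{corollary:upper_bound} yields $C_{K_{\CC}}(\calH) = C_{L}(\calH_{X_m})$, together with the equivalence: $\calH$ is multiplicity-free as a rational $K_{\CC}$-module iff $\calH_{X_m}$ is multiplicity-free as an $L$-module.

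Finally I would translate from $K_{\CC}$ back to $K$. Since $K_{\CC}$ is a complexification of the compact group $K$, every finite-dimensional representation of $K$ extends uniquely to a rational representation of $K_{\CC}$ and this correspondence respects irreducible decompositions; hence the $K$-isotypic decomposition of $\calH$ coincides with its rational $K_{\CC}$-isotypic decomposition, so $C_K(\calH) = C_{K_{\CC}}(\calH)$ and ``$\calH|_K$ multiplicity-free'' means the same as ``$\calH$ multiplicity-free as a rational $K_{\CC}$-module''. Combining this with the previous step gives $C_K(\calH) = C_L(\calH_{X_m})$ and the asserted equivalence.

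There is essentially no hard step here: the statement is a direct corollary. The only points requiring a little care are, first, confirming that the abstract subgroup $L = P_{x_0}$ produced by Corollary~\ref{corollary:upper_bound} is genuinely the explicit centralizer $Z_{K_{\CC}}(\fraka_m)$ — which is exactly what the uniqueness clause of Proposition~\ref{proposition:L-subgroup} combined with Lemma~\ref{lemma:L_is_L_subgroup} delivers — and, second, the harmless identification of $K$-multiplicities with $K_{\CC}$-multiplicities. Beyond these bookkeeping remarks, the result is a substitution into the already-proved general theorems.
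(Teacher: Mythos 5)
Your proposal is correct and follows essentially the same route as the paper: the paper derives this corollary directly from Corollary \ref{corollary:upper_bound} together with Theorem \ref{theorem:stable_theorem_for_highest_weight_module_maximal_compact}, whose proof already verifies the very hypotheses you re-check (sphericity of $\overline{\calO_m}$ via Proposition \ref{proposition:kostant_hua_schmid}, no zero divisors via Proposition \ref{proposition:joseph}, and $L=Z_{K_{\CC}}(\fraka_m)=P_{X_m}$ via Lemma \ref{lemma:L_is_L_subgroup} and Proposition \ref{proposition:L-subgroup}). The only additional remark you make, identifying $K$-multiplicities with rational $K_{\CC}$-multiplicities, is the standard fact already noted in the paper's preliminaries.
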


\begin{remark}
In the proof of Theorem \ref{theorem:stable_theorem_for_highest_weight_module_maximal_compact}
and Corollary \ref{corollary:upper_bound_highest},
we did not use the assumption that $\calH$ is irreducible.
Then, we can apply the theorem in the assumption that $\calH$ is the finite direct sum of unitary highest weight modules
with same associated varieties.
\end{remark}

\subsection{Stability theorem for holomorphic symmetric pairs}\label{section:stability_discrete}

Let $G$ be a connected simple real Lie group of Hermitian type with finite center, 
and $\tau$ be an involutive automorphism of $G$ commuting with a Cartan involution $\theta$ of $G$.
We put $H = (G^{\tau})_0$, the identity component of the fixed point group of $\tau$.
We assume that $(\frakg, \frakh)$ is a holomorphic symmetric pair.
Suppose $\calH$ is a holomorphic discrete series representation of $G$.

Before we state the theorem, we set up some notations.
We fix a Cartan subalgebra $\frakt^{\tau}$ of $\frakk^{\tau}$, and
fix a positive system $\Delta^{+}(\frakg^{\tau\theta}_{\CC}, \frakt^{\tau}_{\CC})$ such that
$\Delta(\frakp^{-\tau}_{+}, \frakt^{\tau}_{\CC}) \subset \Delta^{+}(\frakg^{\tau\theta}_{\CC}, \frakt^{\tau}_{\CC})$.
Let $B=TN$ be a Borel subgroup of $(H \cap K)_{\CC}$ corresponding to the positive system $\Delta^{+}(\frakg^{\tau\theta}_{\CC}, \frakt^{\tau}_{\CC})$.

We will take strongly orthogonal roots $\{\sor_1, \sor_2, \ldots, \sor_r \}$ in $\Delta(\frakp^{-\tau}_{+}, \frakt^{\tau}_{\CC})$
by similar way in Section \ref{section:strongly_orthogonal}.
However, it may be possible that $\frakg^{\tau\theta}$ is not a simple Lie algebra.
Suppose $\frakg^{\tau\theta}=\bigoplus_{i = 1}^{n} \frakh_i$.
We set up an lexicographical order on $\Delta(\frakg^{\tau\theta}, \frakt^{\tau}_{\CC})$ such that
any elements of $\Delta^{+}(\frakg^{\tau\theta}_{\CC}, \frakt^{\tau}_{\CC})$ are positive
and $\Delta^{+}(\frakh_i, \frakt^{\tau}_{\CC}) < \Delta^{+}(\frakh_j, \frakt^{\tau}_{\CC})$ for any $i, j\ (i < j)$.
Here, we write $X < Y$ if $x < y$ for any $x \in X$ and $y \in Y$.
Replacing the term 'lowest root' by 'minimum root' in the definition of Section \ref{section:strongly_orthogonal},
we can take strongly orthogonal roots $\{\sor_1, \sor_2, \ldots, \sor_r \}$ in $\Delta(\frakp^{-\tau}_{+}, \frakt^{\tau}_{\CC})$.

Put $\fraka = \bigoplus_{i=1}^{r} \RR (X_{\sor_i}+\overline{X_{\sor_i}}) \subset \frakp^{-\tau}$.
Then, $\fraka$ is a maximal abelian subspace of $\frakp^{-\tau}$, and $r = \dim_{\RR}(\fraka) = \RR$-$\rank(\frakg^{\tau\theta})$.

\begin{theorem}\label{theorem:stable_theorem_for_holomorphic_discrete_series}
Let $\calH$ be a holomorphic discrete series representation of $G$.
We put $L = Z_{H \cap K}(\fraka)$.
Then, there exists a $\lambda_0 \in \dweight(\frakp_{+}^{-\tau})$ such that
\begin{align*}
m^{H}_{\calH}(\lambda+\lambda_0) = m^{L}_{\calH^{\frakp_{+}}}(\lambda|_{T_x})
\end{align*}
for any $\lambda \in \dweight(\calH)$.
Here, we denote by $m^{H}_{\calH}(\lambda)$ the multiplicity of the unitary highest weight module with highest weight $\lambda$
with respect to $\frakp^{\tau}_{+}\oplus \frakb$.
\end{theorem}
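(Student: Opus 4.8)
The plan is to realize this as an instance of the general machinery from Section \ref{section:stable_multiplicity}, applied to the variety $X=\frakp_{+}^{-\tau}$ and the module $M=\calH/\frakp_{-}^{-\tau}\calH$ over the group $G_{\mathrm{alg}}=(H\cap K)_{\CC}$, exactly as announced in item iii) of the introduction. So the first step is to pass from the $(\frakg,K)$-module structure on $\calH$ to the relevant algebraic module. Since $\calH$ is a holomorphic discrete series representation, the canonical surjection $(\ref{equation:canonical_surjection})$ is an isomorphism, so $\calH\simeq S(\frakp_{-})\otimes\calH^{\frakp_{+}}$ as a $K$-module, and $\AV(\calH)=\frakp_{+}$. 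Decomposing $\frakp_{-}=\frakp_{-}^{\tau}\oplus\frakp_{-}^{-\tau}$ and using Proposition \ref{proposition:reduction_to_compact}, the branching multiplicity $m^{H}_{\calH}(\lambda)$ with respect to $\frakp^{\tau}_{+}\oplus\frakb$ equals the $(H\cap K)_{\CC}$-multiplicity of $\lambda$ in $S(\frakp_{-}^{-\tau})\otimes\calH^{\frakp_{+}}$; this identification is the bridge between the analytic branching problem and a purely algebraic multiplicity-counting problem.

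Next I would set $M := S(\frakp_{-}^{-\tau})\otimes\calH^{\frakp_{+}}$, viewed as a finitely generated $(\rring{\frakp_{+}^{-\tau}},(H\cap K)_{\CC})$-module via the identification $S(\frakp_{-}^{-\tau})\simeq\rring{\frakp_{+}^{-\tau}}$ (a $K$-invariant bilinear form identifies $\frakp_{+}^{-\tau}$ with $(\frakp_{-}^{-\tau})^{*}$). The variety $X=\frakp_{+}^{-\tau}$ is affine, so condition $(\ref{condition:rf})$ is automatic; condition $(\ref{condition:mf})$, sphericity of $\frakp_{+}^{-\tau}$ under $(H\cap K)_{\CC}$, follows from the Kostant–Hua–Schmid type analysis — indeed, by the strongly-orthogonal-root description $\fraka=\bigoplus_{i=1}^{r}\RR(X_{\sor_i}+\overline{X_{\sor_i}})\subset\frakp^{-\tau}$ is maximal abelian in $\frakp^{-\tau}$, and the argument in Lemma \ref{lemma:B-orbit} adapted to the pair $(\frakg^{\tau\theta},\frakt^{\tau})$ shows that $B$ has an open dense orbit. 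The no-zero-divisor hypothesis $(\ref{equation:stability assumption})$ holds because $M$ is a free $\rring{\frakp_{+}^{-\tau}}$-module (tensor of the integral domain $\rring{\frakp_{+}^{-\tau}}$ with the vector space $\calH^{\frakp_{+}}$). So Theorem \ref{theorem:general stability} and Theorem \ref{theorem:stable_multiplicity_M} both apply and produce a $\lambda_0\in\dweight(\frakp_{+}^{-\tau})$ with $m^{(H\cap K)_{\CC}}_{M}(\lambda+\lambda_0)=m^{P_{x_0}}_{M_{x_0}}(\lambda|_{B_{x_0}})$.

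The remaining work — and the main obstacle — is to identify the abstract subgroup $P_{x_0}=\{g\in (H\cap K)_{\CC,x_0}: gBx_0\subset Bx_0\}$ with the complexification of $L=Z_{H\cap K}(\fraka)$, and to identify the fiber $M_{x_0}=M/\frakm(x_0)M$ with $\calH^{\frakp_{+}}$ as an $L$-module. For the subgroup identification I would mimic Lemma \ref{lemma:L_is_L_subgroup}: take $x_0=X_{\sor_1}+\cdots+X_{\sor_r}\in\frakp_{+}^{-\tau}$, verify via Proposition \ref{proposition:restricted_root} (applied inside $\frakg^{\tau\theta}_{\CC}$) that $\Ad(B)x_0$ is open dense, show $B_{x_0}=(T_{\CC})_{x_0}N_{x_0}$, and then check that $Z_{(H\cap K)_{\CC}}(\fraka_{\CC})$ satisfies the four conditions L-1)–L-4) of Proposition \ref{proposition:L-subgroup}; uniqueness in that proposition then forces $P_{x_0}=Z_{(H\cap K)_{\CC}}(\fraka_{\CC})$, which is the complexification of $Z_{H\cap K}(\fraka)$. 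For the fiber, since $M=\rring{\frakp_{+}^{-\tau}}\otimes\calH^{\frakp_{+}}$ is free, $\ev_{x_0}$ is literally evaluation of regular functions at $x_0$ tensored with the identity, so $M_{x_0}\simeq\calH^{\frakp_{+}}$ as a vector space, and one checks the residual $L$-action agrees with the restriction of the $K$-action on $\calH^{\frakp_{+}}$ because $L$ fixes $x_0$. Combining these identifications with the output of Theorem \ref{theorem:stable_multiplicity_M}, and translating back through Proposition \ref{proposition:reduction_to_compact}, gives $m^{H}_{\calH}(\lambda+\lambda_0)=m^{L}_{\calH^{\frakp_{+}}}(\lambda|_{T_x})$, as claimed. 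The delicate point throughout is keeping the root-theoretic bookkeeping straight when $\frakg^{\tau\theta}$ is not simple — this is why the lexicographic order on $\Delta(\frakg^{\tau\theta},\frakt^{\tau}_{\CC})$ respecting the decomposition $\bigoplus\frakh_i$ was introduced — but Proposition \ref{proposition:restricted_root} applies to each simple factor separately, so the open-dense-orbit and $B_{x_0}$-structure arguments go through factor by factor.
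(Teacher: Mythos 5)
Your proposal is correct and follows essentially the same route as the paper: reduce via Proposition \ref{proposition:reduction_to_compact} to the $(H\cap K)$-decomposition of $S(\frakp_{-}^{-\tau})\otimes\calH^{\frakp_{+}}$, then run the general stability machinery on the spherical variety $\frakp_{+}^{-\tau}$ with the identification of $P_{x_0}$ as $Z_{(H\cap K)_{\CC}}(\fraka_{\CC})$ and of the fiber as $\calH^{\frakp_{+}}$. The only cosmetic difference is that the paper, instead of re-verifying the hypotheses of Theorems \ref{theorem:general stability} and \ref{theorem:stable_multiplicity_M} directly, identifies $S(\frakp_{-}^{-\tau})\otimes\calH^{\frakp_{+}}$ with $N^{\frakg^{\theta\tau}}(\calH^{\frakp_{+}})\simeq\calU(\frakg^{\theta\tau})\calH^{\frakp_{+}}$, a finite sum of holomorphic discrete series of $(G^{\theta\tau})_0$, and quotes Theorem \ref{theorem:stable_theorem_for_highest_weight_module_maximal_compact} (whose proof, via Lemmas \ref{lemma:B-orbit}--\ref{lemma:L_is_L_subgroup}, is exactly the content you inline).
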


\begin{proof}
By Proposition \ref{proposition:reduction_to_compact},
the decomposition of $\calH|_{H}$ is reduced to
the decomposition of $(S(\frakp_{-}^{-\tau}) \otimes \calH^{\frakp_{+}})|_{H \cap K}$.
There exists a canonical isomorphism as $H \cap K$-representation:
\begin{align*}
S(\frakp_{-}^{-\tau}) \otimes \calH^{\frakp_{+}} &\simeq N^{\frakg^{\tau\theta}}(\calH^{\frakp_{+}}) \\
&\simeq \calU(\frakg^{\tau\theta})\calH^{\frakp_{+}} (\subset \calH).
\end{align*}
This implies that $S(\frakp_{-}^{-\tau}) \otimes \calH^{\frakp_{+}}$ is isomorphic to the finite direct sum of
some holomorphic discrete series representations of $(G^{\theta\tau})_0$ as $H\cap K$-representation.
Then, applying Theorem \ref{theorem:stable_theorem_for_highest_weight_module_maximal_compact} to $N^{\frakg^{\tau\theta}}(\calH^{\frakp_{+}})$,
we prove the theorem.
\end{proof}

\begin{corollary}\label{corollary:upper_bound_holomorphic_discrete_series}
Let $\calH$ be a holomorphic discrete series representation of $G$.
We put $L = Z_{H \cap K}(\fraka)$.
Then, we have $C_H(\calH) = C_L(\calH^{\frakp_{+}})$.
Moreover, $\calH|_H$ is multiplicity-free if and only if $\calH^{\frakp_{+}}|_L$ is multiplicity-free.
\end{corollary}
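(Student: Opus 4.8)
The plan is to derive the corollary from Corollary \ref{corollary:upper_bound}, applied in exactly the situation set up in the proof of Theorem \ref{theorem:stable_theorem_for_holomorphic_discrete_series}; equivalently, one may quote Corollary \ref{corollary:upper_bound_highest} for $N^{\frakg^{\tau\theta}}(\calH^{\frakp_{+}})$ together with Proposition \ref{proposition:reduction_to_compact}.

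First I would reduce to a branching problem for a complexified maximal compact group. By Proposition \ref{proposition:reduction_to_compact}, the multiplicity of each irreducible summand of $\calH|_{\frakg^{\tau}}$ coincides with the multiplicity of a corresponding $(H\cap K)$-type in $S(\frakp_{-}^{-\tau}) \otimes \calH^{\frakp_{+}}$; hence $C_H(\calH) = C_{H\cap K}(S(\frakp_{-}^{-\tau}) \otimes \calH^{\frakp_{+}})$, and passing from the compact group $H\cap K$ to its complexification $(H\cap K)_{\CC}$ changes neither multiplicities nor multiplicity-freeness. As in the proof of Theorem \ref{theorem:stable_theorem_for_holomorphic_discrete_series}, I identify $S(\frakp_{-}^{-\tau}) \otimes \calH^{\frakp_{+}}$ with $N^{\frakg^{\tau\theta}}(\calH^{\frakp_{+}}) = \calU(\frakg^{\tau\theta})\calH^{\frakp_{+}} \subset \calH$, a finite direct sum of holomorphic discrete series representations of $(G^{\theta\tau})_0$. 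By the Poincar\'e--Birkhoff--Witt theorem this module is free of rank $\dim \calH^{\frakp_{+}}$ over $\rring{\frakp_{+}^{-\tau}} = S(\frakp_{-}^{-\tau})$; in particular it is a finitely generated $(\rring{\frakp_{+}^{-\tau}}, (H\cap K)_{\CC})$-module with no zero divisors, and $\frakp_{+}^{-\tau}$ is an affine spherical $(H\cap K)_{\CC}$-variety, being a product over the simple Hermitian factors of $\frakg^{\theta\tau}$ of the varieties $\overline{\calO_m}$ of Proposition \ref{proposition:kostant_hua_schmid}.

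Next I would apply Corollary \ref{corollary:upper_bound} with $X = \frakp_{+}^{-\tau}$, with $(H\cap K)_{\CC}$ in the role of $G$, and with $M = N^{\frakg^{\tau\theta}}(\calH^{\frakp_{+}})$, using the base point $X_r$, $r = \RR\text{-}\rank(\frakg^{\theta\tau})$. By Lemma \ref{lemma:L_is_L_subgroup}, applied to each simple factor of $\frakg^{\theta\tau}$ and taking products, the reductive subgroup occurring in that corollary is $Z_{(H\cap K)_{\CC}}(\fraka)$, namely the complexification of $L = Z_{H\cap K}(\fraka)$. Since $M$ is a free $\rring{\frakp_{+}^{-\tau}}$-module, the evaluation map at $X_r$ restricts to an isomorphism from the generating subrepresentation $\calH^{\frakp_{+}}$ onto the fibre $M/\frakm(X_r)M$, equivariantly for this subgroup. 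Corollary \ref{corollary:upper_bound} then gives $C_{(H\cap K)_{\CC}}(M) = C_L(\calH^{\frakp_{+}})$, and combining with the first step yields $C_H(\calH) = C_L(\calH^{\frakp_{+}})$.

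The final assertion is immediate, since a representation is multiplicity-free exactly when the supremum of its multiplicity function is at most $1$. I do not expect a genuine obstacle here; the only points needing attention are the bookkeeping among $(G^{\theta\tau})_0$, its maximal compact subgroup $H\cap K$ and their complexifications, and the identification of the isotropy representation $M/\frakm(X_r)M$ with $\calH^{\frakp_{+}}$ as a module over the complexification of $L$ — both of which are routine given the freeness of $M$ and the results already established.
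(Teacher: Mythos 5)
Your proposal is correct and follows essentially the same route as the paper: reduce to the compact side via Proposition \ref{proposition:reduction_to_compact}, identify $S(\frakp_{-}^{-\tau})\otimes\calH^{\frakp_{+}}$ with $N^{\frakg^{\tau\theta}}(\calH^{\frakp_{+}})$ as a finite sum of holomorphic discrete series of $(G^{\theta\tau})_0$, and then invoke the general machinery of Corollary \ref{corollary:upper_bound} (equivalently Corollary \ref{corollary:upper_bound_highest} with the remark on finite direct sums) with $X=\frakp_{+}^{-\tau}$, base point $X_r$ and Lemma \ref{lemma:L_is_L_subgroup} identifying the subgroup $L$. The details you add (freeness over $S(\frakp_{-}^{-\tau})$ and the identification of the fibre at $X_r$ with $\calH^{\frakp_{+}}$ as an $L$-module) are exactly what the paper leaves implicit.
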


\section{Branching laws and $\epsilon$-family}\label{section:upper_bound}

In this section, we discuss the relation between branching laws and $\epsilon$-family.

\subsection{Motivation}

Our main problem is the following:
\begin{question}\label{question:motivation}
Let $H, H'$ be reductive subgroups of a real reductive Lie group $G$, and $\pi$ be an irreducible unitary representation of $G$.
We assume that their complexifications are conjugate by an inner automorphism
(i.e. there exists a $g \in G_{\CC}$ such that $g H_{\CC} g^{-1} = H'_{\CC}$).
Then, what properties of branching laws of $\pi|_{H}$ and $\pi|_{H'}$ are preserved?
\end{question}
In simpler terms, it says whether there exists a method analogous to Weyl's unitary trick for branching problems.

The following fact about upper bounds of multiplicities is known (see \cite[Theorem 4.3.]{PeSe12}).
\begin{fact}\label{fact:penkov}
Let $\frakg$ be a complex Lie algebra, and $\frakh$ be a reductive complex Lie subalgebra in $\frakg$.
Suppose $M$ and $N$ are irreducible representations of $\frakg_{\CC}$,
and the action of $\frakh$ on $M$ and $N$ is locally finite and completely decomposable.
If $\Ann_{\calU(\frakg_{\CC})}(M)=\Ann_{\calU(\frakg_{\CC})}(M)$, then we have $C_\frakh(M)=C_\frakh(N)$.
\end{fact}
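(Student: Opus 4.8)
The plan is to recast both sides of Fact~\ref{fact:penkov} as invariants attached only to the common annihilator $I := \Ann_{\calU(\frakg_{\CC})}(M) = \Ann_{\calU(\frakg_{\CC})}(N)$ (this being the intended hypothesis), and then to extract $C_{\frakh}(M)$ from the associated-variety data of $I$, which makes the symmetry between $M$ and $N$ manifest. Put $A := \calU(\frakg_{\CC})/I$; both $M$ and $N$ are faithful simple $A$-modules that are $\frakh$-locally finite and semisimple. For a finite-dimensional irreducible $\frakh$-module $E$, Frobenius reciprocity for the induction functor $\calU(\frakg_{\CC})\otimes_{\calU(\frakh)}(-)$ gives
\begin{align*}
\dim\Hom_{\frakh}(E, M) = \dim\Hom_{\calU(\frakg_{\CC})}\big(\calU(\frakg_{\CC})\otimes_{\calU(\frakh)}E,\, M\big) = \dim\Hom_{A}(P_E, M),
\end{align*}
where $P_E := A\otimes_{\calU(\frakh)}E$ is a finitely generated $A$-module depending only on $I$ and $E$ (and the same with $N$ in place of $M$). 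Hence $C_{\frakh}(M) = \sup_E \dim\Hom_A(P_E, M)$, and the entire problem is to show that this supremum is unchanged when $M$ is replaced by $N$.

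First I would record the algebra acting on the multiplicity spaces: each $\Hom_A(P_E, M)$ is a module over the $\CC$-algebra $R_E := \End_A(P_E)^{\mathrm{op}}$, a ``relative enveloping algebra'' built purely from $I$ and $E$, so that comparing $C_{\frakh}(M)$ and $C_{\frakh}(N)$ reduces to comparing the dimensions of these $R_E$-modules as $M$ runs over faithful simple $A$-modules. Next I would pass to associated graded objects for compatible good filtrations: $\gr A$ is a quotient of $S(\frakg_{\CC})$ supported on the associated variety $\mathcal{V}(I)\subset\frakg_{\CC}^{*}$, which depends only on $I$, and $\gr M$ is a finitely generated $\gr A$-module whose multiplicity along the open dense orbit of $\mathcal{V}(I)$ equals the Goldie rank of $A$ by Joseph's theorem --- hence is again independent of the particular simple module with annihilator $I$.

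The core of the argument is then a stability statement at the graded level. Restricted to any ray of highest weights, the function $E\mapsto\dim\Hom_{\frakh}(E, M)$ is governed to leading order by the Goldie rank of $A$ together with the position of $\mathcal{V}(I)$ relative to $\frakh_{\CC}^{\perp}$ --- precisely the periodicity phenomenon of Theorem~\ref{theorem:general stability}, now applied to the $\gr A$-module $\gr M$ over a coordinate ring which is $\frakh_{\CC}$-spherical in the relevant situations, with the subgroup $L$ there replaced by the stabilizer data coming from $\frakh_{\CC}$. The contributions of the smaller $\frakh_{\CC}$-orbits inside $\mathcal{V}(I)$ produce only finitely bounded corrections that are erased by $\sup_E$, so $C_{\frakh}(M) = C_{\frakh}(N)$, both finite with the same value or both equal to $+\infty$.

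The hard part will be showing that \emph{all} of $\dim\Hom_{\frakh}(E, M)$, not merely its leading term, is asymptotically determined by $I$, i.e.\ that the sub-leading associated-graded multiplicities do not break the equality of the two suprema. The cleanest route is to localize at the associated-variety level: inverting the $\frakh_{\CC}$-semi-invariant regular functions on $\mathcal{V}(I)$ that meet the open orbit turns $\gr M$ and $\gr N$ into free modules of one and the same rank --- the Goldie rank of $I$ --- over the same localized coordinate ring, and a stability-plus-injectivity argument in the spirit of Lemma~\ref{lemma:stable_multiplicity_injective} and Theorem~\ref{theorem:stable_multiplicity_M} then identifies $\dim\Hom_{\frakh}(E, M)$ with a quantity read off from this localized picture once $E$ is large enough, and likewise for $N$. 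Producing such a ``large enough'' threshold that is \emph{uniform} over all simple modules with annihilator $I$ is the genuine technical heart of the proof.
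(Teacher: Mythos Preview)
The paper does not prove Fact~\ref{fact:penkov}; it is quoted as an external result from \cite[Theorem~4.3]{PeSe12}, so there is no in-paper proof to compare against. What you are really being asked to do is reconstruct the Penkov--Serganova argument.

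Your outline has a genuine gap. You try to feed $\gr M$ into Theorem~\ref{theorem:general stability}, but that theorem requires the underlying variety to be \emph{spherical} for the acting group, and Fact~\ref{fact:penkov} carries no such hypothesis on $\frakh$ or on $\mathcal{V}(I)$. Your parenthetical ``which is $\frakh_{\CC}$-spherical in the relevant situations'' quietly restricts the statement; in the stated generality there is no reason for the associated variety of $I$ to admit an open Borel orbit for $\frakh_{\CC}$, so the periodicity machinery of this paper does not apply. Even under a sphericality assumption, your last step --- a stability threshold uniform over all simple modules with annihilator $I$ --- is exactly what Theorem~\ref{theorem:general stability} does \emph{not} provide: its $\lambda_0$ depends on the module, not merely on $\rring{X}$.

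The intended argument is more directly ring-theoretic and avoids associated varieties altogether. The paper itself hints at it in the example immediately following the Fact, where the isomorphism $\pi(\calU(\frakg)^{H}) \simeq (\calU(\frakg)/\calJ)^{H}$ is used. For each simple $\frakh$-type $E$, the multiplicity space $\Hom_{\frakh}(E,M)$ is naturally a module over $\calU(\frakg_{\CC})^{\frakh}$, and because $M$ is simple this action factors through the image of $\calU(\frakg_{\CC})^{\frakh}$ in $\calU(\frakg_{\CC})/I$ --- an algebra that depends only on $I$. One then reads off $C_{\frakh}(M)$ from invariants of these algebras, with no sphericality needed.
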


We rewrite this fact in terms of our settings.
\begin{fact}
Let $H, H'$ be compact subgroups of a connected real reductive Lie group $G$, and $\pi$ be an irreducible unitary representation of $G$.
Suppose $H_{\CC}, H'_{\CC}$ are conjugate by an inner automorphism of $G_{\CC}$.
Then, we have $C_H(\pi) = C_H'(\pi)$.
\end{fact}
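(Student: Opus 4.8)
The plan is to express both $C_H(\pi)$ and $C_{H'}(\pi)$ as invariants of the form $C_{\frakh_\CC}(M)$ attached to a single irreducible $\frakg_\CC$-module $M$, and then apply the Penkov--Serganova estimate (Fact~\ref{fact:penkov}). The only genuinely delicate input will be that the annihilator of such an $M$ is insensitive to inner automorphisms of $G_\CC$; this is where the hypothesis on $H_\CC$ and $H'_\CC$ is consumed.

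First I would reduce to the case $H, H' \subset K$. Every compact subgroup of $G$ lies in a maximal compact subgroup, and since $G$ is connected all maximal compact subgroups are conjugate, so $xHx^{-1} \subset K$ and $x'H'x'^{-1} \subset K$ for suitable $x, x' \in G$. As $\pi$ composed with conjugation by an element of $G$ is unitarily equivalent to $\pi$, this replacement changes neither $C_H(\pi)$ nor $C_{H'}(\pi)$, and the complexifications of $xHx^{-1}$ and $x'H'x'^{-1}$ are still conjugate by an inner automorphism of $G_\CC$. Let $V$ be the Harish-Chandra module of $\pi$ with respect to $K$. Since $G$ is connected, $K$ is connected, hence $V$ is irreducible as a $\frakg_\CC$-module. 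Because $H \subset K$, every $K$-finite vector is $H$-finite, so $\frakh_\CC$ acts on $V$ locally finitely and, $H$ being compact, completely decomposably; moreover, decomposing $V$ into $K$-isotypic components and restricting each to $H$ shows that the multiplicities of $\frakh_\CC$-types in $V$ coincide with the multiplicities of $H$-types in the unitary module $\pi|_H$, so that $C_H(\pi) = C_{\frakh_\CC}(V)$, and likewise $C_{H'}(\pi) = C_{\frakh'_\CC}(V)$.

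Next I would prove the key point: the ideal $I := \Ann_{\calU(\frakg_\CC)}(V)$ is stable under $\Ad(g)$, viewed as an automorphism of $\calU(\frakg_\CC)$, for every $g \in G_\CC$. For $g \in G$ this is immediate, since $\pi$ twisted by conjugation by $g$ is equivalent to $\pi$, which forces the $\Ad(g)$-twist of $V$ to have annihilator $I$. The set $\{g \in G_\CC : \Ad(g)I = I\}$ is a Zariski-closed subgroup of $G_\CC$ containing the Zariski-dense image of $G$, hence equals $G_\CC$. Now fix $g \in G_\CC$ with $\Ad(g)\frakh_\CC = \frakh'_\CC$ and let $V^g$ denote $V$ with its $\frakg_\CC$-action precomposed with $\Ad(g)$. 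Then $V^g$ is again irreducible over $\frakg_\CC$, with $\Ann_{\calU(\frakg_\CC)}(V^g) = \Ad(g)^{-1}I = I$; and the $\frakh_\CC$-action on $V^g$ is the $\frakh'_\CC$-action on $V$ transported through the isomorphism $\Ad(g)|_{\frakh_\CC} : \frakh_\CC \to \frakh'_\CC$, so it is locally finite and completely decomposable and $C_{\frakh_\CC}(V^g) = C_{\frakh'_\CC}(V)$. Applying Fact~\ref{fact:penkov} with $\frakh = \frakh_\CC$, $M = V$, $N = V^g$ gives $C_{\frakh_\CC}(V) = C_{\frakh_\CC}(V^g)$, and stringing the identities together yields $C_H(\pi) = C_{\frakh_\CC}(V) = C_{\frakh_\CC}(V^g) = C_{\frakh'_\CC}(V) = C_{H'}(\pi)$.

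I expect the main obstacle to be the $\Ad(G_\CC)$-invariance of the annihilator $I$: this is exactly the place where "conjugate by an inner automorphism of $G_\CC$", rather than by one descending from $G$, is needed, and it is a Zariski-density incarnation of Weyl's unitary trick. A secondary point requiring care is the identity $C_H(\pi) = C_{\frakh_\CC}(V)$ when the multiplicities are infinite, i.e. that the purely algebraic branching of the Harish-Chandra module to $\frakh_\CC$ faithfully records the Hilbert-space branching of $\pi|_H$; for compact $H \subset K$ this is routine via the $K$-isotypic decomposition of $V$.
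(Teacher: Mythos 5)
Your argument is correct and is exactly the derivation the paper intends: the paper states this Fact without proof, merely as a ``rewriting'' of Fact~\ref{fact:penkov}, and your reduction to subgroups of $K$, the passage to the Harish-Chandra module $V$ (irreducible over $\calU(\frakg_{\CC})$ since $K$ is connected), and the twist $V^g$ with $\Ann_{\calU(\frakg_{\CC})}(V^g)=\Ann_{\calU(\frakg_{\CC})}(V)$ supply precisely the details the paper leaves implicit (for the annihilator-invariance you could also just note that a two-sided ideal of $\calU(\frakg_{\CC})$ is automatically $\ad(\frakg_{\CC})$-stable, hence $\Ad(G_{\CC})$-stable, making the Zariski-density step unnecessary). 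The one caveat is that your identification $C_H(\pi)=C_{\frakh_{\CC}}(V)$ tacitly assumes $H$ is connected, since for disconnected $H$ the $H$-multiplicities are not determined by the $\frakh_{\CC}$-module structure; the paper's formulation leaves the same assumption implicit.
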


Thus, the suprema of the multiplicities are preserved if $H$ and $H'$ are compact subgroup of $G$.
This is an answer of the question \ref{question:motivation} in a special aspect.
Note that if $H, H'$ are conjugate by an inner automorphism of $G$, this fact is trivial.

On the other hand, if $(G, H, H')=(\Sp(n,\RR), \U(n), \GL(n,\RR))$ and $\pi$ is a unitary highest weight module of $\Sp(n,\RR)$,
$\pi|_{\GL(n\RR)}$ has continuous spectra although $\pi|_{\U(n)}$ has no continuous spectra.
This example implies that discrete decomposability is not preserved.

Our main result in this section is to show that
the branching laws for holomorphic discrete series representations with respect to two similar holomorphic symmetric pairs
are similar (see Theorem \ref{theorem:upper_bound_hol}).

At the end, we state an interesting example.
\begin{example}
Let $(G,H)$ be $(\SO(2p,2q),\U(p_1)\times \U(p_2) \times \U(q))$ and $\pi$ be a minimal representation of $\SO(2p, 2q)$,
and $(G',H')$ be $(\upO^{*}(2(p+q)),\U(p_1)\times \U(p_2) \times \U(q))$ and $\pi'$ be a minimal representation of $\upO^{*}(2(p+q))$
with $p_1+p_2=p$.
Then, their complexifications are $(\upO(2(p+q),\CC),\GL(p_1, \CC) \times \GL(p_2, \CC) \times \GL(q, \CC))$.
The annihilators of $\pi$ and $\pi'$ in $\calU(\frako(2(p+q),\CC))$ are same ideal called the Joseph ideal $\calJ$,
although the representations $\pi$ and $\pi'$ are not equivalent as $\frakg_{\CC}$-modules.
Here, we write same notation $\pi$ for Harish-Chandra module of $\pi$.
\begin{align*}
\pi(\calU(\frakg)^{H}) &\simeq (\calU(\frakg)/\calJ)^{H} \\
&\simeq (\calU(\frakg)/\calJ)^{H'}\\
&\simeq \pi'(\calU(\frakg)^{H'})
\end{align*}
In \cite{Mo08}, M. Moriwaki showed that $\pi|_{H}$ is multiplicity-free.
Then, $\pi'|_{H'}$ is also multiplicity-free by Fact \ref{fact:penkov}
(we can obtain this by a straightforward computation using Howe duality).
\end{example}

\subsection{$\epsilon$-family}\label{section:epsilon_family}
The following definition of $\epsilon$-family is due to T. {\=O}shima and J. Sekiguchi \cite{OsSe84}.
Let $\frakg$ be a real semisimple Lie algebra, and $\tau$ be an involutive automorphism commuting with a Cartan involution $\theta$ of $\frakg$.
Take a maximal abelian subspace $\fraka$ of $\frakp^{-\tau}$.
We put $\frakg(\fraka;\lambda):=\{X \in \frakg: [H,X]=\lambda(H)X \text{ for any }H \in \fraka \}$
and $\Sigma(\fraka):=\{\lambda \in \fraka^{*} \backslash \{0\}: \frakg(\fraka;\lambda) \neq 0\}$.
By Rossmann (see \cite{Ro79}), $\Sigma(\fraka)$ becomes a root system.

We will say a map $\epsilon : \Sigma(\fraka)\cup \{0\} \rightarrow \{1,-1\}$ is a \define{signature} of $\Sigma(\fraka)$
if $\epsilon(\alpha+\beta) = \epsilon(\alpha) \epsilon(\beta)$ for any $\alpha, \beta \in \Sigma(\fraka)\cup \{0\}$.
For a signature $\epsilon$, we define an involutive automorphism $\tau_{\epsilon}$ of $\frakg$ as follows:
\begin{align*}
\tau_{\epsilon}(X)=\epsilon(\alpha)\tau(X) \text{ for } X \in \frakg(\fraka; \alpha), \alpha \in \Sigma(\fraka) \cup \{0\}.
\end{align*}
We set $F((\frakg, \frakg^{\tau})):=\{(\frakg, \frakg^{\tau_{\epsilon}}): \epsilon \text{ is a signature of } \Sigma(\fraka)\}$,
and call it an \define{$\epsilon$-family of symmetric pairs}.
If $\tau = \theta$, we call $F((\frakg, \frakk))$ a \define{$\frakk_\epsilon$-family of symmetric pairs}.

We have the following proposition.
\begin{proposition}\label{proposition:epsilon-family}
Let $\tau$ be an involutive automorphism of $\frakg$.
Then, for any signature $\epsilon$ of $\Sigma(\fraka)$, 
\begin{enumerate}[i)]
	\item $(\frakg_{\CC}, \frakg_{\CC}^{\tau})$ and $(\frakg_{\CC}, \frakg_{\CC}^{\tau_{\epsilon}})$ are conjugate by an inner automorphism of $\frakg_{\CC}$,
	\item $\tau_{\epsilon}$ commutes with $\theta$,
	\item $(\frakg^{\theta\tau}, \frakg^{\tau_{\epsilon},\theta\tau}) \in F((\frakg^{\theta\tau}, \frakg^{\tau, \theta\tau}))$.
\end{enumerate}
\end{proposition}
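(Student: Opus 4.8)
The plan is to verify the three assertions essentially by unwinding the definition of $\tau_\epsilon$ and exploiting the root space decomposition relative to $\fraka$. Fix a signature $\epsilon$ of $\Sigma(\fraka)$ and write $\frakg = \bigoplus_{\alpha \in \Sigma(\fraka)\cup\{0\}}\frakg(\fraka;\alpha)$. The core observation throughout is that $\tau$ preserves each summand $\frakg(\fraka;\alpha)$: since $\tau$ commutes with $\theta$ and fixes $\fraka$ pointwise (because $\fraka \subset \frakp^{-\tau}$ means $\tau|_\fraka = -\id$, hence $\tau$ sends $\frakg(\fraka;\alpha)$ to $\frakg(\fraka;-\alpha)$). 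Wait — one must be careful here: $\tau$ acts as $-1$ on $\fraka$, so $\tau(\frakg(\fraka;\alpha)) = \frakg(\fraka;-\alpha)$, not $\frakg(\fraka;\alpha)$. So $\tau_\epsilon$ as defined by $\tau_\epsilon(X) = \epsilon(\alpha)\tau(X)$ for $X \in \frakg(\fraka;\alpha)$ needs $\epsilon$ to satisfy $\epsilon(-\alpha) = \epsilon(\alpha)$ for the formula to be consistent with $\tau^2 = \id$; this follows from the signature condition $\epsilon(\alpha+\beta)=\epsilon(\alpha)\epsilon(\beta)$ applied with $\beta = -\alpha$, giving $\epsilon(0)=\epsilon(\alpha)\epsilon(-\alpha)$, and $\epsilon(0)=\epsilon(0)^2$ forces $\epsilon(0)=1$, so $\epsilon(-\alpha)=\epsilon(\alpha)^{-1}=\epsilon(\alpha)$. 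This preliminary bookkeeping should be stated first.

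For (ii), commutativity of $\tau_\epsilon$ with $\theta$: I would note that $\theta$ stabilizes $\fraka$-weight spaces up to sign — more precisely $\theta(\frakg(\fraka;\alpha)) = \frakg(\fraka;-\alpha)$ since $\fraka \subset \frakp$ and $\theta|_\fraka = -\id$. For $X \in \frakg(\fraka;\alpha)$, compute $\theta\tau_\epsilon(X) = \epsilon(\alpha)\theta\tau(X)$; since $\theta\tau = \tau\theta$ and $\tau(X) \in \frakg(\fraka;-\alpha)$, we get $\theta\tau(X) = \tau\theta(X) \in \frakg(\fraka;\alpha)$, while $\theta(X) \in \frakg(\fraka;-\alpha)$ has $\epsilon$-value $\epsilon(-\alpha)=\epsilon(\alpha)$, so $\tau_\epsilon\theta(X) = \epsilon(\alpha)\tau\theta(X)$. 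The two agree. The point is that $\epsilon$ is constant on the $\{\pm\alpha\}$-pair, which we established above. That $\tau_\epsilon$ is an involution and an automorphism uses the same pairing plus the multiplicativity $\epsilon(\alpha+\beta)=\epsilon(\alpha)\epsilon(\beta)$ for the bracket: $\tau_\epsilon([X,Y]) = [\tau_\epsilon X, \tau_\epsilon Y]$ reduces to $\epsilon(\alpha+\beta)\tau([X,Y]) = \epsilon(\alpha)\epsilon(\beta)[\tau X,\tau Y]$, which holds since $[\frakg(\fraka;\alpha),\frakg(\fraka;\beta)] \subset \frakg(\fraka;\alpha+\beta)$ and $\tau$ is already an automorphism.

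For (i), I would produce an explicit inner automorphism: choose $H_\epsilon \in \fraka_\CC$ (or rather a suitable element so that $\exp(\pi i\, \mathrm{ad}\, H_\epsilon)$ scales $\frakg(\fraka;\alpha)$ by $\epsilon(\alpha)$). Since $\Sigma(\fraka)$ is a root system, the signature $\epsilon$ is determined by its values on a basis of simple roots, and one can solve for $H_\epsilon \in \fraka_\CC$ with $\alpha(H_\epsilon) \in \{0, 1\}$ appropriately so that $e^{\pi i\,\mathrm{ad}(H_\epsilon)}|_{\frakg(\fraka;\alpha)} = e^{\pi i\,\alpha(H_\epsilon)} = \epsilon(\alpha)$ — this is the standard trick for realizing sign changes by root systems. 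Call this inner automorphism $\sigma := \Int(\exp(\pi i H_\epsilon))$; then $\tau_\epsilon = \sigma \circ \tau$ on each weight space, hence $\tau_\epsilon = \sigma\tau$ as automorphisms, and $\frakg_\CC^{\tau_\epsilon}$ is the fixed set of $\sigma\tau$. A short computation shows $\frakg_\CC^{\tau_\epsilon} = \sigma'(\frakg_\CC^\tau)$ for an appropriate inner $\sigma'$ (one conjugates by a "half" of $\sigma$, i.e. $\exp(\frac{\pi i}{2}H_\epsilon)$, which is the usual way to intertwine two commuting involutions differing by an inner sign-change). Finally (iii): the subalgebra $\frakg^{\theta\tau}$ is $\tau$-stable and $\theta$-stable (all three involutions pairwise commute), $\fraka \subset \frakp^{-\tau} \cap \frakg^{\theta\tau}$ remains a maximal abelian subspace of the relevant $(-1)$-eigenspace inside $\frakg^{\theta\tau}$, and the restricted root system $\Sigma_{\frakg^{\theta\tau}}(\fraka)$ is a sub-root-system of $\Sigma(\fraka)$; the restriction $\epsilon|$ is again a signature, and $\tau_\epsilon|_{\frakg^{\theta\tau}}$ coincides with $(\tau|_{\frakg^{\theta\tau}})_{\epsilon|}$, giving membership in the $\epsilon$-family of $(\frakg^{\theta\tau},\frakg^{\tau,\theta\tau})$.

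The main obstacle I anticipate is part (i): constructing the intertwining inner automorphism cleanly. The existence of $H_\epsilon \in \fraka_\CC$ realizing the signature requires knowing that every signature of a root system arises from an element of the (complexified) coweight lattice modulo $2$, which is a standard but slightly technical fact about root systems; and then checking that conjugation by $\exp(\frac{\pi i}{2}H_\epsilon)$ actually carries $\frakg_\CC^\tau$ to $\frakg_\CC^{\tau_\epsilon}$ — rather than merely showing the two involutions are abstractly conjugate — takes a careful eigenspace comparison. Everything else is routine weight-space bookkeeping once the sign-consistency $\epsilon(-\alpha)=\epsilon(\alpha)$ is recorded at the outset.
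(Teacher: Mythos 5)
Your proposal is correct and follows essentially the same route as the paper: part (ii) is the identical weight-space computation resting on $\epsilon(-\alpha)=\epsilon(\alpha)$, part (iii) is the same observation that $\fraka\subset\frakp^{-\tau,-\tau_{\epsilon}}\subset\frakg^{\theta\tau}$ with $\epsilon$ restricting to a signature of the sub-root-system, and your intertwining element $\exp\bigl(\tfrac{\pi\sqrt{-1}}{2}\ad(H_\epsilon)\bigr)$ with $H_\epsilon=\sum_{\epsilon(\alpha_i)=-1}H_i$ is exactly the paper's explicit automorphism $\exp\bigl(\sum_{i}\tfrac{\pi\sqrt{-1}}{4}(1-\epsilon(\alpha_i))\ad(H_i)\bigr)$. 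The only difference is that the paper delegates the verification of (i) to {\=O}shima--Sekiguchi and merely records this element, whereas you sketch the (standard) check yourself.
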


\begin{proof}
i) is proved in \cite{OsSe80}.
We give only the explicit form of the automorphism.
Let $\{\alpha_1, \alpha_2, \cdots, \alpha_r\} \subset \Sigma(\fraka)$ be the set of simple roots of $\Sigma(\fraka)$,
and $\{H_1, H_2, \ldots, H_r\} \subset \fraka$ be the dual basis of $\{\alpha_1, \alpha_2, \cdots, \alpha_r\}$.
Then, the automorphism is given by
\begin{align*}
\exp\left(\sum_{i=1}^{r}\frac{\pi \sqrt{-1}}{4}(1-\epsilon(\alpha_i))\ad(H_i)\right).
\end{align*}

For the proof of ii), take $\alpha \in \Sigma(\fraka)\cup \{0\}$ and $X \in \frakg(\fraka;\alpha)$.
Then, we have
\begin{align*}
\theta \tau_{\epsilon}(X)&=\theta \epsilon(\alpha)\tau(X) \\
&=\epsilon(\alpha) \theta\tau(X)\\
&=\epsilon(\alpha) \tau \theta(X).
\end{align*}
Since $\theta(\frakg(\fraka;\alpha)) = \frakg(\fraka;-\alpha)$, we have
\begin{align*}
\epsilon(\alpha) \tau \theta(X) = \tau_{\epsilon}\theta(X).
\end{align*}
This shows ii).

Since $\epsilon(0)=1$, we have $\fraka \subset \frakp^{-\tau,-\tau_\epsilon} \subset \frakg^{\theta\tau}$.
Then, iii) is clear.
\end{proof}

Hereafter, we assume that $\frakg$ is a simple real Lie algebra of Hermitian type,
and $(\frakg, \frakg^{\tau})$ is a holomorphic symmetric pair (see Section \ref{section:holomorphic_symmertic_pair}).
We rewrite an $\epsilon$-family in terms of strongly orthogonal roots.
Fix $\frakt^{\tau}$, $\Delta^{+}$ and strongly orthogonal roots $\{\sor_1, \sor_2, \ldots, \sor_r\}$ in $\frakp^{-\tau}$ as in Section \ref{section:stability_discrete}.
We can take root vectors $X_{\sor_i} \in \frakg^{\tau\theta}_{\CC}(\frakt_{\CC}^{\tau}; \sor_i)$ such that
\begin{align*}
\sor_i([X_{\sor_i}, \overline{X_{\sor_i}}]) = 2.
\end{align*}
Put $\fraka:=\bigoplus_{i=1}^{r} \RR (X_{\sor_i}+\overline{X_{\sor_i}}) \subset \frakp^{-\tau}$ and
$\frakt_0^{\tau}:=\bigoplus_{i=1}^{r} \CC [X_{\sor_i},\overline{X_{\sor_i}}] \subset \frakt^{\tau}_{\CC}$.

We define a \define{Cayley transform} $\mathbf{c}$ by
\begin{align*}
\mathbf{c} := \exp \left(\frac{\pi}{4} \sum_{i=1}^{r}\ad(X_{\sor_i}-\overline{X_{\sor_i}})\right).
\end{align*}
It is known that $\mathbf{c}(\frakt^{\tau}_0)=\fraka_{\CC}$ (see \cite{Kn02}).
We set $\Sigma(\frakt_{0}^{\tau}):= \{\lambda \in (\frakt_{0}^{\tau})^{*} \backslash \{0\}: \frakg_{\CC}(\frakt_{0}^{\tau};\lambda) \neq 0\}$.
Then, $\mathbf{c}$ gives bijections $\mathbf{c}^{*}: \Sigma(\fraka) \rightarrow \Sigma(\frakt_{0}^{\tau}) (\alpha \mapsto \alpha \circ \mathbf{c})$
, and $\mathbf{c}:\frakg_{\CC}(\frakt^{\tau}_{0};\alpha) \rightarrow \frakg_{\CC}(\fraka_{\CC};\alpha \circ \mathbf{c}^{-1})$
for any $\alpha \in \Sigma(\frakt_{0}^{\tau})\cup \{0\}$.

\begin{proposition}\label{proposition:epsilon-family2}
Let $\epsilon$ be a signature such that $(\frakg, \frakg^{\tau_{\epsilon}})$ is a holomorphic symmetric pair.
Then,
\begin{enumerate}[i)]
	\item $\tau\tau_{\epsilon}$ commutes with $\mathbf{c}$,
	\item $\tau\tau_{\epsilon}(X)=\epsilon(\alpha \circ \mathbf{c}^{-1})X$ for any $\alpha \in \Sigma(\frakt^{\tau}_{0})$ and $X \in \frakg_{\CC}(\frakt_{0}^{\tau};\alpha)$,
	\item $\epsilon(\sor_i \circ \mathbf{c}^{-1}) = 1$ for any $i\ (1 \leq i \leq r)$.
\end{enumerate}
\end{proposition}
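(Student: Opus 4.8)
The plan is to verify the three assertions by transporting everything through the Cayley transform $\mathbf{c}$ and then invoking the explicit form of $\tau_{\epsilon}$ on restricted-root spaces. Throughout I will use that $\tau$ commutes with $\theta$ (hence with the eigenspace decompositions) and that $\mathbf{c}$ intertwines the $\frakt_0^{\tau}$-grading with the $\fraka_{\CC}$-grading, as recalled just before the statement.

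For i), I would first observe that $\tau$ and $\tau_{\epsilon}$ both fix each $\frakg(\fraka;\alpha)$ as a set (for $\tau_{\epsilon}$ this is immediate from its definition; for $\tau$ it follows since $\tau$ commutes with the $\ad(\fraka)$-action because $\fraka \subset \frakg^{\tau\theta} = \frakg^{\tau}$ on the relevant part — more precisely $\fraka \subset \frakp^{-\tau}$, so $\ad(H)$ for $H \in \fraka$ anticommutes with $\tau$ in a controlled way; I will track signs carefully here). Consequently $\tau\tau_{\epsilon}$ acts on $\frakg(\fraka;\alpha)$ by the scalar $\epsilon(\alpha)$, in particular $\tau\tau_{\epsilon}$ acts as a semisimple element with eigenvalues $\pm 1$ whose $\pm 1$-eigenspace decomposition refines the $\fraka_{\CC}$-grading. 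Since $\mathbf{c}$ is by construction built out of the root vectors $X_{\sor_i}, \overline{X_{\sor_i}}$, which lie in $\frakg^{\tau\tau_{\epsilon}}$ precisely when $\epsilon(\sor_i\circ\mathbf{c}^{-1})=1$ — but for the commutation statement one only needs that $\mathbf{c}$ maps the $\fraka_{\CC}$-grading to the $\frakt_0^{\tau}$-grading, and $\tau\tau_{\epsilon}$ is diagonal for both gradings with matching scalars via $\mathbf{c}^{*}$ — the two automorphisms $\tau\tau_{\epsilon}$ and $\mathbf{c}$ commute. I expect the careful bookkeeping of which graded pieces $\mathbf{c}$ shuffles (it mixes $\frakg_{\CC}(\frakt_0^{\tau};\alpha)$ into $\frakg_{\CC}(\fraka_{\CC};\alpha\circ\mathbf{c}^{-1})$) to be the main technical obstacle; this is where one must be sure the scalar $\epsilon$ is constant along the orbit of $\mathbf{c}$.

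For ii), once i) is established I would transport the formula $\tau\tau_{\epsilon}(Y)=\epsilon(\beta)Y$ for $Y \in \frakg_{\CC}(\fraka_{\CC};\beta)$ through $\mathbf{c}$: given $X \in \frakg_{\CC}(\frakt_0^{\tau};\alpha)$ we have $\mathbf{c}(X) \in \frakg_{\CC}(\fraka_{\CC};\alpha\circ\mathbf{c}^{-1})$, so $\tau\tau_{\epsilon}(\mathbf{c}(X)) = \epsilon(\alpha\circ\mathbf{c}^{-1})\mathbf{c}(X)$, and applying $\mathbf{c}^{-1}$ and using i) gives $\tau\tau_{\epsilon}(X)=\epsilon(\alpha\circ\mathbf{c}^{-1})X$, as claimed. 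For iii), the hypothesis is that $(\frakg,\frakg^{\tau_{\epsilon}})$ is again a holomorphic symmetric pair, i.e. $\tau_{\epsilon}(Z)=Z$; since also $\tau(Z)=Z$ we get $\tau\tau_{\epsilon}(Z)=Z$. Now $X_{\sor_i} \in \frakp_{+}^{-\tau} \subset \frakg_{\CC}$ is a root vector for $\sor_i$; using that $\sor_i$ (as a $\frakt^{\tau}_{\CC}$-root occurring in $\frakp_{+}^{-\tau}$) restricts to a nonzero element of $\Sigma(\frakt_0^{\tau})$, part ii) gives $\tau\tau_{\epsilon}(X_{\sor_i})=\epsilon(\sor_i\circ\mathbf{c}^{-1})X_{\sor_i}$. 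On the other hand the structure of holomorphic symmetric pairs (the decomposition $\frakg_{\CC}^{\tau_{\epsilon}} = \frakp_{-}^{\tau_{\epsilon}}\oplus\frakk_{\CC}^{\tau_{\epsilon}}\oplus\frakp_{+}^{\tau_{\epsilon}}$, each summand nonzero, from Section \ref{section:holomorphic_symmertic_pair}) forces $\frakp_{+}^{-\tau}\cap\frakp_{+}^{\tau_{\epsilon}}$ to contain the lowest such root vector — concretely, one checks $\overline{X_{\sor_i}}+X_{\sor_i}\in\fraka\subset\frakp^{-\tau}$ and traces through the $\theta$-grading that $X_{\sor_i}$ must be $\tau\tau_{\epsilon}$-fixed, since otherwise $\frakp_{+}\cap\frakg_{\CC}^{\tau_{\epsilon}}$ would be too small to match the holomorphic-symmetric-pair decomposition. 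Hence $\epsilon(\sor_i\circ\mathbf{c}^{-1})=1$. I would double-check iii) against the sample $\epsilon$-family $(\spor(n,\RR),\uor(n-i,i))$ listed in the introduction to make sure the sign normalization is right.
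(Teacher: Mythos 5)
Your argument for i) is circular: to make $\mathbf{c}$ commute with $\tau\tau_{\epsilon}$ you invoke that $\tau\tau_{\epsilon}$ is ``diagonal for both gradings with matching scalars via $\mathbf{c}^{*}$'', i.e.\ that $\tau\tau_{\epsilon}$ acts on $\frakg_{\CC}(\frakt_{0}^{\tau};\alpha)$ by the scalar $\epsilon(\alpha\circ\mathbf{c}^{-1})$ --- but that is precisely statement ii), which you then deduce from i). Without $\mathbf{c}$ the only graded statement you actually have is the $\fraka$-side one, $\tau\tau_{\epsilon}=\epsilon(\beta)\,\id$ on $\frakg(\fraka;\beta)$ (and note that, contrary to your opening claim, $\tau$ alone sends $\frakg(\fraka;\alpha)$ to $\frakg(\fraka;-\alpha)$ because $\fraka\subset\frakp^{-\tau}$; only the composite preserves the grading); you offer no independent reason why $\tau\tau_{\epsilon}$ should act by those scalars on the $\frakt_{0}^{\tau}$-grading, and a priori it is not even clear it preserves that grading. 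Likewise, in iii) the decisive claim that $X_{\sor_i}$ is $\tau\tau_{\epsilon}$-fixed is supported only by the heuristic that otherwise $\frakp_{+}\cap\frakg_{\CC}^{\tau_{\epsilon}}$ ``would be too small'', which is not an argument.

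The missing ingredient is to use the holomorphy hypotheses directly: $\tau(Z)=Z$ and $\tau_{\epsilon}(Z)=Z$. The paper proves i) by computing $\tau\tau_{\epsilon}(X_{\sor_i}-\overline{X_{\sor_i}})=\tau\tau_{\epsilon}([Z,X_{\sor_i}+\overline{X_{\sor_i}}])=[Z,X_{\sor_i}+\overline{X_{\sor_i}}]=X_{\sor_i}-\overline{X_{\sor_i}}$, since $X_{\sor_i}+\overline{X_{\sor_i}}\in\fraka\subset\frakg(\fraka;0)$ is $\tau\tau_{\epsilon}$-fixed ($\epsilon(0)=1$); as $\mathbf{c}=\exp\left(\tfrac{\pi}{4}\sum_i\ad(X_{\sor_i}-\overline{X_{\sor_i}})\right)$ is built from $\tau\tau_{\epsilon}$-fixed vectors, it commutes with $\tau\tau_{\epsilon}$. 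Then ii) is exactly the transport you describe, and iii) follows because both $X_{\sor_i}+\overline{X_{\sor_i}}$ and $X_{\sor_i}-\overline{X_{\sor_i}}$ are fixed, hence so is $X_{\sor_i}$. Alternatively, your iii) can be repaired as follows: since both involutions fix $Z$, $\tau\tau_{\epsilon}$ preserves $\frakp_{+}$ and $\frakp_{-}$ separately, so fixing $X_{\sor_i}+\overline{X_{\sor_i}}$ forces it to fix each component $X_{\sor_i}$ and $\overline{X_{\sor_i}}$; this same observation, fed back into the construction of $\mathbf{c}$, is also what rescues your proof of i).
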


\begin{proof}
We recall that the characteristic element for $(\frakg, \frakg^{\theta})$ is $\tau$ and $\tau_{\epsilon}$ invariant.
Since $[Z,X_{\sor_i}+\overline{X_{\sor_i}}]=X_{\sor_i}-\overline{X_{\sor_i}}$, we have
\begin{align*}
\tau\tau_{\epsilon}(X_{\sor_i}-\overline{X_{\sor_i}})&=\tau\tau_{\epsilon}([Z,X_{\sor_i}+\overline{X_{\sor_i}}]) \\
&= [Z,\tau\tau_{\epsilon}(X_{\sor_i}+\overline{X_{\sor_i}})] \\
&= [Z,X_{\sor_i}+\overline{X_{\sor_i}}] \\
&= X_{\sor_i}-\overline{X_{\sor_i}}.
\end{align*}
Then, $X_{\sor_i}-\overline{X_{\sor_i}}$ is $\tau\tau_{\epsilon}$-invariant.
This implies that $\mathbf{c}$ commutes with $\tau\tau_{\epsilon}$.
Thus, (i) and (ii) are proved.
Since $X_{\sor_i}+\overline{X_{\sor_i}}$ and $X_{\sor_i}-\overline{X_{\sor_i}}$ are $\tau\tau_{\epsilon}$-invariant,
$X_{\sor_i}$ is $\tau\tau_{\epsilon}$-invariant.
By ii), we have $\epsilon(\sor_i \circ \mathbf{c}^{-1}) = 1$.
\end{proof}

\subsection{Upper bounds and $\epsilon$-family}\label{section:upper_bound_for_hol}

Let $G$ be a connected simple real Lie group of Hermitian type with finite center,
and $\tau$ be an involutive automorphism of $G$ such that $(\frakg, \frakg^{\tau})$ is holomorphic symmetric pair.
We fix a maximal abelian subspace $\fraka$ of $\frakp^{-\tau}$,
and fix a signature $\epsilon$ of $\Sigma(\fraka)$ such that $(\frakg, \frakg^{\tau_{\epsilon}})$ is holomorphic symmetric pair.
Suppose $H$ and $H'$ are analytic subgroups with Lie algebra $\frakg^{\tau}$ and $\frakg^{\tau_{\epsilon}}$.
Applying Theorem \ref{corollary:upper_bound_holomorphic_discrete_series}, we have the following theorem.

\begin{theorem}\label{theorem:upper_bound_general}
Let $\calH$ be a holomorphic discrete series representation of $G$.
Then, we have $C_{H}(\calH)=C_{H'}(\calH)$.
\end{theorem}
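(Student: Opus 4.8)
The plan is to invoke Corollary~\ref{corollary:upper_bound_holomorphic_discrete_series} for each of the two holomorphic symmetric pairs $(\frakg,\frakg^{\tau})$ and $(\frakg,\frakg^{\tau_{\epsilon}})$ and then to identify the two reductive subgroups it produces. Applied to $(\frakg,\frakg^{\tau})$, the corollary gives $C_{H}(\calH)=C_{L}(\calH^{\frakp_{+}})$ with $L=Z_{H\cap K}(\fraka)$; the goal is to produce the analogous identity for $H'$ with \emph{the same} $\fraka$, hence the same $\calH^{\frakp_{+}}$, and then to show that the resulting subgroup $L'$ equals $L$. Once $L=L'$ is established the theorem is immediate, since $C_{H}(\calH)=C_{L}(\calH^{\frakp_{+}})=C_{L'}(\calH^{\frakp_{+}})=C_{H'}(\calH)$.

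First I would check that $\fraka$ also plays the role required by Section~\ref{section:stability_discrete} for the pair $(\frakg,\frakg^{\tau_{\epsilon}})$. By Proposition~\ref{proposition:epsilon-family}~iii) we have $\fraka\subset\frakp^{-\tau,-\tau_{\epsilon}}$, so $\fraka\subset\frakp^{-\tau_{\epsilon}}$; and if $Y\in\frakp^{-\tau_{\epsilon}}$ centralizes $\fraka$ then $Y\in Z_{\frakg}(\fraka)=\frakg(\fraka;0)$, on which $\tau_{\epsilon}=\tau$ because $\epsilon(0)=1$, so $Y\in\frakp^{-\tau}$ and $Y\in\fraka$ by maximality. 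Thus $\fraka$ is maximal abelian in $\frakp^{-\tau_{\epsilon}}$ as well. Moreover, by the computation in Proposition~\ref{proposition:epsilon-family2} the root vectors $X_{\sor_{i}}$ are $\tau\tau_{\epsilon}$-invariant, so they lie in $\frakg_{\CC}^{\theta\tau_{\epsilon}}$ and realize the strongly-orthogonal-root data of Section~\ref{section:stability_discrete} for $\tau_{\epsilon}$ as well; running that construction for $\tau_{\epsilon}$ therefore yields the same $\fraka$. Hence Corollary~\ref{corollary:upper_bound_holomorphic_discrete_series} gives $C_{H'}(\calH)=C_{L'}(\calH^{\frakp_{+}})$ with $L'=Z_{H'\cap K}(\fraka)$, and it suffices to prove $L=L'$; since complexifying does not change multiplicities, it is enough to prove $L_{\CC}=L'_{\CC}$ inside $K_{\CC}$.

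The key point is that $\tau$ and $\tau_{\epsilon}$ restrict to the \emph{same} automorphism of $Z_{G_{\CC}}(\fraka)$. By Proposition~\ref{proposition:epsilon-family}~i) the inner automorphism carrying $\frakg_{\CC}^{\tau}$ to $\frakg_{\CC}^{\tau_{\epsilon}}$ is $\Int(g_{\epsilon})$ with $g_{\epsilon}=\exp\bigl(\sum_{i}\tfrac{\pi\sqrt{-1}}{4}(1-\epsilon(\alpha_{i}))H_{i}\bigr)$ and $H_{i}\in\fraka$; since $H_{i}\in\fraka\subset\frakg^{-\tau}$ one has $\tau(g_{\epsilon}^{-1})=g_{\epsilon}$, hence $\Int(g_{\epsilon})\,\tau\,\Int(g_{\epsilon})^{-1}=\Int(g_{\epsilon}^{2})\circ\tau$, and a direct check on $\fraka$-weight spaces identifies this with $\tau_{\epsilon}$ on $\frakg_{\CC}$. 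Now $g_{\epsilon}\in\exp(\fraka_{\CC})\subset Z_{G_{\CC}}(\fraka)$ is central in $Z_{G_{\CC}}(\fraka)$: any $z$ in that group fixes $\fraka$, so $zg_{\epsilon}z^{-1}=\exp(\Ad(z)\log g_{\epsilon})=g_{\epsilon}$. Therefore $\Int(g_{\epsilon})$ is the identity on $Z_{G_{\CC}}(\fraka)$, so $\tau=\tau_{\epsilon}$ there, and in particular on the subgroup $Z_{K_{\CC}}(\fraka)$. Consequently $Z_{K_{\CC}}(\fraka)^{\tau}=Z_{K_{\CC}}(\fraka)^{\tau_{\epsilon}}$; since $L_{\CC}=(H\cap K)_{\CC}\cap Z_{K_{\CC}}(\fraka)=Z_{K_{\CC}}(\fraka)^{\tau}$ and likewise $L'_{\CC}=Z_{K_{\CC}}(\fraka)^{\tau_{\epsilon}}$, we conclude $L_{\CC}=L'_{\CC}$, and the theorem follows.

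The step I expect to be the main obstacle is the bookkeeping in the second and fourth paragraphs: making precise, via the Cayley-transform analysis of Section~\ref{section:epsilon_family}, that the Borel subgroup and strongly-orthogonal-root data used to invoke Corollary~\ref{corollary:upper_bound_holomorphic_discrete_series} for $(\frakg,\frakg^{\tau_{\epsilon}})$ can be taken to produce literally the same $\fraka$; and, relatedly, the small component-group verification that $Z_{(H\cap K)_{\CC}}(\fraka)$ coincides with $Z_{K_{\CC}}(\fraka)^{\tau}$, for which one can use $B_{X_{r}}\subset L_{\CC}$ together with the uniqueness statement in Proposition~\ref{proposition:L-subgroup}. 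Everything else is a routine assembly of results already in the paper.
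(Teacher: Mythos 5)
Your overall route is the paper's: reduce via Corollary \ref{corollary:upper_bound_holomorphic_discrete_series} to comparing the two reductive subgroups $L=Z_{K\cap H}(\fraka)$ and $L'=Z_{K\cap H'}(\fraka)$, and exploit that $\tau$ and $\tau_{\epsilon}$ agree on the centralizer of $\fraka$ (which is just $\epsilon(0)=1$). The gap is in the step where you conclude $L=L'$. Since $H$ is the analytic subgroup, $K\cap H=(K^{\tau})_{0}$ is connected, so $L_{\CC}=Z_{(K_{\CC}^{\tau})_{0}}(\fraka)$; your chain $L_{\CC}=(H\cap K)_{\CC}\cap Z_{K_{\CC}}(\fraka)=Z_{K_{\CC}}(\fraka)^{\tau}$ silently replaces this by the full $\tau$-fixed centralizer, and these differ in general. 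For instance, for $(\frakg,\frakg^{\tau})=(\so(2,n),\so(2,n-i)+\so(i))$ with $i\geq 3$ and $n-i\geq 1$, one has $K^{\tau}\simeq \SO(2)\times S(\upO(n-i)\times\upO(i))$ and $\dim_{\RR}\fraka=2<i$, and an element $(I,A,B)$ with $\det A=\det B=-1$ and $B$ fixing $\fraka$ pointwise is $\tau$-fixed and centralizes $\fraka$ but does not lie in $K\cap H=(K^{\tau})_{0}$; so the asserted identity fails. Your $g_{\epsilon}$-computation only yields the Lie algebra equality $Z_{\frakk^{\tau}}(\fraka)=Z_{\frakk^{\tau_{\epsilon}}}(\fraka)$, i.e.\ $\frakl=\frakl'$, which is the easy part; the whole content of the step $L=L'$ is the component group, which your argument does not control. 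The suggested repair via the uniqueness in Proposition \ref{proposition:L-subgroup} cannot close this: that statement compares reductive subgroups of one fixed acting group satisfying L-1)--L-4), whereas $Z_{K_{\CC}}(\fraka)^{\tau}$ need not even be contained in the acting group $(H\cap K)_{\CC}$, and $L$, $L'$ sit inside the two different groups $(H\cap K)_{\CC}$, $(H'\cap K)_{\CC}$ acting on the two different varieties $\frakp_{+}^{-\tau}$, $\frakp_{+}^{-\tau_{\epsilon}}$, so the proposition never compares them.

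For contrast, the paper does not attempt to realize the given $\fraka$ literally by strongly orthogonal root data for both involutions (your second paragraph; that bookkeeping is only carried out later, in Lemma \ref{lemma:good_ordering_for_non_compact}, for Theorem \ref{theorem:upper_bound_hol}). Instead it conjugates: choose $k\in K\cap H$ with $\Ad(k)\fraka'=\fraka$, which costs nothing since $C_{L}(\calH^{\frakp_{+}})$ is unchanged under conjugation inside $K$. The component problem is then handled by the torus: as in the proof of Lemma \ref{lemma:L_is_L_subgroup}, $L=Z_{T^{\tau}}(\fraka')L_{0}$, and using $\frakt^{\tau}=\frakt_{0}\oplus Z_{\frakt^{\tau}}(\fraka')$ together with $\fraka\subset\frakp^{-\tau,-\tau_{\epsilon}}$ one gets $\Ad(k)T^{\tau}\subset K\cap H\cap H'$, hence $\Ad(k)L=Z_{K\cap H\cap H'}(\fraka)$, and by symmetry the same group arises from $H'$. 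Your preliminary observations (maximality of $\fraka$ in $\frakp^{-\tau_{\epsilon}}$, $\tau\tau_{\epsilon}$-invariance of the $X_{\sor_{i}}$) are fine, but to complete your ``same $\fraka$'' formulation you would need exactly such a statement --- every element of $Z_{K\cap H}(\fraka)$ lies in $H'$ and conversely, proved by controlling the components of $L$ through the maximal torus --- rather than the false identification with $Z_{K_{\CC}}(\fraka)^{\tau}$.
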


\begin{proof}
From Theorem \ref{theorem:stable_theorem_for_holomorphic_discrete_series},
it suffices to show that `$L$'s for $H$ and $H'$ in Section \ref{section:stability_discrete} are conjugate under $K$-action.
We take a Cartan subalgebra $\frakt^{\tau}$ of $\frakk^{\tau}$,
positive system $\Delta^{+}(\frakg^{\tau\theta}_{\CC}, \frakt^{\tau}_{\CC})$
and strongly orthogonal roots $\{\sor_1, \sor_2, \ldots, \sor_r\}$ as in Section \ref{section:stability_discrete}.
We put
\begin{align*}
\fraka' &:=\bigoplus_{i=1}^{r} \RR (X_{\sor_i}+\overline{X_{\sor_i}}) \subset \frakp^{-\tau}, \\
\frakt_0 &:=\bigoplus_{i=1}^{r} \RR \sqrt{-1} [X_{\sor_i},\overline{X_{\sor_i}}] \subset \frakt^{\tau}, \\
L &:= Z_{K\cap H}(\fraka').
\end{align*}

Let us show that $L$ and $Z_{K \cap H \cap H'}(\fraka)$ is conjugate under $K \cap H$-action.
Since $\fraka$ and $\fraka'$ are maximal abelian subspaces of $\frakp^{-\tau}$,
there exists a $k \in K \cap H$ such that $\Ad(k)\fraka'=\fraka$.
Then, We have $\Ad(k)L=Z_{K\cap H}(\fraka)$.
By definition of $\tau_{\epsilon}$ ($\epsilon(0)=1$), we have
\begin{align*}
\Ad(k)\frakl &= Z_{\frakk^{\tau}}(\fraka)\\
&= Z_{\frakk^{\tau,\tau_{\epsilon}}}(\fraka).
\end{align*}
As we stated in the proof of Lemma \ref{lemma:L_is_L_subgroup},
the connected components of $L$ can be controlled by $Z_{T^{\tau}}(\fraka')$.
Here, we denote by $T^{\tau}$ a maximal torus of $K\cap H$ corresponding to $\frakt^{\tau}$.
Since $\fraka$ is contained in $\frakp^{-\tau,-\tau_{\epsilon}}$, we have
\begin{align*}
\tau(\Ad(k)X_{\sor_i})=\tau_{\epsilon}(\Ad(k)X_{\sor_i})=-\Ad(k)X_{\sor_i}.
\end{align*}
This implies that $\Ad(k)\frakt_0$ is contained in $\frakk^{\tau, \tau_{\epsilon}}$.
Moreover, since $\frakt^{\tau} = \frakt_0 \oplus Z_{\frakt^{\tau}}(\fraka')$,
$\Ad(k)\frakt^{\tau}$ is contained in $\frakk^{\tau, \tau_{\epsilon}}$.
Then, $\Ad(k)T^{\tau}$ is contained in $K \cap H \cap H'$.
Therefore, $\Ad(k)L$ is contained in $Z_{K \cap H \cap H'}(\fraka)$.
Since converse inclusion is obvious, we have $\Ad(k)L=Z_{K \cap H \cap H'}(\fraka)$.

Replacing $\tau$ by $\tau_{\epsilon}$, we can show the same argument for $H'$.
This completes the proof.
\end{proof}

\subsection{$\frakk_{\epsilon}$-family case}
We can describe the similarity of branching laws of $\calH|_{H}$ and $\calH|_{H'}$ more precisely.
In this section, we assume that $H$ is a maximal compact subgroup of $G$.

We consider the case that $\tau = \theta$, and fix a signature $\epsilon$ of $\Sigma(\fraka)$ such that $(\frakg, \frakg^{\theta_{\epsilon}})$ is a holomorphic symmetric pair.
In this case, we write $\frakt:=\frakt^{\theta}$ and $\frakt_0:=\frakt^{\theta}_0$ since $\frakt^{\theta}$ is a Cartan subalgebra of $\frakk$.
Note that $\frakt$ is contained in $\frakk^{\theta_{\epsilon}}$ since $\mathbf{c}(\frakt)$ commutes with $\fraka_{\CC}=\mathbf{c}(\frakt_0)$.
Then, $\Delta(\frakk_{\CC}^{\theta_{\epsilon}}, \frakt_{\CC}) \cap \Delta^{+}$ is a positive system
of $\Delta(\frakk_{\CC}^{\theta_{\epsilon}}, \frakt_{\CC})$.

By Proposition \ref{proposition:restricted_root}, the restricted root system $\Sigma(\fraka)$ is type $BC_r$ or $C_r$.
We divide strongly orthogonal roots $\{\sor_1, \sor_2, \ldots, \sor_r\}$ into two parts $\Gamma_1$ and $\Gamma_2$ as follows.
If $\Sigma(\fraka)$ is type $BC_r$, we put
\begin{align*}
\Gamma_1 &:= \{\sor_i: \epsilon(\sor_i / 2) = -1\}, \\
\Gamma_2 &:= \{\sor_i: \epsilon(\sor_i / 2) = 1\}.
\end{align*}
If $\Sigma(\fraka)$ is type $C_r$, we put
\begin{align*}
\Gamma_1 &:= \{\sor_i: \epsilon((\sor_1+\sor_i) / 2) = 1\}, \\
\Gamma_2 &:= \{\sor_i: \epsilon((\sor_1+\sor_i) / 2) = -1\}.
\end{align*}
Here, we identify $\Sigma(\fraka)$ and $\Sigma(\frakt_{0})$ by $\mathbf{c}^{*}$.
If necessary, replacing the positive system,
we can assume
\begin{align}
\begin{split}
\Gamma_1 &= \{\sor_1, \sor_2, \ldots, \sor_{r_1}\}, \\
\Gamma_2 &= \{\sor_{r_1+1}, \sor_{r_1+2}, \ldots, \sor_{r}\}.
\end{split}\label{equation:good_order}
\end{align}

\begin{lemma}\label{lemma:Z'}
We put
\begin{align}\label{equation:characteristic}
Z':= -\sum_{i=1}^{r_1}[X_{\sor_i}, \overline{X_{\sor_i}}].
\end{align}
Then, we have
\begin{align*}
\Delta^{+}(\frakk_{\CC}^{-\theta_{\epsilon}}, \frakt_{\CC})=\{\alpha \in \Delta(\frakk_{\CC}, \frakt_{\CC}): \alpha(Z')=1\}, \\
\Delta(\frakk_{\CC}^{\theta_{\epsilon}}, \frakt_{\CC})=\{\alpha \in \Delta(\frakk_{\CC}, \frakt_{\CC}): \alpha(Z')=0\}.
\end{align*}
Especially, $(\frakk_{\CC}^{-\theta_{\epsilon}})_{+}:=\{X \in \frakk_{\CC}: [Z', X] = X\}$ is stable under $\frakk_{\CC}^{\theta_{\epsilon}}$-action.
\end{lemma}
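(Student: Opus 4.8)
The plan is to translate both equalities into a root‑by‑root comparison: on one side the description of the compact roots from Moore's theorem (Proposition~\ref{proposition:restricted_root}), on the other the formula for $\theta\theta_\epsilon$ from Proposition~\ref{proposition:epsilon-family2}. Since $\frakt\subset\frakk^{\theta_\epsilon}$, the involution $\theta_\epsilon$ fixes $\frakt_\CC$ pointwise and commutes with $\theta$, so it preserves $\frakk_\CC$ and acts on each $\frakg_\gamma$ ($\gamma\in\Delta(\frakk_\CC,\frakt_\CC)$) by a scalar $\pm1$; hence $\Delta(\frakk_\CC^{\pm\theta_\epsilon},\frakt_\CC)=\{\gamma:\theta_\epsilon|_{\frakg_\gamma}=\pm1\}$. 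Because $\gamma$ is compact, $\theta|_{\frakg_\gamma}=1$, and because $\frakg_\gamma\subset\frakg_\CC(\frakt_0;\gamma|_{\frakt_0})$, Proposition~\ref{proposition:epsilon-family2}(ii) (with $\tau=\theta$) gives $\theta_\epsilon|_{\frakg_\gamma}=(\theta\theta_\epsilon)|_{\frakg_\gamma}=\epsilon(\gamma|_{\frakt_0}\circ\mathbf{c}^{-1})$. So the lemma is reduced to showing that for every $\gamma\in\Delta(\frakk_\CC,\frakt_\CC)$ one has $\gamma(Z')\in\{-1,0,1\}$, with $\gamma(Z')=0$ if and only if $\epsilon(\gamma|_{\frakt_0}\circ\mathbf{c}^{-1})=1$.

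For this comparison I would use the normalization $\sor_l([X_{\sor_k},\overline{X_{\sor_k}}])=2\delta_{lk}$ fixed in Section~\ref{section:epsilon_family} (strong orthogonality kills the off‑diagonal terms), so that $\gamma(Z')=-\sum_{k=1}^{r_1}\gamma([X_{\sor_k},\overline{X_{\sor_k}}])$ is read off from $\gamma|_{\frakt_0}$. By Proposition~\ref{proposition:restricted_root}, $\Delta(\frakk_\CC,\frakt_\CC)$ is the union of the sets $\pm C_{ij}$, $\pm C_i$ and $\pm C_0$, which yields three cases. For $\gamma\in\pm C_{ij}$ ($i<j$) one gets $\gamma(Z')=\pm(\mathbf{1}_{\Gamma_1}(\sor_i)-\mathbf{1}_{\Gamma_1}(\sor_j))$, which by the ordering~(\ref{equation:good_order}) lies in $\{-1,0,1\}$ and vanishes exactly when $\sor_i,\sor_j$ lie on the same side of the partition $\Gamma_1\sqcup\Gamma_2$; writing $f_i\in\fraka^*$ for half the long restricted root $\sor_i|_{\frakt_0}\circ\mathbf{c}^{-1}$ (so $\mathbf{c}^*$ identifies $\Sigma(\fraka)$ with $\Sigma(\frakt_0)$), one has $\epsilon(\gamma|_{\frakt_0}\circ\mathbf{c}^{-1})=\epsilon(f_i+f_j)$, and the definition of $\Gamma_1,\Gamma_2$ — via $\epsilon(f_i)\epsilon(f_j)$ in type $BC_r$, and in type $C_r$ first rewriting $\epsilon(f_i+f_j)=\epsilon(f_1+f_i)\epsilon(f_1+f_j)$ using $\epsilon(\sor_i\circ\mathbf{c}^{-1})=1$ (Proposition~\ref{proposition:epsilon-family2}(iii)) — shows this equals $1$ in that same case. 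For $\gamma\in\pm C_i$ (which occurs only in type $BC_r$) one gets $\gamma(Z')=\pm\mathbf{1}_{\Gamma_1}(\sor_i)$ and $\epsilon(\gamma|_{\frakt_0}\circ\mathbf{c}^{-1})=\epsilon(f_i)$, equal to $-1$ exactly when $\sor_i\in\Gamma_1$. For $\gamma\in\pm C_0$ both sides are trivial. This proves the two equivalences, hence the two displayed equalities, where $\Delta^+(\frakk_\CC^{-\theta_\epsilon},\frakt_\CC)$ is read as the set of $\frakt_\CC$‑weights on $(\frakk_\CC^{-\theta_\epsilon})_+=\bigoplus_{\gamma(Z')=1}\frakg_\gamma$.

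The last assertion is then immediate: the second equality gives $\frakk_\CC^{\theta_\epsilon}=\frakt_\CC\oplus\bigoplus_{\gamma(Z')=0}\frakg_\gamma=Z_{\frakk_\CC}(Z')$, so $Z'$ is central in $\frakk_\CC^{\theta_\epsilon}$, $\ad(Z')$ commutes with the $\frakk_\CC^{\theta_\epsilon}$‑action, and therefore every $\ad(Z')$‑eigenspace in $\frakk_\CC$ — in particular $(\frakk_\CC^{-\theta_\epsilon})_+$ — is stable under $\frakk_\CC^{\theta_\epsilon}$. The only substantive part is the case analysis in the second paragraph; within it the main subtlety is the type $C_r$ computation of $\epsilon(f_i+f_j)$, where one must use $\epsilon(\sor_i\circ\mathbf{c}^{-1})=1$ to express it through the generators $\epsilon(f_1+f_i)$ appearing in the definition of $\Gamma_1$, while keeping the Cayley‑transform identification $\Sigma(\fraka)\cong\Sigma(\frakt_0)$ straight throughout.
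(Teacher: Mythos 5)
Your proposal is correct and follows essentially the same route as the paper: both reduce membership of a compact root $\gamma$ in $\Delta(\frakk_{\CC}^{\pm\theta_{\epsilon}},\frakt_{\CC})$ to the sign $\epsilon(\gamma|_{\frakt_0})$ (via the Cayley-transform identification) and then compare it with $\gamma(Z')$ case by case using Moore's decomposition $C_{ij}\cup C_i\cup C_0$, the ordering (\ref{equation:good_order}) and the normalization $\sor_i([X_{\sor_i},\overline{X_{\sor_i}}])=2$. You merely spell out two points the paper leaves implicit --- the identification $\theta_{\epsilon}|_{\frakg_\gamma}=\epsilon(\gamma|_{\frakt_0}\circ\mathbf{c}^{-1})$ via Proposition \ref{proposition:epsilon-family2}(ii) and the type $BC_r$/$C_r$ computation of the $\epsilon$-values from the definition of $\Gamma_1,\Gamma_2$ using $\epsilon(\sor_i\circ\mathbf{c}^{-1})=1$ --- which is fine.
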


\begin{proof}
By the definition of $\theta_{\epsilon}$, we have
\begin{align*}
\Delta^{+}(\frakk_{\CC}^{-\theta_{\epsilon}}, \frakt_{\CC})&=\{\alpha \in \Delta^{+}(\frakk_{\CC}, \frakt_{\CC}): \epsilon(\alpha|_{\frakt_0})=-1\}, \\
\Delta(\frakk_{\CC}^{\theta_{\epsilon}}, \frakt_{\CC})&=\{\alpha \in \Delta(\frakk_{\CC}, \frakt_{\CC}): \epsilon(\alpha|_{\frakt_0}) = 1\}.
\end{align*}
For any $\alpha \in \Delta(\frakk_{\CC}, \frakt_{\CC})$, the value $\alpha(Z')$ is determined by the restriction to $\frakt_0$.
Explicit forms of the restrictions of the roots to $\frakt_0$ is given by Proposition \ref{proposition:restricted_root}:
\begin{align*}
\Delta^{+}(\frakk_{\CC}, \frakt_{\CC})&=\bigcup_{1 \leq i<j \leq r}C_{ij} \cup \bigcup_{1 \leq i \leq r}C_{i} \cup C_0, \\
C_{ij} &:= \left\{ \gamma \in \Delta^{+}(\frakk_{\CC}, \frakt_{\CC}): \gamma|_{\frakt_0} = \left.\left(\frac{\gamma_j - \gamma_i}{2}\right) \right|_{\frakt_0} \right\}, \\
C_{i} &:= \left\{ \gamma \in \Delta^{+}(\frakk_{\CC}, \frakt_{\CC}): \gamma|_{\frakt_0} = -\left.\left(\frac{\gamma_i}{2}\right) \right|_{\frakt_0} \right\}, \\
C_{0} &:= \{ \gamma \in \Delta^{+}(\frakk_{\CC}, \frakt_{\CC}): \gamma|_{\frakt_0} = 0 \}.
\end{align*}
From the assumption (\ref{equation:good_order}), we have
\begin{align*}
\epsilon \left(\frac{\sor_j- \sor_i}{2}\right)&=
\begin{cases}
1 & (1 \leq i < j \leq r_1 \text{ or } r_1 < i < j \leq r) \\
-1 & (1 \leq i \leq r_1 < j \leq r)
\end{cases}
, \\
\epsilon \left(-\frac{\sor_i}{2}\right)&=
\begin{cases}
1 & (r_1 < i \leq r) \\
-1 & (1 \leq i \leq r_1)
\end{cases}
.
\end{align*}
Recall that we took $X_{\sor_i}$ satisfying $\sor_i([X_{\sor_i}, \overline{X_{\sor_i}}]) = 2$.
Then, we have
\begin{align*}
\frac{\sor_j- \sor_i}{2}(Z')&=
\begin{cases}
0 & (1 \leq i < j \leq r_1 \text{ or } r_1 < i < j \leq r) \\
1 & (1 \leq i \leq r_1 < j \leq r)
\end{cases}
, \\
-\frac{\sor_i}{2}(Z')&=
\begin{cases}
0 & (r_1 < i \leq r) \\
1 & (1 \leq i \leq r_1)
\end{cases}
.
\end{align*}
This shows the lemma.
\end{proof}

We show the following key lemma.
This lemma can be considered as the scalar type case of the main theorem.

\begin{lemma}\label{lemma:good_ordering}
In the above settings, we have the following equations:
\begin{align*}
\dweight_{\frakk^{\theta_{\epsilon}}}(\frakp_{+}^{-\theta_{\epsilon}}) &=
\left\{-\sum_{i=1}^{r}c_i \sor_i: c_1\geq c_2 \geq \cdots c_{r_1} \geq 0, c_{r_1 + 1}\geq c_{r_1+2} \geq \cdots c_{r} \geq 0\right\}, \\
\dweight_{\frakk}(\frakp_{+})
&=\{\lambda \in \dweight_{\frakk^{\theta_{\epsilon}}}(\frakp_{+}^{-\theta_{\epsilon}}):
(\lambda, \alpha) \geq 0 \text{ for any } \alpha \in \Delta^{+}(\frakk_{\CC}^{-\theta_{\epsilon}}, \frakt_{\CC})\}.
\end{align*}
\end{lemma}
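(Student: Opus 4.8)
The plan is to describe both sets of highest weights by computing the relevant positive $\fraka$-spectra and then invoking the Kostant--Hua--Schmid type description (Proposition~\ref{proposition:kostant_hua_schmid}) applied separately to the two Hermitian symmetric pairs at hand. Concretely, I would first observe that $(\frakg, \frakk^{\theta_\epsilon})$ is, by construction, a Hermitian symmetric pair with characteristic element $Z'$ as in Lemma~\ref{lemma:Z'}, and that the strongly orthogonal roots $\{\sor_i\}_{i=1}^r$ split according to \eqref{equation:good_order}: the subset $\Gamma_1 = \{\sor_1,\dots,\sor_{r_1}\}$ is a maximal set of strongly orthogonal roots for the factor of $\frakk_\CC^{\theta_\epsilon}$-type containing $Z'$'s relevant part, while $\Gamma_2 = \{\sor_{r_1+1},\dots,\sor_r\}$ plays the analogous role for the complementary part, because the restricted root data for $\frakk_\CC^{-\theta_\epsilon}$ as a $\frakk_\CC^{\theta_\epsilon}$-module is governed by $Z'|_{\frakt_0}$, which Lemma~\ref{lemma:Z'} already computed. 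This is the step where the two parts of $\Gamma$ become ``decoupled'': since $\frakk^{\theta_\epsilon}$ need not be simple, $\frakp_+^{-\theta_\epsilon}$ may be an outer tensor product, and Proposition~\ref{proposition:kostant_hua_schmid} (applied to each simple factor, as in the Remark after Theorem~\ref{theorem:stable_theorem_for_highest_weight_module_maximal_compact}) gives exactly the stated description of $\dweight_{\frakk^{\theta_\epsilon}}(\frakp_+^{-\theta_\epsilon})$ as the two independent decreasing-chain conditions $c_1 \geq \cdots \geq c_{r_1} \geq 0$ and $c_{r_1+1} \geq \cdots \geq c_r \geq 0$.

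For the first equation, the key point to verify is that the highest weight of the $K_\CC^{\theta_\epsilon}$-module $\frakp_+^{-\theta_\epsilon}$ corresponding to the $\fraka$-string through each $X_{\sor_i}$ is $-\sor_i$ (restricted appropriately), so that the weights appearing in $S(\frakp_-^{-\theta_\epsilon})$, equivalently in $\rring{\overline{\calO}}$ for the relevant $K_\CC^{\theta_\epsilon}$-orbit closure, are nonnegative integer combinations $-\sum c_i \sor_i$ subject to the dominance chains dictated by the ordering; here I would use Proposition~\ref{proposition:restricted_root} (Moore's theorem) to identify which $\sor_i$ belong to which simple factor and to confirm the chain inequalities are precisely those of a type $C$ or $BC$ subsystem in each factor. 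The ordering \eqref{equation:good_order} was arranged for exactly this: it makes the two dominance chains consecutive blocks rather than interleaved.

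For the second equation, the strategy is that $\dweight_\frakk(\frakp_+)$ is obtained from $\dweight_{\frakk^{\theta_\epsilon}}(\frakp_+^{-\theta_\epsilon})$ by imposing the extra dominance conditions coming from the roots in $\Delta^+(\frakk_\CC^{-\theta_\epsilon}, \frakt_\CC)$. The reason is that $\frakp_+$ decomposes under $\frakk_\CC^{\theta_\epsilon}$ as $\frakp_+ = \frakp_+^{-\theta_\epsilon} \oplus (\text{part in } \frakk_\CC^{-\theta_\epsilon})_+$ (using Lemma~\ref{lemma:Z'} that $(\frakk_\CC^{-\theta_\epsilon})_+$ is $\frakk_\CC^{\theta_\epsilon}$-stable), and a $\frakk$-dominant weight is the same as a $\frakk^{\theta_\epsilon}$-dominant weight that is in addition nonnegative on the coroots of $\Delta^+(\frakk_\CC^{-\theta_\epsilon}, \frakt_\CC)$; a weight of the form $-\sum c_i\sor_i$ that is $\frakk$-dominant is automatically realized in $\frakp_+$ by Kostant--Hua--Schmid for the pair $(\frakg, \frakk)$, and conversely. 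I would make this precise by comparing the two decompositions $\rring{\overline{\calO_m}}$ for $(\frakg,\frakk)$ and for $(\frakg,\frakk^{\theta_\epsilon})$, noting that the underlying varieties ($\fraka$-spectrum generated in $S(\frakp_-)$) are compatible.

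The main obstacle I anticipate is the bookkeeping in the first equation when $\frakk^{\theta_\epsilon}$ is not simple: one must carefully check that the root system $\Sigma(\frakt_0)$, when restricted to each simple factor of $\frakk_\CC^{\theta_\epsilon}$, is again of type $C$ or $BC$ with strongly orthogonal roots exactly $\Gamma_1$ respectively $\Gamma_2$, and that there is no ``cross term'' --- i.e. that the $\fraka$-string data genuinely decouples. This requires a careful case analysis using Proposition~\ref{proposition:restricted_root} together with the sign computations $\epsilon((\sor_j-\sor_i)/2)$ and $\epsilon(-\sor_i/2)$ (or the $C_r$ analogues) already recorded in the proof of Lemma~\ref{lemma:Z'}, which is precisely what forces $\Gamma_1$ and $\Gamma_2$ to be the two blocks. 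Once that structural fact is in hand, both displayed equations follow from Proposition~\ref{proposition:kostant_hua_schmid} applied factor-by-factor, plus the standard translation between $\frakk$-dominance and $\frakk^{\theta_\epsilon}$-dominance-plus-positivity on $\Delta^+(\frakk_\CC^{-\theta_\epsilon}, \frakt_\CC)$.
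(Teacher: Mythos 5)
Your strategy is essentially the paper's: both displayed equations follow from Kostant--Hua--Schmid once one knows that the non-compact part of $\frakg^{\theta\theta_{\epsilon}}$ splits into at most two Hermitian simple factors whose strongly orthogonal roots are exactly $\Gamma_1$ and $\Gamma_2$, and the second equation is the translation of $\frakk$-dominance into $\frakk^{\theta_{\epsilon}}$-dominance plus nonnegativity on $\Delta^{+}(\frakk_{\CC}^{-\theta_{\epsilon}},\frakt_{\CC})$. Two remarks. First, the ``careful case analysis'' you defer is carried out in the paper by a short direct argument: one writes down two subsets $\Sigma_1,\Sigma_2\subset\Sigma(\fraka)$ built from $\Gamma_1,\Gamma_2$, checks each is a subroot system and that every $\alpha$ with $\epsilon(\alpha)=1$ lies in one of them, so that $\Sigma_1\cup\Sigma_2$ is the restricted root system of $\frakg^{\theta\theta_{\epsilon}}$ and there are at most two non-compact factors; and for the nontrivial inclusion in the second equation a single pairing with a root $\alpha$ satisfying $\alpha|_{\frakt_0}=(\sor_{r_1+1}-\sor_{r_1})/2$ gives $c_{r_1}\ge c_{r_1+1}$, which together with the two chains yields the full chain $c_1\ge\cdots\ge c_r\ge 0$ characterizing $\dweight_{\frakk}(\frakp_{+})$. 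Second, your asserted decomposition $\frakp_{+}=\frakp_{+}^{-\theta_{\epsilon}}\oplus(\text{part in }\frakk_{\CC}^{-\theta_{\epsilon}})_{+}$ is false: the correct one is $\frakp_{+}=\frakp_{+}^{\theta_{\epsilon}}\oplus\frakp_{+}^{-\theta_{\epsilon}}$, while $(\frakk_{\CC}^{-\theta_{\epsilon}})_{+}$ lives inside $\frakk_{\CC}$, not $\frakp_{+}$; likewise the Hermitian pair relevant to $\frakp_{+}^{-\theta_{\epsilon}}$ is $(\frakg^{\theta\theta_{\epsilon}},\frakk^{\theta_{\epsilon}})$ rather than $(\frakg,\frakk^{\theta_{\epsilon}})$, and $Z'$ of Lemma~\ref{lemma:Z'} is the characteristic element governing the splitting of $\frakk_{\CC}$, not of $\frakg^{\theta\theta_{\epsilon}}_{\CC}$. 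Neither slip is fatal, since your actual argument for the second equation only uses $\Delta^{+}(\frakk_{\CC},\frakt_{\CC})=\Delta^{+}(\frakk_{\CC}^{\theta_{\epsilon}},\frakt_{\CC})\sqcup\Delta^{+}(\frakk_{\CC}^{-\theta_{\epsilon}},\frakt_{\CC})$ together with Kostant--Hua--Schmid for both pairs, but the statements should be corrected.
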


\begin{proof}
For the first equation, we show that $\frakg^{\theta\theta_{\epsilon}}$ has at most two non-compact simple factors
determined by $\Gamma_1$ and $\Gamma_2$.
We define two subsets $\Sigma_1$ and $\Sigma_2$ of $\Sigma(\fraka)$ as follows:
\begin{align*}
\Sigma_i := \left\{\pm \frac{\sor_i \pm \sor_j}{2}: \sor_i, \sor_j \in \Gamma_i\right\} \cup
\left\{\pm \frac{\sor_i}{2} \in \Sigma(\fraka): \sor_i \in \Gamma_i, \epsilon \left(\frac{\sor_i}{2} \right)=1 \right\} \cup \Gamma_i.
\end{align*}
By definition of $\Gamma_i$ and Proposition \ref{proposition:epsilon-family2}, $\Sigma_i$ is a subroot system of $\Sigma(\fraka)$,
and if $\alpha \in \Sigma(\fraka)$ satisfies $\epsilon(\alpha)=1$, $\alpha$ is an element of either of $\Sigma_1$ or $\Sigma_2$.
This implies that $\Sigma_1 \cup \Sigma_2$ is the restricted root system of $\frakg^{\theta\theta_{\epsilon}}$ with respect to $\fraka$.
Since $\Sigma_1$ and $\Sigma_2$ are irreducible root systems, $\frakg^{\theta\theta_{\epsilon}}$ has at most two non-compact simple factors.
By Kostant-Hua-Schmid Theorem (Proposition \ref{proposition:kostant_hua_schmid}), the first equation is proved.

By Kostant-Hua-Schmid Theorem again, the left hand side of the second equation is contained in the right hand side.
If $r_1=r$, we have nothing to prove.
Then, we assume $r_1 < r$.
We take $-\sum_{i=1}^{r}c_i \sor_i \in \dweight_{\frakk^{\theta_{\epsilon}}}(\frakp_{+}^{-\theta_{\epsilon}})$
such that $(-\sum_{i=1}^{r}c_i \sor_i, \alpha) \geq 0$ for any $\alpha \in \Delta^{+}(\frakk_{\CC}^{-\theta_{\epsilon}}, \frakt_{\CC})$.
Especially, we can choose $\alpha$ such that $\alpha|_{\frakt_0} = (\sor_{r_1+1}-\sor_{r_1})/2$.
Then, we have
\begin{align*}
0 &\leq \left(-\sum_{i=1}^{r}c_i \sor_i, \alpha\right)\\
&= \left(-\sum_{i=1}^{r}c_i \sor_i, \frac{\sor_{r_1+1}-\sor_{r_1}}{2}\right)\\
&= (\sor_i, \sor_i)\frac{c_{r_1}-c_{r_1+1}}{2}.
\end{align*}
Therefore, $c_{r_1}\geq c_{r_1+1}$. This shows the second equation.
\end{proof}

Let $H$ be the analytic subgroup with Lie algebra $\frakg^{\theta_{\epsilon}}$.

\begin{theorem}\label{theorem:upper_bound_compact}
Let $\calH$ be a holomorphic discrete series representation of $G$.
Suppose $(\calH^{\frakp_{+}})^{*}$ has the following formal character with respect to $\frakt$:
\begin{align*}
\ch((\calH^{\frakp_{+}})^{*})=\bigoplus_{\nu \in \frakt^{*}} m(\nu)e^{\nu}.
\end{align*}
We put $\calV:=\{\nu \in \sqrt{-1}\frakt^{*}: m(\nu) \neq 0\}$.
Then, for $\lambda \in \sqrt{-1}\frakt^{*}$ such that $(\lambda + \nu, \alpha) \geq 0$ for any $\alpha \in \Delta^{+}(\frakt_{\CC}, \frakk_{\CC}^{-\theta_{\epsilon}})$ and $\nu \in \calV$,
we have
\begin{align*}
m^{K}_{\calH}(\lambda) = m^{H}_{\calH}(\lambda).
\end{align*}
\end{theorem}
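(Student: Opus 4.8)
The plan is to carry both branching multiplicities down to multiplicities for a compact group in explicit symmetric-algebra models of $\calH$, and then to compare the two multiplicity functions in the stated range by means of the parabolic subalgebra of $\frakk_{\CC}$ furnished by Lemma~\ref{lemma:Z'} together with the monoid identity of Lemma~\ref{lemma:good_ordering}.

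First I would fix the two models. Put $W:=\calH^{\frakp_{+}}$. Since $\calH$ is a holomorphic discrete series representation the map (\ref{equation:canonical_surjection}) is an isomorphism, so by the Poincar\'e--Birkhoff--Witt theorem $\calH\cong S(\frakp_{-})\otimes W$ as a $\frakk_{\CC}$-module, hence $m^{K}_{\calH}(\lambda)=\dim\bigl(S(\frakp_{-})\otimes W\bigr)^{\frakn_{\frakk}}(\lambda)$, where $\frakn_{\frakk}$ is the nilradical of the Borel subalgebra of $\frakk_{\CC}$ attached to $\Delta^{+}_{c}$. On the other side, Proposition~\ref{proposition:reduction_to_compact}, applied to the holomorphic symmetric pair $(\frakg,\frakg^{\theta_{\epsilon}})$, identifies the branching $\calH|_{H}$ with the decomposition of $S(\frakp_{-}^{-\theta_{\epsilon}})\otimes W$ as a $\frakk^{\theta_{\epsilon}}$-module, hence $m^{H}_{\calH}(\lambda)=\dim\bigl(S(\frakp_{-}^{-\theta_{\epsilon}})\otimes W\bigr)^{\frakn_{\frakk^{\theta_{\epsilon}}}}(\lambda)$, where $\frakn_{\frakk^{\theta_{\epsilon}}}$ is the nilradical of the Borel of $\frakk_{\CC}^{\theta_{\epsilon}}$ attached to $\Delta^{+}_{c}\cap\Delta(\frakk_{\CC}^{\theta_{\epsilon}},\frakt_{\CC})$. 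By Lemma~\ref{lemma:Z'}, $\frakk_{\CC}^{\theta_{\epsilon}}\oplus(\frakk_{\CC}^{-\theta_{\epsilon}})_{+}$ is a parabolic subalgebra of $\frakk_{\CC}$ with nilradical $\frakn_{+}:=(\frakk_{\CC}^{-\theta_{\epsilon}})_{+}$, and the same lemma gives $\frakn_{\frakk}=\frakn_{\frakk^{\theta_{\epsilon}}}\oplus\frakn_{+}$; therefore $m^{K}_{\calH}(\lambda)=\dim\bigl[\bigl((S(\frakp_{-})\otimes W)^{\frakn_{+}}\bigr)^{\frakn_{\frakk^{\theta_{\epsilon}}}}\bigr](\lambda)$, and the theorem is reduced to the statement that, for $\lambda$ in the stated range, the $\frakk_{\CC}^{\theta_{\epsilon}}$-modules $(S(\frakp_{-})\otimes W)^{\frakn_{+}}$ and $S(\frakp_{-}^{-\theta_{\epsilon}})\otimes W$ have the same multiplicity of the irreducible of highest weight $\lambda$.

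By the Kostant--Hua--Schmid decomposition (Proposition~\ref{proposition:kostant_hua_schmid}) the $\frakk_{\CC}$-module $S(\frakp_{-})$ and the $\frakk_{\CC}^{\theta_{\epsilon}}$-module $S(\frakp_{-}^{-\theta_{\epsilon}})$ are multiplicity free, with highest-weight sets the two monoids $\dweight_{\frakk}(\frakp_{+})$ and $\dweight_{\frakk^{\theta_{\epsilon}}}(\frakp_{+}^{-\theta_{\epsilon}})$ of Lemma~\ref{lemma:good_ordering}; so, writing $W^{*}=(\calH^{\frakp_{+}})^{*}$ and letting $V_{\lambda}$ denote in each case the irreducible of highest weight $\lambda$, the quantity $m^{K}_{\calH}(\lambda)$ counts with multiplicity the constituents of highest weight in $\dweight_{\frakk}(\frakp_{+})$ of the $\frakk_{\CC}$-module $V_{\lambda}\otimes W^{*}$, and $m^{H}_{\calH}(\lambda)$ counts the constituents of highest weight in $\dweight_{\frakk^{\theta_{\epsilon}}}(\frakp_{+}^{-\theta_{\epsilon}})$ of the $\frakk_{\CC}^{\theta_{\epsilon}}$-module $V_{\lambda}\otimes W^{*}$. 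The main point — and the step I expect to be the genuine obstacle — is to show that, in the stated range, sending a $\frakk_{\CC}$-constituent to its Cartan $\frakk_{\CC}^{\theta_{\epsilon}}$-component gives a weight-preserving bijection between these two families of constituents. Here the hypothesis plays a decisive role: it says precisely that $\lambda+\nu$ is $\Delta^{+}(\frakk_{\CC}^{-\theta_{\epsilon}},\frakt_{\CC})$-dominant for every weight $\nu$ of $W^{*}$, and by Lemma~\ref{lemma:good_ordering} the monoid $\dweight_{\frakk}(\frakp_{+})$ is exactly the set of $\frakk_{\CC}$-dominant elements of $\dweight_{\frakk^{\theta_{\epsilon}}}(\frakp_{+}^{-\theta_{\epsilon}})$; combining these one checks (i) that any $\frakn_{\frakk^{\theta_{\epsilon}}}$-highest weight vector of $V_{\lambda}\otimes W^{*}$ whose weight $\kappa$ is $\frakk_{\CC}$-dominant and lies in $\dweight_{\frakk^{\theta_{\epsilon}}}(\frakp_{+}^{-\theta_{\epsilon}})$ is automatically annihilated by $\frakn_{+}$ — the shifted weights $\kappa+\alpha$, $\alpha\in\Delta^{+}(\frakk_{\CC}^{-\theta_{\epsilon}},\frakt_{\CC})$, then lie beyond the weight support of $V_{\lambda}\otimes W^{*}$, which is where the explicit positivity bound enters — and (ii) that no $\frakn_{\frakk^{\theta_{\epsilon}}}$-highest weight in $\dweight_{\frakk^{\theta_{\epsilon}}}(\frakp_{+}^{-\theta_{\epsilon}})\setminus\dweight_{\frakk}(\frakp_{+})$ survives in the good range. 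Summing over constituents then yields $m^{K}_{\calH}(\lambda)=m^{H}_{\calH}(\lambda)$; for $W$ one-dimensional both sides collapse to indicator functions of the two monoids and the assertion is exactly Lemma~\ref{lemma:good_ordering}, as already observed there. The remaining points are routine: the weight-support bookkeeping making (i)--(ii) precise, the case where $\lambda$ is not $\frakk_{\CC}^{\theta_{\epsilon}}$-dominant (both sides vanish), and the legitimacy of reading $\lambda\in\sqrt{-1}\frakt^{*}$ on both sides, valid since $\frakt\subset\frakk^{\theta_{\epsilon}}$ and the positive systems were chosen compatibly.
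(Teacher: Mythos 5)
Your opening reductions are sound and coincide with how the paper begins: Proposition \ref{proposition:reduction_to_compact} converts $m^{H}_{\calH}(\lambda)$ into the multiplicity of $V_{\lambda,\frakk^{\theta_{\epsilon}}}$ in $S(\frakp_{-}^{-\theta_{\epsilon}})\otimes W$, and Kostant--Hua--Schmid rewrites both sides as sums of tensor-product multiplicities over the two monoids of Lemma \ref{lemma:good_ordering}. The gap is exactly the step you flag as the main point, and it is twofold. First, the two families you want to put in bijection live in different modules: $m^{K}_{\calH}(\lambda)$ counts $\frakk_{\CC}$-constituents of $V_{\lambda,\frakk}\otimes W^{*}$, while $m^{H}_{\calH}(\lambda)$ counts $\frakk_{\CC}^{\theta_{\epsilon}}$-constituents of $V_{\lambda,\frakk^{\theta_{\epsilon}}}\otimes W^{*}$, which under restriction corresponds only to the small subspace $V_{\lambda,\frakk}^{\frakn_{+}}\otimes W^{*}$ of $V_{\lambda,\frakk}\otimes W^{*}$; your ``Cartan component'' map is not defined into the module that computes $m^{H}$, and nothing in the sketch identifies the two counts. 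Second, claim (i) and its justification are false: an $\frakn_{\frakk^{\theta_{\epsilon}}}$-highest weight vector whose weight $\kappa$ is $\frakk$-dominant and lies in the monoid need not be killed by $\frakn_{+}=(\frakk_{\CC}^{-\theta_{\epsilon}})_{+}$, and $\kappa+\alpha$ generally remains in the weight support. Take $\frakg=\spor(2,\RR)$, $\frakg^{\theta_{\epsilon}}=\uor(1,1)$, so $\frakk_{\CC}^{\theta_{\epsilon}}=\frakt_{\CC}$ and $\frakn_{\frakk^{\theta_{\epsilon}}}=0$, and a scalar $W$ with $\lambda+\nu=(-2,-8)$ (inside the good range, since $(\lambda+\nu,e_{1}-e_{2})=6\geq 0$): the weight $(-4,-6)$ of $V_{\lambda,\frakk}\otimes W^{*}\cong V_{(-2,-8),\frakk}$ is $\frakk$-dominant and lies in both monoids, its weight vector is not annihilated by the compact raising operator, and $(-3,-7)$ is still a weight of the module. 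Moreover, counting $\frakn_{\frakk^{\theta_{\epsilon}}}$-highest vectors of $V_{\lambda,\frakk}\otimes W^{*}$ with weight in $\dweight_{\frakk^{\theta_{\epsilon}}}(\frakp_{+}^{-\theta_{\epsilon}})$ gives $4$ in this example, whereas $m^{H}_{\calH}(\lambda)=m^{K}_{\calH}(\lambda)=1$; so the count really must be made in $V_{\lambda,\frakk^{\theta_{\epsilon}}}\otimes W^{*}$, and your proposal provides no bridge.

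The underlying difficulty is that the theorem is an equality of totals, not of individual tensor multiplicities, and the paper never attempts a constituent-by-constituent matching. Its proof expresses both sides via the Weyl character formula as signed Klimyk-type sums $\sum \sgn(\sigma)m(\nu)$ over index sets $I_{K}(\lambda)\subset\calV\times W_{K}$ and $I_{K\cap H}(\lambda)\subset\calV\times W_{K\cap H}$, and shows $I_{K}(\lambda)=I_{K\cap H}(\lambda)$ in the stated range; this uses Lemma \ref{lemma:Z'} twice (for $\sigma(\rho_{K})-\rho_{K}=\sigma(\rho_{K\cap H})-\rho_{K\cap H}$ on $W_{K\cap H}$, and for the $W_{K\cap H}$-stability of $\Delta^{+}(\frakk_{\CC}^{-\theta_{\epsilon}},\frakt_{\CC})$), Lemma \ref{lemma:good_ordering} for the monoid comparison, and the positivity hypothesis in a dominance argument for $\sigma(\lambda+\nu+\rho_{K})$. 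Your unsigned bijection of constituents is a strictly stronger assertion than this signed identity, it is not established by the weight-support bookkeeping you propose, and no substitute for the Weyl-group comparison is offered; as written the proposal does not prove the theorem.
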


\begin{proof}
By Proposition \ref{proposition:reduction_to_compact}, we consider a branching law of $S(\frakp^{-\theta_{\epsilon}}_{-})\otimes \calH^{\frakp_{+}}|_{K\cap H}$.
From the Weyl character formula, we have
\begin{align}
D_K\ch(V_{\lambda,K}\otimes (\calH^{\frakp_{+}})^{*}) &= D_K\ch(V_{\lambda,K})\ch((\calH^{\frakp_{+}})^{*}) \notag \\
&= \sum_{\sigma \in W_{K}}\sgn(\sigma)e^{\sigma(\lambda+\rho_{K})-\rho_{K}} \sum_{\nu \in \calV}m(\nu)e^{\nu} \notag \\
&= \sum_{\nu \in \calV}m(\nu) \sum_{\sigma \in W_{K}} \sgn(\sigma)e^{\sigma(\lambda+\nu+\rho_{K})-\rho_{K}}. \label{equation:character}
\end{align}
Here, we write the Weyl group for $(K,T)$ by $W_K$, half the sum of the positive roots by $\rho_K$,
and the Weyl denominator for $(K,T)$ by $D_K$.
For the third equality, we used $W_K$-invariance of $m(\nu)$.
We have the similar equations for $K\cap H$ by replacing $W_K$ by $W_{K\cap H}$, and $\rho_K$ by $\rho_{K\cap H}$.

We put
\begin{align*}
I_K(\lambda) &:= \{(\nu, \sigma) \in \calV \times W_K: \sigma(\lambda+\nu+\rho_{K})-\rho_{K} \in \dweight(\frakp_{+})\}, \\
I_{K\cap H}(\lambda) &:= \{(\nu, \sigma) \in \calV \times W_{K\cap H}: \sigma(\lambda+\nu+\rho_{K\cap H})-\rho_{K \cap H} \in \dweight(\frakp_{+}^{-\theta_{\epsilon}})\}.
\end{align*}
By Lemma \ref{lemma:Z'}, we have $\sigma(\rho_{K})-\rho_{K}=\sigma(\rho_{K\cap H})-\rho_{K\cap H}$ for any $\sigma \in W_{K\cap H}$.
Then, we can replace $\rho_{K\cap H}$ by $\rho_{K}$ in the above definitions.
Note that if $(\nu, \sigma_1), (\nu, \sigma_2) \in I_{\cdot}(\lambda)$, then $\sigma_1 = \sigma_2$.

From (\ref{equation:character}), we can write the multiplicity by this notation:
\begin{align*}
m^{K}_{\calH}(\lambda) &= \sum_{(\nu, \sigma) \in I_{K}(\lambda)}\sgn(\sigma) m(\nu), \\
m^{H}_{\calH}(\lambda) &= \sum_{(\nu, \sigma) \in I_{K\cap H}(\lambda)}\sgn(\sigma) m(\nu).
\end{align*}
Then, it suffices to show $I_{K}(\lambda)=I_{K\cap H}(\lambda)$ if $\lambda$ satisfies the condition.

We assume $(\lambda + \nu, \alpha) \geq 0$ for any $\alpha \in \Delta^{+}(\frakk_{\CC}^{-\theta_{\epsilon}}, \frakt_{\CC})$ and $\nu \in \calV$.
First, we will show $I_K(\lambda)\supset I_{K\cap H}(\lambda)$.
Take $(\nu, \sigma) \in I_{K\cap H}(\lambda)$.
By definition, we have $\sigma(\lambda+\nu+\rho_{K})-\rho_{K} \in \dweight(\frakp_{+}^{-\theta_{\epsilon}})$.
From Lemma \ref{lemma:Z'}, $\Delta^{+}(\frakk_{\CC}^{-\theta_{\epsilon}}, \frakt_{\CC})$ is stable under $\sigma$.
Take a lowest root $\alpha \in \Delta^{+}(\frakk_{\CC}^{-\theta_{\epsilon}}, \frakt_{\CC})$
with respect to $\Delta^{+}(\frakk_{\CC}, \frakt_{\CC})$.
Since $\alpha$ is a lowest root, $\sigma^{-1}(\alpha)-\alpha$ is a sum of elements of
$\Delta^{+}(\frakk^{\theta_\epsilon}, \frakt_{\CC})$ with positive coefficients.
Then, we have
\begin{align*}
(\sigma(\lambda+\nu+\rho_{K})-\rho_{K}, \alpha)&=(\sigma(\lambda+\nu+\rho_{K}), \alpha)-(\rho_{K}, \alpha)\\
&=(\lambda+\nu+\rho_{K},\sigma^{-1}(\alpha))-(\rho_{K}, \alpha)\\
&=(\lambda+\nu,\alpha)+(\rho_{K}, \sigma^{-1}(\alpha)-\alpha)\geq 0.
\end{align*}
Thus, $\sigma(\lambda+\nu+\rho_{K})-\rho_{K}$ is dominant with respect to $\Delta^{+}(\frakk_{\CC}, \frakt_{\CC})$.
By Lemma \ref{lemma:good_ordering}, we have $\sigma(\lambda+\nu+\rho_{K})-\rho_{K} \in \dweight(\frakp_{+})$.
This implies the desired inclusion.

Next, we show the converse inclusion.
Take $(\nu, \sigma) \in I_{K}(\lambda)$.
Since $\sigma(\lambda+\nu+\rho_{K})-\rho_{K} \in \dweight(\frakp_{+})$,
$\lambda+\nu+\rho_{K}$ is regular for $W_K$, especially for $W_{K\cap H}$.
Then, there exist a $\sigma' \in W_{K\cap H}$ such that $\sigma'(\lambda+\nu+\rho_{K})$ is
strictly dominant with respect to $\Delta^{+}(\frakt_{\CC}, \frakk_{\CC}^{\theta_{\epsilon}})$.
Since $\Delta^{+}(\frakk^{-\theta_\epsilon}_{\CC}, \frakt_{\CC})$ is $W_{K\cap H}$-invariant,
$\sigma'(\lambda+\nu+\rho_{K})$ is also strictly dominant with respect to $\Delta^{+}(\frakk_{\CC}, \frakt_{\CC})$.
Then, we have $\sigma = \sigma'$.
This completes the proof.
\end{proof}

\begin{remark}
Note that the irreducibility of $\calH$ is not used in the above proof.
Then, the assumption that $\calH$ is a holomorphic discrete series representation can be replaced by the assumption
that $\calH$ is the finite direct sum of holomorphic discrete series representations.
\end{remark}

\subsection{Non-compact case}\label{section:non-compact_upper_bound}

The aim of this section is to show an analogous theorem of Theorem \ref{theorem:upper_bound_compact} for non-compact symmetric pairs.
We use the notation in Section \ref{section:upper_bound_for_hol}.

We can apply two types of transformations to the branching law of $\calH|_H$:
\begin{enumerate}[i)]
	\item the branching law of $\calH|_H$ coincides with the branching law of $(\calU(\frakg^{\theta\tau})\calH^{\frakp_{+}})|_{\frakg^{\theta,\tau}}$
	in the sense of Proposition \ref{proposition:reduction_to_compact}, and
	\item if $H$ is a maximal compact subgroup of $G$, the branching law of $\calH|_H$ coincides with the branching law of $\calH|_{H'}$
	in the sense of Theorem \ref{theorem:upper_bound_compact}.
\end{enumerate}
Then, if we ignore the assumption (\ref{equation:good_order}) in Theorem \ref{theorem:upper_bound_compact}, 
the branching laws of $\calH|_H$ and $\calH|_{H'}$ are reduced to the same branching law as follows.

\begin{align}
\xymatrix{
(\frakg, \frakg^\tau) \ar@{-->}[dd]^{\epsilon \text{-family}} \ar[r]^(0.4){\text{i)}} & (\frakg^{\theta\tau}, \frakg^{\theta\tau, \tau}) \ar[r]^{\text{ii)}}
& (\frakg^{\theta\tau}, \frakg^{\theta\tau, \tau_{\epsilon}})\ar[rd]^{\text{i)}} \\
&&&(\frakg^{\theta\tau, \theta\tau_{\epsilon}}, \frakg^{\theta, \tau, \tau_{\epsilon}})\\
(\frakg, \frakg^{\tau_\epsilon}) \ar[r]^(0.4){\text{i)}} & (\frakg^{\theta\tau_{\epsilon}}, \frakg^{\theta\tau_{\epsilon}, \tau_\epsilon}) \ar[r]^{\text{ii)}}
& (\frakg^{\theta\tau_{\epsilon}}, \frakg^{\theta\tau_\epsilon, \tau}) \ar[ur]_{\text{i)}} 
}\label{diagram:reduction}
\end{align}

Let us show the existence of a compatible ordering of $(\frakt^{\tau})^{*}$ with respect to the pairs
$(\frakg^{\theta\tau}, \frakg^{\theta\tau, \tau_{\epsilon}})$ and $(\frakg^{\theta\tau_{\epsilon}}, \frakg^{\theta\tau_\epsilon, \tau})$
satisfying (\ref{equation:good_order}).

\begin{lemma}\label{lemma:good_ordering_for_non_compact}
In the above settings, there exists an ordering of $\sqrt{-1}(\frakt^{\tau})^{*}$ that satisfies the following conditions:
\begin{enumerate}[i)]
	\item any elements of $\Delta(\frakp^{-\tau}_{+}, \frakt^{\tau}_{\CC})$ and $\Delta(\frakp^{-\tau_{\epsilon}}_{+}, \frakt^{\tau}_{\CC})$ are positive,
	\item $\dweight_{\frakk^{\tau}}(\frakp^{-\tau}_{+})=\{\lambda \in \dweight_{\frakk^{\tau, \tau_{\epsilon}}}(\frakp_{+}^{-\tau, -\tau_{\epsilon}}):
(\lambda, \alpha) \geq 0 \text{ for any } \alpha \in \Delta^{+}(\frakk_{\CC}^{\tau, -\tau_{\epsilon}}, \frakt^{\tau}_{\CC})\}$,
	\item $\dweight_{\frakk^{\tau_{\epsilon}}}(\frakp^{-\tau_\epsilon}_{+})=\{\lambda \in \dweight_{\frakk^{\tau, \tau_{\epsilon}}}(\frakp_{+}^{-\tau, -\tau_{\epsilon}}):
(\lambda, \alpha) \geq 0 \text{ for any } \alpha \in \Delta^{+}(\frakk_{\CC}^{-\tau, \tau_{\epsilon}}, \frakt^{\tau}_{\CC})\}$.
\end{enumerate}
\end{lemma}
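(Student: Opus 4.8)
The plan is to deduce the lemma from the $\frakk_{\epsilon}$-family case, Lemma~\ref{lemma:good_ordering}, applied to the two reductive Lie algebras $\frakg^{\theta\tau}=\frakk^{\tau}\oplus\frakp^{-\tau}$ and $\frakg^{\theta\tau_\epsilon}=\frakk^{\tau_\epsilon}\oplus\frakp^{-\tau_\epsilon}$, both of Hermitian type since $(\frakg,\frakg^{\tau})$ and $(\frakg,\frakg^{\tau_\epsilon})$ are holomorphic symmetric pairs; this is precisely the reduction along the top and bottom rows of the diagram~(\ref{diagram:reduction}). Inside $\frakg^{\theta\tau}$ the Cartan involution is $\tau|_{\frakg^{\theta\tau}}$, and by Proposition~\ref{proposition:epsilon-family}~iii) the automorphism $\tau_\epsilon|_{\frakg^{\theta\tau}}$ lies in the corresponding $\frakk_{\epsilon}$-family; symmetrically, inside $\frakg^{\theta\tau_\epsilon}$ the Cartan involution is $\tau_\epsilon|_{\frakg^{\theta\tau_\epsilon}}$ and $\tau|_{\frakg^{\theta\tau_\epsilon}}$ lies in its $\frakk_{\epsilon}$-family, since $(\tau_\epsilon)_\epsilon=\tau$, so the $\epsilon$-family relation is symmetric in $\tau$ and $\tau_\epsilon$ and is governed by the same signature $\epsilon$ of $\Sigma(\fraka)$.

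First I would collect the structural facts that make $\frakt^{\tau}$ a legitimate common Cartan subalgebra. Using that the $X_{\sor_i}$ are $\tau\tau_\epsilon$-invariant (Proposition~\ref{proposition:epsilon-family2}), each $\sor_i$ belongs to both $\Delta(\frakp_+^{-\tau},\frakt_{\CC}^{\tau})$ and $\Delta(\frakp_+^{-\tau_\epsilon},\frakt_{\CC}^{\tau})$; since $\fraka$ is maximal abelian in both $\frakp^{-\tau}$ and $\frakp^{-\tau_\epsilon}$ (so $\RR\text{-}\rank(\frakg^{\theta\tau})=\RR\text{-}\rank(\frakg^{\theta\tau_\epsilon})=r$) the set $\{\sor_1,\dots,\sor_r\}$ is a maximal strongly orthogonal system in both, and $\Sigma(\fraka)$, which depends only on $\fraka$ and $\frakg$, is literally the same in the two pictures. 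Likewise $[X_{\sor_i},\overline{X_{\sor_i}}]$ is $\tau\tau_\epsilon$-invariant, hence $\frakt_0^{\tau}\subset\frakk^{\tau,\tau_\epsilon}_{\CC}$, and as $\tau_\epsilon$ agrees with $\tau$ on $Z_{\frakg}(\fraka)\supset Z_{\frakt^{\tau}}(\fraka)$ and $\frakt^{\tau}=\frakt_0^{\tau}\oplus Z_{\frakt^{\tau}}(\fraka)$, one gets $\frakt^{\tau}\subset\frakk^{\tau}\cap\frakk^{\tau_\epsilon}$; maximality of $\frakt^{\tau}$ in $\frakk^{\tau_\epsilon}$ then follows from the rank equality, exactly as $\frakt$ serves as a common Cartan of $\frakk$ and $\frakk^{\theta_\epsilon}$ in the $\frakk_{\epsilon}$-family case. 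Moreover $Z\in\frakt_{\CC}^{\tau}$ is a common characteristic element and $\Delta(\frakp_+^{-\tau},\frakt_{\CC}^{\tau})$, $\Delta(\frakp_+^{-\tau_\epsilon},\frakt_{\CC}^{\tau})$ both consist of roots taking the value $1$ on $Z$.

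Next I would fix the ordering of $\sqrt{-1}(\frakt^{\tau})^{*}$ once and for all: choose $\xi=NZ+\xi_0\in\sqrt{-1}\frakt^{\tau}$ with $N$ large and $\xi_0$ generic, so that $\Delta(\frakp_+^{-\tau},\frakt_{\CC}^{\tau})$ and $\Delta(\frakp_+^{-\tau_\epsilon},\frakt_{\CC}^{\tau})$ are both positive, which is condition~i), and so that in addition the $\sor_i$ lying in $\Gamma_1$ precede those lying in $\Gamma_2$, where $\Gamma_1\sqcup\Gamma_2$ is the partition of $\{\sor_1,\dots,\sor_r\}$ defined in Section~\ref{section:epsilon_family} from $\epsilon$ and $\Sigma(\fraka)$. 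Since this partition, the Cayley transform $\mathbf{c}$, and the data $\frakt_0^{\tau}$, $\{\sor_i\}$ are all unchanged when $\tau$ is replaced by $\tau_\epsilon$, the same ordering realizes the normalization~(\ref{equation:good_order}) for both $\frakg^{\theta\tau}$ and $\frakg^{\theta\tau_\epsilon}$. Applying Lemma~\ref{lemma:good_ordering} to $\frakg^{\theta\tau}$ with this ordering, under the dictionary $\frakk\rightsquigarrow\frakk^{\tau}$, $\frakp_+\rightsquigarrow\frakp_+^{-\tau}$, $\frakk^{\theta_\epsilon}\rightsquigarrow\frakk^{\tau,\tau_\epsilon}$, $\frakp_+^{-\theta_\epsilon}\rightsquigarrow\frakp_+^{-\tau,-\tau_\epsilon}$, $\frakk_{\CC}^{-\theta_\epsilon}\rightsquigarrow\frakk_{\CC}^{\tau,-\tau_\epsilon}$, the second displayed equality of that lemma becomes condition~ii); applying it to $\frakg^{\theta\tau_\epsilon}$ with the same ordering, now under $\frakk\rightsquigarrow\frakk^{\tau_\epsilon}$, $\frakp_+\rightsquigarrow\frakp_+^{-\tau_\epsilon}$, $\frakk^{\theta_\epsilon}\rightsquigarrow\frakk^{\tau,\tau_\epsilon}$, $\frakp_+^{-\theta_\epsilon}\rightsquigarrow\frakp_+^{-\tau,-\tau_\epsilon}$, $\frakk_{\CC}^{-\theta_\epsilon}\rightsquigarrow\frakk_{\CC}^{-\tau,\tau_\epsilon}$ (using that $\frakk^{\tau,\tau_\epsilon}$ and $\frakp_+^{-\tau,-\tau_\epsilon}$ are symmetric in $\tau\leftrightarrow\tau_\epsilon$), it becomes condition~iii).

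The main obstacle, and the only point requiring real care, is the verification that one and the same ordering serves both applications: that the Cayley-transform data producing $\Gamma_1\sqcup\Gamma_2$ and the normalization~(\ref{equation:good_order}) genuinely coincide for $(\frakg^{\theta\tau},\frakg^{\theta\tau,\tau_\epsilon})$ and for $(\frakg^{\theta\tau_\epsilon},\frakg^{\theta\tau_\epsilon,\tau})$, and that $\frakt^{\tau}$ really is a Cartan subalgebra of $\frakk^{\tau_\epsilon}$. Both rest on Proposition~\ref{proposition:epsilon-family2} (the $\tau\tau_\epsilon$-invariance of $X_{\sor_i}$ and of $X_{\sor_i}-\overline{X_{\sor_i}}$) together with the invariance of $\RR\text{-}\rank$ within an $\epsilon$-family; once these are in place the rest is bookkeeping with the dictionaries above.
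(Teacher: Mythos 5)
Your overall strategy --- reducing conditions ii) and iii) to Lemma \ref{lemma:good_ordering} applied to the two Hermitian algebras $\frakg^{\theta\tau}$ and $\frakg^{\theta\tau_{\epsilon}}$ with one common ordering, after checking that $\fraka$, the $X_{\sor_i}$ and $\frakt^{\tau}$ serve both sides --- is the same as the paper's, and your structural preliminaries (invariance of the $X_{\sor_i}$ under $\tau\tau_{\epsilon}$, $\fraka$ maximal abelian in both $\frakp^{-\tau}$ and $\frakp^{-\tau_{\epsilon}}$, positivity via the value $1$ on $Z$) are fine. The gap is exactly at the step you yourself flag as the only delicate point. You treat $\Gamma_1\sqcup\Gamma_2$ as a single partition of $\{\sor_1,\dots,\sor_r\}$ ``defined from $\epsilon$ and $\Sigma(\fraka)$'' and assert it is unchanged when $\tau$ is replaced by $\tau_{\epsilon}$. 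But in general neither $\frakg^{\theta\tau}$ nor $\frakg^{\theta\tau_{\epsilon}}$ is simple, so the normalization (\ref{equation:good_order}) must be arranged separately inside every non-compact simple factor of $\frakg^{\theta\tau}$ and, simultaneously, inside every non-compact simple factor of $\frakg^{\theta\tau_{\epsilon}}$; these two factor decompositions differ in general, so there is no single two-block partition to compare. Moreover, even factor by factor the labelled partition is not symmetric data: for a factor whose restricted root system is of type $BC$ the block $\Gamma_1$ is fixed intrinsically as $\{\sor_i:\epsilon(\sor_i/2)=-1\}$, whereas for a factor of type $C$ the block called $\Gamma_1$ is by definition whichever block contains the first root, and the types of the factors of $\frakg^{\theta\tau}$ and of $\frakg^{\theta\tau_{\epsilon}}$ meeting a given group of $\sor_i$ need not agree. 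So ``the same ordering realizes (\ref{equation:good_order}) on both sides'' is precisely what must be proved, and your proposal supplies no mechanism for it.

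The paper's mechanism, absent from your proposal, is to order the $X_i$ (hence the $\sor_i$) according to the non-compact simple factors $\fraks_k$ of the common subalgebra $\frakg^{\theta\tau,\theta\tau_{\epsilon}}$, placing all tube-type factors before all non-tube-type ones, and then to take the lexicographic order attached to $\calB=\{Z,-[X_1,\overline{X_1}],\dots,-[X_r,\overline{X_r}],Y_1,\dots,Y_d\}$. This works because, inside any non-compact simple factor of $\frakg^{\theta\tau}$ (or of $\frakg^{\theta\tau_{\epsilon}}$) of type $BC$, the proof of Lemma \ref{lemma:good_ordering} shows that the $\Gamma_1$ block generates a tube-type factor of $\frakg^{\theta\tau,\theta\tau_{\epsilon}}$ and the $\Gamma_2$ block a non-tube-type one, while inside a type $C$ factor any non-interleaved arrangement is admissible since the leading block is automatically $\Gamma_1$; hence ``group by the $\fraks_k$, tube type first'' enforces (\ref{equation:good_order}) for all factors of both algebras at once. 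A further point you skip: after changing the ordering one must check that the inductive minimal-root construction reproduces the same set $\{\sor_1,\dots,\sor_r\}$ up to relabelling; the paper verifies this with Proposition \ref{proposition:restricted_root}. Without these ingredients the central claim of your argument is unsupported.
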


\begin{proof}
We put a lexicographical order on $\sqrt{-1}(\frakt^{\tau})^{*}$ as follows.

Let $\{\fraks_1, \fraks_2, \ldots, \fraks_n\}$ be the set of the non-compact simple factors of $\frakg^{\theta\tau, \theta\tau_{\epsilon}}$.
We can assume that if $\fraks_i$ is of tube type and $\fraks_j$ is of non-tube type, then $i < j$.
We reorder $\{X_1, X_2, \ldots, X_r\}$ (see Section \ref{section:epsilon_family})
such that if $X_i \in \fraks_{n_i}$, $X_j \in \fraks_{n_j}$ and $n_i < n_j$, then $i < j$.
We also reorder $\{\sor_1, \sor_2, \ldots, \sor_r\}$ according to the order of $\{X_1, X_2, \ldots, X_r\}$.
Based on this ordering, we take an ordered generating set of $\sqrt{-1}\frakt^{\tau}$ as
\begin{align*}
\calB := \{Z, -[X_1, \overline{X_1}], -[X_{2}, \overline{X_{2}}], \ldots, -[X_r, \overline{X_r}], Y_1, Y_2, \ldots, Y_d\}.
\end{align*}
Here, $Z$ is a characteristic element of $\frakg$, and $\{Y_1, Y_2, \ldots, Y_d\}$ is a basis of $(\frakt^{\tau}_0+\CC Z)^{\perp}\cap \sqrt{-1}\frakt^{\tau}$.
$\calB$ is a generating set of $\sqrt{-1}\frakt^{\tau}$ although it may not be a basis.
Then, $\calB$ induces a lexicographical ordering on $\sqrt{-1}(\frakt^{\tau})^{*}$.

Let us show that the ordering satisfies the desired conditions.
The condition i) is clear from the definition of $\calB$.
We take a maximal set of strongly orthogonal roots $\{\sor_1', \sor_2', \ldots, \sor_r'\}$ from $\Delta^+(\frakp_+^{-\tau}, \frakt_\CC^\tau)$ with respect to the new ordering.
By Proposition \ref{proposition:restricted_root}, we have $\sor_i' = \sor_i$ for any $1 \leq i \leq r$.
For each non-compact simple factor of $\frakg^{\tau\theta}$,
the assumption (\ref{equation:good_order}) of Lemma \ref{lemma:Z'} is satisfied.
Then, the condition ii) is proved from Lemma \ref{lemma:good_ordering}.
Since the condition iii) can be proved in the same way as ii), then Lemma \ref{lemma:good_ordering_for_non_compact} is proved.
\end{proof}

By Lemma \ref{lemma:good_ordering_for_non_compact} and the reduction (\ref{diagram:reduction}),
we have the following theorem.

\begin{theorem}\label{theorem:upper_bound_hol}
Let $\calH$ be a holomorphic discrete series representation of $G$.
We put an ordering on $\sqrt{-1}\frakt^{\tau}$ as in Lemma \ref{lemma:good_ordering_for_non_compact}.
Suppose $(\calH^{\frakp_{+}})^{*}$ has the following formal character with respect to $\frakt^{\tau}$:
\begin{align*}
\ch((\calH^{\frakp_{+}})^{*})=\bigoplus_{\nu \in (\frakt^{\tau})^{*}} m(\nu)e^{\nu}.
\end{align*}
We put $\calV:=\{\nu \in \sqrt{-1}(\frakt^{\tau})^{*}: m(\nu) \neq 0\}$.
Then, for $\lambda \in \sqrt{-1}(\frakt^{\tau})^{*}$ such that $(\lambda + \nu, \alpha) \geq 0$ for any
$\alpha \in \Delta^{+}(\frakk_{\CC}^{-\tau\tau_{\epsilon}}, \frakt^{\tau}_{\CC})$ and $\nu \in \calV$,
we have
\begin{align*}
m^{H}_{\calH}(\lambda) = m^{H'}_{\calH}(\lambda).
\end{align*}
\end{theorem}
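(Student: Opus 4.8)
The plan is to trace both branching laws $\calH|_H$ and $\calH|_{H'}$ through the reduction diagram (\ref{diagram:reduction}), showing that after three steps each of them terminates at the \emph{same} $\frakk^{\tau,\tau_\epsilon}$-type problem, and that the single ordering furnished by Lemma \ref{lemma:good_ordering_for_non_compact} makes every intermediate identification valid simultaneously, on precisely the cone named in the statement. Throughout I fix $\frakt^\tau$, the strongly orthogonal roots $\{\sor_1,\dots,\sor_r\}$ and $\fraka$ as in Section \ref{section:stability_discrete}, together with the lexicographic ordering of $\sqrt{-1}(\frakt^\tau)^{*}$ supplied by Lemma \ref{lemma:good_ordering_for_non_compact}.

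First I would follow the top row of (\ref{diagram:reduction}). By Proposition \ref{proposition:reduction_to_compact} applied to $(\frakg,\frakg^{\tau})$ (the reduction (i)), $m^{H}_{\calH}(\lambda)$ equals the $\frakk^{\tau}$-type multiplicity of the irreducible $\frakk^{\tau}$-module of highest weight $\lambda$ in $S(\frakp_{-}^{-\tau})\otimes\calH^{\frakp_{+}}\simeq\calU(\frakg^{\theta\tau})\calH^{\frakp_{+}}$; the latter is a finite direct sum of holomorphic discrete series representations of $(G^{\theta\tau})_0$ whose $\frakp_{+}^{-\tau}$-nullspace is $\calH^{\frakp_{+}}$, so its associated $\calV$ coincides with the one in the theorem, and this identification carries no large-parameter hypothesis. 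Since $\frakk^{\tau}$ is a maximal compact subalgebra of $\frakg^{\theta\tau}$ and, by Proposition \ref{proposition:epsilon-family}, $(\frakg^{\theta\tau},\frakg^{\theta\tau,\tau_{\epsilon}})$ is a holomorphic symmetric pair lying in the $\frakk_\epsilon$-family of $(\frakg^{\theta\tau},\frakk^{\tau})$, I would then apply Theorem \ref{theorem:upper_bound_compact} to this pair (the transformation (ii)); conditions i) and ii) of Lemma \ref{lemma:good_ordering_for_non_compact} provide exactly the positivity and the ``good ordering'' assumption (\ref{equation:good_order}) required in Theorem \ref{theorem:upper_bound_compact} for each non-compact simple factor of $\frakg^{\theta\tau}$, so for $\lambda$ with $(\lambda+\nu,\alpha)\ge 0$ for all $\alpha\in\Delta^{+}(\frakk_{\CC}^{\tau,-\tau_{\epsilon}},\frakt^{\tau}_{\CC})$ and all $\nu\in\calV$ the above $\frakk^{\tau}$-multiplicity equals the branching multiplicity of $\calU(\frakg^{\theta\tau})\calH^{\frakp_{+}}$ with respect to $\frakg^{\theta\tau,\tau_{\epsilon}}$. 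Applying Proposition \ref{proposition:reduction_to_compact} once more, now to the holomorphic symmetric pair $(\frakg^{\theta\tau},\frakg^{\theta\tau,\tau_{\epsilon}})$, this last quantity equals the $\frakk^{\tau,\tau_{\epsilon}}$-type multiplicity of the highest-weight-$\lambda$ module in $\calU(\frakg^{\theta\tau,\theta\tau_{\epsilon}})\calH^{\frakp_{+}}$, with respect to the Borel $\frakp_{+}^{\tau,\tau_{\epsilon}}\oplus\frakb$ and the fixed ordering.

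Running the bottom row of (\ref{diagram:reduction}) with the roles of $\tau$ and $\tau_\epsilon$ interchanged -- legitimate since $(\frakg,\frakg^{\tau_\epsilon})$ is again a holomorphic symmetric pair and $\tau$ arises from $\tau_\epsilon$ via the signature $\epsilon$ of $\Sigma(\fraka)$ as in Section \ref{section:epsilon_family} -- yields in the same way, for $\lambda$ with $(\lambda+\nu,\alpha)\ge 0$ for all $\alpha\in\Delta^{+}(\frakk_{\CC}^{-\tau,\tau_{\epsilon}},\frakt^{\tau}_{\CC})$ and all $\nu\in\calV$, that $m^{H'}_{\calH}(\lambda)$ equals the $\frakk^{\tau_{\epsilon},\tau}$-type multiplicity of the highest-weight-$\lambda$ module in $\calU(\frakg^{\theta\tau_{\epsilon},\theta\tau})\calH^{\frakp_{+}}$; here condition iii) of Lemma \ref{lemma:good_ordering_for_non_compact} is used in place of ii). Since $\frakg^{\theta\tau,\theta\tau_{\epsilon}}=\frakg^{\theta\tau_{\epsilon},\theta\tau}$, $\frakk^{\tau,\tau_{\epsilon}}=\frakk^{\tau_{\epsilon},\tau}$, and in both rows the terminal module is literally $\calU(\frakg^{\theta\tau,\theta\tau_{\epsilon}})\calH^{\frakp_{+}}$ with the same Borel and the same ordering, the two terminal multiplicities agree. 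Finally $\frakk_{\CC}^{-\tau\tau_{\epsilon}}=\frakk_{\CC}^{\tau,-\tau_{\epsilon}}\oplus\frakk_{\CC}^{-\tau,\tau_{\epsilon}}$, so the hypothesis $(\lambda+\nu,\alpha)\ge 0$ for all $\alpha\in\Delta^{+}(\frakk_{\CC}^{-\tau\tau_{\epsilon}},\frakt^{\tau}_{\CC})$ in the statement is exactly the conjunction of the two cone conditions used above; hence $m^{H}_{\calH}(\lambda)=m^{H'}_{\calH}(\lambda)$ on that cone.

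The step I expect to be most delicate is the bookkeeping of the involutions and orderings at the two intermediate stages: checking that each of $(\frakg^{\theta\tau},\frakg^{\theta\tau,\tau_{\epsilon}})$ and $(\frakg^{\theta\tau_{\epsilon}},\frakg^{\theta\tau_{\epsilon},\tau})$ is a genuinely \emph{holomorphic} (not anti-holomorphic) symmetric pair in the appropriate $\frakk_\epsilon$-family, so that Proposition \ref{proposition:reduction_to_compact} and Theorem \ref{theorem:upper_bound_compact} are both applicable, and that the single ordering of Lemma \ref{lemma:good_ordering_for_non_compact} simultaneously serves as the per-pair ordering satisfying (\ref{equation:good_order}) that Theorem \ref{theorem:upper_bound_compact} demands along both rows. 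Once (\ref{diagram:reduction}) is justified entry by entry, the coincidence of the two terminal branching problems and the merging of the cone conditions are routine.
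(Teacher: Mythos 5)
Your proposal is correct and is essentially the paper's own argument: the paper proves this theorem exactly by chasing both branching laws through the reduction diagram (\ref{diagram:reduction}) via Proposition \ref{proposition:reduction_to_compact} and Theorem \ref{theorem:upper_bound_compact}, with Lemma \ref{lemma:good_ordering_for_non_compact} supplying the single compatible ordering. Your write-up in fact spells out the bookkeeping (which involution plays the role of $\theta_\epsilon$ at each stage, and how the two cone conditions merge into $\Delta^{+}(\frakk_{\CC}^{-\tau\tau_{\epsilon}},\frakt^{\tau}_{\CC})$) more explicitly than the paper does.
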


\begin{table}[h]
\centering
\caption{holomorphic symmetric pairs and $\epsilon$-family}
\begin{tabular}{|c|c|c|c|}
\hline
$\frakg$& parameter &$\{\frakg^{\tau_{\epsilon}}: (\frakg, \frakg^{\tau_{\epsilon}}) \text{ is holomorphic}\}$ \\ \hline \hline
$\su(p,q)$& $i,j$ &$\{\fraks(\uor(i-k,j+k)+\uor(p-i+k,q-j-k))\}$\\ \hline
$\su(n,n)$& & $\{\so^{*}(2n)\}$ \\ \hline
$\su(n,n)$& & $\{\spor(n,\RR)\}$ \\ \hline
$\so^{*}(2n)$& & $\{\uor(2i,n-2i):0\leq 2i\leq n\}$ \\ \hline
$\so^{*}(2n)$& & $\{\uor(2i+1,n-2i-1):0\leq 2i+1\leq n\}$ \\ \hline
$\so^{*}(2n)$& $i$ & $\{\so^{*}(2i)+\so^{*}(2(n-i))\}$\\ \hline
$\so(2,n)$& $i$ & $\{\so(2,n-i)+\so(i), \so(n-i+2)+\so(2,i-2)\}$ \\ \hline
$\so(2,2n)$& & $\{\uor(1,n)\}$ \\ \hline
$\spor(n,\RR)$& & $\{\uor(i,n-i):0\leq i \leq n\}$ \\ \hline
$\spor(n,\RR)$& $i$ & $\{\spor(i,\RR)+\spor(n-i,\RR)\}$ \\ \hline
$\frake_{6(-14)}$& & $\{\so(10)+\so(2), \so^{*}(10)+\so(2), \so(8,2)+\so(2)\}$ \\ \hline
$\frake_{6(-14)}$& & $\{\su(5,1)+\slor(2,\RR), \su(4,2)+\su(2)\}$ \\ \hline
$\frake_{7(-25)}$& & $\{\frake_{6(-78)}+\so(2), \frake_{6(-14)}+\so(2)\}$ \\ \hline
$\frake_{7(-25)}$& & $\{\so(10,2)+\slor(2,\RR), \so^{*}(12)+\su(2)\}$ \\ \hline
$\frake_{7(-25)}$& & $\{\su(6,2)\}$ \\ \hline
\end{tabular}
\end{table}
\newpage


\bibliography{reference.bib}

\end{document}